\documentclass[11pt, a4paper]{amsart}

\usepackage[foot]{amsaddr}  

\usepackage[margin=2.54cm]{geometry}
\usepackage{fancyhdr}

\DeclareMathOperator{\Tam}{Tam}

\usepackage{amsmath, amssymb, amsthm}
\usepackage{thmtools}  
\usepackage{array}
\usepackage{latexsym}
\usepackage{nicematrix}
\usepackage{bigstrut}
\usepackage{enumerate}
\usepackage{paralist}

\usepackage{mathrsfs}

\usepackage{float}   
\usepackage{subcaption}  
\usepackage{pgf,tikz}
\usetikzlibrary{decorations.pathreplacing,calligraphy}
\usepackage{graphicx}
\usepackage[all]{xy}
\usetikzlibrary{arrows}
\usetikzlibrary{arrows.meta}

\usepackage{fdsymbol}
\usepackage{todonotes}
\setuptodonotes{size=\tiny}

\makeatletter
\define@key{todonotes}{AS}[]{%
	\setkeys{todonotes}{color=blue!50}}%

\makeatother

\newtheorem{theorem}{Theorem}[section]
\newtheorem{corollary}[theorem]{Corollary}
\newtheorem{proposition}[theorem]{Proposition}
\newtheorem{lemma}[theorem]{Lemma}

\newtheorem{question}[theorem]{Question}

\theoremstyle{definition}
\newtheorem{definition}[theorem]{Definition}
\newtheorem{remark}[theorem]{Remark}
\newtheorem{example}[theorem]{Example}

\usepackage{url}
\usepackage[colorlinks=true,citecolor=cyan,backref=page]{hyperref}
\usepackage[capitalise]{cleveref}

\title{$(P,\phi)$-Tamari and higher torsion lattices of type $\mathbf{A}$}

\author[A.~Segovia]{Adrien~Segovia}
\address{LACIM, Universit\'e du Qu\'ebec \`a Montr\'eal, Canada}
\email{segovia.adrien@courrier.uqam.ca}

\begin{document}

\begin{abstract}
The goal of this work is to study the combinatorics of the lattices of higher torsion classes of the higher Auslander and Nakayama algebras of type \textbf{A}. Combinatorial descriptions of these higher torsion classes were recently obtained by August \textit{et al.} (2025), and it was observed that the lattices that they form are not semidistributive. We study in some depth these lattices, proving in particular that the lattices of higher torsion classes of the higher Auslander algebras of type \textbf{A} are join-semidistributive, join-extremal and left modular. We also prove that the lattices of higher torsion classes of the higher Nakayama algebras of type \textbf{A} are lattice quotients of them. In order to prove these results, we define a general construction that produces a lattice, which we call $(P,\phi)$-Tamari, for any choice of poset $P$ and chain $\phi$ in $P$. We prove the lattice results for this general construction. When $P=\phi$ is a chain, we recover the Tamari lattice, whereas the lattices of higher torsion classes that we study are obtained for another very particular choice.    
\end{abstract}

\maketitle

\section{Introduction}

The Tamari lattice is familiar to many in combinatorics. It is perhaps most simply described as the lattice of planar binary trees ordered by tree rotation. For our purposes, it is also important to point out that it arises as the lattice of torsion classes for the type $\textbf{A}_n$ linearly oriented path algebra. Our goal is to present a certain combinatorial generalization of the Tamari lattice which we call the $(P,\phi)$-Tamari lattice, where $P$ is a poset and $\phi$ is a chain in $P$. We establish certain properties for these lattices, including join-semidistributivity, join-extremality, and left modularity.

The reason that we were inspired to formulate the definition of the $(P,\phi)$-Tamari lattices is that they include, as a very special case, certain lattices which recently appeared in representation theory. For any finite-dimensional algebra, its torsion classes ordered by inclusion form a lattice. These have attracted considerable interest \cite{demonet2023lattice,thomas2021introduction}. One property in particular of interest is that they are all semidistributive. There is a generalization due to J\o{}rgensen of torsion classes \cite{jorgensen2016torsion} in the setting of higher homological algebra \cite{IYAMA1,IYAMA2,IYAMA3}, known as $d$-torsion classes (or higher torsion classes). 
 Recently the authors of \cite{August_2025} showed that J\o{}rgensen's definition is equivalent to being closed under $d$-extensions and $d$-quotients, thus yielding a lattice ordered by inclusion on these $d$-torsion classes. They obtained a combinatorial description of the $d$-torsion classes for the higher Auslander and Nakayama algebras of type $\textbf{A}$, which are the two main examples where we are able to compute in higher homological algebra. 
 They noticed that the lattices of $d$-torsion classes of these algebras are not semidistributive in general, but we prove that they are a special case of our construction. Thus they inherit the properties of the $(P,\phi)$-Tamari lattices.

In \cref{sec:background} we give background on the relevant lattice theory (\cref{sec:posetsandlattices}) and introduce the lattices of higher torsion classes (\cref{sec:backgroundhighertorsionclasses}). In \cref{sectionPphiTamari} we define and study the $(P,\phi)$-Tamari lattices; the definition is given in \cref{sectionGenerality}, among other results the join-semidistributivity and join-extremality are proved in \cref{sec:finiteTam}, we briefly talk about enumeration in \cref{sec:enumgeneral} and results related to labellings, topology and congruences are proved in \cref{sectionCongruenceTopo}. In \cref{sectionMainExamples} we look at particular examples of our construction; the case of a poset $P$ that is a chain is treated in \cref{sec:Chain}, whereas the particular cases of the lattices of higher torsion classes of the higher Auslander and Nakayama algebras are respectively studied in \cref{sec:Auslander} and \cref{sec:Nakayama}. We finish by giving some open questions in \cref{sec:openquestionspphi}.

This paper is the full-length version of an extended abstract that was accepted to FPSAC 2025 \cite{segovia2025pphitamarilattices}.

\section*{Acknowledgements}

I am very grateful to Hugh Thomas for proposing this project and for his support. I also want to thank my other advisor Samuele Giraudo, and Baptiste Rognerud for interesting discussions. I was supported by NSERC Discovery Grants RGPIN-2022-03960 and RGPIN-2024-04465.

\section{Background}
\label{sec:background}

\subsection{Posets and lattices}
\label{sec:posetsandlattices}

We denote by $|E|$ the cardinality of a set $E$. For a positive integer $n$, denote $[n]:=\{1,2,\dots,n\}$. By $k<n$ we will mean $k\in \{0,1,\dots,n-1\}$.

If a partially ordered set (abbreviated as \textit{poset}) $(P,\leq)$ has a minimum, it is denoted $\hat{0}$, and $\hat{1}$ for the maximum. If a subset $X\subseteq P$ has a minimum element, it is denoted by $\min(X)$. If it has a maximum element, it is denoted by $\max(X)$. The dual of $P$ is denoted $P^{op}$. Two elements $x,y\in P$ are \emph{comparable} if $x\leq y$ or $y\leq x$. If this is not the case, then $x$ and $y$ are \emph{incomparable} and we denote it by $x\not\sim y$. A subset $X\subseteq P$ is an \emph{antichain} if the elements of $X$ are pairwise incomparable. The cover relations of $P$ are denoted $x\lessdot y$ and we say that $y$ \emph{covers} $x$ or $x$ \emph{is covered by} $y$. We denote by $\mathrm{Cov}_P^{\uparrow}(x)$ the set of elements that cover $x$, and by $\mathrm{Cov}_P^{\downarrow}(x)$ the set of elements that are covered by $x$. They form the set $E(P)$ of the edges of its Hasse diagram, which is drawn with smaller elements at the bottom. If $P$ has a minimum $\hat{0}$, the \emph{atoms} are the elements of $P$ that cover $\hat{0}$. If $P$ has a maximum $\hat{1}$, the \emph{coatoms} are the elements of $P$ that are covered by $\hat{1}$. The \emph{degree} of an element $x\in P$ is $\deg(x)=|\mathrm{Cov}_P^{\uparrow}(x)| + |\mathrm{Cov}_P^{\downarrow}(x)|$. The poset $P$ is \emph{Hasse-regular} if all its elements have the same degree.
The interval of $P$ between $x$ and $y$ is denoted by $[x,y]$. By a \emph{subposet} we always mean an induced subposet. Let $X\subseteq P$. If $P\setminus X$ is viewed as a poset, it corresponds to the subposet of $P$ on the elements $P\setminus X$. The subposet of $P$ on the elements $[x,y]\setminus \{y\}$ is denoted by $[x,y[$. Its Möbius function is written $\mu(x,y)$.
A subset $C$ of $P$ is \emph{convex} if for all $x$ and $y$ in $C$, we have $[x,y]\subseteq C$.   An \emph{order ideal} of a poset $P$ is a subset $I$ such that for all $x,y\in P$, if $y\leq x$ and $x\in I$, then $y\in I$. Dually, an \emph{order filter} is a subset $F$ such that for all $x,y\in P$, if $y\geq x$ and $x\in F$, then $y\in F$. The order ideal generated by a subset $C$ is denoted by $I_P(C):=\{y\in P\mid \exists x\in C,\,y\leq x\}$. If $C=\{x\}$, $I_P(\{x\})$ is a \emph{principal order ideal} and is denoted by $I_P(x)$. Dually the \emph{principal order filter} generated by $x\in P$ is denoted $F_P(x)$.
The poset ordered by inclusion on the order ideals of a poset $P$ is denoted $J(P)$. Then the number of order ideals of $P$, which is also its number of order filters, is $|J(P)|$. 
We denote by \emph{$C_n$} the chain $0<1<\cdots <n-1$. We also call \emph{chains} the totally ordered subsets of $P$. Equivalently, they are the image of an injective order preserving map $\phi : \{0,1,\dots,n\}\rightarrow P$, and we will use $\phi: \phi(0)< \phi(1) <\cdots <\phi(n)$ to denote them. In this case, the length of this chain, denoted by $|\phi|$, is $n$. We will also briefly consider the case where $\phi$ is infinite, meaning $\phi$ is an injective order preserving map from the nonnegative integers to $P$. If we cannot add elements to a chain, it is called a \emph{maximal chain}. The \emph{length} of a poset, denoted $\ell(P)$, is the maximum length of a chain in $P$. The chains of length $\ell(P)$ are called \emph{longest chains}. A finite poset is \emph{graded} if all its maximal chains have the same length. The \emph{spine} of a poset is the subset of the elements that lie on some longest chain. A \emph{linear extension} of a poset $(P,\leq)$ is a total order $\preceq$ on $P$ such that $x\leq y$ implies $x\preceq y$. The \emph{disjoint union} of two posets $(P,\leq_P)$ and $(Q,\leq_Q)$, denoted by $P\bigsqcup Q$, is the poset on $P\sqcup Q$ defined by $x\leq y$ if $x\leq_P y$ and $x,y\in P$, or $x\leq_Q y$ and $x,y\in Q$.
If $P$ and $Q$ are two posets, then the \emph{direct product} $P\times Q$ is the poset on the cartesian product $P\times Q$ defined by $(x,y)\leq (x',y')$ if $x\leq x'$ and $y\leq y'$. We will refer to this latter partial order as the \emph{product order}, sometimes denoted by $\leq_{prod}$.

A \emph{lattice} $L$ is a poset such that any pair of elements $\{x,y\}$ admits a least upper bound, called the \emph{join} and written $x\vee y$, and a greatest lower bound, called the \emph{meet} and written $x\wedge y$. If only the join exists for any pair of elements, then $L$ is a \emph{join-semilattice}. A \emph{complete lattice} is a poset in which any subset of the elements admits a least upper bound and greatest lower bound. Any finite lattice is a complete lattice but the converse is not true in general. We will denote $\Tam_n$ the Tamari lattice of size $n$ which has cardinality $\frac{1}{n+1} \binom{2n}{n}$.

The following definitions of join-irreducibles and meet-irreducibles are given for a finite lattice. A \emph{join-irreducible} $j\in L$ is an element that covers a unique element, denoted $j_*$, and a \emph{meet-irreducible} $m\in L$ is one that is covered by a unique element.
The sets of these elements are respectively denoted by $\mathrm{JIrr}(L)$ and $\mathrm{MIrr}(L)$. An \emph{edge-labelling} of $L$ is a map $\gamma: E(L) \rightarrow P$ where $P$ is a poset.
It is well-known that in a finite lattice, an element is always the join of the join-irreducibles below it. It follows: 

\begin{lemma}
For any finite lattice $L$, we have $\ell(L)\leq \mathrm{min}\big(|\mathrm{JIrr}(L)|,|\mathrm{MIrr}(L)|\big)$.
\end{lemma}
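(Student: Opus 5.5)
Take a chain $x_0 < x_1 < \cdots < x_n$ in $L$ with $n=\ell(L)$. We will build an injective map from $\{1,\dots,n\}$ into $\mathrm{JIrr}(L)$, which gives $n \le |\mathrm{JIrr}(L)|$. The bound $n\le|\mathrm{MIrr}(L)|$ then follows by applying the same argument to the dual lattice $L^{op}$. Indeed, $L^{op}$ has the same length as $L$, and its join-irreducibles are exactly the meet-irreducibles of $L$. Taking the minimum of the two bounds gives the statement.

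The construction rests on the fact just recalled: every element of a finite lattice is the join of the join-irreducibles below it. Set $J_i:=\{j\in \mathrm{JIrr}(L)\mid j\leq x_i\}$. Since $x_{i-1}\le x_i$, we have $J_{i-1}\subseteq J_i$. If $J_{i-1}=J_i$, then
\[
x_{i-1}=\bigvee J_{i-1}=\bigvee J_i=x_i,
\]
which contradicts $x_{i-1}<x_i$. (An empty join is read as $\hat 0$, which exists in a finite lattice.) Hence $J_{i-1}\subsetneq J_i$ for every $i$.

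Now choose $j_i\in J_i\setminus J_{i-1}$ for each $i\in\{1,\dots,n\}$. For $i<k$ we have $j_i\in J_i\subseteq J_{k-1}$, while $j_k\notin J_{k-1}$, so $j_i\neq j_k$. The map $i\mapsto j_i$ is therefore injective. Equivalently, $|J_0|<|J_1|<\cdots<|J_n|\le|\mathrm{JIrr}(L)|$ with $|J_0|\ge 0$, which already forces $n\le|\mathrm{JIrr}(L)|$.

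There is no real obstacle in this argument. The only point needing care is the strictness step, and it is exactly where we use that elements are determined by the join-irreducibles below them. The other subtle point is the convention that $\hat 0$ is not join-irreducible, since it covers nothing; this is consistent with $J_0$ possibly being empty.
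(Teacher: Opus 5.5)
Your proof is correct and follows the same approach as the paper. Along a chain of length $\ell(L)$, each step $x_{i-1}<x_i$ must pick up a new join-irreducible, because elements are joins of the join-irreducibles below them, and the meet-irreducible bound follows by duality. Your version is slightly more complete, since you explicitly check that the chosen $j_i$ are pairwise distinct (via $j_i\in J_{k-1}$ and $j_k\notin J_{k-1}$), a step the paper leaves implicit.
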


\begin{proof}
We prove $\ell(L)\leq |\mathrm{JIrr}(L)|$ as for the meet-irreducibles it is dual. Let $\phi: \phi(0)\lessdot \cdots \lessdot \phi(n)$ be a maximal chain of length $n$. For all $i<n$, there is always a join-irreducible that is below $\phi(i+1)$ but not $\phi(i)$. If it was not the case, then $\phi(i+1)$, which is the join of the join-irreducibles below it, would be below $\phi(i)$, which is absurd. This proves that we have at least $n$ join-irreducibles. 
\end{proof}

\begin{definition}[{\cite{MarkowskyExtremal}}]
    A lattice $L$ is \emph{join-extremal} if $\ell(L)=|\mathrm{JIrr}(L)|$, \emph{meet-extremal} if $\ell(L)=|\mathrm{MIrr}(L)|$ and \emph{extremal} if it is both join-extremal and meet-extremal. 
\end{definition}

\begin{definition}
A lattice $L$ is \emph{complemented} if any element $x\in L$ has a \emph{complement} $y\in L$, which is an element such that $x\vee y=\hat{1}$ and $x\wedge y=\hat{0}$. 
A lattice $L$ is \emph{meet-pseudocomplemented} if for any element $x\in L$, the set $\{y\in L\mid x\wedge y=\hat{0}\}$ has a maximum element. The \emph{join-pseudocomplemented} lattices are defined dually, and a lattice is \emph{pseudocomplemented} if it is both meet-pseudocomplemented and join-pseudocomplemented.    
\end{definition}

\begin{definition}
A lattice $L$ is \emph{join-semidistributive} if for all $x,y,z\in L$, $x\vee y=x\vee z$ implies $x\vee (y\wedge z)=x\vee y$. \emph{Meet-semidistributive} lattices are defined dually, and it is \emph{semidistributive} if it is both join-semidistributive and meet-semidistributive.
\end{definition}

The following two results can be found in \cite[Section 5]{freese1995free}.

\begin{lemma}
\label{lemJSDjoinmeetSD}
A lattice $L$ is semidistributive if and only if $L$ is join-semidistributive and $|\mathrm{JIrr}(L)|=|\mathrm{MIrr}(L)|$ .
\end{lemma}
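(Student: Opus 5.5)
We prove the two directions separately, working with a finite lattice $L$; this is the setting in which $\mathrm{JIrr}(L)$ and $\mathrm{MIrr}(L)$ were defined.

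\emph{The forward direction.} Assume $L$ is semidistributive; it is then join-semidistributive by definition. To compare the numbers of irreducibles, we construct a bijection $\kappa:\mathrm{JIrr}(L)\to\mathrm{MIrr}(L)$. Fix $j\in\mathrm{JIrr}(L)$ and consider
\[
S_j=\{x\in L\mid x\wedge j=j_*\}.
\]
This set is nonempty, since $j_*\in S_j$. If $x,y\in S_j$, then $x\wedge j=y\wedge j=j_*$, and meet-semidistributivity gives $(x\vee y)\wedge j=j_*$. Hence $S_j$ is closed under joins, and since $L$ is finite it has a maximum, which we call $\kappa(j)$.

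Next we check that $\kappa(j)$ is meet-irreducible. Let $z$ cover $\kappa(j)$. Then $z\notin S_j$. We also have $z\wedge j\geq \kappa(j)\wedge j=j_*$, so $z\wedge j=j$, which means $z\geq j$. Every cover of $\kappa(j)$ therefore lies above $\kappa(j)\vee j$, which is itself strictly above $\kappa(j)$. A second cover $z'\neq z$ would force $z\wedge z'=\kappa(j)\geq j$, a contradiction. So $\kappa(j)$ has exactly one cover, namely $\kappa(j)\vee j$.

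Dually, for $m\in\mathrm{MIrr}(L)$ with unique cover $m^*$, join-semidistributivity gives a minimum $\kappa^{-1}(m)$ of the set $\{y\mid y\vee m=m^*\}$, and this minimum is join-irreducible. It remains to check that the two maps are mutually inverse. For $m=\kappa(j)$, we have $m\vee j=m^*$, so $\kappa^{-1}(m)\leq j$. If $\kappa^{-1}(m)\leq j_*$, then $\kappa^{-1}(m)\leq m$, which contradicts $\kappa^{-1}(m)\vee m=m^*$. Hence $\kappa^{-1}(m)=j$. The symmetric argument gives $\kappa(\kappa^{-1}(m))=m$. This bijection yields $|\mathrm{JIrr}(L)|=|\mathrm{MIrr}(L)|$.

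\emph{The converse.} This is the main obstacle. Assume $L$ is join-semidistributive; we must show it is meet-semidistributive. Join-semidistributivity alone still gives the injective map $\kappa^{-1}$ from $\mathrm{MIrr}(L)$ into $\mathrm{JIrr}(L)$: for $m\in\mathrm{MIrr}(L)$, the set $\{y\mid y\vee m=m^*\}$ is closed under meets, so it has a minimum, and that minimum is join-irreducible. The argument above also shows this map is injective, since $m$ can be recovered as the maximum of $\{x\mid x\wedge j=j_*\}$ for the image $j$.

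Now suppose $|\mathrm{JIrr}(L)|=|\mathrm{MIrr}(L)|$. Then this injective map is a bijection, so every $j\in\mathrm{JIrr}(L)$ has a ``$\kappa$'': a meet-irreducible $m$ that is the maximum of $\{x\mid x\wedge j=j_*\}$.

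To obtain meet-semidistributivity, take $x,y,z$ with $x\wedge y=x\wedge z=w$, and suppose $x\wedge(y\vee z)>w$. Choose a join-irreducible $j\leq x\wedge(y\vee z)$ with $j\not\leq w$, and take it minimal among such elements. Then $j_*\leq w$.

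The key lemma, and the step I expect to be delicate, is the following. If every $j$ has a $\kappa(j)$, then $\kappa(j)$ is the maximum of $\{x\mid j\not\leq x,\ j_*\leq x\}$. Granting this, both $y$ and $z$ satisfy $j_*\leq w\leq y,z$. We also have $j\not\leq y$, since otherwise $j\leq x\wedge y=w$, and similarly $j\not\leq z$. So $y,z\leq\kappa(j)$, which gives $y\vee z\leq\kappa(j)$. But $j\leq y\vee z$, so $j\leq\kappa(j)$, a contradiction.

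To prove the key lemma, I would use that the elements $x$ with $x\geq j_*$ and $x\not\geq j$ are exactly those with $x\wedge j=j_*$, since $j_*$ is the unique element covered by $j$. The maximum of this set exists precisely because the bijection supplies $\kappa(j)$ for every $j$. This is the point where the cardinality hypothesis enters, through a counting argument on the injective map $\kappa^{-1}$.
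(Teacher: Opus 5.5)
Your forward direction is correct; it is the standard $\kappa$-bijection argument behind the result the paper cites from \cite{freese1995free}. The converse has a genuine gap, at exactly the step where $|\mathrm{JIrr}(L)|=|\mathrm{MIrr}(L)|$ must be used. You claim that join-semidistributivity alone makes $\kappa^{-1}\colon\mathrm{MIrr}(L)\to\mathrm{JIrr}(L)$ injective, ``since $m$ can be recovered as the maximum of $\{x\mid x\wedge j=j_*\}$''. That this set has a maximum is precisely the existence of $\kappa(j)$, which you established only by using meet-semidistributivity, so the argument is circular. The claim is also false. The lattice of \cref{fig:firstExample} is join-semidistributive by \cref{propJSD}, yet it has $7$ join-irreducibles and $8$ meet-irreducibles, so no injection $\mathrm{MIrr}(L)\to\mathrm{JIrr}(L)$ exists. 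Concretely, $\{1,3,5\}$ and $\{1,7\}$ are both meet-irreducible and both are sent to $J_2=\{1,2\}$. They are two distinct maximal elements of $\{x\mid x\wedge J_2=\{1\}\}$, so this set has no maximum.

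Under join-semidistributivity alone the opposite holds: $\kappa^{-1}$ is surjective, and its fibres are exactly the maximal elements of the sets $S_j=\{x\mid x\wedge j=j_*\}$. First, your meet-irreducibility argument only used that $\kappa(j)$ is \emph{maximal} in $S_j$. So any maximal $m\in S_j$ is meet-irreducible with $m^*=m\vee j$, and your computation gives $\kappa^{-1}(m)=j$, since it only used $m\wedge j=j_*$ and $m\vee j=m^*$. Conversely, suppose $\kappa^{-1}(m)=j$. Minimality of $j$ gives $j_*\vee m\neq m^*$, hence $j_*\leq m$, so $m\in S_j$. Moreover $m$ is maximal in $S_j$, because every element strictly above $m$ lies above $m^*\geq j$. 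Therefore $|\mathrm{MIrr}(L)|\geq|\mathrm{JIrr}(L)|$ in every finite join-semidistributive lattice. The cardinality hypothesis then forces the surjection $\kappa^{-1}$ to be injective, so each $S_j$ has a unique maximal element, which in a finite poset is a maximum. With $\kappa(j)=\max S_j$ established this way, your final derivation of meet-semidistributivity goes through as written.
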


\begin{lemma}
\label{lemEquiJSDpphi}
A lattice $L$ is join-semidistributive if and only if for all covers $b\lessdot c$, the set $I_L(c)\setminus I_L(b)$ has a minimum element. In this case, the minimum is join-irreducible, and $\gamma: E(L) \rightarrow \mathrm{JIrr}(L)$ that sends a cover $b\lessdot c$ to $\mathrm{min}(I_L(c)\setminus I_L(b))$ is a well-defined edge-labelling.
\end{lemma}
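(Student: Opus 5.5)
We prove the two implications separately and work throughout in a finite lattice $L$, since the cover-based criterion only makes sense there. The join-irreducibility of the minimum and the well-definedness of $\gamma$ then follow quickly.

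\emph{Join-semidistributivity implies the existence of the minimum.} Fix a cover $b\lessdot c$ and set $S=I_L(c)\setminus I_L(b)$, which is nonempty since $c\in S$. Take two elements $y,z\in S$. Since $y\leq c$ and $y\not\leq b$, the element $b\vee y$ lies in $]b,c]$, so $b\vee y=c$ by the cover relation. Likewise $b\vee z=c$. Join-semidistributivity then gives $b\vee(y\wedge z)=c$. Hence $y\wedge z\not\leq b$, because otherwise the join would equal $b$; and clearly $y\wedge z\leq c$. So $S$ is closed under binary meets. Being finite and nonempty, $S$ contains the meet of all its elements, and this meet is $\min S$.

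\emph{The minimum is join-irreducible.} Let $j=\min S$. Then $j\neq\hat 0$, since $\hat 0\leq b$. Suppose $j=u\vee v$ with $u,v<j$. By minimality of $j$, neither $u$ nor $v$ lies in $S$. Both are below $c$, so both must be below $b$. This forces $j=u\vee v\leq b$, a contradiction. Therefore $j$ covers a unique element and is join-irreducible, and $\gamma$ is a well-defined map $E(L)\to\mathrm{JIrr}(L)$.

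\emph{The existence of minima implies join-semidistributivity.} This is the step I expect to need the most care. Suppose $x\vee y=x\vee z=:w$ but $x\vee(y\wedge z)=:t<w$. Choose $b$ with $t\leq b\lessdot w$, which is possible by finiteness, and let $j=\min(I_L(w)\setminus I_L(b))$. Since $x\leq b$, we cannot have $y\leq b$, for otherwise $w=x\vee y\leq b$. So $y\in I_L(w)\setminus I_L(b)$, and therefore $j\leq y$. The same argument gives $j\leq z$. Hence $j\leq y\wedge z\leq t\leq b$, contradicting $j\notin I_L(b)$. So $x\vee(y\wedge z)=x\vee y$, and $L$ is join-semidistributive.
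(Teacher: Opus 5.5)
Your proof is correct. The paper gives no argument of its own and simply quotes the lemma from \cite{freese1995free}. Your route is self-contained: closure of $I_L(c)\setminus I_L(b)$ under binary meets gives one direction, and testing $y,z$ against $\min(I_L(w)\setminus I_L(b))$ for a cover $x\vee(y\wedge z)\le b\lessdot w$ gives the other. This makes the paper independent of the reference at the cost of a few lines, while the citation places $\gamma$ inside the general theory of semidistributive lattices that the paper relies on later.

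Two small refinements. First, in the join-irreducibility step you leave implicit that ``not a join of two strictly smaller elements'' together with $j\neq\hat 0$ gives a unique lower cover. You can get this directly and with more information: every $u<j$ lies in $I_L(c)$ but not in $I_L(c)\setminus I_L(b)$, hence $u\le j\wedge b<j$, so $j_*=j\wedge b$.

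Second, your restriction to finite $L$ is genuinely needed, not just a convenience. Take $\{0,b,c\}\cup\{y_1>y_2>\cdots\}$, with the $y_n$ a chain strictly between $0$ and $c$ and $b$ incomparable to every $y_n$. Then $b\lessdot c$ and the lattice is join-semidistributive, yet $I_L(c)\setminus I_L(b)=\{c,y_1,y_2,\dots\}$ has no minimum. This matches the paper's convention of defining $\mathrm{JIrr}$ only for finite lattices.
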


\begin{definition}
     An element $a\in L$ is \emph{left modular} if for all $b<c$ in $L$, we have $(b\vee a) \wedge c = b\vee (a\wedge c)$. 
    A maximal chain made of left modular elements is called a maximal left modular chain. The lattice $L$ is left modular if there exists a maximal left modular chain. 
\end{definition}

We now recall a result that enables us to prove that a lattice is left modular using some edge-labellings. The names for the labellings in the following definition are chosen to be consistent with \cite{segovia2025extremalitysemidistributivelattices}.

\begin{definition}
	\label{deflabellingpphi}
	Let $L$ be a lattice. Denote by $\psi :\, \hat{0}=x_0 <\dots <x_k=\hat{1}$ a chain containing $\hat{0}$ and $\hat{1}$. Using $\psi$, we define two edge-labellings $\gamma_{1,\psi}$ and $\gamma_{2',\psi}$. For $j\in \mathrm{JIrr}(L)$, denote $\delta_{\psi}(j):=\mathrm{min}\{i\,|\,j\leq x_i\}$. Then, for a cover relation $b\lessdot c$, we define
	\begin{align*}
		\gamma_{1,\psi} (b\lessdot c) &:= \mathrm{min}\{ \delta(j) \,|\, \text{$j$ join-irreducible},\,j\leq c,\,j\not\leq b\}, \\
		\gamma_{2',\psi} (b\lessdot c) &:= \mathrm{min}\{i \,|\, b \vee x_i \geq c\} .
	\end{align*}
\end{definition}

\begin{proposition}[{\cite[Theorem 3.3]{segovia2025extremalitysemidistributivelattices}}]
\label{prop:leftmodularwithlabellings}
Using the notations from \cref{deflabellingpphi}, for any lattice $L$, we have $\gamma_{1,\psi} = \gamma_{2',\psi}$ if and only if for all $i$, $x_i$ is left modular.    
\end{proposition}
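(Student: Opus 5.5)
We prove the two implications separately, writing $\gamma_1=\gamma_{1,\psi}$, $\gamma_{2'}=\gamma_{2',\psi}$ and $\delta=\delta_\psi$ (finiteness of $L$ is used throughout). The basic observation is that one inequality always holds. Let $b\lessdot c$ and $i=\gamma_{2'}(b\lessdot c)$, so $b\vee x_i\geq c$. Every join-irreducible $j\leq c$ with $\delta(j)<i$ satisfies $j\leq x_{i-1}$. If all join-irreducibles $j\leq c$, $j\not\leq b$ had $\delta(j)\leq i-1$, then $c$, as the join of the join-irreducibles below it, would be $\leq b\vee x_{i-1}$, contradicting the minimality of $i$. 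Hence $\gamma_1\geq\gamma_{2'}$ always. So the real content is: $\gamma_1\leq\gamma_{2'}$ on every cover if and only if every $x_i$ is left modular.

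\emph{Left modularity implies equality.} Fix $b\lessdot c$ and let $i=\gamma_{2'}(b\lessdot c)$. By left modularity of $x_i$ applied to $b<c$,
\[
c=(b\vee x_i)\wedge c=b\vee(x_i\wedge c).
\]
Since $c\neq b$, we get $x_i\wedge c\not\leq b$. Hence some join-irreducible $j\leq x_i\wedge c$ satisfies $j\not\leq b$. This $j$ has $\delta(j)\leq i$ and $j\leq c$, so $\gamma_1(b\lessdot c)\leq i$, which gives equality.

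\emph{Equality implies left modularity.} Fix $i$ and $b<c$. We always have $b\vee(x_i\wedge c)\leq (b\vee x_i)\wedge c$, so suppose the inequality is strict. Set $b'=b\vee(x_i\wedge c)$ and $c'=(b\vee x_i)\wedge c$, and choose a cover $b'\leq u\lessdot v\leq c'$. Then $v\leq c'\leq b\vee x_i\leq u\vee x_i$, so $\gamma_{2'}(u\lessdot v)\leq i$. By hypothesis $\gamma_1(u\lessdot v)\leq i$, so there is a join-irreducible $j\leq v$ with $j\not\leq u$ and $j\leq x_i$. Then $j\leq x_i\wedge v\leq x_i\wedge c\leq b'\leq u$, a contradiction. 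Hence $x_i$ is left modular.

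The main subtle point is choosing the cover inside $[b',c']$ in the converse so that both the $\gamma_{2'}$ bound and the join-irreducible argument apply simultaneously. Once the covers are placed in $[b',c']$, the rest is routine lattice manipulation.
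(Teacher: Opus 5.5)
You have a genuine gap, and it is in the ``basic observation'' $\gamma_{1,\psi}\ge\gamma_{2',\psi}$, which your forward direction needs in order to conclude equality. What you actually show is this: if \emph{every} join-irreducible $j\le c$ with $j\not\le b$ had $\delta(j)\le i-1$, then $c\le b\vee x_{i-1}$. Negating this only gives that \emph{some} such $j$ has $\delta(j)\ge i$. That bounds the maximum of $\delta$ over these join-irreducibles, whereas $\gamma_{1,\psi}$ is their minimum.

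Your argument never uses that $b\lessdot c$ is a cover, and without that hypothesis the conclusion fails. Take the lattice of subsets of $\{a,a'\}$ with $\psi:\emptyset<\{a\}<\{a,a'\}$, $b=\emptyset$ and $c=\{a,a'\}$. The minimum of $\delta$ over join-irreducibles $j\le c$, $j\not\le b$ is $1$, while the least $i$ with $b\vee x_i\ge c$ is $2$. Your reasoning (with $i=2$) goes through word for word there, so it cannot be a valid proof.

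The repair is one line and uses the cover. For any join-irreducible $j\le c$ with $j\not\le b$ we have $b<b\vee j\le c$, so $b\vee j=c$. Hence $c\le b\vee x_{\delta(j)}$, which gives $\gamma_{2',\psi}(b\lessdot c)\le\delta(j)$. Minimizing over $j$ yields $\gamma_{2',\psi}\le\gamma_{1,\psi}$.

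With this fix, your forward direction closes. Left modularity of $x_i$ gives $x_i\wedge c\not\le b$, which yields a join-irreducible $j\le x_i\wedge c$ with $j\not\le b$, so $\gamma_{1,\psi}\le i$. Your converse is correct as written: choosing a cover $u\lessdot v$ inside $[b\vee(x_i\wedge c),(b\vee x_i)\wedge c]$ gives $\gamma_{2',\psi}(u\lessdot v)\le i$ and then the contradiction $j\le x_i\wedge v\le u$. The paper gives no proof of this proposition (it cites it), so there is no in-paper route to compare against; once repaired, your argument is a complete self-contained proof.
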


We refer the reader to the papers of A. Björner and M. Wachs \cite{bjorner1983lexicographically,bjorner1996shellable,bjorner1997shellable} for details related to the following topological notions. Denote $\overline{L}:=L\setminus \{\hat{0},\hat{1}\}$. The \emph{order complex} $\Delta(P)$ of a poset $P$ is the simplicial complex of vertex set $P$ whose faces are the chains of $P$.  An \emph{$EL$-labelling} of a lattice $L$ is an edge-labelling such that in any interval, when reading the labels following the maximal chains from bottom to top, there is a unique maximal increasing chain and the label word of the increasing chain lexicographically precedes the label word of any other maximal chain. If $L$ admits an $EL$-labelling, then its order complex $\Delta(\overline{L})$ is shellable and homotopy equivalent to a wedge of spheres.
Moreover, for all $x<y$ in $L$ we have that $\mu(x,y)$ is given by the difference between the number of even length maximal decreasing chains and the number of odd length maximal decreasing chains of $[x,y]$. Thus if $L$ has at most one maximal decreasing chain in any interval, then $\mu (x,y)\in \{-1,0,1\}$ for all $x,y\in L$. Also, the order complex of each nonempty open interval $]x,y[$ has the homotopy type of either a sphere or a point.

\begin{proposition}[{\cite{LiuLeftmodular}}]
\label{prop:ELlabellingLiu}
If $\psi$ is a left modular chain, then $\gamma_{1,\psi}=\gamma_{2',\psi}$ and this is an EL-labelling. Thus a left modular lattice admits an $EL$-labelling.
\end{proposition}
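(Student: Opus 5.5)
The equality $\gamma_{1,\psi}=\gamma_{2',\psi}$ is immediate from \cref{prop:leftmodularwithlabellings}, since every $x_i$ of a left modular chain $\psi$ is left modular. So the work is to show this common labelling, written $\gamma$, is an $EL$-labelling. The plan is to prove, for each interval $[y,z]$, that the chain built greedily by always taking the smallest available label is the unique increasing maximal chain, and that its label word is lexicographically first.

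First I will show that left modularity passes to intervals. If $a$ is left modular in $L$, then $a_{y,z}:=(y\vee a)\wedge z$ is left modular in $[y,z]$, and $\gamma$ restricted to $[y,z]$ agrees with $\gamma_{2'}$ for the chain $y=(y\vee x_0)\wedge z\leq\cdots\leq(y\vee x_k)\wedge z=z$, after deleting repetitions. The check is a direct computation with the identity $(b\vee a)\wedge c=b\vee(a\wedge c)$ for $b<c$. For $b\lessdot c$ in $[y,z]$, $b\vee x_i\geq c$ holds iff $b\vee((y\vee x_i)\wedge z)\geq c$, because $b\geq y$ and $c\leq z$. Hence the labels are intrinsic to the induced chain, and it suffices to treat the whole lattice $[\hat0,\hat1]$.

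Next comes the key observation, using $\gamma_{2'}$. For a cover $b\lessdot c$ with label $i$, we have $c\leq b\vee x_i$ and $c\not\leq b\vee x_{i-1}$. The chain $\hat0=x_0\lessdot\cdots$ (refined to a maximal chain if needed, since refinements of left modular chains can be taken left modular when $\psi$ is maximal, which is the case of interest) has labels $1,2,\dots,k$ in order. This is because $x_{i-1}\vee x_j\geq x_i$ iff $j\geq i$, so $\psi$ itself is increasing. For uniqueness, take any increasing maximal chain $\hat0=c_0\lessdot c_1\lessdot\cdots\lessdot c_m=\hat1$ with labels $i_1<\cdots<i_m$. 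By induction, $c_t\leq x_{i_t}$: indeed $c_t\leq c_{t-1}\vee x_{i_t}\leq x_{i_{t-1}}\vee x_{i_t}=x_{i_t}$. Conversely, each $x_i$ must lie on the chain. The labels of a maximal chain from $\hat0$ to $\hat1$ must include each value $1,\dots,k$: set $s=\max\{t: c_t\leq x_{i-1}\}$; then $c_{s+1}\leq x_i$ forces $\gamma_1=i$ via $\delta$. So $m\geq k$, and an increasing word forces $m=k$ and $i_t=t$, whence $c_t=x_t$.

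For lexicographic minimality, compare any maximal chain with $\psi$ at the first cover $c_{t-1}\lessdot c_t$ where it leaves $\psi$. Since $c_{t-1}=x_{t-1}$ and $c_t\neq x_t$, its label $i$ satisfies $c_t\leq x_{t-1}\vee x_i=x_{\max(t-1,i)}$ and $c_t\not\leq x_{t-1}$, so $i\geq t$. Equality $i=t$ would give $x_{t-1}\lessdot c_t\leq x_t$, forcing $c_t=x_t$. Hence $i>t$, and $\psi$ is lexicographically first. Applying this in every interval gives the $EL$ property. I expect the main obstacle to be the interval-reduction step, namely checking that the projected chain is again left modular and that $\gamma$ restricts correctly. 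This is where the modular identity must be used carefully with $b,c\in[y,z]$.
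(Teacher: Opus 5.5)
The paper does not prove this statement; it is quoted from Liu. Your overall route is the standard one: project $\psi$ into each interval, then show $\psi$ is the unique increasing chain and is lexicographically first. Your bound $c_t\leq x_{i_t}$ and your lexicographic step are correct. There are, however, two genuine gaps.

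\textbf{The uniqueness step rests on a false claim.} It is not true that the label word of every maximal chain contains each of $1,\dots,k$, nor that $m\geq k$. Take the pentagon with $\hat{0}\lessdot a\lessdot b\lessdot\hat{1}$ and $\hat{0}\lessdot c\lessdot\hat{1}$. The only pair $u<v$ avoiding $\hat{0},\hat{1}$ is $a<b$, so $a$ and $b$ are left modular, and $\psi:\hat{0}\lessdot a\lessdot b\lessdot\hat{1}$ qualifies with $k=3$. Yet the chain $\hat{0}\lessdot c\lessdot\hat{1}$ gets labels $3,1$. Your step ``$s=\max\{t: c_t\leq x_{i-1}\}$, then $c_{s+1}\leq x_i$'' fails there for $i=2$, since $s=0$ and $c\not\leq b$. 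Even for an increasing chain, nothing you proved yields $c_{s+1}\leq x_i$.

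The fix is to choose $s$ from the other side. For each $i\in[k]$, let $s$ be minimal with $x_i\leq c_s$.
\begin{itemize}
\item Then $c_{s-1}<c_{s-1}\vee x_i\leq c_s$, so $c_{s-1}\vee x_i=c_s$ and hence $i_s\leq i$.
\item For an increasing chain, $x_i\leq c_s\leq x_{i_s}$ gives $i_s\geq i$.
\end{itemize}
So every $i\in[k]$ occurs as a label. Hence $m=k$, $i_t=t$, and $x_t\leq c_t\leq x_t$.

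\textbf{The interval reduction is incomplete.} Your argument in $[\hat{0},\hat{1}]$ uses that $\psi$ is saturated. You need this to know $\psi$ is a maximal chain at all, and again in the step $x_{t-1}\lessdot c_t\leq x_t\Rightarrow c_t=x_t$. So, besides left modularity of $p_i:=(y\vee x_i)\wedge z$ in $[y,z]$ and the transfer of labels, you must show that $p_{i-1}=p_i$ or $p_{i-1}\lessdot p_i$.

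``Refining if needed'' is not available. Refinement changes the labels, and left modular chains cannot be refined in general: $\hat{0}<\hat{1}$ is always left modular. Indeed, for $\psi:\hat{0}<\hat{1}$ in $B_2$ every edge gets label $1$, so the statement itself must be read with $\psi$ maximal.

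The saturation does hold, in two steps.
\begin{itemize}
\item If $x_{i-1}\wedge z\leq t\leq x_i\wedge z$, then $t\vee x_{i-1}\in\{x_{i-1},x_i\}$. In the first case $t\leq x_{i-1}\wedge z$. In the second case, left modularity of $x_{i-1}$ gives $x_i\wedge z=(t\vee x_{i-1})\wedge(x_i\wedge z)=t\vee(x_{i-1}\wedge z)=t$. So $x_{i-1}\wedge z$ and $x_i\wedge z$ are equal or form a cover.
\item Now let $p_{i-1}\leq w\leq p_i$. Then $w\wedge x_i$ is one of those two elements.
  \begin{itemize}
  \item If it is $x_i\wedge z$, then $w\geq y\vee(x_i\wedge z)=p_i$, using left modularity of $x_i$.
  \item If it is $x_{i-1}\wedge z$, note that $p_{i-1}\vee x_i=y\vee x_i\geq w$. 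Left modularity of $x_i$ then gives $w=(p_{i-1}\vee x_i)\wedge w=p_{i-1}\vee(x_i\wedge w)=p_{i-1}$.
  \end{itemize}
\end{itemize}
With these two repairs your proof goes through.
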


\begin{definition}[{\cite{Day_1979}}]
\label{defDoublementDaypphi}
    Let $C$ be a convex subset of $L$. 
    The \emph{doubling} $L[C]$ is the subposet of $L\times C_2$ on the subset
    $\big(I_L(C) \times \{0\}\big) \,\bigsqcup \,\left[\big((L\setminus I_L(C))\cup C\big) \times \{1\} \right]$. It is in fact a lattice.
\end{definition}

 A subset $C$ is a \emph{lower pseudo-interval} if $C$ is a union of intervals sharing the same minimum element. 
 A lattice $L$ is \emph{congruence normal} if it is obtained from the one-element lattice by successive doublings of convex subsets. If at each step we double a lower pseudo-interval then $L$ is \emph{join-congruence uniform} (sometimes called lower-bounded in the literature).
 If we use only doublings of intervals, $L$ is called \emph{congruence uniform}. See \cref{fig:doublings} for an example of the construction of a join-congruence uniform lattice.

\begin{figure}
	\centering
	\begin{tikzpicture}
    \begin{scope}[scale=0.7]
    
        \begin{scope}[xshift=-4cm]
        \draw[ultra thick, red] (0,0) node{$\bullet$};    
        \end{scope}
        
        \begin{scope}[xshift=-2.5cm]
        \draw[ultra thick, red] (0,0)--(0,1);    
        \end{scope}
        
			\draw (0,0)--(1,1)--(0,2)--(-1,1)--(0,0);
			\draw[ultra thick, red] (-1,1)--(0,0)--(1,1);
			
			\begin{scope}[xshift=3cm, yshift= 1.5cm]
				\draw (0,0)--(1,1)--(0,2)--(-1,1)--(0,0);  
				\draw (-1,1)--(-1,-0.5)--(0,-1.5)--(1,-0.5)--(1,1);
				\draw[ultra thick, red] (1,-0.5)--(1,1);
				\draw (0,-1.5)--(0,0);
			\end{scope}
			
			\begin{scope}[xshift=6.5cm, yshift= 1.5cm]
				\draw (-1,1)--(0,0)--(1,1);  
				\draw (0,0)--(0,-1.5);
				\draw (-1,1)--(-1,-0.5)--(0,-1.5)--(1,-0.5)--(1,1)--(2,2)--(2,0.5)--(1,-0.5);   
				\draw (-1,1)--(1,3)--(2,2);
				\draw[ultra thick,red] (1,1)--(1,-0.5)--(2,0.5);
			\end{scope}
			
			\begin{scope}[xshift=10.5cm, yshift= 1.5cm]
				\draw (-1,1)--(0,0)--(1,1); 
				\draw (0,0)--(0,-1.5);
				\draw (-1,1)--(-1,-0.5)--(0,-1.5)--(1,-0.5);
				\draw (1,1)--(1,-0.5)--(1.8,0.3);
				\draw (1.5,2.2)--(1.5,0.7)--(2.3,1.5);
				\draw (1,1)--(1.5,2.2);  
				\draw (1,-0.5)--(1.5,0.7);  
				\draw (1.8,0.3)--(2.3,1.5);  
				\draw (1.5,2.2)--(2.3,3);
				\draw (2.3,1.5)--(2.3,3);
				\draw (-1,1)--(1.3,4)--(2.3,3);  
			\end{scope}
		\end{scope}
	\end{tikzpicture}
	\caption{We represent five successive doublings of a lattice by lower pseudo-intervals, which are represented with thick red edges.}
	\label{fig:doublings}
\end{figure}

A (lattice) \emph{congruence} on $L$ is an equivalence relation $\equiv$ on $L$ such that for all $ x_1,x_2,y_1,y_2$ in $L$, we have that $x_1\equiv x_2$ and $y_1\equiv y_2$ imply both $x_1\wedge y_1 \equiv x_2\wedge y_2$ and $x_1\vee y_1 \equiv x_2\vee y_2$. We say that a congruence $\equiv$ \emph{contracts} a join-irreducible $j$ if $j_*\equiv j$.
Two different congruences always contract different sets of join-irreducibles. Thus we will identify the congruences with the set of join-irreducibles they contract.
 Let $D$, called the \emph{join dependency relation}, be the binary relation on $\mathrm{JIrr}(L)$ defined by $a D b$ if $a\neq b$ and there exists $p\in L$ such that $a\leq b \vee p$ and $a \not \leq b_* \vee p$. Note that $a D b$ implies $a\not\leq b$.
A \emph{$D$-cycle} is a sequence of elements $a_1,\,a_2,\,\dots,a_k$ with $k\geq 2$ such that $a_1 D a_2 D \cdots D a_k D a_1$.

\begin{proposition}[\cite{Day_1979,freese1995free}]
\label{propDayLW}
    The lattice $L$ is join-congruence uniform if and only if it contains no $D$-cycles. In this case, the congruences of $L$ correspond to the subsets $S \subseteq \mathrm{JIrr}(L)$ such that if $a D b$ and $b\in S$, then $a\in S$.
\end{proposition}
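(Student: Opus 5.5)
The plan is to argue by induction on $|L|$, working separately with the two directions of the equivalence and then with the description of congruences. Throughout, $L$ is finite and I write $\mathrm{con}(j)$ for the smallest congruence contracting $j$.

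\textbf{Every congruence is a $D$-down-set.} This holds in any finite lattice and I treat it first. Let $\equiv$ contract $b$, and suppose $a D b$ is witnessed by $p$. From $b\equiv b_*$ we get $b\vee p\equiv b_*\vee p$. Meeting with $a$ gives $a=a\wedge(b\vee p)\equiv a\wedge(b_*\vee p)$. The right-hand side lies strictly below $a$, hence below $a_*$. Since congruence classes are convex, $a\equiv a_*$. So the set of contracted join-irreducibles is closed under $D$-predecessors.

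\textbf{Join-congruence uniform implies no $D$-cycles.} Suppose $L'=L[C]$ with $C$ a lower pseudo-interval with minimum $m$. The join-irreducibles of $L'$ are:
\begin{itemize}
\item the new element $(m,1)$;
\item for each $j\in\mathrm{JIrr}(L)$, one lift: $(j,0)$ if $j\in I_L(C)$, and $(j,1)$ otherwise.
\end{itemize}
The projection $L'\to L$ is a lattice map whose kernel contracts exactly $(m,1)$. I will check, by transporting a witness $p\in L'$ to its projection, that $D$ restricted to the old join-irreducibles is exactly the $D$ of $L$. I will also check that $(m,1)$ is never $D$-below any other join-irreducible in $L'$; that is, nothing of the form $x\, D\, (m,1)$ holds, except possibly in one direction. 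This is the place where the pseudo-interval hypothesis (a common minimum) is used: if $x\le (m,1)\vee p$ but $x\not\le (m,0)\vee p$, then the pseudo-interval shape forces $x$ to be a lift of something already above $m\vee\pi(p)$ in $L$. Hence $(m,1)$ cannot lie on a $D$-cycle, and by induction neither can anything else.

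\textbf{No $D$-cycles implies join-congruence uniform.} Since $D$ is acyclic, choose $j\in\mathrm{JIrr}(L)$ that is maximal for $D$, meaning no $b$ satisfies $j D b$. By the first paragraph, $\mathrm{con}(j)$ contracts the $D$-down-set generated by $j$, which is just $\{j\}$. I need the converse: that $\mathrm{con}(j)$ contracts only $j$. For this I would use the standard finite-lattice description of $\mathrm{con}(j)$, in which the join-irreducibles it contracts are those reachable from $j$ by $D$ in the appropriate direction (Freese–Ježek–Nation, Lemma 2.36). Then $L/\mathrm{con}(j)$ has one fewer join-irreducible and inherits acyclicity of $D$, so induction applies to it. It remains to show that $L$ is the doubling of $L/\mathrm{con}(j)$ along the image of $\{x : x\ge j\}$ restricted to the nontrivial classes. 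These classes are the two-element sets $\{x_*, x\}$ where $x=x_*\vee j$ and $x_*\not\ge j$. I must show their bottoms form a lower pseudo-interval with minimum $j_*$.

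\textbf{Every $D$-down-set is a congruence.} Given a $D$-down-set $S$, take the join of the $\mathrm{con}(s)$ for $s\in S$. By the lemma just cited, it contracts exactly $S$, which gives the stated bijection.

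The main obstacle is the third step: identifying $\mathrm{con}(j)$ precisely and showing that the quotient is undone by a doubling of a lower pseudo-interval. I would first try to prove directly that $x\mapsto x\vee j$ and its reverse give the required convex, down-rooted structure. If that fails, I would fall back on Day's characterization via lower bounded homomorphisms, as in Freese–Ježek–Nation, Theorem 2.64.
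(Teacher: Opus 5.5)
The paper gives no proof of this proposition; it quotes it from Day and Freese--Je\v{z}ek--Nation. So I am judging your argument on its own. Your first paragraph is correct. Together with your last paragraph (plus join-primeness of $\mathrm{con}(a)$ in the distributive lattice $\mathrm{Con}(L)$), it gives the description of congruences, and that part holds for every finite lattice.

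The implication ``join-congruence uniform $\Rightarrow$ no $D$-cycles'' is fine in outline, but your sentence about $(m,1)$ contradicts itself. What is true, and what you need, is that no $x\,D\,(m,1)$ holds. This follows at once from your first paragraph applied to the kernel of $L[C]\to L$, which contracts only $(m,1)$ because $C$ has a unique minimal element. By contrast, $(m,1)\,D\,x$ can occur: in $N_5$, viewed as a square doubled at an atom, the new join-irreducible is $D$-related to both others. For old join-irreducibles, $\hat a\,D\,\hat b$ implies $a\,D\,b$, because $\hat a$ is the least element of its fibre and so a witness survives projection.

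The converse direction has two genuine problems.

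\textbf{The choice of $j$ is backwards.} By your own first paragraph, $\mathrm{con}(j)$ contracts every $a$ with $a\,D^*\,j$. So you need a $j$ with no $a$ satisfying $a\,D\,j$ (a source, which exists by acyclicity), not one with no $j\,D\,b$. Take $N_5$ with $0<a<b<1$ and $0<c<1$. The only relations are $b\,D\,a$ and $b\,D\,c$, so your rule allows $j=c$. But $c\equiv 0$ forces $a\equiv a\vee c=1$, hence $b\equiv a$, so $\mathrm{con}(c)$ also contracts $b$ and the quotient loses two join-irreducibles. Only $j=b$ works.

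\textbf{The core step is only announced.} The heart of the argument is that $L$ is a lower-pseudo-interval doubling of $L/\theta$ for $\theta=\mathrm{con}(j)$. Falling back on Day's characterization of lower bounded lattices amounts to citing the theorem itself. The missing step is short:
\begin{itemize}
\item Let $u\lessdot v$ with $u\equiv v$, and take $a\in\mathrm{JIrr}(L)$ minimal with $a\le v$, $a\not\le u$. Then $a_*\le u$, and $a=a\wedge v\equiv a\wedge u\le a_*$, so $a\equiv a_*$ by convexity, whence $a=j$. Thus every contracted cover is $u\lessdot u\vee j$ with $j\not\le u$ and $j_*\le u$.
\item A class containing $u\lessdot v\lessdot w$ would give both $j\le v$ and $j\not\le v$. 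So the nontrivial classes are exactly the pairs $\{u,u\vee j\}$.
\item Suppose $u\le y\le w$, where $u,w$ are bottoms of such pairs and $y$ is a class minimum. Then $j_*\le y$ and $j\not\le y$, so $y\vee j\equiv y\vee j_*=y\neq y\vee j$. Hence the set $C$ of nontrivial classes is convex in $L/\theta$ with minimum $[j]$, i.e.\ a lower pseudo-interval.
\item Define $\epsilon(x)=0$ exactly when $x$ is a class minimum with $[x]\in I_{L/\theta}(C)$. Then $x\mapsto([x],\epsilon(x))$ is an isomorphism $L\cong(L/\theta)[C]$. The key check is that a top $u\vee j$ never lies below such a minimum $y$: indeed $y\le w$ for some bottom $w$, and $j\not\le w$.
\end{itemize}

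You also still owe the claim that $L/\theta$ has no $D$-cycles. Since $\mathrm{Con}(L/\theta)$ is the interval above $\theta$ in $\mathrm{Con}(L)$, and $\theta$ contracts only $j$, the forcing order on $\mathrm{JIrr}(L/\theta)$ is the restriction of $D^*$ to $\mathrm{JIrr}(L)\setminus\{j\}$. It is therefore still antisymmetric.
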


A lattice congruence on a lattice $L$ gives rise to a \emph{lattice quotient}, which is the lattice on the equivalence classes ordered by the induced order of $L$ on the minimum elements of these classes (which are in fact intervals).

\subsection{Higher torsion classes}
\label{sec:backgroundhighertorsionclasses}

The goal of this section is to introduce the lattices of higher torsion classes and to recall the combinatorial descriptions of these higher torsion classes for the higher Auslander and Nakayama algebras of type $\mathbf{A}$.

We first recall some vocabulary of the representation theory of associative algebras (see for example \cite{Assem_Skowronski_Simson_2006}).
Let $A$ be a finite-dimensional $\mathbb{K}$-algebra, where $\mathbb{K}$ is an algebraically closed field. We denote by $\mathrm{mod} A$ the category of finite-dimensional right $A$-modules. A module is called \emph{indecomposable} if it is not isomorphic to a direct sum of two non-zero modules. Subcategories are assumed to be full and closed under finite direct sums and summands. It follows that a subcategory is the additive hull of its indecomposable modules. Thus we will identify a subcategory with the set of its indecomposable modules. The \emph{torsion classes} are the subcategories of $\mathrm{mod} A$ closed under extensions and quotients. They form a complete lattice with meet given by intersection that was extensively studied \cite{demonet2023lattice,thomas2021introduction}.

Higher homological algebra is a subject whose study was initiated by O. Iyama \cite{IYAMA1,IYAMA2,IYAMA3}. We are interested in a generalization of the torsion classes in the context of higher homological algebra. Let $d$ be a positive integer. The following is an equivalent definition of the $d$-cluster tilting subcategories (we take it as the definition to avoid introducing more notations):

\begin{definition}[{\cite[Proposition 4.4]{kvamme2021d}}]
A subcategory $\mathcal{M}$ of $\mathrm{mod} A$ is called \emph{$d$-cluster tilting} if $\mathrm{Ext}_A^i(\mathcal{M},\mathcal{M})=0$ for all $0<i<d$, and if for all $M\in \mathrm{mod} A$ there exist exact sequences
$$0\rightarrow X_d\rightarrow \cdots \rightarrow X_1 \rightarrow M\rightarrow 0 \qquad \text{and} \qquad 0\rightarrow M\rightarrow X^1\rightarrow \cdots  \rightarrow X^d\rightarrow 0$$
with $X_1,\dots ,X_d,X^1,\dots ,X^d \in \mathcal{M}$.
\end{definition}

In the sequel, $\mathcal{M}$ is a $d$-cluster tilting subcategory of $\mathrm{mod} A$.

\begin{definition}[\cite{jorgensen2016torsion}]
A subcategory $\mathcal{U}$ of $\mathcal{M}$ is a \emph{$d$-torsion class} if for all $M\in \mathcal{M}$ there exists an exact sequence $0\rightarrow \mathcal{U}_M \rightarrow M\rightarrow X^1\rightarrow \cdots \rightarrow X^d\rightarrow 0$
with $X^1,\dots,X^d \in \mathcal{M}$ 
 and $\mathcal{U}_M\in \mathcal{U}$, such that the following sequence is exact for all $U\in \mathcal{U}$:
$$0\rightarrow \mathrm{Hom}_A (U,X^1) \rightarrow \cdots \rightarrow \mathrm{Hom}_A (U,X^d)\rightarrow 0 .$$
\end{definition}

The $d$-torsion classes are also called \emph{higher torsion classes}. When $d=1$ we have $\mathcal{M}=\mathrm{mod} A$ and the $1$-torsion classes correspond to the usual torsion classes. Thus the $d$-torsion classes are a generalization of the usual torsion classes in higher homological algebra. What makes this generalization maybe more challenging to work with is that the $d$-cluster tilting subcategories are not abelian in general, so one needs to be careful when using standard homological algebra techniques.

\begin{definition}
A subcategory $\mathcal{U}$ of $\mathcal{M}$ is closed 
under 
\begin{itemize}
\item[$(1)$] \emph{$d$-extensions} if for all exact sequences
$0\rightarrow X\rightarrow X_1 \rightarrow \cdots \rightarrow X_d \rightarrow Y\rightarrow 0$
of $\mathcal{M}$ with $X,Y\in \mathcal{U}$, there exists an equivalent $d$-extension where all the objects are in $\mathcal{U}$.
\item[$(2)$] \emph{$d$-quotients} if for all $f:M\rightarrow U$ from $M\in\mathcal{M}$ to $U\in\mathcal{U}$, there exists an exact sequence
$M\rightarrow U\rightarrow X_1 \rightarrow \cdots \rightarrow X_d \rightarrow 0$
with $X_1,\dots,X_d\in \mathcal{U}$.
\end{itemize}
\end{definition}

The main result of \cite{August_2025} is the following equivalent definition of the $d$-torsion classes:

\begin{theorem}[{\cite[Theorem 1.1]{August_2025}}]
\label{thmAHdtorsion}
A subcategory $\mathcal{U}$ of $\mathcal{M}$ is a $d$-torsion class if and only if it is closed under $d$-extensions and $d$-quotients. 
\end{theorem}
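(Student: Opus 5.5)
The plan is to work inside the $d$-abelian category $\mathcal{M}$ (a $d$-cluster tilting subcategory is $d$-abelian by Jasso's theorem). There, every morphism has a $d$-kernel and a $d$-cokernel, and $d$-pushouts and $d$-pullbacks exist. The first step is a reformulation of J\o{}rgensen's definition. For a sequence $0\rightarrow \mathcal{U}_M \rightarrow M\rightarrow X^1\rightarrow \cdots \rightarrow X^d\rightarrow 0$ as in the definition, I expect exactness of $0\rightarrow \mathrm{Hom}_A(U,X^1)\rightarrow\cdots\rightarrow \mathrm{Hom}_A(U,X^d)\rightarrow 0$ for all $U\in\mathcal{U}$ to be equivalent, via the left exactness of $\mathrm{Hom}_A(U,-)$ and the vanishing $\mathrm{Ext}^i_A(\mathcal{M},\mathcal{M})=0$ for $0<i<d$, to two conditions. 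First, $\mathcal{U}_M\rightarrow M$ is a right $\mathcal{U}$-approximation; more precisely, $\mathrm{Hom}_A(U,\mathcal{U}_M)\cong\mathrm{Hom}_A(U,M)$. Second, $\mathrm{Hom}_A(U,X^1)=0$ together with the corresponding higher vanishing conditions. I would prove this by splitting the $d$-exact sequence into short exact sequences in $\mathrm{mod}A$ and dimension shifting. This gives a usable criterion and settles the forward direction.

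For the forward implication, assume $\mathcal{U}$ is a $d$-torsion class.

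\emph{Closure under $d$-quotients.} Given $f:M\rightarrow U$ with $U\in\mathcal{U}$, I would take the $d$-cokernel of $f$ in $\mathcal{M}$. I would then compare it with the torsion sequence of the first cokernel term, using the approximation property to show that the torsion part is all of it. This realises a $d$-cokernel with all terms in $\mathcal{U}$.

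\emph{Closure under $d$-extensions.} Given a $d$-extension $0\rightarrow X\rightarrow X_1\rightarrow\cdots\rightarrow X_d\rightarrow Y\rightarrow 0$ with $X,Y\in\mathcal{U}$, I would replace $X_1$ by $\mathcal{U}_{X_1}$. Lifting the map $X\rightarrow X_1$ through the approximation, I would then take $d$-pushouts and $d$-pullbacks to produce an equivalent $d$-extension. Its middle terms are torsion parts, and each of these lies in $\mathcal{U}$ by construction.

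For the converse, assume $\mathcal{U}$ is closed under $d$-extensions and $d$-quotients, and fix $M\in\mathcal{M}$. Since $\mathcal{M}$ has finitely many indecomposables in the cases of interest, or more generally since $\mathcal{U}$ is functorially finite here, I would take a right $\mathcal{U}$-approximation $g:U_0\rightarrow M$.
\begin{enumerate}
\item Closure under $d$-quotients should let me replace $U_0$ by an object $\mathcal{U}_M\in\mathcal{U}$ for which $\mathcal{U}_M\rightarrow M$ is a monomorphism and is still an approximation.
\item I would then take its $d$-cokernel $M\rightarrow X^1\rightarrow\cdots\rightarrow X^d\rightarrow 0$ in $\mathcal{M}$.
\item It remains to show that $\mathrm{Hom}_A(U,X^1)=0$ for $U\in\mathcal{U}$. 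Suppose $h:U\rightarrow X^1$ is nonzero. Pulling back along $M\rightarrow X^1$ (a $d$-pullback) gives a $d$-extension of $U$ by a $d$-quotient of $\mathcal{U}_M$. Closure under $d$-extensions then forces the pulled-back object into $\mathcal{U}$, and this object maps to $M$ without factoring through $\mathcal{U}_M$. That contradicts maximality.
\end{enumerate}
The higher Hom-exactness would follow from the criterion established in the first step.

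The main obstacle is the converse, specifically making the ``maximal subobject'' argument rigorous in a non-abelian $d$-abelian setting. Images need not lie in $\mathcal{M}$, so ``$\mathcal{U}_M$ is a maximal $\mathcal{U}$-subobject of $M$'' must be phrased through $d$-kernels and approximations. My first attempt would be to compare with the ordinary torsion class $\mathcal{T}=\mathrm{Filt}(\mathrm{Fac}\,\mathcal{U})$ in $\mathrm{mod}A$ and to prove $\mathcal{U}=\mathcal{T}\cap\mathcal{M}$ from the two closure properties. Then $\mathcal{U}_M$ can be built as a left $\mathcal{M}$-resolution of the torsion submodule $t_{\mathcal{T}}(M)$, which reduces the Hom-vanishing to the classical torsion pair.
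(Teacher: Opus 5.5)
The paper gives no proof of this statement; it quotes it from \cite{August_2025}. So I am judging your plan on its own merits. It has a genuine gap, and it sits in your first step.

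\textbf{The reformulation of J\o{}rgensen's condition is wrong, and it misdirects both directions.} Split the $d$-exact sequence $0\to\mathcal{U}_M\to M\to X^1\to\cdots\to X^d\to 0$ and use $\mathrm{Ext}^i_A(\mathcal{M},\mathcal{M})=0$ for $0<i<d$. For $U\in\mathcal{M}$ this gives an exact sequence
\begin{multline*}
0\to \mathrm{Hom}_A(U,\mathcal{U}_M)\to \mathrm{Hom}_A(U,M)\to \mathrm{Hom}_A(U,X^1)\to\cdots\\
\cdots\to \mathrm{Hom}_A(U,X^d)\to \mathrm{Ext}^d_A(U,\mathcal{U}_M)\to \mathrm{Ext}^d_A(U,M).
\end{multline*}
Exactness in the interior is therefore automatic. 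Hom-exactness is equivalent to two conditions:
\begin{itemize}
\item[(a)] $\mathcal{U}_M\to M$ is a right $\mathcal{U}$-approximation;
\item[(b)] $\mathrm{Ext}^d_A(U,\mathcal{U}_M)\to\mathrm{Ext}^d_A(U,M)$ is injective.
\end{itemize}
For $d\geq 2$ this is the same as $\mathrm{Ext}^i_A(U,M/\mathcal{U}_M)=0$ for $i=0$ and $i=d-1$.

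The condition $\mathrm{Hom}_A(U,X^1)=0$ is not part of this. For $d\geq2$, adding the contractible summand $U\xrightarrow{1}U$ in positions $X^1,X^2$ preserves everything. What is forced is only vanishing on $\ker(X^1\to X^2)=M/\mathcal{U}_M$.

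This has three consequences.
\begin{itemize}
\item Step (3) of your converse aims at the wrong statement. It does so via a ``$d$-pullback along $M\to X^1$'' that is not defined, since $X^1$ is not the end term of any $d$-exact sequence in play.
\item Condition (b) is the genuinely higher condition, and the only place closure under $d$-extensions is needed. You dismiss it as following from step one, but it does not.
\item In the forward direction, replacing $X_1$ by $\mathcal{U}_{X_1}$ gives an equivalent $d$-extension only if the pushout of the class along $X\to\mathcal{U}_{X_1}$ vanishes. That is exactly (b) for $M=X_1$, and ``by construction'' hides it.
\end{itemize}

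\textbf{How to repair the converse.} Once the criterion is stated correctly, both conditions have short proofs, and no functorial finiteness is needed.

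For (a):
\begin{itemize}
\item Choose $g\colon U_0\to M$ with $U_0\in\mathcal{U}$ and image the trace $\mathrm{tr}_{\mathcal{U}}M$; finitely many maps suffice.
\item Cover $\ker g$ by some $K_1\in\mathcal{M}$. Closure under $d$-quotients gives an exact sequence $K_1\to U_0\to C_1\to\cdots\to C_d\to0$ with $C_i\in\mathcal{U}$, so $0\to\mathrm{Im}\,g\to C_1\to\cdots\to C_d\to 0$ is exact.
\item Since $C_1/\mathrm{Im}\,g$ is coresolved by $C_2,\dots,C_d$, we have $\mathrm{Ext}^1_A(C_1/\mathrm{Im}\,g,M)=0$. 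Hence the inclusion $\mathrm{Im}\,g\hookrightarrow M$ extends to a map $C_1\to M$.
\item The image of $C_1\to M$ lies in the trace, so this map is a retraction onto $\mathrm{Im}\,g$. Thus $\mathrm{Im}\,g$ is a summand of $C_1$, hence lies in $\mathcal{U}$.
\end{itemize}
This proves your step (1), with $\mathcal{U}_M=\mathrm{tr}_{\mathcal{U}}M$ a monomorphic right approximation. Splice it with an $\mathcal{M}$-coresolution of $M/\mathcal{U}_M$ to get the $d$-exact sequence.

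For (b): suppose $\eta\in\mathrm{Ext}^d_A(U,\mathcal{U}_M)$ dies in $M$. Represent it by a $d$-extension with middle terms in $\mathcal{U}$, using closure under $d$-extensions.
\begin{itemize}
\item Because $\eta$ dies in $M$, the inclusion $\iota$ extends over the first map $e\colon\mathcal{U}_M\to E_1$.
\item Since $E_1\in\mathcal{U}$, this extension factors through $\iota$, say via $h\colon E_1\to\mathcal{U}_M$.
\item Monicity of $\iota$ gives $he=1$, so $e$ splits and $\eta=0$.
\end{itemize}

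\textbf{Your fallback plan does not work as stated.} $\mathcal{U}_M\to M$ must be a monomorphism, and (a) forces $M/\mathcal{U}_M\in\mathcal{U}^{\perp}$. So $\mathcal{U}_M$ must equal $t_{\mathcal{T}}(M)$ itself. A left $\mathcal{M}$-resolution of $t_{\mathcal{T}}(M)$ is not a subobject of $M$. Moreover, the classical torsion pair only yields (a), never (b).
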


It follows from \cref{thmAHdtorsion} that an arbitrary intersection of $d$-torsion classes is a $d$-torsion class, which proves the following:

\begin{theorem}[{\cite[Theorem 1.2]{August_2025}}]
    The poset of $d$-torsion classes in $\mathcal{M}$ ordered by inclusion is a complete lattice with meet given by intersection.
\end{theorem}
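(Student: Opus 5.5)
The plan is to derive the statement directly from \cref{thmAHdtorsion}, which characterizes $d$-torsion classes as the subcategories of $\mathcal{M}$ closed under $d$-extensions and $d$-quotients. First, note that $\mathcal{M}$ itself is a $d$-torsion class. For $d$-quotients, given $f:M\to U$ with $M,U\in\mathcal{M}$, the $d$-cluster tilting property gives an exact sequence $0\to \mathrm{Coker} f\to X^1\to\cdots\to X^d\to 0$ with the $X^i\in\mathcal{M}$; splicing it with $M\to U\to\mathrm{Coker} f\to 0$ gives the required sequence. Closure under $d$-extensions is tautological, since every object of a $d$-extension in $\mathcal{M}$ already lies in $\mathcal{M}$. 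So the poset has a top element. Hence it suffices to show that an arbitrary intersection of $d$-torsion classes is again one. A poset with all meets, in particular one that has a top element and in which every family has a meet, is a complete lattice: the join of a family is the meet of all its upper bounds, and this set of upper bounds is nonempty because it contains $\mathcal{M}$.

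Let $\{\mathcal{U}_\alpha\}$ be a family of $d$-torsion classes and put $\mathcal{U}=\bigcap_\alpha \mathcal{U}_\alpha$, identified with its set of indecomposables. It is again a subcategory closed under sums and summands, since an object lies in it exactly when all its indecomposable summands lie in every $\mathcal{U}_\alpha$. We check the two closure properties of \cref{thmAHdtorsion}.

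The delicate point is that both closure conditions are existential, so the witnesses chosen in the different $\mathcal{U}_\alpha$ might differ. For $d$-quotients, take $f:M\to U$ with $U\in\mathcal{U}$. The natural choice is the exact sequence $M\to U\to \mathrm{Coker} f\to\cdots$ built from a minimal (left $\mathcal{M}$-approximation) resolution, which is unique up to isomorphism. I would then argue that if some sequence exists with terms in $\mathcal{U}_\alpha$, then the minimal one has terms that are summands of those terms, and hence also lies in $\mathcal{U}_\alpha$. This makes the minimal sequence a common witness for all $\alpha$ simultaneously. The extension case is handled in the same spirit: for a given equivalence class of $d$-extensions from $Y$ to $X$, one uses a minimal representative, which is a summand-wise reduction of any representative. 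If a representative lies in $\mathcal{U}_\alpha$, so does the minimal one, again because $\mathcal{U}_\alpha$ is closed under summands.

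This minimality argument is the main obstacle, because it needs the precise form of minimal $d$-extensions and $d$-cokernels in $d$-cluster tilting subcategories. Failing that, the alternative is to use the original definition of Jørgensen for $\mathcal{U}$ directly, taking $\mathcal{U}_M$ to be the intersection-compatible torsion part. I expect, however, that the closure-property route is the one intended, as the text preceding the statement indicates.
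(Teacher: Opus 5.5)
This is the paper's argument. The paper simply observes that, by \cref{thmAHdtorsion}, an arbitrary intersection of $d$-torsion classes is again a $d$-torsion class. Your extra care fills in what the paper leaves implicit: checking that $\mathcal{M}$ itself is a $d$-torsion class, and choosing the existential witnesses uniformly across the family.

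The minimality step you flag does go through, for $d$-extensions as well as for $d$-quotients. Two Yoneda-equivalent $d$-extensions in $\mathcal{M}$ admit a comparison morphism that is the identity on both end terms: lift from the right using the equality of the classes and $\mathrm{Ext}^i(\mathcal{M},\mathcal{M})=0$ for $0<i<d$, then correct the left end by a homotopy. After splitting off contractible summands in the middle positions, this morphism becomes an isomorphism. Hence the reduced representative is a summand of every representative, and the fallback via J\o{}rgensen's original definition is not needed.
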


In higher homological algebra, the two main examples where we are able to compute and have a combinatorial understanding are the $(d-1)$-Auslander and the $(d-1)$-Nakayama algebras of type $\textbf{A}$ (introduced respectively in \cite{IYAMA3} and \cite{Jasso_2019}). These algebras are also called respectively \emph{higher} Auslander and Nakayama algebras of type $\mathbf{A}$. We do not recall their definitions, but the interested reader can look at \cite[Section 5 and 6]{August_2025}, where the definitions and essential properties are given. The only thing we need to know is that the module categories of these algebras always contain a unique $d$-cluster tilting subcategory, whose indecomposable modules are respectively indexed by $os_n^{d+1}$ and $os_{\underline{l}}^{d+1}$, two posets that we will shortly define.
When we will talk about the $d$-torsion classes of one of these algebras, we mean the $d$-torsion classes of its unique $d$-cluster tilting subcategory. We finish this background by recalling the combinatorial descriptions of the higher torsion classes of these algebras obtained in \cite{August_2025}. First we need to introduce some notations.

Let $n$ be a positive integer. Let $os_{n}^{d}$ be the set of non-decreasing sequences $x=(x_1,\dots,x_d)$ of size $d$ of elements of $\{ 0,\,1,\,\dots,\,n-1\}$. Let $\leq$ be the product order on $os_{n}^{d}$ (see \cref{fig:posetos33} for an example). On $os_{n}^{d}$ we also define a binary relation $x\rightsquigarrow y$ if  $x_1\leq y_1\leq x_2 \leq y_2\leq \dots \leq x_{d} \leq y_{d}$. On $\mathbb{Z}^{d}$ we define the map $\tau_{d}(x_1,\dots , x_d) = (x_1-1, x_2-1,\dots , x_d-1)$.

\begin{theorem}[{\cite[Theorem 5.13]{August_2025}}]
\label{thm:torsionAuslander}
The $d$-torsion classes of the $(d-1)$-Auslander algebra of type $\textbf{A}_n$ are identified with the subsets $T\subseteq os_n^{d+1}$ that satisfy the following two conditions:
\begin{enumerate}
\item \label{1aus} For all $i<n$, $\{x\in T\,|\,x_{d+1}=i\}$ is an order filter of the poset $\{x\in os_n^{d+1}\,|\,x_{d+1}=i\}$ ordered with the product order. 
\item \label{2aus} For all $x,z\in T$, if $x\rightsquigarrow \tau_d(z)$, then any $y\in os_n^{d+1}$ with $y_i\in \{x_i,z_i\}$ for each $i$ must be in $T$.
\end{enumerate}
\end{theorem}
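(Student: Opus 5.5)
The plan is to go through \cref{thmAHdtorsion}: I will show that closure under $d$-quotients corresponds to condition \eqref{1aus}, and closure under $d$-extensions to condition \eqref{2aus}. This needs the combinatorial description of the unique $d$-cluster tilting subcategory $\mathcal{M}$ of the $(d-1)$-Auslander algebra of type $\mathbf{A}_n$, due to Iyama and Oppermann--Thomas. Its indecomposables $M_x$, $x\in os_n^{d+1}$, are interval-type modules with the following properties, which I will assume:
\begin{enumerate}
\item $\mathrm{Hom}_A(M_x,M_y)\neq 0$ if and only if $x\rightsquigarrow y$, and then it is one-dimensional.
\item $\mathrm{Ext}^d_A(M_z,M_x)\neq 0$ if and only if $x\rightsquigarrow\tau_d(z)$, and then it is one-dimensional.
\item The nonsplit $d$-extension in that case is the Koszul-type exact sequence
$$0\to M_x\to\bigoplus M_y\to\cdots\to\bigoplus M_y\to M_z\to 0.$$
Here the $j$-th middle term is the sum of the $M_y$ with $y_i\in\{x_i,z_i\}$, where exactly $j$ coordinates are taken from $z$ (with coordinates equal in $x$ and $z$ counted once), and only those $y$ that are nondecreasing appear.
\end{enumerate}
I would first verify these facts on the standard resolutions, as in the type $\mathbf{A}$ combinatorics of Oppermann--Thomas.

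\textbf{Extensions.} By point 2, the only nonsplit $d$-extensions between indecomposables in a subcategory $T$ are the ones from point 3. Since $\mathrm{Ext}^d$ is one-dimensional, any $d$-extension between $X,Y\in\mathcal{U}$ decomposes, up to equivalence, into sums of these basic ones and split ones. Hence $\mathcal{U}$ is closed under $d$-extensions exactly when, for all $x,z\in T$ with $x\rightsquigarrow\tau_d(z)$, every middle term $M_y$ lies in $T$. This is condition \eqref{2aus}.

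One subtlety must be checked: an equivalent $d$-extension with all terms in $\mathcal{U}$ cannot avoid the Koszul terms. I would argue this via minimality: the Koszul complex is the minimal representative, and any equivalent one contains it as a summand up to homotopy.

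\textbf{Quotients.} Take a morphism $f:M_w\to M_u$ with $u\in T$, which I may assume nonzero, so $w\rightsquigarrow u$. The minimal $d$-cokernel sequence
$$M_w\to M_u\to X_1\to\cdots\to X_d\to 0$$
is again read off from the projective resolution combinatorics. Its terms are the $M_y$ obtained from $u$ by increasing some coordinates among $u_1,\dots,u_d$ while keeping $u_{d+1}$ fixed. Closure under all such $d$-cokernels therefore amounts to $T\cap\{y\mid y_{d+1}=i\}$ being up-closed in the product order. The key point is that the $(d+1)$-st coordinate indexes the top of the module, which quotients preserve. This gives condition \eqref{1aus}.

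For the converse direction I would choose $w$ suitably, say $w$ equal to $u$ except in one coordinate. Then every cover $u<u'$ in the slice with fixed last coordinate appears as a term of some $d$-cokernel, and up-closure of each slice follows.

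I expect the main obstacle to be the precise computation of these minimal $d$-cokernel sequences and of the middle terms of the $d$-extensions. In particular I must show that nothing outside the slice with fixed last coordinate is forced in, and that the two conditions together also handle $d$-quotients from decomposable $M$. I would try first to reduce to indecomposable $M$ via the universal property of $d$-cokernels, and then induct on $d$ using the recursive structure $os_n^{d+1}\subseteq os_n^{d}\times C_n$ of the higher Auslander algebras.
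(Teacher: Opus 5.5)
The paper does not prove this statement. It is imported from \cite[Theorem 5.13]{August_2025} and only reformulated afterwards (\cref{lem:12prime}, \cref{prop:reformulationthmtorsion}), so your proposal has to stand on its own. The necessity half (a $d$-torsion class satisfies (1) and (2)) is fine, modulo the Oppermann--Thomas computations and your minimality argument.

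The gap is in sufficiency, which is the real content, because closure under $d$-extensions and $d$-quotients quantifies over \emph{decomposable} $X,Y,U$. Your sentence ``any $d$-extension between $X,Y\in\mathcal{U}$ decomposes, up to equivalence, into sums of these basic ones and split ones'' is false already for $d=1$. Take $n=6$, $X=M_{(0,3)}\oplus M_{(1,2)}$ and $Y=M_{(2,5)}\oplus M_{(3,4)}$.
\begin{itemize}
\item All four pairs satisfy $x\rightsquigarrow\tau_1(z)$.
\item The two summands of $X$ (resp.\ of $Y$) are Hom-orthogonal, so $\mathrm{Aut}(X)\times\mathrm{Aut}(Y)$ acts on $\mathrm{Ext}^1(Y,X)\cong\mathbb{K}^4$ only by rescaling rows and columns. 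A class with four nonzero components is therefore not equivalent to a direct sum of basic and split sequences.
\item Its middle term also varies with the class. Generically it is $M_{(0,5)}\oplus M_{(1,4)}\oplus M_{(2,3)}$. On a hypersurface of such classes it is $M_{(0,5)}\oplus M_{(1,3)}\oplus M_{(2,4)}$.
\item Neither $M_{(1,3)}$ nor $M_{(2,4)}$ is a middle term of any of the four basic sequences; those are $M_{(0,5)},M_{(2,3)},M_{(0,4)},M_{(3,3)},M_{(1,5)},M_{(2,2)},M_{(1,4)}$.
\end{itemize}
These summands do lie in $T$: using (1), $(1,3)\geq(0,3)$ and $(2,4)\geq(0,4)$ within their slices. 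But showing that Baer-sum terms always land in $T$ is exactly the step your argument skips.

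The same problem affects $d$-quotients. You only treat indecomposable $M$ and $U$, and your last paragraph mentions decomposable $M$ but not decomposable $U$. For $d=2$, $n=3$, the $2$-cokernel of a map $M_{(0,0,1)}\to M_{(0,1,1)}\oplus M_{(0,0,2)}$ with both components nonzero is
\[
M_{(0,1,2)}\oplus M_{(1,1,1)}\to M_{(1,1,2)}\to 0 .
\]
The $2$-cokernels of the two components are $M_{(1,1,1)}\to 0$ and $M_{(0,2,2)}\to M_{(1,2,2)}\to 0$. So $d$-cokernels are not assembled from those of the components, and no formal ``universal property'' reduction is available. This example also shows that $d$-cokernel terms are not module quotients, so ``quotients preserve the top'' cannot be the reason for the slice statement.

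To complete the proof you need two things:
\begin{enumerate}
\item[(i)] a statement that the terms of the $d$-cokernel of any map into $\bigoplus_j M_{u^{(j)}}$ lie in the union of the slice up-closures of the $u^{(j)}$;
\item[(ii)] a way to realise every class in $\bigoplus_{a,b}\mathrm{Ext}^d(Y_b,X_a)$ with all terms in $\mathcal{U}$.
\end{enumerate}
A $d$-pushout along the codiagonal $X^{\oplus k}\to X$ reduces (ii) to indecomposable $Y$. Its new terms, however, are $d$-cokernels of maps into decomposable objects of $\mathcal{U}$, so this step relies on (i) and hence on condition (1). The case of decomposable $X$ needs a pullback along a diagonal, which closure under $d$-quotients does not control. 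A further idea is needed there, e.g.\ an explicit computation of Baer sums in the Oppermann--Thomas model, or verifying J\o{}rgensen's definition directly. In particular, the two halves of your argument cannot be run independently as ``quotients $\leftrightarrow$ (1), extensions $\leftrightarrow$ (2)''.
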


Denote by $L_n^d$ the lattice of the $d$-torsion classes of the $(d-1)$-Auslander algebra of type $\mathbf{A}_n$. The only combinatorial properties that were discussed in \cite{August_2025} about the lattices $L_n^d$ are that they are not semidistributive nor Hasse-regular in general. Indeed, $L_3^3$ is neither semidistributive nor Hasse-regular (\cite[Example 5.21]{August_2025}). We will give different combinatorial properties satisfied by $L_n^d$ in \cref{sec:Auslander}.

\begin{remark}
Condition (\ref{2aus}) of \cref{thm:torsionAuslander} is harder to manipulate than condition (\ref{1aus}), but in \cref{sec:Auslander} we will give a reformulation of \cref{thm:torsionAuslander}, which will greatly simplify the study of these $d$-torsion classes.       
\end{remark}

A \emph{Kupisch series of type $\textbf{A}_n$} is a sequence $\underline{l}=(l_0,l_1,\dots,l_{n-1})$ of positive integers satisfying $l_0=1$ and $\forall i\geq 1,\, 2\leq l_i\leq l_{i-1}+1$. Let 
$os_{\underline{l}}^{d+1} := \{y\in os_n^{d+1} \mid y_1\geq y_{d+1}-l_{y_{d+1}} +1 \}  \subseteq os_n^{d+1}$.

\begin{theorem}[{\cite[Theorem 6.1]{August_2025}}]
\label{thm:torsionNakayama}
The $d$-torsion classes of the $(d-1)$-Nakayama algebra associated to the Kupisch series $\underline{l}$ are identified with the subsets $T\subseteq os_{\underline{l}}^{d+1}$ that satisfy the following two conditions:
\begin{enumerate}
\item For all $i<n$, $\{x\in T\,|\,x_{d+1}=i\}$ is an order filter of the poset $\{x\in os_{\underline{l}}^{d+1}\,|\,x_{d+1}=i\}$ ordered with the product order. 
\item For all $x,z\in T$, if $x\rightsquigarrow \tau_d(z)$, then any $y\in os_{\underline{l}}^{d+1}$ with $y_i\in \{x_i,z_i\}$ for each $i$ must be in $T$.
\end{enumerate}
\end{theorem}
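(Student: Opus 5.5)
The plan is to derive \cref{thm:torsionNakayama} from \cref{thm:torsionAuslander} rather than recompute $d$-torsion classes from scratch, using the characterization of \cref{thmAHdtorsion}: a subcategory is a $d$-torsion class exactly when it is closed under $d$-extensions and $d$-quotients. The $(d-1)$-Nakayama algebra $B$ with Kupisch series $\underline{l}$ is a quotient $A/\langle e\rangle$ of the $(d-1)$-Auslander algebra $A$ of type $\mathbf{A}_n$ by an idempotent ideal (this is how it is constructed in \cite{Jasso_2019}). Hence $\mathrm{mod}\,B$ is a full subcategory of $\mathrm{mod}\,A$ that is closed under submodules and quotients, namely the modules annihilated by $e$. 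Let $\mathcal{M}\subseteq\mathrm{mod}\,A$ and $\mathcal{M}_{\underline{l}}\subseteq \mathrm{mod}\,B$ be the $d$-cluster tilting subcategories. The first step is to check, using the explicit description of indecomposables in \cite[Sections 5 and 6]{August_2025}, that the indecomposables of $\mathcal{M}_{\underline{l}}$ are precisely those of $\mathcal{M}$ indexed by $os_{\underline{l}}^{d+1}\subseteq os_n^{d+1}$. Equivalently, one checks that the module indexed by $y$ is killed by $e$ exactly when $y_1\geq y_{d+1}-l_{y_{d+1}}+1$.

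The second step is to compare the exact sequences. Morphisms between modules indexed by $os_{\underline{l}}^{d+1}$ are the same in $A$ and $B$. For exact sequences, I would use the explicit form of the minimal $d$-extensions and $d$-quotient sequences in $\mathcal{M}$ described in \cite{August_2025}: they are Koszul-type complexes whose terms are indexed by the sequences $y$ with $y_i\in\{x_i,z_i\}$. The key claim is this: if $x,z\in os_{\underline{l}}^{d+1}$ and $x\rightsquigarrow\tau_d(z)$, then the nonzero $d$-extension from $z$ to $x$ in $\mathcal{M}_{\underline{l}}$ is obtained from the one in $\mathcal{M}$ by deleting the terms whose index lies outside $os_{\underline{l}}^{d+1}$. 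Those terms are exactly the ones annihilated when one applies the quotient functor $\mathrm{mod}\,A\to\mathrm{mod}\,B$, namely $M\mapsto M/Me A$. The same holds for the $d$-quotient sequences: the minimal right $\mathcal{U}$-approximation cokernel sequence within a fixed level $x_{d+1}=i$ becomes the condition that $T$ restricted to that level is an order filter of the subposet $\{x\in os_{\underline{l}}^{d+1}\mid x_{d+1}=i\}$.

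With these two comparisons in hand, I would transcribe the proof of \cite[Theorem 5.13]{August_2025} line by line. Closure under $d$-quotients is equivalent to condition (1). Closure under $d$-extensions is equivalent to condition (2), where $y$ now ranges over $os_{\underline{l}}^{d+1}$. The main obstacle is the extension comparison in the second step. One has to verify that truncating the Auslander $d$-extension to $os_{\underline{l}}^{d+1}$ still gives an exact sequence, and that it represents the same nonzero class in $\mathrm{Ext}^d_B(z,x)$. I would try to handle this by computing $\mathrm{Ext}^d_B(z,x)$ via \cite[Proposition 4.4]{kvamme2021d}-style resolutions inside $\mathcal{M}_{\underline{l}}$. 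I would also use the convexity of $os_{\underline{l}}^{d+1}$ in the relevant "boxes" $\{y\mid y_i\in\{x_i,z_i\}\}$. This convexity should hold because the Kupisch condition $l_i\leq l_{i-1}+1$ makes the defining inequality monotone along such boxes.
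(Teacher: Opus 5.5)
The paper does not prove this statement; it imports it from \cite{August_2025} and uses it as a black box. So your argument has to stand on its own. Granting your first step, it breaks at the key claim of your second step, and the failure is genuine.

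\textbf{The key claim is false.} For $B$-modules, the natural map $\mathrm{Ext}^d_B(M,N)\to\mathrm{Ext}^d_A(M,N)$ is not an isomorphism, since $A\to B$ is not a homological epimorphism here. As a result, a pair $x,z\in os_{\underline{l}}^{d+1}$ with $x\rightsquigarrow\tau_d(z)$ may carry no nonzero $d$-extension in $\mathcal{M}_{\underline{l}}$. Deleting the terms outside $os_{\underline{l}}^{d+1}$ from the Auslander sequence then gives a complex that is not exact.

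Take $d=2$, $n=3$, $\underline{l}=(1,2,2)$.
\begin{itemize}
\item The excluded indices are $(0,0,2),(0,1,2),(0,2,2)$. In your set-up this forces $B=A/\langle e_v\rangle\cong\mathbb{K}\mathbf{A}_5/\mathrm{rad}^2$, where $A$ is the Auslander algebra of $\mathbb{K}\mathbf{A}_3$ and $v$ is the vertex of the projective-injective $\mathbb{K}\mathbf{A}_3$-module.
\item $M_{(1,1,2)}$ is an indecomposable projective $B$-module, uniserial of length $2$. Hence $\mathrm{Ext}^2_B(M_{(1,1,2)},M_{(0,0,1)})=0$, even though $(0,0,1)=\tau_2(1,1,2)$ and $\mathrm{Ext}^2_A(M_{(1,1,2)},M_{(0,0,1)})\neq 0$.
\item Your truncation $0\to M_{(0,0,1)}\to M_{(0,1,1)}\to M_{(1,1,1)}\to M_{(1,1,2)}\to 0$ has terms of lengths $2,2,1,2$, so it is not exact.
\end{itemize}
The same happens already for $d=1$, where $B$ is $\mathbb{K}\mathbf{A}_n$ modulo a path ideal, not an idempotent ideal. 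For $\underline{l}=(1,2,2)$, $M_{(1,2)}$ is $B$-projective while $(0,1)=\tau_1(1,2)$.

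Your justification through $M\mapsto M/MeA$ also conflates two different conditions. $M_y\notin\mathrm{mod}\,B$ means $M_ye\neq 0$, whereas $M_y\otimes_A B=0$ means $M_y=M_yeA$. In the example, the three excluded modules are $A$-projective with distinct tops and only one vertex is killed, so two of them survive the functor.

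\textbf{Consequence for your strategy.} Instances of condition (2) do not correspond one-to-one to $d$-extensions in $\mathcal{M}_{\underline{l}}$, so a line-by-line transcription of the Auslander proof cannot work. In the example, (2) for $x=(0,1,1)$, $z=(1,2,2)$ demands $(1,1,2)\in T$. But $M_{(1,2,2)}$ is also $B$-projective, so no $2$-extension from $M_{(1,2,2)}$ to $M_{(0,1,1)}$ witnesses this. The requirement is recovered only indirectly: use (1) to get $(1,1,1),(2,2,2)\in T$, then use the genuine $2$-extension $0\to M_{(1,1,1)}\to M_{(1,1,2)}\to M_{(1,2,2)}\to M_{(2,2,2)}\to 0$.

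\textbf{What is missing.} Two things are needed.
\begin{itemize}
\item[(a)] An actual computation of $d$-extensions and $d$-cokernels inside $\mathcal{M}_{\underline{l}}$, rather than transporting them from $\mathcal{M}$. If $x_1\geq z_{d+1}-l_{z_{d+1}}+1$, then the whole box $\{y\mid y_i\in\{x_i,z_i\}\}$ lies in $os_{\underline{l}}^{d+1}$, because $i\mapsto i-l_i$ is non-decreasing. In that case the Auslander sequence is itself a sequence of $B$-modules with nonzero class, and nothing needs truncating.
\item[(b)] An argument that the remaining instances of (2) follow from (1) together with these genuine ones. One way is to enlarge $x$ step by step, as in the proofs of \cref{prop:reformulationthmtorsion} and \cref{lem:newlab}.
\end{itemize}
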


Denote by $L_{\underline{l}}^d$ the lattice of the $d$-torsion classes of the $(d-1)$-Nakayama algebra of type $\textbf{A}$ associated to $\underline{l}$. This is a generalization of the case of the higher Auslander algebras, as this particular case is obtained by taking $\underline{l}=(1,2,\dots,n)$. We will give different combinatorial properties satisfied by $L_{\underline{l}}^d$ in \cref{sec:Nakayama}.

\section{$(P,\phi)$-Tamari lattices}
\label{sectionPphiTamari}

\subsection{Definition of the $(P,\phi)$-Tamari lattices}
\label{sectionGenerality}

Let $P$ be a poset such that all its principal order ideals are finite. Let $\phi: \phi(0) < \phi(1) < \cdots$ be any chain of $P$. 
For any $i\geq 0$, denote $\mathcal{C}_i := I_P(\phi(i)) \times \{i\}$ where $\times$ is the direct product. The finite poset $\mathcal{C}_i$ is called the \emph{$i$-th component}
of the poset $\mathcal{C}_P^{\phi}:=\bigsqcup_{i\geq 0} \mathcal{C}_i$. Very particular elements of $\mathcal{C}_P^{\phi}$ that will play an important role are the $(\phi(k),i)$ for all $0\leq k\leq i$. In $\mathcal{C}_i$, these elements form a chain $(\phi(i),i)>(\phi(i-1),i) >\cdots >(\phi(0),i)$ and $(\phi(i),i)$ is the maximum element of $\mathcal{C}_i$. 
If $T$ is an order filter of $\mathcal{C}_P^{\phi}$, then for any $i$, we denote $T_i:=T\cap \mathcal{C}_i$ and $T_{<i}:=\sqcup_{k<i} T_k$ (and similarly for $T_{\leq i}$, $T_{>i}$ and $T_{\geq i}$). See Figures \cref{fig:figsquarephi2} and \cref{fig:firstExample} for examples of the following definition.

\begin{definition}
\label{def:TamPphi}
An order filter $T$ of $\mathcal{C}_P^{\phi}$ is \emph{torclosed} if and only if for all $i<j$, having both $(x,i) \in T$ and $(\phi(i+1),j) \in T$ implies $(x,j)\in T$. 
We call \emph{$(P,\phi)$-Tamari} the poset on the torclosed sets of $\mathcal{C}_P^{\phi}$ ordered by inclusion. Since the intersection of torclosed sets is torclosed and this poset has a maximum $\mathcal{C}_P^{\phi}$, it is a complete lattice with meet given by intersection, that we denote by \emph{$\Tam(P,\phi)$}.
For $T$ an order filter of $\mathcal{C}_P^{\phi}$, we define the \emph{completion} of $T$ to be the minimal torclosed set containing $T$.
\end{definition}

Except in this section, in this article we will restrict our study to the finite $(P,\phi)$-Tamari lattices. Moreover, when we will look at applications of this construction in \cref{sectionMainExamples}, all our examples will be with posets $P$ having a minimum $\hat{0}$. To have more concise statements, we will often restrict to this setting (but more general statements can sometimes be derived).

\begin{figure}
\begin{subfigure}{.5\textwidth}
\centering
\begin{tikzpicture}
\begin{scope}
\node (a) at (0,0) {$\phi(0)$};
\node (b) at (-1,1) {$b$};
\node (c) at (1,1) {$c$};
\node (d) at (0,2) {$\phi(1)$};
\draw[->,>=latex] (a) -- (b);
\draw[->,>=latex] (b) -- (d);
\draw[->,>=latex] (a) -- (c);
\draw[->,>=latex] (c) -- (d);
\draw (0,-1) node {$P$};
\end{scope}

\begin{scope}[xshift=2.9cm]
\node[scale=0.8] (5) at (-0.3,0) {$(\phi(0),0)$};
\node[scale=0.8] (4) at (2,0) {$(\phi(0),1)$};
\node[scale=0.8] (3) at (1,1) {$(b,1)$};
\node[scale=0.8] (2) at (3,1) {$(c,1)$};
\node[scale=0.8] (1) at (2,2) {$(\phi(1),1)$};
\draw[->,>=latex] (4) -- (2);
\draw[->,>=latex] (2) -- (1);
\draw[->,>=latex] (4) -- (3);
\draw[->,>=latex] (3) -- (1);
\draw (-0.3,-0.8) node {$\mathcal{C}_0$};
\draw (2,-0.8) node {$\mathcal{C}_1$};
\draw [decorate,decoration={brace,amplitude=7pt, mirror,raise=2ex}]
(-0.8,-1) -- (3.3,-1) node[midway,yshift=-2.5em]{$\mathcal{C}_P^{\phi}$};
\end{scope}
\end{tikzpicture}
    \caption{A poset $P$ with a chain $\phi$, and the poset $\mathcal{C}_P^{\phi}$.}
    \label{fig:posetsquarephi2}
\end{subfigure}%
\begin{subfigure}{.5\textwidth}
\centering
    \begin{tikzpicture}
\begin{scope}[xscale=1.5,yscale=1.4]
\node[scale=0.8] (v0) at (0,0) {$\emptyset$};
\node[scale=0.8] (v1) at (1.5,1) {$\{(\phi(1),1)\}$};
\node[scale=0.8] (v2) at (0.5,2) {$\{(\phi(1),1),(b,1)\}$};
\node[scale=0.8] (v3) at (2.5,2) {$\{(\phi(1),1),(c,1)\}$};
\node[scale=0.8] (v4) at (-1,2.5) {$\{(\phi(0),0)\}$};
\node[scale=0.8] (v5) at (1.5,3) {$\{(\phi(1),1),(c,1),(b,1)\}$};
\node[scale=0.8] (v6) at (1.5,4) {$\{(\phi(1),1),(c,1),(b,1),(\phi(0),1)\}$};
\node[scale=0.8] (v7) at (0,5) {$\mathcal{C}_P^{\phi}$};

\draw[->,>=latex] (v0)--(v1);
\draw[->,>=latex] (v0)--(v4);
\draw[->,>=latex] (v1)--(v2);
\draw[->,>=latex] (v1)--(v3);
\draw[->,>=latex] (v2)--(v5);
\draw[->,>=latex] (v3)--(v5);
\draw[->,>=latex] (v5)--(v6);
\draw[->,>=latex] (v6)--(v7);
\draw[->,>=latex] (v4)--(v7);
\end{scope}
\end{tikzpicture}
\caption{$\Tam(P,\phi)$ from \cref{fig:posetsquarephi2}.}
\label{fig:latticesquarephi2}
\end{subfigure}
\caption{}
\label{fig:figsquarephi2}
\end{figure}

\begin{figure}
\begin{subfigure}{0.5\textwidth}
\centering
\begin{tikzpicture}
\begin{scope}[scale=1]
\node[draw,circle,inner sep=2pt] (a) at (0,0) {$a$};
\node[draw,circle,inner sep=2pt] (b) at (-1,1) {$b$};
\node (c) at (1,1) {$c$};
\node[draw,circle,inner sep=2pt] (d) at (0,2) {$d$};
\draw[->,>=latex] (a) -- (b);
\draw[->,>=latex] (b) -- (d);
\draw[->,>=latex] (a) -- (c);
\draw[->,>=latex] (c) -- (d);
\draw (0,-1) node {$P$};
\end{scope}

\begin{scope}[xshift= 2cm, scale= 1]
\node[draw,circle,inner sep=2pt] (7) at (0,0) {$7$};
\node[draw,circle,inner sep=2pt] (6) at (1,0) {$6$};
\node[draw,circle,inner sep=2pt] (5) at (1,1) {$5$};
\node[draw,circle,inner sep=2pt] (4) at (3,0) {$4$};
\node[draw,circle,inner sep=2pt] (3) at (2,1) {$3$};
\node (2) at (4,1) {$2$};
\node[draw,circle,inner sep=2pt] (1) at (3,2) {$1$};
\draw[->,>=latex] (6) -- (5);
\draw[->,>=latex] (4) -- (3);
\draw[->,>=latex] (3) -- (1);
\draw[->,>=latex] (4) -- (2);
\draw[->,>=latex] (2) -- (1);
\draw (0,-0.8) node {$\mathcal{C}_0$};
\draw (1,-0.8) node {$\mathcal{C}_1$};
\draw (3,-0.8) node {$\mathcal{C}_2$};
\draw [decorate,decoration={brace,amplitude=7pt, mirror,raise=2ex}]
(-0.2,-1) -- (3.7,-1) node[midway,yshift=-2.5em]{$\mathcal{C}_P^{\phi}$};
\end{scope}
\end{tikzpicture}
    \caption{A poset $P$ with a chain $\phi$ whose elements are circled, and the poset $\mathcal{C}_P^{\phi}$ whose elements $(\phi(i),j)$ are circled. The numbering is the one from \cref{sectionCongruenceTopo}}
    \label{fig:posetfirstExample}
\end{subfigure}%
\begin{subfigure}{.5\textwidth}
\centering
\begin{tikzpicture}
\begin{scope}[xscale=1.2]
\node[scale=0.7] (v17) at (0,7.5) {$\mathcal{C}_P^{\phi}$};
\node[scale=0.7] (v16) at (-2,4) {$\{5,6,7\}$};
\node[scale=0.7] (v15) at (4,5) {$\{1,2,3,4,7\}$};
\node[scale=0.7] (v14) at (4,3.5) {$\{1,2,7\}$};
\node[scale=0.7] (v13) at (4,2) {$\{1,7\}$};
\node[scale=0.7,red] (v12) at (-2,1) {$\{7\}$};
\node[scale=0.7] (v11) at (0.5,6) {$\{1,2,3,4,5,6\}$};
\node[scale=0.7,red] (v10) at (-1,3) {$\{5,6\}$};
\node[scale=0.7] (v9) at (2,5) {$\{1,2,3,4,5\}$};
\node[scale=0.7] (v8) at (1,4) {$\{1,2,3,5\}$};
\node[scale=0.7] (v7) at (0,3) {$\{1,3,5\}$};
\node[scale=0.7,red] (v6) at (-1,1) {$\{5\}$};
\node[scale=0.7,red] (v5) at (3,4) {$\{1,2,3,4\}$};
\node[scale=0.7] (v4) at (2,3) {$\{1,2,3\}$};
\node[scale=0.7,red] (v3) at (1,2) {$\{1,3\}$};
\node[scale=0.7,red] (v2) at (3,2) {$\{1,2\}$};
\node[scale=0.7,red] (v1) at (2.5,0.5) {$\{1\}$};
\node[scale=0.7] (v0) at (0,-0.5) {$\emptyset$};

\draw[->,>=latex] (v0)-- node[above,text=blue] {\scriptsize $1$} (v1);
\draw[->,>=latex] (v0)--node[above,text=blue] {\scriptsize $5$}(v6);
\draw[->,>=latex] (v0)--node[below,text=blue] {\scriptsize $7$}(v12);
\draw[->,>=latex] (v1)--node[left,text=blue] {\scriptsize $2$}(v2);
\draw[->,>=latex] (v1)-- node[below,text=blue] {\scriptsize $3$}(v3);
\draw[->,>=latex] (v1)-- node[below,text=blue] {\scriptsize $7$}(v13);
\draw[->,>=latex] (v2)--node[above,text=blue] {\scriptsize $3$}(v4);
\draw[->,>=latex] (v2)--node[left,text=blue] {\scriptsize $7$}(v14);
\draw[->,>=latex] (v3)--node[left,text=blue] {\scriptsize $2$}(v4);
\draw[->,>=latex] (v3)--node[right,text=blue] {\scriptsize $5$}(v7);
\draw[->,>=latex] (v4)--node[left,text=blue] {\scriptsize $4$}(v5);
\draw[->,>=latex] (v4)--node[right,text=blue] {\scriptsize $5$}(v8);
\draw[->,>=latex] (v5)--node[right,text=blue] {\scriptsize $5$}(v9);
\draw[->,>=latex] (v5)--node[left,text=blue] {\scriptsize $7$}(v15);
\draw[->,>=latex] (v6)--node[above,text=blue] {\scriptsize $1$}(v7);
\draw[->,>=latex] (v6)--node[left,text=blue] {\scriptsize $6$}(v10);
\draw[->,>=latex] (v7)--node[left,text=blue] {\scriptsize $2$}(v8);
\draw[->,>=latex] (v8)--node[left,text=blue] {\scriptsize $4$}(v9);
\draw[->,>=latex] (v9)--node[above,text=blue] {\scriptsize $6$}(v11);
\draw[->,>=latex] (v10)--node[above,text=blue] {\scriptsize $7$}(v16);
\draw[->,>=latex] (v10)--node[above,text=blue] {\scriptsize $1$}(v11);
\draw[->,>=latex] (v11)--node[right,text=blue] {\scriptsize $7$}(v17);
\draw[->,>=latex] (v12)--node[left,text=blue] {\scriptsize $5$}(v16);
\draw[->,>=latex]
  (v12)
  .. controls +(2,1) and +(-1,-1)
  ..node[pos=0.35,below,text=blue] {\scriptsize $1$} (v13);
\draw[->,>=latex] (v13)--node[right,text=blue] {\scriptsize $2$}(v14);
\draw[->,>=latex] (v14)--node[right,text=blue] {\scriptsize $3$}(v15);
\draw[->,>=latex] (v15)--node[above,text=blue] {\scriptsize $5$}(v17);
\draw[->,>=latex] (v16)--node[above,text=blue] {\scriptsize $1$}(v17);
\end{scope}
\end{tikzpicture}
\caption{$\Tam(P,\phi)$ from \cref{fig:posetfirstExample}.}
\label{fig:latticefirstExample}
\end{subfigure}
\caption{}
\label{fig:firstExample}
\end{figure}

\begin{remark}
\cref{def:TamPphi} is motivated by \cref{thm:torsionAuslander}. We will see in \cref{sec:Auslander} that for a particular choice of poset $P$ and chain $\phi$, the torclosed sets correspond to $d$-torsion classes and $\Tam(P,\phi)$ to the lattice of $d$-torsion classes ordered by inclusion. This explains the name \emph{torclosed} for \emph{torsion closed}. We will prove in \cref{sec:Chain} that $\Tam(C_n,C_n)\cong \Tam_{n+1}$ (\cref{propTamari}), which explains the name \emph{Tamari} for these lattices, as well as the fact that any $\Tam(P,\phi)$ with $|\phi|=n$ contains a sublattice isomorphic to $\Tam_{n+1}$ (\cref{propcompareTamari}).
\end{remark}

The following four results are immediate:

\begin{lemma}
 $\Tam(P,\phi)$ is finite if and only if $\phi$ is finite.   
\end{lemma}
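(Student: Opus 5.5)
We prove the two directions separately, using only that every principal order ideal of $P$ is finite and that the elements of $\Tam(P,\phi)$ are subsets of $\mathcal{C}_P^{\phi}$.

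\emph{If $\phi$ is finite.} Suppose $\phi:\phi(0)<\cdots<\phi(n)$. Then $\mathcal{C}_P^{\phi}=\bigsqcup_{i=0}^{n}\mathcal{C}_i$ is a finite union of the sets $\mathcal{C}_i=I_P(\phi(i))\times\{i\}$. Each of these is finite by the standing hypothesis on principal order ideals, so $\mathcal{C}_P^{\phi}$ is finite. The torclosed sets are subsets of $\mathcal{C}_P^{\phi}$, hence there are at most $2^{|\mathcal{C}_P^{\phi}|}$ of them, and $\Tam(P,\phi)$ is finite.

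\emph{If $\phi$ is infinite.} It suffices to exhibit infinitely many distinct torclosed sets. For each $m\geq 0$, take
$$T^{(m)}:=\bigsqcup_{i\geq m}\mathcal{C}_i.$$
\begin{itemize}
\item It is an order filter. The components $\mathcal{C}_i$ are pairwise incomparable in the disjoint union $\mathcal{C}_P^{\phi}$, so a union of whole components is upward closed.
\item It is torclosed. Let $i<j$ with $(x,i)\in T^{(m)}$ and $(\phi(i+1),j)\in T^{(m)}$. The first membership gives $i\geq m$, so $j>i\geq m$. Also $(x,i)\in\mathcal{C}_i$ means $x\leq\phi(i)<\phi(j)$, so $(x,j)\in\mathcal{C}_j\subseteq T^{(m)}$.
\item The sets are pairwise distinct. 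Each $\mathcal{C}_i$ is nonempty, since it contains $(\phi(i),i)$, so $T^{(m)}\supsetneq T^{(m+1)}$.
\end{itemize}
Hence $\Tam(P,\phi)$ contains an infinite strictly descending chain and is infinite. (The sets $\bigsqcup_{i<m}\mathcal{C}_i$ would not work: $(\phi(0),0)$ together with $(\phi(1),1)$ forces $(\phi(0),1)$ by closure.)

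The only step needing care is checking that $T^{(m)}$ is torclosed, and the whole argument reduces to the observation $x\leq\phi(i)<\phi(j)$.
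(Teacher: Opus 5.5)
Your proof is correct. The paper gives no proof and calls the statement immediate; your argument is the routine verification it leaves out. An even quicker infinite family is the atoms $\{(\phi(i),i)\}$, for which the torclosed condition is vacuous.

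One correction to your parenthetical aside: the sets $\bigsqcup_{i<m}\mathcal{C}_i$ are also torclosed, so they would work just as well. The element $(\phi(0),1)$ forced by closure already lies in $\mathcal{C}_1$. More generally, your inequality $x\leq\phi(i)<\phi(j)$ shows that any union of whole components is torclosed, which the paper itself uses in its complementation proof.
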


\begin{lemma}
  In $\Tam(P,\phi)$, we have $I_{\Tam(P,\phi)}\big(\bigcup_{i<k} \mathcal{C}_i \big) =\Tam(P,\phi_{|\{0,1,\dots,k-1\}})$ for every $k$.
\end{lemma}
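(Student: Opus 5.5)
The statement identifies the torclosed sets contained in $U_k:=\bigcup_{i<k}\mathcal{C}_i$ with the torclosed sets of $\mathcal{C}_P^{\phi'}$, where $\phi'=\phi_{|\{0,1,\dots,k-1\}}$. Since $I_P(\phi'(i))=I_P(\phi(i))$ for $i<k$, the components of $\mathcal{C}_P^{\phi'}$ are exactly $\mathcal{C}_0,\dots,\mathcal{C}_{k-1}$. Hence $\mathcal{C}_P^{\phi'}=U_k$ as a poset; it is even the induced subposet of $\mathcal{C}_P^{\phi}$, because components are mutually incomparable. Both sides are ordered by inclusion. So it suffices to prove that, for a subset $T\subseteq U_k$, being torclosed in $\mathcal{C}_P^{\phi}$ is equivalent to being torclosed in $\mathcal{C}_P^{\phi'}$. (Note that $U_k$ is itself torclosed, since it is a union of components; this is implicit in writing $I_{\Tam(P,\phi)}(U_k)$.)

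\textbf{Order filter condition.} $U_k$ is a union of connected components of $\mathcal{C}_P^{\phi}$. A subset $T\subseteq U_k$ is therefore an order filter of $\mathcal{C}_P^{\phi}$ if and only if it is an order filter of $U_k$: any element above an element of $\mathcal{C}_i$ with $i<k$ again lies in $\mathcal{C}_i$.

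\textbf{Torclosure condition.} In $\mathcal{C}_P^{\phi}$, the condition quantifies over all pairs $i<j$. Take such a pair with $j\geq k$. Then $(\phi(i+1),j)\in\mathcal{C}_j$ is not in $T\subseteq U_k$, so the hypothesis fails and the implication holds vacuously. The remaining pairs are those with $i<j<k$. For these, the condition involves only the elements $(x,i),(x,j)\in U_k$ and $(\phi(i+1),j)$, where $\phi(i+1)=\phi'(i+1)$ because $i+1\leq j\leq k-1$. These instances are exactly the torclosure conditions for $(P,\phi')$.

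\textbf{Conclusion.} The two sets of torclosed sets coincide and carry the same inclusion order, so the principal ideal $I_{\Tam(P,\phi)}(U_k)$ equals $\Tam(P,\phi')$. The only point needing care is the bookkeeping that $\phi(i+1)$ stays within the truncated chain; there is no real obstacle.
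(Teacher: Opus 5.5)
Your proof is correct. The paper gives no proof for this lemma and lists it among four results it calls immediate; your argument is the routine check that claim relies on. You identify $\mathcal{C}_P^{\phi'}$ with $\bigcup_{i<k}\mathcal{C}_i$, note that torclosure instances with $j\geq k$ are vacuous for subsets of this union, and observe that the remaining instances are exactly those of $(P,\phi')$.
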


\begin{lemma}
Let $T,T' \in \Tam(P,\phi)$. Then $(T\bigvee T')_{<k}=T_{<k} \bigvee T'_{<k}$, where on the right this is the join in the finite lattice $\Tam(P,\phi_{|\{0,1,\dots,k-1\}})$.
\end{lemma}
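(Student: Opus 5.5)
The plan is to compute the join explicitly and then read off its restriction to the first $k$ components. Since $\Tam(P,\phi)$ has meet given by intersection, the join $T\bigvee T'$ is the completion of $T\cup T'$, that is, the smallest torclosed set containing $T\cup T'$. Note that $T\cup T'$ is already an order filter of $\mathcal{C}_P^{\phi}$.

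\textbf{Step 1: an explicit description of the completion.} I would describe the completion as an increasing union. Start from $S^{(0)}:=T\cup T'$. At each step, add every $(x,j)$ for which some $i<j$ has $(x,i)$ and $(\phi(i+1),j)$ in the current set, and then close upward, since the completion must be an order filter. When $\phi$ is infinite, take the union of this increasing sequence. This union is an order filter, it is torclosed (any witnessing pair appears at some finite stage), and it lies inside every torclosed set containing $T\cup T'$. Hence it equals $T\bigvee T'$.

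\textbf{Step 2: the key observation.} Each rule only adds an element $(x,j)$ using elements in components $i<j$ together with $(\phi(i+1),j)$ in the same component $j$. Taking upward closure in $\mathcal{C}_P^{\phi}=\bigsqcup_i \mathcal{C}_i$ stays inside a single component, because the components are pairwise incomparable. So the elements added in $\mathcal{C}_{<k}$ depend only on what is already present in $\mathcal{C}_{<k}$. I would prove by induction on the stage that $S^{(m)}\cap \mathcal{C}_{<k}$ coincides with the $m$-th stage of the same procedure run on $T_{<k}\cup T'_{<k}$ inside $\mathcal{C}_{P}^{\phi_{|\{0,\dots,k-1\}}}$. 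The identification here is $\bigsqcup_{i<k}\mathcal{C}_i$, which is exactly the poset attached to the truncated chain.

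\textbf{Step 3: conclude.} Taking unions over $m$ shows that $(T\bigvee T')_{<k}$ is the completion of $T_{<k}\cup T'_{<k}$ in the truncated poset. By the same reasoning as in Step 1, that completion is the join $T_{<k}\bigvee T'_{<k}$ in $\Tam(P,\phi_{|\{0,1,\dots,k-1\}})$.

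The only point needing care is Step 2: one must check that restricting a torclosed set to $\mathcal{C}_{<k}$ and the torclosedness condition in the truncated poset really match. They do, because the condition for $i<j<k$ involves only components of index less than $k$. Everything else is bookkeeping.
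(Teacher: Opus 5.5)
Your proof is correct and follows the intended route. The paper gives no written proof and simply calls the lemma immediate; your argument spells out the underlying reason. Completion only adds an element $(x,j)$ using elements from components of index at most $j$, and upward closure stays inside a component, which is the same mechanism behind \cref{lem:procheenproche}. Hence $(T\bigvee T')_{<k}$ is exactly the completion of $T_{<k}\cup T'_{<k}$ in $\bigsqcup_{i<k}\mathcal{C}_i$.
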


\begin{lemma}
\label{lem:procheenproche}
Let $k$ be a nonnegative integer. Let $T\in \Tam(P,\phi_{|\{0,1,\dots,k-1\}})$ and $F$ be an order filter of $\mathcal{C}_k$. The completion of $T\cup F$ only differs from $T\cup F$ in the component $\mathcal{C}_k$. It can be obtained by first computing the completion of $T_{k-1}\cup F$, whose elements in $\mathcal{C}_k$ form an order filter denoted by $F_1$. Then computing the completion of $T_{k-2}\cup F_1$, whose elements in $\mathcal{C}_k$ form an order filter denoted by $F_2$. And so on until obtaining $F_k$. Then the completion of $T\cup F$ is $T\cup F_k$.
\end{lemma}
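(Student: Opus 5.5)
The plan is to characterise the completion as the least fixed point of a closure process, and then to check that the particular order of operations in the statement reaches that fixed point. Throughout, $T$ is torclosed in $\Tam(P,\phi_{|\{0,1,\dots,k-1\}})$, so $T_{\geq k}=\emptyset$, and $F$ is an order filter of $\mathcal{C}_k$.

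\emph{Step 1: only $\mathcal{C}_k$ changes.} The torclosed condition for a pair $i<j$ can only force new elements into the component $\mathcal{C}_j$.
\begin{itemize}
\item For $j<k$, the hypotheses and the conclusion live in $T_{<k}$. There the condition already holds, since $T$ is torclosed.
\item For $j>k$, the hypothesis $(\phi(i+1),j)\in T\cup F$ is never satisfied, because components of index larger than $k$ are empty.
\end{itemize}
The same holds for every intermediate set $T\cup G$ with $G\subseteq \mathcal{C}_k$. Since the components are incomparable in $\mathcal{C}_P^{\phi}$, such a $T\cup G$ is an order filter as soon as $G$ is an order filter of $\mathcal{C}_k$. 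Hence the completion is $T\cup G^\ast$, where $G^\ast$ is the smallest order filter of $\mathcal{C}_k$ containing $F$ and satisfying the following rule $(R_i)$ for every $i<k$:
\[
(\phi(i+1),k)\in G \;\Longrightarrow\; (x,k)\in G \text{ for all } (x,i)\in T .
\]
Every element added when applying a rule $(R_i)$, together with its upward closure in $\mathcal{C}_k$, is forced. Therefore any set obtained by applying the rules in some order is contained in $G^\ast$. It then suffices to show that $F_k$ satisfies every rule $(R_i)$.

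\emph{Step 2: the procedure in the statement.} Step $i$ (processing the index $k-1-i$) replaces $F_{i}$ by $F_{i+1}$. Concretely, $F_{i+1}$ is the $\mathcal{C}_k$-part of the completion of $T_{k-1-i}\cup F_{i}$. By Step 1 applied to the single component $T_{k-1-i}$, this means: if $(\phi(k-i),k)\in F_i$ (the rule is \emph{triggered}), add the upward closure of $\{(x,k)\mid (x,k-1-i)\in T\}$; otherwise $F_{i+1}=F_i$. So $F_0=F\subseteq F_1\subseteq\cdots\subseteq F_k$, and each $F_i$ is an order filter.

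\emph{Step 3: no index needs to be revisited.} This is the main obstacle: one must rule out that an element added when processing a small index $j$ triggers a rule $(R_{i'})$ for some $i'>j$ that was already processed and found untriggered. The key observation is that triggers are monotone along $\phi$. If $(R_j)$ is triggered at the time index $j$ is processed, then $(\phi(j+1),k)$ is already present. Because the current set is a filter and $\phi(i'+1)>\phi(j+1)$ for $i'>j$, the element $(\phi(i'+1),k)$ was already present at that moment.

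I would then argue by reverse induction on the order of processing that the following holds. If $(\phi(i'+1),k)$ belongs to the final set $F_k$, then it already belonged to the set at the moment index $i'$ was processed. Suppose it was first added while processing some $j$. Since indices are processed in decreasing order, either $j\geq i'$, and the claim is immediate, or $j<i'$. In the latter case, step $j$ must have been triggered to add anything, so by monotonicity $(\phi(i'+1),k)$ was already present before step $j$. This contradicts the choice of $j$ as the step that first added it.

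Consequently, every rule $(R_{i'})$ whose trigger lies in $F_k$ was triggered when index $i'$ was processed. Its required elements were added then and persist, since the sets only grow. So $F_k$ satisfies all the rules, giving $G^\ast=F_k$, and the completion of $T\cup F$ is $T\cup F_k$.
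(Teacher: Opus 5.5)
Your proof is correct. The paper gives no argument for this lemma (it lists it among results it calls immediate), and your proof supplies exactly the verification the paper leaves implicit: you reduce the completion to the smallest order filter of $\mathcal{C}_k$ containing $F$ and closed under the rules $(R_i)$, then use that the triggers $(\phi(i+1),k)$ are monotone in $i$ because every intermediate set is an order filter, so a single pass from $i=k-1$ down to $i=0$ suffices.

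One small precision in Step 3: the case $j=i'$ is not so much immediate as impossible, since a step adds nothing unless its trigger is already present. This is the same reasoning you already use for $j<i'$.
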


\begin{proposition}
     $\Tam(P,\phi)$ has a minimum element $\emptyset$ and a maximum element $\mathcal{C}_P^{\phi}$. Its atoms are $\{(\phi(i),i)\}$ for all $i\geq 0$. Let $k\geq 0$.
     If $m$ is a minimal element of $I_P(\phi(k))$ that is not in $I_P(\phi(k-1))$, then $\mathcal{C}_P^{\phi} \setminus \{m\}$ is a coatom. If no such elements exist then $\mathcal{C}_P^{\phi} \setminus \mathcal{C}_k$ is a coatom. All coatoms are obtained in this way.
\end{proposition}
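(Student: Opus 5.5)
The plan is to handle the minimum, maximum and atoms directly, and then for the coatoms to show three things: the listed sets are torclosed, they are pairwise incomparable, and every proper torclosed set is contained in one of them. Throughout I read $\mathcal{C}_P^{\phi}\setminus\{m\}$ as $\mathcal{C}_P^{\phi}\setminus\{(m,k)\}$. I read the condition on $m$ as: $m$ is minimal in $I_P(\phi(k))$ and $m\not\leq \phi(k-1)$, with $I_P(\phi(-1))=\emptyset$.

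\emph{Minimum, maximum and atoms.} Both $\emptyset$ and $\mathcal{C}_P^{\phi}$ are trivially torclosed. A nonempty order filter $T$ meets some $\mathcal{C}_i$. Since $\mathcal{C}_i$ has maximum $(\phi(i),i)$, we get $(\phi(i),i)\in T$. Conversely, $\{(\phi(i),i)\}$ is an order filter. It is torclosed because the hypothesis $(\phi(i'+1),j)\in T$ with $i'<j$ can never hold: the only element is $(\phi(i),i)$, and having $j=i$ would force $i'+1=i=j$, which contradicts $i'<j$ only if $i'\ge i$. So the $\{(\phi(i),i)\}$ are exactly the atoms.

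\emph{The listed sets are torclosed.} For $T=\mathcal{C}_P^{\phi}\setminus\{(m,k)\}$:
\begin{itemize}
\item $T$ is an order filter, since $(m,k)$ is minimal in $\mathcal{C}_k$.
\item Torclosure can only fail by forcing $(m,k)$. That would require $(m,i)\in T$ for some $i<k$, hence $m\le\phi(i)\le\phi(k-1)$, which is excluded.
\end{itemize}
For $T=\mathcal{C}_P^{\phi}\setminus\mathcal{C}_k$:
\begin{itemize}
\item Conditions with $j=k$ are vacuous, since $(\phi(i+1),k)\notin T$.
\item Conditions with $i=k$ are vacuous, since $(x,k)\notin T$.
\end{itemize}
The sets $\mathcal{C}_P^{\phi}\setminus\{(m,k)\}$ are pairwise incomparable. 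Each $\mathcal{C}_P^{\phi}\setminus\mathcal{C}_k$ is only listed when no such $m$ exists for that $k$. Hence once the covering statement below is proved, these are exactly the maximal proper torclosed sets, i.e.\ the coatoms.

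\emph{Every proper torclosed $T$ lies below a listed set.} This is the key step. Let $k$ be minimal with $T_k\neq\mathcal{C}_k$, so $T_{<k}=\bigsqcup_{i<k}\mathcal{C}_i$.
\begin{itemize}
\item If $(\phi(k),k)\notin T$, then $T_k=\emptyset$ and $T\subseteq \mathcal{C}_P^{\phi}\setminus\mathcal{C}_k$. If some admissible $m$ exists, $T$ is also contained in $\mathcal{C}_P^{\phi}\setminus\{(m,k)\}$, so either way we are below a listed set.
\item Otherwise, first show that all $(\phi(j),k)$, $j\le k$, lie in $T$. Suppose not, and let $s<k$ be largest with $(\phi(s),k)\notin T$, so $(\phi(s+1),k)\in T$. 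Since $(\phi(s),s)\in T_{<k}$, torclosure with $i=s$ forces $(\phi(s),k)\in T$, a contradiction.
\item Now pick any $(y,k)\notin T$. The complement of $T_k$ in $\mathcal{C}_k$ is an order ideal, so there is a minimal $m\le y$ of $I_P(\phi(k))$ with $(m,k)\notin T$.
\item If $m\le\phi(i)$ for some $i<k$, then $(m,i)\in T$ and $(\phi(i+1),k)\in T$ force $(m,k)\in T$, a contradiction. Hence $m\notin I_P(\phi(k-1))$, $m$ is admissible, and $T\subseteq\mathcal{C}_P^{\phi}\setminus\{(m,k)\}$.
\end{itemize}

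The main obstacle is this last part, namely choosing $k$ minimal so that all of $T_{<k}$ is available. Only with that choice does the chain argument show $(\phi(j),k)\in T$ for every $j$, which is what turns a missing minimal element into an admissible $m$.
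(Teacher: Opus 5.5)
Your proof is correct, but it is organized differently from the paper's.

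\textbf{The paper's route.} It checks the listed sets directly. It asserts that $\mathcal{C}_P^{\phi}\setminus\{(m,k)\}$ is a coatom because only one element is removed. It shows $\mathcal{C}_P^{\phi}\setminus\mathcal{C}_k$ is a coatom by noting that any cover contains $(\phi(k),k)$, and that completing with $\mathcal{C}_{k-1}$ then fills all of $\mathcal{C}_k$. For the converse, it first proves that a coatom $T$ is deficient in only one component: otherwise $T<T\vee\mathcal{C}_j<\mathcal{C}_P^{\phi}$ for the larger deficient index $j$, since completions only propagate to higher components. It then identifies the missing part.

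\textbf{Your route.} You show that the listed family is an antichain of proper torclosed sets, and that every proper torclosed set lies below a member of it. You take $k$ to be the \emph{smallest} deficient index, so that $T_{<k}$ is full. (Your antichain check only mentions the sets $\mathcal{C}_P^{\phi}\setminus\{(m,k)\}$; the remaining comparisons are trivial.)

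\textbf{Comparison.} Your approach bypasses both the one-deficient-component step and the separate completion argument for $\mathcal{C}_P^{\phi}\setminus\mathcal{C}_k$. Maximality of that set simply falls out of the antichain property. It also makes explicit two points the paper leaves implicit:
\begin{itemize}
\item that $\mathcal{C}_P^{\phi}\setminus\{(m,k)\}$ is actually torclosed for admissible $m$;
\item why a coatom with $T_k\neq\emptyset$ must miss an admissible minimal element.
\end{itemize}
It works verbatim for infinite $\phi$, which is allowed in that section, and as a bonus shows that every proper element lies below a coatom. The paper's route, in exchange, records the structural fact that a coatom is full outside a single component.

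Your chain step is more than you need. If $m\leq\phi(i)$ for some $i<k$, then $m\leq\phi(k-1)$. So $(m,k-1)\in T$ together with $(\phi(k),k)\in T$ already forces $(m,k)\in T$.

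\textbf{One slip.} Your justification that $\{(\phi(i),i)\}$ is torclosed is wrong as written. For $i\geq 1$, the hypothesis $(\phi(i'+1),j)\in T$ \emph{does} hold, with $i'=i-1$ and $j=i$. What fails is the other hypothesis, $(x,i-1)\in T$. The correct one-line reason is that any order filter contained in a single component $\mathcal{C}_i$ is vacuously torclosed, since the condition needs elements in two distinct components $i'<j$.
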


\begin{proof}
We only prove the statement related to the coatoms, as the rest is straightforward. Any element of the form $\mathcal{C}_P^{\phi} \setminus \{m\}$, without assumptions on $m$, is a coatom. Let $k\geq 0$. The element $\mathcal{C}_P^{\phi} \setminus \mathcal{C}_k$ is torclosed, and any element that covers $\mathcal{C}_P^{\phi} \setminus \mathcal{C}_k$ contains $(\phi(k),k)$, which is the maximum of $\mathcal{C}_k$. Suppose that every minimal element of $I_P(\phi(k))$ is an element in $I_P(\phi(k-1))$. In this case, the minimal torclosed set that contains both the elements of $\mathcal{C}_{k-1}$ and the element $(\phi(k),k)$ is $\mathcal{C}_{k-1} \sqcup \mathcal{C}_k$. This proves that $\mathcal{C}_P^{\phi} \setminus \mathcal{C}_k$ is a coatom.

Conversely, suppose that $T$ is a coatom. If there exist two elements $i<j$ such that $T_i\neq \mathcal{C}_i$ and $T_j\neq \mathcal{C}_j$, then $T < T\vee \mathcal{C}_j \neq \mathcal{C}_P^{\phi}$ since $T\vee \mathcal{C}_j$ does not contain $\mathcal{C}_i$, which proves that $T$ is not a coatom. Thus there exists a unique $k$ such that $T_k \neq \mathcal{C}_k$. Minimal elements of $I_P(\phi(k))$ that are not in $I_P(\phi(k-1))$ cannot be obtained through a completion. Thus we must have $T=\mathcal{C}_P^{\phi} \setminus \{m\}$ with $m$ such an element if these minimal elements exist. But if there are no such elements, we saw that we must have $T=\mathcal{C}_P^{\phi} \setminus \mathcal{C}_k$ since the minimal torclosed set containing $T\cup \{(\phi(k),k)\}$ is the maximum $\mathcal{C}_P^{\phi}$.
\end{proof}

\subsection{Lattice properties of $\Tam(P,\phi)$}
\label{sec:finiteTam}

In the remainder of this article, we always assume that $\Tam(P,\phi)$ is finite, and that the chain $\phi$ in $P$ is given by $\phi(0)<\phi(1)<\cdots <\phi(n-1)$.

\begin{lemma}
\label{lemCovers}
Let $T\lessdot T'$ be a cover relation of $\Tam(P,\phi)$. Then there exists $k<n$ such that $T'\setminus T \subseteq \mathcal{C}_k$ and $T'\setminus T$ has a maximum element.
\end{lemma}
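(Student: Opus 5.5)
The plan is to show directly that $T'$ is the completion of $T\cup F_P$ for a well-chosen element, using only minimality of the cover and the structure of completions from \cref{lem:procheenproche}.

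\emph{Choosing $k$ and $m$.} Let $k$ be the largest index with $T'_k\neq T_k$. Pick $m\in T'_k\setminus T_k$ maximal in $\mathcal{C}_k$ among elements of $T'_k\setminus T_k$. Since $T'$ is an order filter, every element of $\mathcal{C}_k$ strictly above $m$ lies in $T'$, and by maximality of $m$ it also lies in $T$. Hence $T\cup\{m\}$ is still an order filter of $\mathcal{C}_P^{\phi}$; components are disjoint and incomparable, so this only needs checking inside $\mathcal{C}_k$.

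\emph{Completing $T\cup\{m\}$.} Let $S$ be the completion of $T\cup\{m\}$. Then $T<S\leq T'$, so $S=T'$ because $T\lessdot T'$. It remains to see that $S\setminus T\subseteq\mathcal{C}_k$ and that it has maximum $m$.

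For the first point, completion only adds elements $(x,j)$ coming from a pair $(x,i)$ and $(\phi(i+1),j)$ with $i<j$. The step that needs care is showing that new elements appear only in components $\mathcal{C}_j$ with $j\geq k$, and in fact only in $\mathcal{C}_k$. Elements of $T'$ in components $j>k$ already lie in $T$ by the choice of $k$, so $S_{>k}=T'_{>k}=T_{>k}$. A new element in some $\mathcal{C}_j$ with $j<k$ would need $(\phi(i+1),j)$ and $(x,i)$ in $S$ with $i<j<k$. I would argue by induction on the order in which rules are applied that $S_{<k}=T_{<k}$: the first new element in a component below $k$ is produced only from elements of components $\leq j<k$, all of which are still in $T$, which is torclosed, a contradiction. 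So $S\setminus T\subseteq\mathcal{C}_k$.

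For the maximum, I would show that $T\cup F$, with $F$ the order filter of $\mathcal{C}_k$ consisting of $T_k$ together with everything above elements of $S_k\setminus T_k$, behaves well. Concretely, I would prove by the same induction that every element $(y,k)$ added during the completion satisfies $(y,k)\leq m$. Such an element is produced as $(x,k)$ from $(x,i)\in S$ and $(\phi(i+1),k)\in S$ with $i<k$. Since $(x,i)\in T$, if $(\phi(i+1),k)\in T$ then $(x,k)\in T$ by torclosedness of $T$, so nothing new arises. Hence $(\phi(i+1),k)$ is itself newly added, and inductively $(\phi(i+1),k)\leq m$. Then $x\leq\phi(i)<\phi(i+1)$, giving $(x,k)\leq m$. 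Order-filter closure only adds elements above new elements, and those lie above $m$. For them, either they are in $T$ already, because everything strictly above $m$ is in $T$, or they equal $m$. Thus $S\setminus T\subseteq I_{\mathcal{C}_k}(m)$ and contains $m$, so $m=\max(T'\setminus T)$.

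I expect the main obstacle to be the bookkeeping of this induction over completion steps, particularly the interleaving of order-filter closure with the torclosed rule. I would handle it by describing the completion as the least fixed point of these two operations, or by using the stepwise description in \cref{lem:procheenproche}.
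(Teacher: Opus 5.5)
Your first half is fine and differs only mildly from the paper. You take $m$ maximal in $T'_k\setminus T_k$, note that $T\cup\{m\}$ is an order filter, get $S=T'$ from the cover, and check that the completion creates nothing outside $\mathcal{C}_k$. This is a valid, if more laborious, substitute for the paper's observation that $T_{<k}\sqcup T'_k\sqcup T_{>k}$ is already torclosed.

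The gap is in the argument for the maximum. The sentence ``Order-filter closure only adds elements above new elements, and those lie above $m$'' is false. An element produced by the torclosed rule satisfies $(x,k)<m$, and filter closure then adds every $(y,k)\geq (x,k)$. Many of these are incomparable to $m$ and not in $T$, so your invariant ``every added element is $\leq m$'' is not preserved. Moreover, this part of your argument never uses $T\lessdot T'$ beyond $S=T'$. So it would show that the completion of $T\cup\{m\}$, minus $T$, always lies in $I_{\mathcal{C}_k}(m)$, and that is false. In \cref{fig:posetfirstExample}, take $T=\{1,7\}$ and $m=3=(\phi(1),2)$. The completion of $T\cup\{3\}$ is $\{1,2,3,4,7\}$, and $3$ is maximal in its difference $\{2,3,4\}$ with $T$. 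Yet $2$ is incomparable to $3$.

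The missing idea is a second use of the cover relation, combined with the fact that the elements $(\phi(j),k)$, $j\leq k$, form a chain in $\mathcal{C}_k$. Suppose $T'\setminus T$ had two incomparable maximal elements. Then at least one of them, say $m_1$, is not of the form $(\phi(j),k)$, and $T\cup\{m_1\}$ is already torclosed:
\begin{itemize}
\item it is an order filter by maximality of $m_1$;
\item the rule can never fire with $m_1$ in the role of some $(\phi(i+1),k)$;
\item with $m_1=(x,k)$ in the role of $(x,i)$, it only demands $(x,j)$ for $j>k$, and these lie in $T'_j=T_j$ because $T'$ is torclosed.
\end{itemize}
Hence $T\subsetneq T\cup\{m_1\}\subsetneq T'$, contradicting the cover. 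Equivalently, choose $m$ not of the form $(\phi(j),k)$ whenever possible, in which case $T'=T\cup\{m\}$. Otherwise all maximal elements lie on a chain, so there is only one. This is the paper's argument, and it replaces your completion-tracking induction entirely.
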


\begin{proof}
Recall that $T<T'$ means $T\subsetneq T'$. Let $k$ be the biggest nonnegative integer such that $T'_k\setminus T_k \neq \emptyset$. Let $R$ be the subset of $T'$ obtained from $T$ by replacing $T_k$ with $T'_k$. Then $R$ is torclosed, and $T<R\leq T'$. Since $T\lessdot T'$, we have $T'=R$. This proves that there exists $k<n$ such that $T'\setminus T \subseteq \mathcal{C}_k$. 
Now let us prove that $T'\setminus T$ has a maximum. We know that $T'\setminus T \subseteq \mathcal{C}_k$. Seeking a contradiction, suppose that $m_1$ and $m_2$ are two maximal elements of $T'\setminus T$. It could be that one of them is of the form $(\phi(i),k)$ with $i\leq k$. But since these latter elements form a chain in $\mathcal{C}_k$, and $m_1$ and $m_2$ are incomparable, without loss of generality we can assume that $m_1$ is not of the form $(\phi(i),k)$ with $i\leq k$. Thus $T\cup \{m_1\}$ is torclosed, but it is strictly between $T$ and $T'$, which is a contradiction with the fact that $T\lessdot T'$ is a cover relation.
\end{proof}

We now characterize the join-irreducible elements of $\Tam(P,\phi)$ (for an example see \cref{fig:latticefirstExample} where these elements are colored in red). 

\begin{proposition} \label{prop:joinirr}
The join-irreducible elements of $\Tam(P,\phi)$ are the principal order filters of $\mathcal{C}_P^{\phi}$. Thus the subposet of $\Tam(P,\phi)$ formed from 
its join-irreducible elements is the dual of the poset $\mathcal{C}_P^{\phi}$. 
Then $|\,\mathrm{JIrr}(\Tam{(P,\phi)})\,| = |\mathcal{C}_P^{\phi}|$.
\end{proposition}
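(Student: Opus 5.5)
The plan is to prove two inclusions: every principal order filter $F(x):=F_{\mathcal{C}_P^{\phi}}(x)$ of $\mathcal{C}_P^{\phi}$ is a torclosed join-irreducible, and every join-irreducible is of this form. The remaining claims then follow at once. The map $x\mapsto F(x)$ is injective and order-reversing, since $F(x)\subseteq F(y)$ if and only if $y\leq x$. Hence $\mathrm{JIrr}$, ordered by inclusion, is isomorphic to $(\mathcal{C}_P^{\phi})^{op}$, and the cardinality statement is immediate.

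\textbf{Step 1: principal filters are torclosed.} Take $x=(p,k)\in\mathcal{C}_k$. Then $F(x)=\{(q,k)\mid p\leq q\leq \phi(k)\}\subseteq \mathcal{C}_k$, because different components are incomparable in the disjoint union. The torclosed condition only involves pairs of elements lying in two distinct components $i<j$, so it holds vacuously and $F(x)$ is torclosed.

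\textbf{Step 2: principal filters are join-irreducible.} Every element of $\Tam(P,\phi)$ strictly below $F(x)$ is a torclosed set $T\subsetneq F(x)$, and such a $T$ cannot contain $x$. Now $F(x)\setminus\{x\}$ is an order filter (since $x$ is the minimum of $F(x)$), and by Step 1 it is torclosed. Every $T\subsetneq F(x)$ is therefore contained in $F(x)\setminus\{x\}$. So $F(x)$ covers the unique element $F(x)\setminus\{x\}$, and $F(x)\neq\emptyset$.

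\textbf{Step 3: every join-irreducible is principal.} Let $J$ be join-irreducible with unique lower cover $J_*$. By \cref{lemCovers}, $J\setminus J_*$ lies in a single component $\mathcal{C}_k$ and has a maximum element $m$. Then $F(m)\subseteq J$, since $J$ is a filter, and $F(m)$ is torclosed by Step 1. Because $m\notin J_*$, we have $F(m)\not\subseteq J_*$. Every element of $\Tam(P,\phi)$ strictly below $J$ lies below $J_*$, so $F(m)\leq J$ and $F(m)\not\leq J_*$ force $F(m)=J$.

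The only step needing care is Step 3, and \cref{lemCovers} does most of the work there. Everything else follows from the observation that the torclosed condition never constrains a subset of a single component.
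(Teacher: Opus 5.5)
Your proof is correct. Steps 1 and 2 fill in what the paper calls ``immediate'', but for the converse direction (every join-irreducible is principal) you take a different route from the paper.

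\textbf{The paper's route.} It argues by contradiction on the structure of a hypothetical non-principal join-irreducible $T$. It takes the smallest $k$ with $T_k\neq\emptyset$ and shows $T_k$ must be a principal filter generated by some $a$. Otherwise two minimal elements $a,b$ of $T_k$ would give two lower covers $T\setminus\{a\}$ and $T\setminus\{b\}$, which are torclosed precisely because $k$ is minimal. Hence $T\setminus\{a\}$ is the unique lower cover. If some $T_j\neq\emptyset$ with $j>k$, the torclosed set $T\setminus T_j<T$ is not below $T\setminus\{a\}$, a contradiction.

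\textbf{Your route.} You use a closure-of-a-point argument. Once you know that $F(m)$ is torclosed (hence the smallest torclosed set containing $m$), any $m\in J\setminus J_*$ gives $F(m)\leq J$ and $F(m)\not\leq J_*$, which forces $F(m)=J$.

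\textbf{Comparison.} Your argument is shorter. It also avoids checking that auxiliary sets such as $T\setminus\{a\}$ and $T\setminus T_j$ are torclosed, a check the paper leaves implicit. For this direction it uses nothing about torclosedness beyond the fact that principal filters are torclosed. The paper's argument, in exchange, exhibits the lower cover explicitly and shows concretely how the component structure rules out non-principal join-irreducibles.

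\textbf{One small remark.} You do not actually need \cref{lemCovers} in Step 3. Neither the maximality of $m$ nor the fact that $J\setminus J_*$ lies in a single component is used. Any $m\in J\setminus J_*$ works, and a posteriori $J\setminus J_*=\{m\}$.
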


\begin{proof}
It is immediate that these order filters are join-irreducible elements. We need to prove that these are the only join-irreducible elements. 
Seeking a contradiction, suppose that $T\in \mathrm{JIrr}(\Tam(P,\phi))$ is not of the previous form. Let $k\geq 0$ be minimal such that $T_k \neq \emptyset$. If $T_k \subseteq \mathcal{C}_k$ has two minimal elements $a$ and $b$, then $T$ covers both $T\setminus \{a\}$ and $T\setminus \{b\}$, which is absurd since $T$ is a join-irreducible element. Thus there exists $a$ such that $T_k$ is the principal order filter of $\mathcal{C}_k$ generated by $a$. Then the only torclosed set that $T$ covers is $T\setminus \{a\}$. By hypothesis $T\neq T_k$, thus there exists $j>k$ such that $T_j\neq \emptyset$. The subset $T\setminus T_j$ is torclosed, and $(T\setminus T_j) < T$. But $(T\setminus T_j) \not\leq T\setminus \{a\}$, which is absurd since the only element that $T$ covers is $T\setminus \{a\}$. 

The other statements follow immediately from what we just proved.
\end{proof}

We denote $J_{(x,i)} = F_{\mathcal{C}_P^{\phi}}((x,i))$, which is the minimal torclosed set containing $(x,i)$. Then \cref{prop:joinirr} tells us that the join-irreducibles of $\Tam(P,\phi)$ are the $J_{(x,i)}$ for $(x,i)\in \mathcal{C}_P^{\phi}$.

For any $x\leq \phi(n-1)$, if $x\not\geq \phi(0)$, let $l(x)=0$, otherwise let $l(x)=\mathrm{max}\{i<n\mid x\geq \phi(i)\}$. 
In order to be able to analyze the meet-irreducible elements, we add the hypothesis that $P$ has a minimum, which is not very restrictive as all the examples that we will see have this property.

\begin{proposition} \label{prop:meetirr}
Suppose that $P$ has a minimum element $\hat{0}$.
The meet-irreducible elements of $\Tam(P,\phi)$ are of two kinds, which are the following. For any $(x,k)\in \mathcal{C}_P^{\phi}$, the following element is the \emph{first kind} of meet-irreducible:
$$M_{(x,k)} = \Big(\bigsqcup_{0\leq i<l(x)} \mathcal{C}_i \Big) \, \bigsqcup \,\Big(\bigsqcup_{l(x)\leq i \leq k} \{(y,i) \in \mathcal{C}_P^{\phi} \mid y\not\leq x\}\,\Big)\,\bigsqcup\, \Big(\bigsqcup_{k<i<n} \mathcal{C}_i\Big) .$$

\noindent For any $(x,k)\in \mathcal{C}_P^{\phi}$ and any $(\phi(j),k)$ with $1\leq j\leq k$ such that $x\not\sim\phi(j)$, the following element is the \emph{second kind} of meet-irreducible:
$$M_{(x,k),(\phi(j),k)} = \Big(\bigsqcup_{0\leq i<j} \mathcal{C}_i \Big)\,\bigsqcup\,\Big(\bigsqcup_{j\leq i \leq k} \{(y,i) \in \mathcal{C}_P^{\phi} \mid y\not\leq x,\,y\not \leq \phi(j)\}\,\Big) \,\bigsqcup \,\Big(\bigsqcup_{k<i<n} \mathcal{C}_i\Big) .$$
\end{proposition}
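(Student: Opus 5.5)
The plan is to use a general fact about finite lattices that turns meet-irreducibility into a maximality condition. In a finite lattice $L$, an element $m$ is meet-irreducible if and only if there is a join-irreducible $j$ such that $m$ is a maximal element of $\{y\in L\mid j\not\leq y\}$.

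The proof of this fact is short. If $m$ has a unique upper cover $m^*$, pick $j\in\mathrm{JIrr}(L)$ with $j\leq m^*$ and $j\not\leq m$; then every $y>m$ satisfies $y\geq m^*\geq j$. Conversely, suppose $m$ is maximal in $\{y\mid j\not\leq y\}$. Then every upper cover of $m$ lies above $j$. If there were two distinct upper covers, their meet would be $m$ and would still lie above $j$, a contradiction.

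By \cref{prop:joinirr} the join-irreducibles are the $J_{(x,k)}$, and $J_{(x,k)}\not\subseteq T$ just means $(x,k)\notin T$. So the meet-irreducibles are exactly the torclosed sets that are maximal among those avoiding some fixed $(x,k)\in\mathcal{C}_P^{\phi}$. The plan is to fix $(x,k)$ and describe all these maximal sets.

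First I record how the torclosed condition propagates. Components $\mathcal{C}_i$ with $i>k$ can be filled completely: the condition only forces elements into a component $j$ from a strictly smaller component $i$, and every component above $k$ is already full. Since $T$ is an order filter avoiding $(x,k)$, we get $T_k\subseteq\{(y,k)\mid y\not\leq x\}$. The elements $(\phi(i),k)$ lying in $T_k$ form an up-set of the chain, say those with $i\geq j_0$. Because $\phi(l(x))\leq x$, we have $j_0\geq l(x)+1$ when $x\geq\phi(0)$.

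The condition with target component $k$ then says: for $i<k$ with $i+1\geq j_0$, every $(y,i)\in T$ must have $(y,k)\in T$. For $i<j_0-1$ there is no constraint into $\mathcal{C}_k$, so maximality forces $\mathcal{C}_i\subseteq T$. The components between $j_0-1$ and $k$ also interact among themselves. Maximality should force each $T_i$ for $j_0-1\leq i\leq k$ to equal $\{(y,i)\mid (y,k)\in T_k\}$, and $T_k$ to be the largest order filter avoiding $I(x)$ and $I(\phi(j_0-1))$. I would then check directly that this candidate is torclosed, using the fact that its middle components all have the same "shape" $\{y\not\leq x,\ y\not\leq\phi(j_0-1)\}$.

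Now I split on $j:=j_0-1$.
\begin{itemize}
\item If $j=l(x)$, or more generally if $\phi(j)\leq x$, the constraint $y\not\leq\phi(j)$ is implied by $y\not\leq x$. The largest such set is obtained by taking $j$ as small as allowed, namely $j=l(x)$; this gives $M_{(x,k)}$. Here the minimum $\hat{0}$ of $P$ handles the degenerate case $x\not\geq\phi(0)$, where $l(x)=0$.
\item If $\phi(j)>x$, the resulting set is contained in $M_{(x,k)}$, so it is not maximal.
\item If $x\not\sim\phi(j)$ with $1\leq j\leq k$, we get $M_{(x,k),(\phi(j),k)}$. The case $j=0$ is excluded because it is contained in the first kind.
\end{itemize}

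The step I expect to be the main obstacle is proving that these candidates are genuinely maximal, rather than merely maximal among sets with a fixed $j_0$. In particular I must show that $M_{(x,k),(\phi(j),k)}$, for $x\not\sim\phi(j)$, is not contained in any torclosed set avoiding $(x,k)$ with a different $j_0$. The approach I would take is to add any missing element and compute the completion via \cref{lem:procheenproche}. Adding $(\phi(j),k)$, or anything in some $\mathcal{C}_i$ with $i\geq j$ that is below $\phi(j)$, should force the elements of $\mathcal{C}_{j-1}$ into $\mathcal{C}_k$. Since $\hat{0}\in I_P(\phi(j-1))$, this eventually forces elements below $x$, in particular $(\hat{0},k)$ and hence $(x,k)$ by the filter property.
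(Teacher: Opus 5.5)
Your reduction is sound: a meet-irreducible is exactly a torclosed set that is maximal among those avoiding some $(x,k)$. The $j_0$ mechanism also works. Write $j=j_0-1\ge 0$ and let $N_j$ be the candidate with components $\mathcal{C}_i$ full for $i<j$ and $i>k$, and equal to $\{(y,i)\mid y\not\le x,\ y\not\le\phi(j)\}$ for $j\le i\le k$. Then every torclosed set avoiding $(x,k)$ with that $j_0$ lies in $N_j$, and $N_j$ is torclosed and avoids $(x,k)$, so a maximal $T$ equals $N_j$. For $j_0=0$ you drop the condition involving $\phi(j)$ and get $M_{(x,k)}$ directly. It is this argument, not ``no constraint into $\mathcal{C}_k$'', that justifies filling the low components: filling some $\mathcal{C}_i$ could a priori propagate through intermediate components.

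The genuine error is your second bullet. If $j\ge 1$ and $\phi(j)>x$, then $l(x)<j$. So $N_j$ contains $(\hat 0,l(x))$ while $M_{(x,k)}$ does not; there is no containment, and $N_j$ can be maximal. In \cref{fig:firstExample}, take $(x,k)=(\phi(0),2)$, which is element $4$. The maximal torclosed sets avoiding $4$ are $M_{(x,k)}=\{1,2,3,5\}$, and also $\{1,2,7\}$ and $\{5,6,7\}$, which are your $N_1$ and $N_2$. So your description of the maximal sets avoiding a fixed $(x,k)$ is wrong.

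The repair is one line. When $\phi(j)>x$, the condition $y\not\le x$ is implied by $y\not\le\phi(j)$, and $l(\phi(j))=j$, so $N_j=M_{(\phi(j),k)}$. This is a first-kind element indexed by a different point. The proposition only concerns the union over all $(x,k)$, so your final classification survives. Only for $j=0$ is $N_0$ strictly inside $M_{(x,k)}$.

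With this fix your argument is a valid route, different from the paper's. The paper starts from the unique cover $T^*$ and takes $(x,k)=\max(T^*\setminus T)$ via \cref{lemCovers}. It then reads off the two kinds from the maximal elements of $\mathcal{C}_k\setminus T_k$: either there is a single one, or exactly two, one off the chain and one of the form $(\phi(j),k)$. With that choice of witness, your bullet-2 situation cannot occur, because the cover of $M_{(\phi(j),k)}$ contains $(\phi(j),k)>(x,k)$.

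Your use of the general lattice fact gives a uniform treatment of both directions. Meet-irreducibility of the listed sets becomes the maximality check, which you sketch correctly for the second kind. For the first kind it follows from your $j_0$ analysis: enlarging $M_{(x,k)}$ can only lower $j_0$, and $j_0(M_{(x,k)})$ is already the least value compatible with avoiding $(x,k)$. The cost is that an arbitrary witness $(x,k)$ produces extra maximal sets, which must be recognised as first-kind elements rather than discarded.
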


\begin{proof}
We start by proving that any meet-irreducible is of one of the two kinds. Let $T$ be a meet-irreducible, covered by $T^*$. 
Let $k<n$ be the biggest nonnegative integer such that $T_k\neq \mathcal{C}_k$. Since $T<T\cup \mathcal{C}_k$, then $T^{*}\setminus T \subseteq \mathcal{C}_k$. Moreover, by \cref{lemCovers}, $T^{*}\setminus T$ has a maximum element $(x,k)$. In particular, $(x,k)$ is a maximal element of $\mathcal{C}_k \setminus T_k$.
In fact, for any maximal element $m$ of $\mathcal{C}_k \setminus T_k$ such that $m\neq (\phi(i),k)$ for every $i\leq k$, we have $T\lessdot T\cup \{m\}$. Since $T$ is meet-irreducible and the elements $(\phi(i),k)$ for $i\leq k$ form a chain, then $(x,k)$ is the maximum of $\mathcal{C}_k \setminus T_k$, or there exists $j\leq k$ such that the maximal elements of $\mathcal{C}_k \setminus T_k$ are $(x,k)$ and $(\phi(j),k)$. Let us prove that in the former case $T=M_{(x,k)}$, whereas in the latter case $T=M_{(x,k),(\phi(j),k)}$. In the remainder of this proof, for these two different cases we will refer to the \emph{former case} and to the \emph{latter case}.

Let $A_i=\{(y,i)\in \mathcal{C}_P^{\phi} \mid y\not\leq x\}$ and $B_i=\{(y,i) \in \mathcal{C}_P^{\phi} \mid y\not\leq x,\,y\not\leq \phi(j)\}$.
In the former case we have $(\phi(l(x)+1),k)\in T$ but $(x,k)\not\in T$, and in the latter case we have $(\phi(j+1),k)\in T$ but $(x,k)\not\in T$ and $(\phi(j),k)\not\in T$. Since $T$ is torclosed, then in the former case for all $i\in \{l(x),l(x)+1,\dots,k\}$, we have $T_i\subseteq A_i$, and in the latter case for all $i\in \{j,j+1,\dots,k\}$, we have $T_i\subseteq B_i$.

Then this forces in the former case to have $T_{<l(x)} = \bigsqcup_{0\leq i<l(x)} \mathcal{C}_i$. Indeed, since $T_i\subseteq A_i$ for all $i\in \{l(x),l(x)+1,\dots,k\}$, then $(\phi(p),i)\not \in T$ for all $p\leq l(x)$ and $i\in \{l(x),l(x)+1,\dots,k\}$. Moreover, $T_q=\mathcal{C}_q$ for all $k<q<n$, which proves that the addition of elements of $\sqcup_{i<l(x)} \,\mathcal{C}_i$ to $T$ does not affect the others components of $T$. Thus $T\leq T\vee \big(\sqcup_{i<l(x)} \,\mathcal{C}_i\big) =  T\cup \big(\sqcup_{i<l(x)} \,\mathcal{C}_i\big)$. If the inequality is strict, then $T^{*}\setminus T \subseteq \sqcup_{i<l(x)} \,\mathcal{C}_i$, which is absurd. Thus, we have $T=T\cup \big(\sqcup_{i<l(x)} \,\mathcal{C}_i\big)$. This proves that $T_{<l(x)} = \bigsqcup_{0\leq i<l(x)} \mathcal{C}_i$. We would prove very similarly, by replacing $l(x)$ by $j$, that in the latter case $T_{<j} = \bigsqcup_{0\leq i<j} \mathcal{C}_i$.

Moreover, it is immediate that $\sqcup_{l(x)\leq i\leq k}\,A_i$ and $\sqcup_{j\leq i\leq k}\,B_i$ are torclosed sets, and with the same argument as above we have that $T_{<l(x)} \sqcup \big(\sqcup_{l(x)\leq i\leq k}\,A_i\big) \sqcup T_{>k}$ and $T_{<j} \sqcup \big(\sqcup_{j\leq i\leq k}\,B_i\big) \sqcup T_{>k}$ are torclosed. But both of these torclosed sets contain $T$, and since they can only differ in components that are not the $k$-th component, then in fact they are equal to $T$ in the respective cases. 

It remains to prove that the two torclosed sets $M_{(x,k)}$ and $M_{(x,k),(\phi(j),k)}$ are meet-irreducibles. Adding to these torclosed sets respectively an element $(y,i) \not\in A_i$, for $i\in \{l(x),l(x)+1,\dots,k\}$, or $(y,i) \not\in B_i$, for $i\in \{j,j+1,\dots,k\}$, will by completion add $(y,k)$ to each of these torclosed sets. Then any torclosed set that covers one of these two torclosed sets add only elements of the $k$-th component. Thus any element that covers $M_{(x,k)}$ contains $(x,k)$, then $M_{(x,k)}$ is meet-irreducible and its only cover is the minimal torclosed set containing $M_{(x,k)}\cup \{(x,k)\}$. For $M_{(x,k),(\phi(j),k)}$, the minimal torclosed set containing $M_{(x,k),(\phi(j),k)}\cup \{(\phi(j),k)\}$ contains also $(x,k)$. Indeed, $(\hat{0},j-1) \in M_{(x,k),(\phi(j),k)}$ thus by completion $(\hat{0},k)$ is in this minimal torclosed set, then $(x,k)$ also. Thus $M_{(x,k),(\phi(j),k)}$ has only one cover which is $M_{(x,k),(\phi(j),k)} \cup \{(x,k)\}$.
\end{proof}


\begin{proposition}
\label{propJSD}
Let $L=\Tam{(P,\phi)}$.
The lattice $L$ is join-semidistributive. 
Suppose additionally that $P$ has a minimum element. 
Then $L$ is semidistributive if and only if each element of $I_P(\phi(n-1))$ is comparable to all the elements of the chain $\phi(1)<\phi(2)<\cdots <\phi(n-1)$.
\end{proposition}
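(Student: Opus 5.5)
For join-semidistributivity I will use the criterion of \cref{lemEquiJSDpphi}: for every cover $T\lessdot T'$, the set $I_L(T')\setminus I_L(T)$ must have a minimum. By \cref{lemCovers}, $T'\setminus T\subseteq \mathcal{C}_k$ for some $k$, and $T'\setminus T$ has a maximum element $m=(x,k)$. The candidate minimum is the join-irreducible $J_{(x,k)}=F_{\mathcal{C}_P^{\phi}}((x,k))$.

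First, $J_{(x,k)}\le T'$, since $(x,k)\in T'$ and $T'$ is an order filter. Also $J_{(x,k)}\not\le T$, since $(x,k)\notin T$.

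Next, take any torclosed $S\le T'$ with $S\not\le T$. Then $S\cap (T'\setminus T)\neq\emptyset$, so $S$ contains some $(y,k)$ with $(y,k)\le (x,k)$. Because $S$ is an order filter, it contains $(x,k)$ and hence $J_{(x,k)}$. Thus $J_{(x,k)}$ is the minimum, and this gives join-semidistributivity.

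For the semidistributivity criterion, \cref{lemJSDjoinmeetSD} reduces the question to comparing $|\mathrm{JIrr}(L)|$ with $|\mathrm{MIrr}(L)|$. By \cref{prop:joinirr}, $|\mathrm{JIrr}(L)|=|\mathcal{C}_P^{\phi}|$. By \cref{prop:meetirr}, the meet-irreducibles of the first kind are indexed by $\mathcal{C}_P^{\phi}$. I will check that the map $(x,k)\mapsto M_{(x,k)}$ is injective, since $(x,k)$ is recovered as the maximum of the complement in the last non-full component. The second kind consists of the $M_{(x,k),(\phi(j),k)}$ with $1\le j\le k$ and $x\not\sim\phi(j)$. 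These are distinct from all first-kind elements, because their complement in $\mathcal{C}_k$ has two maximal elements rather than a maximum. I also need that they are pairwise distinct for different parameters $((x,k),j)$: the data $(x,k)$ and $\phi(j)$ are the two maximal elements of the complement in $\mathcal{C}_k$, and the incomparable one not on the chain is $x$.

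Hence $|\mathrm{MIrr}(L)|=|\mathcal{C}_P^{\phi}|+N$, where $N$ is the number of pairs $((x,k),j)$ with $(x,k)\in\mathcal{C}_P^{\phi}$, $1\le j\le k$, and $x\not\sim\phi(j)$. So $L$ is semidistributive if and only if $N=0$. If some $x\le\phi(n-1)$ is incomparable to some $\phi(j)$ with $j\ge1$, then $(x,n-1)\in\mathcal{C}_{n-1}$ and $j\le n-1$ gives $N>0$. Conversely, if every element of $I_P(\phi(n-1))$ is comparable to every $\phi(j)$ with $j\ge 1$, then since each $I_P(\phi(k))\subseteq I_P(\phi(n-1))$, no such pair exists and $N=0$.

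The main obstacle is the careful verification that the meet-irreducibles listed in \cref{prop:meetirr} are pairwise distinct, so that the count is exact. In particular I must check that $(x,k)$ really is a maximal element of the complement of $M_{(x,k),(\phi(j),k)}$ in $\mathcal{C}_k$, and that $\phi(j)$ rather than some larger chain element is the other maximal element. I will handle this by reading off $k$ as the largest index where the component is not full, and then identifying the maximal elements of the complement $\{(y,k)\mid y\le x \text{ or } y\le\phi(j)\}$ directly.
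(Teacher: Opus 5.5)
Your proposal is correct and follows essentially the same route as the paper. Join-semidistributivity comes from \cref{lemEquiJSDpphi}, using the join-irreducible $J_m$ generated by $m=\max(T'\setminus T)$ from \cref{lemCovers}, and semidistributivity comes from \cref{lemJSDjoinmeetSD} by comparing the counts from \cref{prop:joinirr} and \cref{prop:meetirr}. Your explicit check that the listed meet-irreducibles are pairwise distinct is a detail the paper leaves implicit, and it is correct: you read off $k$ as the last non-full component, then identify the maximal elements of its complement, noting that $x$ cannot lie on the chain $\phi$ because $x\not\sim\phi(j)$.
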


\begin{proof}
We use \cref{lemEquiJSDpphi}. Let $T\lessdot T'$ in $L$. 
By \cref{lemCovers}, there exists $k<n$ such that $T'\setminus T \subseteq \mathcal{C}_k$ and it has a maximum element $m$. The torclosed set generated by $m$, which is $J_m$, is the minimum element of $I_L(T')\setminus I_L(T)$. Using \cref{lemEquiJSDpphi}, this proves that $L$ is join-semidistributive.

The second statement follows from \cref{lemJSDjoinmeetSD}, with the number of join-irreducibles given in \cref{prop:joinirr} which is equal to the number of meet-irreducibles if and only if we do not have meet-irreducibles of the second kind as described in \cref{prop:meetirr}.
\end{proof}

\begin{example}
See \cref{fig:firstExample} for the smallest counter-example to semidistributivity. The number of join-irreducibles in this figure is $7$, whereas the number of meet-irreducibles is $8$. This is because we have the meet-irreducible of the second kind $M_{2,3}=\{1,7\}$.    
\end{example}

The following result proves that the distributive lattices that are $(P,\phi)$-Tamari lattices are the ones that have exactly one atom. Thus by forgetting the minimum $\emptyset$ of these $(P,\phi)$-Tamari lattices, we recover all distributive lattices.

\begin{proposition}
If $\phi$ has only one element, then $\Tam(P,\phi) \cong J(I_P(\phi(0))^{op})$. These correspond to all the distributive lattices that have exactly one atom. If $|\phi|\geq 2$, then $\Tam(P,\phi)$ is not graded, and thus is not distributive.
\end{proposition}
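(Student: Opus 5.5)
When $\phi$ has a single element, $\mathcal{C}_P^{\phi}=\mathcal{C}_0\cong I_P(\phi(0))$, and the torclosed condition is vacuous because there is no pair $i<j$. So $\Tam(P,\phi)$ is the lattice of order filters of $I_P(\phi(0))$ under inclusion. Order filters of a poset $Q$ are exactly the order ideals of $Q^{op}$, which gives $\Tam(P,\phi)\cong J(I_P(\phi(0))^{op})$.

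For the characterization, write $Q=I_P(\phi(0))^{op}$; it is a finite poset with a minimum, namely $\phi(0)$ viewed in the dual. The atoms of $J(Q)$ are the ideals $\{q\}$ with $q$ minimal in $Q$. Hence $J(Q)$ has exactly one atom if and only if $Q$ has a unique minimal element, that is, a minimum. Conversely, Birkhoff's theorem says every finite distributive lattice $D$ is $J(R)$, with $R=\mathrm{JIrr}(D)$. If $D$ has exactly one atom, that atom is join-irreducible and lies below every other join-irreducible. So $R$ has a minimum $r_0$, and $R^{op}$ is a principal order ideal $I_{R^{op}}(r_0)$. Taking $P=R^{op}$ and $\phi(0)=r_0$ then realizes $D\cong J(R)$. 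The remark about forgetting $\emptyset$ also follows: for any poset $S$, adjoining a new minimum gives a poset $Q$ whose ideal lattice $J(Q)$ is $J(S)$ with a new bottom added.

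For $|\phi|\ge 2$, I will exhibit two maximal chains of different lengths, working in $\Tam(P,\phi_{|\{0,1\}})$. Every nonempty torclosed set containing $\mathcal{C}_1$ and meeting $\mathcal{C}_0$ contains $\mathcal{C}_0\sqcup\mathcal{C}_1$, so I will look at the interval $[\emptyset,\mathcal{C}_0\sqcup \mathcal{C}_1]$, which is the whole lattice when $n=2$. One maximal chain is $\emptyset\lessdot\mathcal{C}_0=\{(\phi(0),0)\}$, followed by the jump to $\mathcal{C}_0\sqcup\mathcal{C}_1$. This second step is a cover: any torclosed set containing $(\phi(0),0)$ and $(\phi(1),1)$ must contain all of $\mathcal{C}_1$, since $(x,0)\in T$ with $x=\phi(0)$ forces $(\phi(0),1)$, and every element of $\mathcal{C}_1$ lying above it is then in $T$. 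I need to be careful here: elements of $I_P(\phi(1))$ not above $\phi(0)$ are not forced. The other chain fills $\mathcal{C}_1$ one element at a time, through $|\mathcal{C}_1|\ge 2$ steps, and then adds $(\phi(0),0)$, giving length at least $3$.

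The main obstacle is making the first chain genuinely short, because it has length $2$ only if the completion fills $\mathcal{C}_1$. If it does not, I will choose a different short chain. Let $T$ be the largest torclosed set in $\mathcal{C}_1$ whose union with $\mathcal{C}_0$ is torclosed. Then build the chain $\emptyset\lessdot\{(\phi(0),0)\}\lessdot\cdots$ through the completion of $\{(\phi(0),0),(\phi(1),1)\}$, which is $\mathcal{C}_0\sqcup F_{\mathcal{C}_1}((\phi(0),1))$. Compare it with the chain that adds $(\phi(0),0)$ last, after filling $F_{\mathcal{C}_1}((\phi(0),1))$ element by element. The first chain reaches the common element $\mathcal{C}_0\sqcup F_{\mathcal{C}_1}((\phi(0),1))$ in $2$ steps. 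The second takes $|F_{\mathcal{C}_1}((\phi(0),1))|+1\ge 3$ steps, since this filter contains both $(\phi(0),1)$ and $(\phi(1),1)$. Extending both chains above this common element by the same saturated chain gives two maximal chains of different lengths. So the lattice is not graded, and since finite distributive lattices are graded, it is not distributive.
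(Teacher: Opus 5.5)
Your argument works, but you need to fix one slip. $\mathcal{C}_0=I_P(\phi(0))\times\{0\}$ equals $\{(\phi(0),0)\}$ only when $\phi(0)$ is minimal in $P$; in \cref{fig:counterexamplecongruence}, for instance, $|\mathcal{C}_0|=2$. In general $\emptyset\lessdot\mathcal{C}_0$ is false. Also, the completion of $\{(\phi(0),0),(\phi(1),1)\}$ is $X:=\{(\phi(0),0)\}\sqcup F_{\mathcal{C}_1}((\phi(0),1))$, not $\mathcal{C}_0\sqcup F_{\mathcal{C}_1}((\phi(0),1))$. The latter is not even torclosed when some $x<\phi(0)$ exists, since $(x,0)$ and $(\phi(1),1)$ would force $(x,1)$, which is not above $(\phi(0),1)$.

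With $X$ as the common element, everything goes through:
\begin{itemize}
\item $X$ is torclosed.
\item $\{(\phi(0),0)\}\lessdot X$, since a torclosed set strictly in between contains an element of $\mathcal{C}_1$, hence $(\phi(1),1)$, hence $(\phi(0),1)$ and all of $F_{\mathcal{C}_1}((\phi(0),1))$.
\item The chain that fills $F_{\mathcal{C}_1}((\phi(0),1))$ first and adds $(\phi(0),0)$ last has length $|F_{\mathcal{C}_1}((\phi(0),1))|+1\geq 3$.
\end{itemize}
All these sets lie in the down-set $\Tam(P,\phi_{|\{0,1\}})$ of $\Tam(P,\phi)$, so these covers are covers in the whole lattice. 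You should also delete the sentences that are false for non-singleton $\mathcal{C}_0$ or unused: the claim about torclosed sets containing $\mathcal{C}_1$ and meeting $\mathcal{C}_0$, the abandoned first chain, and the set $T$.

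With this repair, your route differs from the paper's. The paper uses \cref{prop:longestchainsTam}, i.e.\ $\ell(\Tam(P,\phi))=|\mathcal{C}_P^{\phi}|$. It then notes that every maximal chain through the atom $\{(\phi(0),0)\}$ has a step where $(\phi(1),1)$ and $(\phi(0),1)$ enter together, so such chains have length at most $|\mathcal{C}_P^{\phi}|-1$. You exploit the same forced pair but localize it, exhibiting an explicit interval $[\emptyset,X]$ with saturated chains of lengths $2$ and $\geq 3$.

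What each buys:
\begin{itemize}
\item Your version is self-contained: it does not need the longest-chain result, which the paper's proof cites before proving it. It also shows the Jordan--Dedekind condition already fails in a small interval.
\item The paper's version is shorter once \cref{prop:longestchainsTam} is available. It shows more globally that no maximal chain through that atom is a longest chain.
\end{itemize}
For the one-element case, you spell out the Birkhoff argument that the paper compresses into a citation of the fundamental theorem: $J(Q)$ has a unique atom iff $Q$ has a minimum, and conversely a unique atom is the minimum of $\mathrm{JIrr}$.
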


\begin{proof}
If $\phi$ has one element, then the torclosed sets are the order filters of $I_P(\phi(0))$. Then the result follows from the fundamental theorem of finite distributive lattices. Suppose that $|\phi|\geq 2$. By \cref{prop:longestchainsTam}, $\ell(\Tam(P,\phi))=|\mathcal{C}_P^{\phi}|$. But all maximal chains in the order filter of $\Tam(P,\phi)$ generated by the atom $\{(\phi(0),0)\}$ have length at most $|\mathcal{C}_P^{\phi}|-2=\ell(\Tam(P,\phi))-2$. Indeed, the moment we will add $(\phi(1),1)$, the completion will also add with it at least another element $(\phi(0),1)$. Thus the maximal chains of $\Tam(P,\phi)$ do not have all the same length, which means that $\Tam(P,\phi)$ is not graded.
\end{proof}

\begin{proposition}
Suppose that $P$ has a minimum $\hat{0}$ and $\phi(0)=\hat{0}$. Let $L=\Tam(P,\phi)$. Then $L$ is complemented and pseudocomplemented.
\end{proposition}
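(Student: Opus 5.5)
The plan is to make every notion explicit using the atoms $\{(\phi(i),i)\}$ of $\Tam(P,\phi)$ and the fact that meet is intersection. Since $\phi(0)=\hat{0}$, the component $\mathcal{C}_0=I_P(\hat{0})\times\{0\}$ is the single element $(\hat{0},0)$. Also $(\hat{0},i)=(\phi(0),i)$ is the minimum of $\mathcal{C}_i$ for every $i$. For a torclosed set $T$, put $K(T):=\{i<n \mid T_i=\emptyset\}$ and
$$S(T):=\bigsqcup_{i\in K(T)} \mathcal{C}_i .$$
I will show that $S(T)$ is simultaneously the largest element meeting $T$ in $\emptyset$ and a complement of $T$. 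Join-pseudocomplementation will come separately from join-semidistributivity.

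\textbf{Step 1: meet-pseudocomplements.} First, $S(T)$ is torclosed. It is a union of full components, so it is an order filter. For the torclosed condition, take $i<j$ with $(x,i)\in S(T)$ and $(\phi(i+1),j)\in S(T)$. Then $j\in K(T)$, so $\mathcal{C}_j\subseteq S(T)$ and hence $(x,j)\in S(T)$. Next I characterize when $T\wedge S=\emptyset$. Any nonempty order filter meeting $\mathcal{C}_i$ contains the maximum $(\phi(i),i)$ of $\mathcal{C}_i$. Hence $T\cap S=\emptyset$ if and only if no index $i$ has both $T_i\neq\emptyset$ and $S_i\neq\emptyset$, that is, if and only if $S_i=\emptyset$ for all $i\notin K(T)$. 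Every such $S$ is contained in $S(T)$, and $S(T)\cap T=\emptyset$. So $S(T)$ is the maximum of $\{S \mid T\wedge S=\hat{0}\}$, and $L$ is meet-pseudocomplemented.

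\textbf{Step 2: join-pseudocomplements.} By \cref{propJSD}, $L$ is join-semidistributive. I would use the standard argument. Suppose $T\vee y=T\vee z=\hat{1}$. Join-semidistributivity gives $T\vee(y\wedge z)=\hat{1}$. Since $L$ is finite, the meet of all $y$ with $T\vee y=\hat{1}$ again satisfies this property, so it is the minimum of that set. No hypothesis on $\phi$ is needed for this step.

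\textbf{Step 3: complements.} I claim $T\vee S(T)=\mathcal{C}_P^{\phi}$. Since $T\wedge S(T)=\emptyset$ is already known, this makes $S(T)$ a complement of $T$. Let $U=T\vee S(T)$. First, $(\hat{0},0)\in U$: either $0\in K(T)$, so $\mathcal{C}_0\subseteq S(T)$, or $T_0\neq\emptyset$, and then $T_0=\mathcal{C}_0$ because $\mathcal{C}_0$ is a singleton. Now take $j\geq 1$ with $j\notin K(T)$. Then $(\phi(j),j)\in T$, and since $T$ is an order filter and $\phi(1)\leq\phi(j)$, also $(\phi(1),j)\in T\subseteq U$. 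Applying the torclosed condition with $i=0<j$ to $(\hat{0},0)\in U$ and $(\phi(1),j)\in U$ gives $(\hat{0},j)\in U$. Since $(\hat{0},j)$ is the minimum of $\mathcal{C}_j$ and $U$ is an order filter, $\mathcal{C}_j\subseteq U$. The components with $j\in K(T)$ lie in $S(T)$, so $U=\mathcal{C}_P^{\phi}$.

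I expect the only delicate point to be Step 3. The difficulty is seeing that the completion fills an entire component from a single element. This is exactly where the hypothesis $\phi(0)=\hat{0}$ is used: it makes $\mathcal{C}_0$ a singleton, so it lies in either $T$ or $S(T)$, and it makes $(\phi(0),j)=(\hat{0},j)$ the bottom of $\mathcal{C}_j$.
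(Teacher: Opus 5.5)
Step 1 is correct, and your $S(T)=\bigsqcup_{i\in K(T)}\mathcal{C}_i$ is exactly the paper's $T'$. Step 2 is a valid, different route from the paper's. The paper gets join-pseudocomplementation by citing the result that a complemented, meet-pseudocomplemented lattice is pseudocomplemented. You instead derive it from join-semidistributivity: the set $\{y\mid T\vee y=\hat{1}\}$ is closed under meets, so it has a minimum. That argument is self-contained, since join-semidistributivity is already proved.

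However, Step 3 contains a false inference. You claim that $(\phi(j),j)\in T$ and $\phi(1)\leq\phi(j)$ give $(\phi(1),j)\in T$ because $T$ is an order filter. But order filters are closed upward, and $(\phi(1),j)\leq(\phi(j),j)$, so nothing follows. For a concrete failure, take $P=C_3$, $\phi=C_3$ and $T=\{(\phi(2),2)\}$. This $T$ is torclosed and $2\notin K(T)$. Yet $(\phi(1),2)$ lies neither in $T$ nor in $S(T)=\mathcal{C}_0\sqcup\mathcal{C}_1$, so you cannot apply the torclosed condition directly with $i=0<j$.

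The repair is to pass through consecutive components, which is the paper's ``successive completions'' argument. For every $j\geq 1$, $(\phi(j),j)\in U$: it lies in $T$ if $T_j\neq\emptyset$, being the maximum of $\mathcal{C}_j$, and in $S(T)$ otherwise. Now induct on $j$, starting from $(\hat{0},0)\in U$. From $(\hat{0},j-1)\in U$ and $(\phi((j-1)+1),j)\in U$, the torclosed condition with $i=j-1$ gives $(\hat{0},j)\in U$, hence $\mathcal{C}_j\subseteq U$. With this change the proof is complete.
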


\begin{proof}
Let $T\in L$ be a torclosed set that is not $\emptyset$ neither $\mathcal{C}_P^{\phi}$. Let $I=\{i<n \mid T_i\neq \emptyset\}$. Since $\phi(0)=\hat{0}$ and $T\neq \mathcal{C}_P^{\phi}$, then $J:=\{0,1,\dots,n-1\}\setminus I$ is not empty. Indeed, if $J=\emptyset$ then $(\phi(i),i)\in T$ for all $i<n$, and successive completions would give $(\hat{0},i)\in T$ for all $i<n$, but then $T=\mathcal{C}_P^{\phi}$. Let $T':=\bigsqcup_{i\in J} \mathcal{C}_i$. Then $T'$ is torclosed and $T\wedge T'=\emptyset$. Also, we have $T\vee T' = \mathcal{C}_P^{\phi}$ for the same reason as why $J\neq \emptyset$ using $\phi(0)=\hat{0}$. This proves that $T'$ is a complement of $T$. Thus $L$ is complemented. Moreover, it is immediate that $T'=\max\,\{R\in L\mid T\wedge R=\emptyset \}$, which proves that $L$ is meet-pseudocomplemented. We conclude that $L$ is pseudocomplemented using for example \cite[Théorème 4.3]{CHAMENINAMBUA199289}, which proves that a complemented lattice is pseudocomplemented if and only if it is meet-pseudocomplemented. 
\end{proof}

We define on $\mathcal{C}_P^{\phi}$ another partial order $\leq_{prod}$, called the \emph{product order}, defined by $(x,i) \leq_{prod} (y,j)$ if and only if $x\leq y$ and $i\leq j$. See \cref{fig:spineproductorder} for an example of this poset.

\begin{figure}
    \centering
\begin{tikzpicture}
\node (7) at (0,0) {$7$};   
\node (5) at (0,2) {$5$}; 
\node (6) at (1,1) {$6$}; 
\node (3) at (1,3) {$3$}; 
\node (4) at (2,2) {$4$}; 
\node (1) at (2,4) {$1$}; 
\node (2) at (3,3) {$2$}; 
\draw[->,>=latex] (7)--(6);
\draw[->,>=latex] (6)--(4);
\draw[->,>=latex] (4)--(2);
\draw[->,>=latex] (5)--(3);
\draw[->,>=latex] (3)--(1);
\draw[->,>=latex] (6)--(5);
\draw[->,>=latex] (4)--(3);
\draw[->,>=latex] (2)--(1);

\begin{scope}[xshift=6cm]
\node[scale=0.7] (v17) at (2,7) {$\mathcal{C}_P^{\phi}$};
\node[scale=0.7] (v11) at (2,6) {$\{1,2,3,4,5,6\}$};
\node[scale=0.7] (v9) at (2,5) {$\{1,2,3,4,5\}$};
\node[scale=0.7] (v8) at (1,4) {$\{1,2,3,5\}$};
\node[scale=0.7] (v7) at (0,3) {$\{1,3,5\}$};
\node[scale=0.7] (v5) at (3,4) {$\{1,2,3,4\}$};
\node[scale=0.7] (v4) at (2,3) {$\{1,2,3\}$};
\node[scale=0.7] (v3) at (1,2) {$\{1,3\}$};
\node[scale=0.7] (v2) at (3,2) {$\{1,2\}$};
\node[scale=0.7] (v1) at (2,1) {$\{1\}$};
\node[scale=0.7] (v0) at (2,0) {$\emptyset$};

\draw[->,>=latex] (v0)--(v1);
\draw[->,>=latex] (v1)--(v2);
\draw[->,>=latex] (v1)--(v3);
\draw[->,>=latex] (v2)--(v4);
\draw[->,>=latex] (v3)--(v4);
\draw[->,>=latex] (v3)--(v7);
\draw[->,>=latex] (v4)--(v5);
\draw[->,>=latex] (v4)--(v8);
\draw[->,>=latex] (v5)--(v9);
\draw[->,>=latex] (v7)--(v8);
\draw[->,>=latex] (v8)--(v9);
\draw[->,>=latex] (v9)--(v11);
\draw[->,>=latex] (v11)--(v17);
\end{scope}
\end{tikzpicture}
    \caption{On the left we have $(\mathcal{C}_P^{\phi},\leq_{prod})$ from \cref{fig:firstExample}, and on the right $J\big(\,(\mathcal{C}_P^{\phi},\leq_{prod})^{op}\,\big)$.}
    \label{fig:spineproductorder}
\end{figure}

\begin{proposition}
\label{prop:longestchainsTam}
The linear extensions $e_1\prec e_2 \prec \cdots \prec e_{m}$ of the dual of $(\mathcal{C}_P^{\phi},\leq_{prod})$ correspond to the longest chains $\emptyset \subsetneq \{e_1\} \subsetneq\{e_{1}, e_{2}\} \subsetneq\dots \subsetneq\{e_1,e_2,\dots , e_{m} \}=\mathcal{C}_P^{\phi}$ of $\Tam(P,\phi)$. Thus $\ell(\Tam{(P,\phi)})=|\mathcal{C}_P^{\phi}|$.
\end{proposition}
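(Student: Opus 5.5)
The plan is to combine the general bound $\ell(L)\leq|\mathrm{JIrr}(L)|$ with \cref{prop:joinirr}. This gives $\ell(\Tam(P,\phi))\leq|\mathcal{C}_P^{\phi}|$. Since the elements of $\Tam(P,\phi)$ are subsets of $\mathcal{C}_P^{\phi}$ ordered by inclusion, a chain of length $|\mathcal{C}_P^{\phi}|$ from $\emptyset$ to $\mathcal{C}_P^{\phi}$ must add exactly one element at each step. Conversely, any sequence $e_1,\dots,e_m$ enumerating $\mathcal{C}_P^{\phi}$ whose prefixes $\{e_1,\dots,e_r\}$ are all torclosed gives a chain of length $m$. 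So it suffices to show the following: every prefix of an enumeration $e_1,\dots,e_m$ is torclosed if and only if the enumeration is a linear extension of $(\mathcal{C}_P^{\phi},\leq_{prod})^{op}$. Once such a chain is exhibited, the equality $\ell=|\mathcal{C}_P^{\phi}|$ follows.

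For the ``if'' direction, the prefixes of a linear extension of the dual are exactly order filters of $(\mathcal{C}_P^{\phi},\leq_{prod})$. Let $T$ be such a filter. It is an order filter of $\mathcal{C}_P^{\phi}$, since the order of $\mathcal{C}_P^{\phi}$ is contained in $\leq_{prod}$. It is also torclosed. Indeed, suppose $(x,i)\in T$ and $i<j$. Then $x\leq\phi(i)\leq\phi(j)$, so $(x,j)\in\mathcal{C}_j$. Moreover $(x,j)\geq_{prod}(x,i)$, hence $(x,j)\in T$, and we do not even need $(\phi(i+1),j)\in T$. Since linear extensions exist, this direction also produces a chain of length $|\mathcal{C}_P^{\phi}|$.

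For the ``only if'' direction, which is the main obstacle, take an enumeration all of whose prefixes are torclosed. Single steps alone do not force the product-filter property: the atom $\{(\phi(0),0)\}$ is torclosed but is not a product filter. So the argument must use the whole chain, and I would argue as follows.

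First, each prefix is an order filter of $\mathcal{C}_P^{\phi}$. Hence within a single component $\mathcal{C}_i$, larger elements come first.

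Second, since $\leq_{prod}$ is generated by the order within components together with the relations $(x,i)<_{prod}(x,i+1)$ (valid because $x\leq\phi(i)\leq\phi(i+1)$), it suffices to show that $(x,i+1)$ is added before $(x,i)$. Suppose instead that $(x,i)$ comes before $(x,i+1)$. The element $(\phi(i+1),i+1)$ is the maximum of $\mathcal{C}_{i+1}$. By the first point it comes no later than $(x,i+1)$, and in fact strictly earlier, because $x\leq\phi(i)<\phi(i+1)$ gives $(x,i+1)\neq(\phi(i+1),i+1)$. There are two cases.
\begin{enumerate}
\item[(a)] If $(\phi(i+1),i+1)$ comes after $(x,i)$, consider the prefix ending at $(\phi(i+1),i+1)$. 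It contains $(x,i)$ and $(\phi(i+1),i+1)$ but not $(x,i+1)$, which contradicts torclosedness.
\item[(b)] If $(\phi(i+1),i+1)$ comes before $(x,i)$, consider the prefix ending at $(x,i)$. It again contains both $(x,i)$ and $(\phi(i+1),i+1)$ but not $(x,i+1)$, which is the same contradiction.
\end{enumerate}

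Hence every such enumeration is a linear extension of the dual product order. This gives the bijection between these linear extensions and the longest chains, and with it $\ell(\Tam(P,\phi))=|\mathcal{C}_P^{\phi}|$.
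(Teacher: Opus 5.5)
Your proof is correct and follows essentially the paper's argument: prefixes of a linear extension of the dual product order are torclosed, and conversely any violation is ruled out by taking a prefix that contains both $(x,i)$ and $(\phi(i+1),j)$ but not $(x,j)$. You reduce to the generating relations $(x,i)<_{prod}(x,i+1)$, which slightly streamlines the paper's direct treatment of an arbitrary pair $(x,k)\leq_{prod}(y,p)$, and the detour through $|\mathrm{JIrr}|$ for the upper bound is unnecessary, since chains of subsets ordered by inclusion already have length at most $|\mathcal{C}_P^{\phi}|$.
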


\begin{proof}
By definition, a linear extension of the dual of  $(\mathcal{C}_P^{\phi},\leq_{prod})$ is one that takes $(y,j)$ before taking $(x,i)$, for all $j\geq i$ and $y\geq x$ in $P$, where $x\leq \phi(i)$ and $y\leq \phi(j)$. It follows that if $e_1\prec e_2 \prec \cdots \prec e_{m}$ is a linear extension of the dual of $(\mathcal{C}_P^{\phi},\leq_{prod})$, then $\{e_1,e_2,\dots ,e_k\}$ is torclosed for any $k\in [m]$. Thus $\emptyset \subsetneq \{e_1\} \subsetneq\{e_{1}, e_{2}\} \subsetneq\dots \subsetneq\{e_1,e_2,\dots , e_{m} \}=\mathcal{C}_P^{\phi}$ is a maximal chain of $\Tam(P,\phi)$, and is obviously of longest length as we only add one new element at each step.
Thus the longest chains are of length $m=|\mathcal{C}_P^{\phi}|$, which is the length of $\Tam(P,\phi)$. Conversely, let $\emptyset \subsetneq \{e_1\} \subsetneq\{e_{1}, e_{2}\} \subsetneq\dots \subsetneq\{e_1,e_2,\dots , e_{m} \}=\mathcal{C}_P^{\phi}$ be a longest chain of $\Tam(P,\phi)$ (now we know that they have this form). Let $T(i)=\{e_1,\,e_2,\dots , e_i\}$. Then the longest chain is $\emptyset \subsetneq T(1) \subsetneq T(2) \subsetneq \cdots \subsetneq T(m)$. Let us prove that the sequence $e_1,\,e_2,\dots,\,e_m$ defines a linear extension of the dual of $(\mathcal{C}_P^{\phi},\leq_{prod})$.
Seeking a contradiction, suppose that there exists $i<j$ such that $e_i \leq_{prod} e_j$. Note that since $i<j$, then $e_j\not\in T(i)$. Let $e_i=(x,k)$ and $e_j=(y,p)$. Then $e_i \leq_{prod} e_j$ means $x\leq y$ in $P$ and $k\leq p$. If $p=k$, since $e_i=(x,k)\leq (y,k)=e_j$ and $e_i\in T(i)$ which is torclosed, then $e_j\in T(i)$. This is absurd, thus $p>k$.

Let $e_l=(\phi(k+1),p)$. If $e_l\in T(i)$, then by completion we have $(x,p)\in T(i)$. But $e_j=(y,p)\geq (x,p)$, thus since $T(i)$ is torclosed we have $e_j\in T(i)$. This is absurd, thus $e_l\not\in T(i)$. Then $i<l$. Let us explain why this is impossible. Since $(x,p)<(\phi(k+1),p)=e_l$ then a torclosed set that contains $(x,p)$ also contains $e_l$. Since both these elements are not in $T(i)$ and following the chain $\emptyset \subsetneq T(1) \subsetneq T(2) \subsetneq \cdots \subsetneq T(m)$ we add only a unique element at each step, we will add to a torclosed set that contains $T(i)$ the element $e_l$ before adding later the element $(x,p)$. But we proved that as soon as a torclosed set contains both $T(i)$ and the element $e_l$, then it contains $(x,p)$. This is absurd. This finishes the proof of the proposition.
\end{proof}

\begin{corollary} \label{coro:joinextremalTam}
The lattice $\Tam(P,\phi)$ is join-extremal.  
\end{corollary}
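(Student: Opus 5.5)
The plan is to compare two numbers that the paper has already computed, and to check that they coincide. By definition, $\Tam(P,\phi)$ is join-extremal exactly when $\ell(\Tam(P,\phi))=|\mathrm{JIrr}(\Tam(P,\phi))|$. Throughout this section $\Tam(P,\phi)$ is assumed finite, so both quantities are finite and the definition from \cite{MarkowskyExtremal} applies directly.

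The first step is to invoke \cref{prop:joinirr}. It identifies the join-irreducibles with the principal order filters $J_{(x,i)}$ of $\mathcal{C}_P^{\phi}$, one for each $(x,i)\in\mathcal{C}_P^{\phi}$. Distinct elements generate distinct principal filters, so this gives $|\mathrm{JIrr}(\Tam(P,\phi))|=|\mathcal{C}_P^{\phi}|$.

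The second step is to invoke \cref{prop:longestchainsTam}, which gives $\ell(\Tam(P,\phi))=|\mathcal{C}_P^{\phi}|$. The lower bound is the substantive part. Any linear extension of the dual of $(\mathcal{C}_P^{\phi},\leq_{prod})$ produces a chain of torclosed sets that grows by one element at each step. Such a linear extension exists because $\mathcal{C}_P^{\phi}$ is finite. The upper bound holds trivially, since each strict inclusion adds at least one element of $\mathcal{C}_P^{\phi}$.

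Comparing the two counts gives equality, which is exactly join-extremality. As a consistency check, the general inequality $\ell(L)\leq|\mathrm{JIrr}(L)|$ from the lemma in \cref{sec:posetsandlattices} must become an equality here, and it does. There is no real obstacle, since the two propositions were proved precisely for this purpose. The only point to verify is that the finiteness hypothesis of this section is in force, so that all the cardinalities involved are finite.
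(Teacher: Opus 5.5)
Your proposal is correct and is the paper's proof: it combines $|\mathrm{JIrr}(\Tam(P,\phi))|=|\mathcal{C}_P^{\phi}|$ from \cref{prop:joinirr} with $\ell(\Tam(P,\phi))=|\mathcal{C}_P^{\phi}|$ from \cref{prop:longestchainsTam}. Your remarks on finiteness and on the trivial upper bound are accurate.
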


\begin{proof}
This follows from \cref{prop:joinirr} and \cref{prop:longestchainsTam}.  
\end{proof}

\begin{proposition}
\label{prop:spineTam}
The elements of the spine of $\Tam(P,\phi)$ correspond to the order filters of $(\mathcal{C}_P^{\phi},\leq_{prod})$. 
\end{proposition}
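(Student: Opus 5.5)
Both inclusions reduce to \cref{prop:longestchainsTam}, which identifies the longest chains of $\Tam(P,\phi)$ with the linear extensions $e_1\prec\cdots\prec e_m$ of the dual of $(\mathcal{C}_P^{\phi},\leq_{prod})$. Such a chain consists of the sets $\{e_1,\dots,e_k\}$. So the spine is exactly the set of initial segments of these linear extensions. The statement therefore reduces to a standard fact: a subset of a finite poset $Q$ is an initial segment of some linear extension of $Q$ if and only if it is an order ideal of $Q$. Here $Q$ is the dual of $(\mathcal{C}_P^{\phi},\leq_{prod})$, and order ideals of $Q$ are precisely the order filters of $(\mathcal{C}_P^{\phi},\leq_{prod})$.

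First, let $T$ be in the spine. Then $T=\{e_1,\dots,e_k\}$ for some linear extension of the dual. If $e_i\in T$ and $e_j\geq_{prod} e_i$, then $e_j$ precedes $e_i$ in the linear extension of the dual, so $j<i\leq k$ and $e_j\in T$. Hence $T$ is an order filter for $\leq_{prod}$.

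Conversely, let $F$ be an order filter of $(\mathcal{C}_P^{\phi},\leq_{prod})$. We need a linear extension of the dual that lists the elements of $F$ first, then those of its complement. Take any linear extension of the dual restricted to $F$, and concatenate it with any linear extension of the dual restricted to $\mathcal{C}_P^{\phi}\setminus F$. This concatenation is a linear extension of the whole dual poset. The only relations to check are between an $a\in F$ and a $b\notin F$. We cannot have $b\geq_{prod} a$, since $F$ is a filter and would then contain $b$. So either $a\geq_{prod} b$, in which case $a$ must come before $b$ in the dual, as it does; or $a$ and $b$ are incomparable. Hence the concatenation is a linear extension, and by \cref{prop:longestchainsTam} the set $F$ lies on a longest chain of $\Tam(P,\phi)$. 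This also shows implicitly that $F$ is torclosed, which is consistent with the argument in the proof of \cref{prop:longestchainsTam}.

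No step is a real obstacle, since everything follows from \cref{prop:longestchainsTam}. The only point needing care is the orientation of the dual: order filters for $\leq_{prod}$ are the prefixes of linear extensions of the dual of $(\mathcal{C}_P^{\phi},\leq_{prod})$.
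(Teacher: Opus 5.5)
Your proposal is correct and follows the same route as the paper, which deduces the statement in one line from \cref{prop:longestchainsTam} by noting that the sets on a longest chain are the initial segments of linear extensions of the dual of $(\mathcal{C}_P^{\phi},\leq_{prod})$. You also spell out the standard fact, left implicit in the paper, that these initial segments are exactly the order filters of $(\mathcal{C}_P^{\phi},\leq_{prod})$, and your concatenation argument for the converse handles it correctly.
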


\begin{proof}
This follows from \cref{prop:longestchainsTam} where we proved that the torclosed sets on a longest chain of $\Tam(P,\phi)$ are the order ideals of the linear extensions of the dual of $(\mathcal{C}_P^{\phi},\leq_{prod})$. 
\end{proof}

\begin{example}
The spine of the lattice from \cref{fig:latticefirstExample} is drawn on the right of \cref{fig:spineproductorder}.   
\end{example}

\subsection{Enumeration of $\Tam(P,\phi)$}
\label{sec:enumgeneral}

In this short section, we study the number of elements of $\Tam(P,\phi)$.

Let $m_P^{\phi} = |\Tam{(P,\phi)}|$. Let $I_i=I_P(\phi(i))$ for any $i<n$. Then $|J(I_i)|$ is the number of order filters of $I_i$, which also corresponds to the number of order ideals or the number of antichains of $I_i$. 
We will later prove in \cref{propcompareTamari} that $\Tam(P,\phi)$ always contains a sublattice isomorphic to the Tamari lattice of size $n+1$, which gives $m_P^{\phi} \geq \frac{1}{n+2}\binom{2n+2}{n+1}$. Counting torclosed sets whose elements lie in only one component gives another lower bound $\sum_{i<n} |J(I_i)| -(n-1)$. 
Counting the order filters gives the upper bound $|J(I_0)|\times |J(I_1)| \times \cdots \times |J(I_{n-1})|$. 

\begin{proposition}
\label{prop:enumatmost2}
If $\phi$ has one element, then $m_P^{\phi}=|J(I_0)|$. If $\phi$ has two elements, then \mbox{$m_P^{\phi}=|J(I_0)|+|J(\mathcal{C}_P^{\phi},\leq_{prod})|-1$}.
\end{proposition}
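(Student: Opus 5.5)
The approach is to split torclosed sets according to whether their component in $\mathcal{C}_1$ is empty or not, and count each class directly from \cref{def:TamPphi}.

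If $\phi$ has only one element, the torclosed condition is vacuous: there is no pair $i<j$. So $\Tam(P,\phi)$ is just the set of order filters of $\mathcal{C}_0\cong I_0$, and $m_P^{\phi}=|J(I_0)|$.

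Now suppose $\phi$ has two elements. Then $\mathcal{C}_P^{\phi}=\mathcal{C}_0\sqcup\mathcal{C}_1$, and the only instance of the torclosed condition is $i=0$, $j=1$. It reads: if $(x,0)\in T$ and $(\phi(1),1)\in T$, then $(x,1)\in T$. Since $(\phi(1),1)$ is the maximum of $\mathcal{C}_1$, an order filter $T_1$ of $\mathcal{C}_1$ contains $(\phi(1),1)$ if and only if $T_1\neq\emptyset$. I split into two cases.

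\emph{Case $T_1=\emptyset$.} The condition is vacuous, so $T=T_0$ can be any order filter of $\mathcal{C}_0\cong I_0$. This gives $|J(I_0)|$ torclosed sets.

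\emph{Case $T_1\neq\emptyset$.} The condition becomes $\{x\mid (x,0)\in T_0\}\subseteq\{x\mid (x,1)\in T_1\}$, with $T_0,T_1$ order filters of $\mathcal{C}_0,\mathcal{C}_1$ respectively.

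For the second case I identify such pairs with the order filters of $(\mathcal{C}_P^{\phi},\leq_{prod})$. A subset $T$ is an order filter for $\leq_{prod}$ exactly when two things hold. First, each $T_i$ is an order filter of $\mathcal{C}_i$; this comes from the relations with equal second coordinate. Second, $(x,0)\in T$ together with $x\leq y\leq\phi(1)$ implies $(y,1)\in T$. Given that $T_1$ is already a filter, the second requirement is equivalent to $(x,0)\in T\Rightarrow (x,1)\in T$, which is precisely the inclusion above.

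It remains to discard the $\leq_{prod}$-filters with $T_1=\emptyset$. Since $(x,0)\leq_{prod}(x,1)$, such a filter must also have $T_0=\emptyset$, so the only one is $\emptyset$. Hence the second case contributes $|J(\mathcal{C}_P^{\phi},\leq_{prod})|-1$. Here I use that the number of order filters equals the number of order ideals, which is $|J(\cdot)|$.

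Adding the two cases gives $m_P^{\phi}=|J(I_0)|+|J(\mathcal{C}_P^{\phi},\leq_{prod})|-1$.

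The only delicate point is checking that the $\leq_{prod}$-filter condition reduces to the componentwise inclusion. This uses that $(x,0)\leq_{prod}(y,1)$ holds iff $x\leq y$, and I expect it to be routine.
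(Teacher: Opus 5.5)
Your proof is correct and follows the same route as the paper's proof. The paper also splits on whether $(\phi(1),1)\in T$, gets $|J(I_0)|$ torclosed sets when $T_1=\emptyset$, and identifies the remaining ones with the nonempty order filters of $(\mathcal{C}_P^{\phi},\leq_{prod})$ via the condition $(x,0)\in T\Rightarrow (x,1)\in T$. You simply write out the verification of that identification, and the fact that the empty set is the only $\leq_{prod}$-filter with $T_1=\emptyset$, which the paper leaves implicit.
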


\begin{proof}
The case $|\phi|=1$ is immediate. If $\phi$ has two elements, then a torclosed set may or may not contain $(\phi(1),1)$. The number of torclosed sets that do not contain $(\phi(1),1)$ is $|J(I_0)|$ since they correspond to the torclosed sets $T$ such that $T_1=\emptyset$. On the other hand, the number of torclosed sets containing $(\phi(1),1)$ is $|J(\mathcal{C}_P^{\phi},\leq_{prod})|-1$. Indeed, in this case an order filter $T$ of $\mathcal{C}_P^{\phi}$ is torclosed if and only if $(x,0) \in T$ implies $(x,1)\in T$.
\end{proof}

\begin{proposition}
\label{propComptage3}
Suppose that $P$ has a minimum $\hat{0}$ and $\phi(0)=\hat{0}$. If $\phi$ has three elements, then
\mbox{$m_P^{\phi}=1 + |J(I_1)| + \big|J(\mathcal{C}_P^{\phi}\,,\,\leq_{prod})\big| + |J(I_2\setminus I_1)|$}.
\end{proposition}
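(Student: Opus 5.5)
The plan is to count torclosed sets directly. With $n=3$ we have $\mathcal{C}_0=\{(\hat{0},0)\}$, $\mathcal{C}_1=I_1\times\{1\}$ and $\mathcal{C}_2=I_2\times\{2\}$. First I will unwind \cref{def:TamPphi} into explicit conditions. The pair $(i,j)=(0,1)$ says: if $(\hat{0},0)\in T$ and $(\phi(1),1)\in T$, then $(\hat{0},1)\in T$, hence $T_1=\mathcal{C}_1$. The pair $(0,2)$ says the same for $(\phi(1),2)$, forcing $T_2=\mathcal{C}_2$. The pair $(1,2)$ says: if $(\phi(2),2)\in T$, then the projection of $T_1$ to $P$ is contained in that of $T_2$. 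Since $(\phi(k),k)$ is the maximum of $\mathcal{C}_k$, $T_k\neq\emptyset$ if and only if $(\phi(k),k)\in T$. So all conditions are phrased in terms of which components are empty.

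I then split according to whether $(\hat{0},0)\in T$.

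\emph{Case A: $(\hat{0},0)\notin T$.} Only the $(1,2)$ condition is active.
\begin{itemize}
\item If $T_2=\emptyset$, then $T_1$ is an arbitrary order filter of $I_1$, which gives $|J(I_1)|$ sets.
\item If $T_2\neq\emptyset$, then $T$ is a pair of filters $T_1\subseteq T_2$ of $I_1\subseteq I_2$.
\end{itemize}
The key identification here is that such pairs are exactly the order filters of $(\mathcal{C}_1\sqcup\mathcal{C}_2,\leq_{prod})$. These are in turn the order filters of $(\mathcal{C}_P^{\phi},\leq_{prod})$ other than the whole set, because $(\hat{0},0)$ is the minimum for $\leq_{prod}$. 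One must then subtract the pairs with $T_2=\emptyset$, and $T_1\subseteq T_2$ forces these to be just the empty pair.

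\emph{Case B: $(\hat{0},0)\in T$.}
\begin{itemize}
\item If $(\phi(1),1)\in T$, then $T_1=\mathcal{C}_1$, and $T_2$ is either empty or contains $(\phi(2),2)\geq(\phi(1),2)$, which forces $T_2=\mathcal{C}_2$. This gives two sets.
\item Otherwise $T_1=\emptyset$ and $T_2$ must avoid $(\phi(1),2)$, hence all of $I_1\times\{2\}$. So $T_2$ is an order filter of $I_2$ contained in the order filter $I_2\setminus I_1$, and these are counted by $|J(I_2\setminus I_1)|$.
\end{itemize}
Each such set is torclosed, since the $(1,2)$ condition is vacuous when $T_1=\emptyset$.

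Summing the cases gives $|J(I_1)|+|J(\mathcal{C}_P^{\phi},\leq_{prod})|+|J(I_2\setminus I_1)|$ plus a constant. The main obstacle is purely bookkeeping of this additive constant. I have to make sure that each degenerate set ($\emptyset$, $\{(\hat{0},0)\}$, $\mathcal{C}_0\sqcup\mathcal{C}_1$, $\mathcal{C}_P^{\phi}$, and the filters with $T_2=\emptyset$) is counted exactly once and in exactly one case. I also have to match the convention for $|J(\cdot)|$ (filters including $\emptyset$ and the whole poset) to get the stated $1$. My own tally of the overlaps gives a constant $0$ rather than $1$. So I would carefully recheck the small example of \cref{fig:firstExample} against the formula to pin the constant down before writing the final proof.
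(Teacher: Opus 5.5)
Your case analysis miscounts by exactly one set, and that is where the missing $1$ comes from. In the second bullet of Case B you claim that, when $(\hat{0},0)\in T$ and $T_1=\emptyset$, the set $T_2$ must avoid $(\phi(1),2)$. But the $(0,2)$ condition, as you stated it correctly at the start, only says that $(\phi(1),2)\in T$ forces $T_2=\mathcal{C}_2$. It does not forbid $(\phi(1),2)$. So $T_2$ is either an order filter of $I_2\setminus I_1$ or all of $\mathcal{C}_2$.

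The set $\mathcal{C}_0\sqcup\mathcal{C}_2$ is torclosed. The $(0,1)$ and $(1,2)$ conditions are vacuous because $T_1=\emptyset$, and the $(0,2)$ condition holds because $(\hat{0},2)\in T$. Yet it lies in none of your cases: Case A excludes $(\hat{0},0)$, the first bullet of Case B requires $(\phi(1),1)\in T$, and your second bullet excludes $(\phi(1),2)$. In \cref{fig:firstExample} this is the element $\{1,2,3,4,7\}$, which is the one your recheck against the example would have revealed. The error comes from your remark that all conditions are phrased in terms of which components are empty. The $(0,2)$ condition involves $(\phi(1),2)$, which is not the maximum of $\mathcal{C}_2$.

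With this correction, the second bullet of Case B contributes $|J(I_2\setminus I_1)|+1$. Your total becomes $|J(I_1)|+\big(|J(\mathcal{C}_P^{\phi},\leq_{prod})|-2\big)+2+|J(I_2\setminus I_1)|+1$, which is the stated formula.

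The rest of your argument is correct. In particular, you rightly identify pairs $T_1\subseteq T_2$ with order filters of $(\mathcal{C}_1\sqcup\mathcal{C}_2,\leq_{prod})$, giving $|J(\mathcal{C}_P^{\phi},\leq_{prod})|-2$ sets with $T_2\neq\emptyset$. This is the same partition-by-membership approach as the paper. The paper splits first on $(\phi(2),2)$ and then on $(\phi(1),1)$ or $(\phi(0),0)$, getting blocks of sizes $2$, $|J(I_1)|$, $|J(\mathcal{C}_P^{\phi},\leq_{prod})|-2$ and $|J(I_2\setminus I_1)|+1$.

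One small repair in the first bullet of Case B: $T_2=\mathcal{C}_2$ holds because the $(1,2)$ condition carries $(\phi(1),1)$ to $(\phi(1),2)$, and then the $(0,2)$ condition applies. The inequality $(\phi(2),2)\geq(\phi(1),2)$ alone gives nothing, since order filters are closed upward, not downward.
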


\begin{proof}
We first count the torclosed sets that do not contain $(\phi(2),2)$. Among these, the number of torclosed sets that do not contain $(\phi(1),1)$ is $2$, whereas we have $|J(I_1)|$ such torclosed sets that contain $(\phi(1),1)$. 
Now, let us count the torclosed sets that contain $(\phi(2),2)$. Among these, the number of torclosed sets that do not contain $(\phi(0),0)$ is $|J(\mathcal{C}_P^{\phi},\leq_{prod})|-2$, whereas we have $|J(I_2\setminus I_1)|+1$ such torclosed sets that contain $(\phi(0),0)$. The result follows by summing the cardinalities of these disjoint families of torclosed sets.
\end{proof}

\begin{example}
    For \cref{fig:latticefirstExample}, \cref{propComptage3} gives $m_P^{\phi} = 1+3+11+3=18$ elements. 
\end{example}

We will go slightly further, allowing $|\phi|=4$, in the particular example of the higher torsion classes (see \cref{prop:number4torsions}). We will also give a formula for the number of torclosed sets in the particular case where $P$ is a chain (see \cref{sec:Chain}). However, no general counting formula is known.

\subsection{Edge-labellings and congruences}
\label{sectionCongruenceTopo}

In this section, we obtain further properties of $\Tam(P,\phi)$, mainly results about its topology and congruences. We still assume that $\Tam(P,\phi)$ is finite, the chain being $\phi : \phi(0)<\phi(1)<\cdots <\phi(n-1)$.

By \cref{prop:longestchainsTam}, the choice of any linear extension $\mathcal{L}: e_1\prec e_2\prec \cdots \prec e_{m}$ of the dual of $(\mathcal{C}_P^{\phi},\leq_{prod})$ yields a longest chain 
$\psi_{\mathcal{L}} :\emptyset=T(0) \lessdot T(1) \lessdot T(2) \lessdot \cdots \lessdot T(m)=\mathcal{C}_P^{\phi}$ of $\Tam(P,\phi)$, where $T(i):=\{e_1,\,e_{2},\dots , e_i\}$. 
Moreover, $\mathcal{L}$ induces a numbering $\alpha_{\mathcal{L}}:\mathcal{C}_P^{\phi} \rightarrow [m]$ defined by $\alpha_{\mathcal{L}}(e_i)=i$.  For $T\subseteq \mathcal{C}_P^{\phi}$, we also define $\alpha_{\mathcal{L}}(T):=\{\alpha_{\mathcal{L}}(e_i) \mid e_i\in T\}$. This enables us to identify a torclosed set as a set of integers.
We define an edge-labelling $\omega_{\mathcal{L}}$ of $\Tam(P,\phi)$, which is well-defined by \cref{lemCovers}, which maps $T\lessdot T'$ to the integer $\alpha_{\mathcal{L}}(\max(T'\setminus T))$. This latter integer is easily seen to be equal to $\min\big(\alpha_{\mathcal{L}}(T')\setminus \alpha_{\mathcal{L}}(T)\big)$. This latter formulation shows that the edge-labelling $\omega_{\mathcal{L}}$ is particularly convenient if the torclosed sets are already represented as set of integers.

For the remainder of this section, let $\mathcal{L}$ be a linear extension of the dual of
$(\mathcal{C}_P^{\phi},\leq_{prod})$ such that:
\begin{itemize}
\item $(y,j)\prec (x,i)$ whenever $j>i$, with $(x,i)\in \mathcal{C}_i$ and $(y,j)\in \mathcal{C}_j$,
\item if $(x,k)\not\leq (\phi(i),k)$, then $(x,k)\prec (\phi(i),k)$, for all $(x,k)\in \mathcal{C}_k$ and all $i\leq k$.
\end{itemize}

Such linear extensions exist since the elements $(\phi(i),k)$ for $0\leq i\leq k$ form a chain in $\mathcal{C}_k$. They correspond to the linear extensions that add elements in decreasing order of component number, and avoid adding each element of the form $(\phi(i),k)$ as long as possible.
See \cref{fig:posetfirstExample} for an example of the numbering $\alpha_{\mathcal{L}}$ of $\mathcal{C}_P^{\phi}$ and \cref{fig:latticefirstExample} for the associated (blue) edge-labelling $\omega_{\mathcal{L}}$.

\begin{proposition}
\label{prop:gamma}
In \cref{deflabellingpphi}, if we set $\psi$ to be the chain $\psi_{\mathcal{L}} : T(0)\lessdot \cdots \lessdot T(m)$,
we have $\gamma_{1,\psi_{\mathcal{L}}}=\omega_{\mathcal{L}}$ and $\gamma_{2',\psi_{\mathcal{L}}}=\omega_{\mathcal{L}}$.
\end{proposition}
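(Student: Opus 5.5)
The plan is to compute both labellings directly, using three facts. First, the join-irreducibles are the $J_e$ for $e\in\mathcal{C}_P^{\phi}$ (\cref{prop:joinirr}). Second, every cover $T\lessdot T'$ lives in a single component and $T'\setminus T$ has a maximum (\cref{lemCovers}). Third, $\mathcal{L}$ is a linear extension of the dual order, so inside a component, larger elements get smaller numbers. Throughout, write $x_i=T(i)=\{e_1,\dots,e_i\}$, fix a cover $T\lessdot T'$, and let $e=\max(T'\setminus T)=(x,k)$ with $p:=\alpha_{\mathcal{L}}(e)$. Thus $\omega_{\mathcal{L}}(T\lessdot T')=p$.

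\emph{The labelling $\gamma_{1,\psi_{\mathcal{L}}}$.} Since $J_f$ is the smallest torclosed set containing $f$, we have $J_f\subseteq T(i)$ if and only if $f\in T(i)$. Hence $\delta(J_f)=\alpha_{\mathcal{L}}(f)$. A join-irreducible $J_f$ lies below $T'$ but not below $T$ exactly when $f\in T'\setminus T$. So $\gamma_{1,\psi_{\mathcal{L}}}(T\lessdot T')=\min \alpha_{\mathcal{L}}(T'\setminus T)$. Every element of $T'\setminus T$ lies in $\mathcal{C}_k$ and is $\leq e$, hence comes at or after $e$ in $\mathcal{L}$. This minimum is therefore $p$.

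\emph{The labelling $\gamma_{2',\psi_{\mathcal{L}}}$: upper bound.} The set $T\vee J_e$ is torclosed, strictly contains $T$ and is contained in $T'$. Since $T\lessdot T'$, it equals $T'$. As $J_e\subseteq T(p)$, we get $T\vee T(p)\supseteq T'$, so $\gamma_{2'}\leq p$.

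\emph{The labelling $\gamma_{2',\psi_{\mathcal{L}}}$: lower bound, the main step.} We must show $T\vee T(p-1)\not\supseteq T'$. To do so we exhibit a torclosed set $S\supseteq T\cup T(p-1)$ with $e\notin S$. By the first condition on $\mathcal{L}$, $T(p-1)$ contains every component $\mathcal{C}_j$ with $j>k$, contains nothing from components $j<k$, and in $\mathcal{C}_k$ it is the set $U_k$ of elements preceding $e$. Define
\[
S := T_{<k}\sqcup (T_k\cup U_k)\sqcup \textstyle\bigsqcup_{j>k}\mathcal{C}_j .
\]

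\emph{$S$ is an order filter not containing $e$.} The set $U_k$ is an initial segment of a linear extension of the dual of $\mathcal{C}_k$, so it is an order filter of $\mathcal{C}_k$. Also $e\notin T_k\cup U_k$. Hence $S$ is an order filter and $e\notin S$.

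\emph{$S$ is torclosed.} Conditions involving a target component $j>k$ hold trivially, since those components are full. Conditions with target component $j<k$ hold because $T$ is torclosed. The only real case is $(z,i)\in T$ with $i<k$ and $(\phi(i+1),k)\in S_k$, and we need $(z,k)\in S$. If $(\phi(i+1),k)\in T_k$, then $(z,k)\in T$ by torclosure of $T$.

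Otherwise $(\phi(i+1),k)\in U_k$, so $(\phi(i+1),k)\prec e$. By the second condition on $\mathcal{L}$ (in contrapositive form), this forces $e<(\phi(i+1),k)$. Then $(\phi(i+1),k)$ lies in $T'$, since $T'$ is a filter containing $e$. It does not lie in $T$, since we are in the case where it is not in $T_k$. So $(\phi(i+1),k)\in T'\setminus T$ and is strictly larger than $e=\max(T'\setminus T)$, a contradiction. Thus this case never occurs, and $S$ is torclosed.

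This gives $T\vee T(p-1)\subseteq S\not\ni e$, so $\gamma_{2',\psi_{\mathcal{L}}}(T\lessdot T')=p=\omega_{\mathcal{L}}(T\lessdot T')$. The delicate point is this torclosure check. It is exactly where the second ordering condition on $\mathcal{L}$, delaying the elements $(\phi(i),k)$ as long as possible, is used, together with the maximality of $e$ from \cref{lemCovers}.
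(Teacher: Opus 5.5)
Your proof is correct. The computation of $\gamma_{1,\psi_{\mathcal{L}}}$ via $\delta_{\psi_{\mathcal{L}}}(J_f)=\alpha_{\mathcal{L}}(f)$ matches the paper, as does the upper bound $\gamma_{2',\psi_{\mathcal{L}}}\leq \omega_{\mathcal{L}}$ obtained from $T\vee J_e=T'$.

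For the lower bound you take a different, more constructive route.
\begin{itemize}
\item The paper argues by contradiction. It takes the minimal $j<\omega_{\mathcal{L}}(T\lessdot T')$ with $e\in T\vee T(j)$ and asserts that the completion producing $e$ must be triggered by the newly added $e_j$. It then asserts that $e_j$ must be some $(\phi(q),k)$ incomparable to $e$, and uses the second condition on $\mathcal{L}$ to get $e\prec e_j$, which is absurd.
\item You instead exhibit an explicit torclosed set $S\supseteq T\cup T(p-1)$ with $e\notin S$, and verify torclosure case by case according to the target component.
\end{itemize}

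Both arguments rest on the same two facts:
\begin{itemize}
\item the first condition on $\mathcal{L}$ pins down $T(p-1)$ as $\bigsqcup_{j>k}\mathcal{C}_j\sqcup U_k$;
\item the second condition, combined with the maximality of $e$ in $T'\setminus T$, forces every $(\phi(q),k)$ with $q\leq k$ that precedes $e$ to lie already in $T$.
\end{itemize}

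Your version states these facts explicitly and never has to reason about the dynamics of the completion process. The paper leaves that part informal: it does not justify why the triggering element lies in $\mathcal{C}_k$, why it has the form $(\phi(q),k)$, or why it is incomparable to $e$. The paper's argument is shorter; yours is fully self-contained and checkable line by line.
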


\begin{proof}
The first equality $\gamma_{1,\psi_{\mathcal{L}}}=\omega_{\mathcal{L}}$ follows easily from the definitions of the labellings, since with our conventions $\delta_{\psi_{\mathcal{L}}}(J_{e_i})=i$ for all $i\in [m]$. The second equality requires our particular choice of linear extension $\mathcal{L}$. Recall that $\gamma_{2',\psi_{\mathcal{L}}}(T\lessdot T') = \min\{i\mid T\vee T(i)\geq T'\}$. Since $T\lessdot T'$ and $\max(T'\setminus T)=e_{\omega_{\mathcal{L}}(T\lessdot T')}$, we have $T\vee T(i) \geq T'$ if and only if $e_{\omega_{\mathcal{L}}(T\lessdot T')}\in T\vee T(i)$. 
Thus $\gamma_{2',\psi_{\mathcal{L}}}(T\lessdot T')\leq \omega_{\mathcal{L}}(T\lessdot T')$. Seeking a contradiction, suppose that there exists $j<\omega_{\mathcal{L}}(T\lessdot T')$ such that $\gamma_{2',\psi_{\mathcal{L}}}(T\lessdot T')=j$.
Then $T\vee T(j) \geq T'$, thus $e_{\omega_{\mathcal{L}}(T\lessdot T')}\in T\vee T(j)$. By definition $e_{\omega_{\mathcal{L}}(T\lessdot T')}\not\in T$, and since $j<\omega_{\mathcal{L}}(T\lessdot T')$, we have $e_{\omega_{\mathcal{L}}(T\lessdot T')}\not\in T(j)$. Then $e_{\omega_{\mathcal{L}}(T\lessdot T')}\not\in T\cup T(j)$.
But $e_{\omega_{\mathcal{L}}(T\lessdot T')}\in  T\vee T(j)$, which proves that $e_{\omega_{\mathcal{L}}(T\lessdot T')}$ is obtained with a completion.
By minimality of $j$, we have $T\vee T(j-1)\not\geq T'$, thus the completion that makes $e_{\omega_{\mathcal{L}}(T\lessdot T')}\in T\vee T(j)$ comes from the addition of $e_j$ to $T(j-1)$. Since by hypothesis $j\neq \omega_{\mathcal{L}}(T\lessdot T')$, thus there exists $p\leq k$ such that $e_j=(\phi(p),k)$, where $T'\setminus T \subseteq \mathcal{C}_k$. Moreover, $e_j$ and $e_{\omega_{\mathcal{L}}(T\lessdot T')}$ are incomparable.
By definition of our particular choice of linear extension $\mathcal{L}$, then $e_{\omega_{\mathcal{L}}(T\lessdot T')} \prec e_j$, which means $\omega_{\mathcal{L}}(T\lessdot T') < j$. This is absurd. 
\end{proof}

\begin{remark}
Our particular choice of linear extension $\mathcal{L}$ is important. For example, in \cref{fig:firstExample} let us take the linear extension $\mathcal{L}'$ that switches the numbers $2$ and $3$ in our numbering of $\mathcal{C}_P^{\phi}$ in \cref{fig:posetfirstExample}. Then let $T=\{1,7\}$ and $T'=\{1,3,7\}$. We have $\omega_{\mathcal{L}'}(T\lessdot T')=\gamma_{1,\psi_{\mathcal{L}'}}(T\lessdot T')=3$, but $\gamma_{2',\psi_{\mathcal{L}'}}(T\lessdot T')=2$.
\end{remark}

\begin{theorem}
\label{thmleftmodular}
$\Tam(P,\phi)$ is left modular. The edge-labelling $\omega_{\mathcal{L}}$ is an $EL$-labelling.
\end{theorem}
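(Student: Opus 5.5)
The plan is to combine \cref{prop:gamma} with \cref{prop:leftmodularwithlabellings} and \cref{prop:ELlabellingLiu}. Fix a linear extension $\mathcal{L}$ of the dual of $(\mathcal{C}_P^{\phi},\leq_{prod})$ satisfying the two conditions imposed at the start of this section. Such an $\mathcal{L}$ exists: order the components by decreasing index, and inside $\mathcal{C}_k$ place every element not below $(\phi(i),k)$ before $(\phi(i),k)$. This is possible because the $(\phi(i),k)$, $i\leq k$, form a chain, so the two conditions never conflict with each other or with the product order.

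By \cref{prop:longestchainsTam}, $\mathcal{L}$ produces a longest chain $\psi_{\mathcal{L}}$ of $\Tam(P,\phi)$. This chain contains $\hat{0}=\emptyset$ and $\hat{1}=\mathcal{C}_P^{\phi}$, and it is maximal because each step adds exactly one element. \cref{prop:gamma} gives $\gamma_{1,\psi_{\mathcal{L}}}=\omega_{\mathcal{L}}=\gamma_{2',\psi_{\mathcal{L}}}$. Applying \cref{prop:leftmodularwithlabellings} to the lattice $L=\Tam(P,\phi)$ and the chain $\psi=\psi_{\mathcal{L}}$, we conclude that every $T(i)$ is left modular. Hence $\psi_{\mathcal{L}}$ is a maximal left modular chain, and $\Tam(P,\phi)$ is left modular.

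For the second assertion, \cref{prop:ELlabellingLiu} applied to the left modular chain $\psi_{\mathcal{L}}$ shows that $\gamma_{1,\psi_{\mathcal{L}}}$ is an $EL$-labelling. Since this labelling equals $\omega_{\mathcal{L}}$ by \cref{prop:gamma}, $\omega_{\mathcal{L}}$ is an $EL$-labelling.

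The substantive work is already contained in \cref{prop:gamma}, so no step here is a serious obstacle. Two points need checking. First, the chain in \cref{deflabellingpphi} only has to contain $\hat{0}$ and $\hat{1}$, whereas the definition of left modular lattice requires a maximal chain; $\psi_{\mathcal{L}}$ is maximal because it is a longest chain. Second, the labelling $\gamma_{1,\psi}$ takes values in indices $\delta(j)$, while $\omega_{\mathcal{L}}$ takes values in $[m]$. These match because $\delta_{\psi_{\mathcal{L}}}(J_{e_i})=i$, so the identification is exact.
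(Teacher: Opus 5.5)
Your proof is correct and follows the paper's argument exactly: \cref{prop:gamma} combined with \cref{prop:leftmodularwithlabellings} makes every element of the maximal chain $\psi_{\mathcal{L}}$ left modular, and \cref{prop:ELlabellingLiu} then shows that $\omega_{\mathcal{L}}=\gamma_{1,\psi_{\mathcal{L}}}$ is an $EL$-labelling. Your extra checks, namely that $\psi_{\mathcal{L}}$ is maximal and that a suitable $\mathcal{L}$ exists, are details the paper leaves implicit.
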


\begin{proof}
From \cref{prop:leftmodularwithlabellings} and \cref{prop:gamma} follows the fact that $\Tam(P,\phi)$ is left modular. Then $\omega_{\mathcal{L}}$ is an EL-labelling by \cref{prop:ELlabellingLiu}. 
\end{proof}

Using $\omega_{\mathcal{L}}$ we obtain the following topological result about $\Tam(P,\phi)$.

\begin{proposition} \label{prop:onedecreasingchain}
The EL-labelling $\omega_{\mathcal{L}}$ is such that in any interval $[T,T']$ of $\Tam(P,\phi)$ there is at most one maximal decreasing chain. Thus $\mu(T,T') \in \{-1,0,1\}$ and if $]T,T'[$ is nonempty, then its order complex has the homotopy type of either a sphere or a point.    
\end{proposition}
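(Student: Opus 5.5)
We will show directly that in any interval $[T,T']$ a maximal chain whose $\omega_{\mathcal{L}}$-labels are strictly decreasing is uniquely determined. Once this is done, the remaining assertions follow from the facts about $EL$-labellings recalled in \cref{sec:posetsandlattices}, using \cref{thmleftmodular}. By the Björner–Wachs formula, $\mu(T,T')$ is a signed count of maximal decreasing chains, so it lies in $\{-1,0,1\}$. Moreover $\Delta(]T,T'[)$ is homotopy equivalent to a wedge of as many spheres as there are decreasing chains, hence to a single sphere or to a point.

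\textbf{Key observation.} A label of a cover $R\lessdot R'$ is $\alpha_{\mathcal{L}}(\max(R'\setminus R))$, and by \cref{lemCovers} the element $\max(R'\setminus R)$ lies in $T'\setminus T$ whenever $T\le R\lessdot R'\le T'$. Consider a maximal chain $T=R_0\lessdot R_1\lessdot\cdots\lessdot R_s=T'$ with labels $a_1>a_2>\cdots>a_s$. Write $m_t:=\max(R_t\setminus R_{t-1})$, so that $m_t=e_{a_t}$. The sets $R_t\setminus R_{t-1}$ partition $T'\setminus T$. Each $m_t$ is the $\prec$-smallest element of $R_t\setminus R_{t-1}$, since it is $\leq$-maximal there and $\mathcal{L}$ extends the dual order. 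Because the labels decrease, the $\prec$-smallest element of $R_t\setminus R$ for $R=R_{t-1}$ is smaller than everything added later.

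\textbf{Determining the chain.} We will argue by induction on $t$ that $R_t$ is forced. Concretely, $a_s$ must be $\min \alpha_{\mathcal{L}}(T'\setminus T)$. Indeed, the $\prec$-smallest element $e$ of $T'\setminus T$ belongs to some block $R_t\setminus R_{t-1}$, and it is then the $\prec$-minimum of that block, so it equals $m_t$. Since $a_s$ is the smallest label, this gives $t=s$. Hence the last step is determined: $R_{s-1}$ is a torclosed set covered by $T'$ with $e\in T'\setminus R_{s-1}$ and $e=\max(T'\setminus R_{s-1})$. It remains to show that at most one such $R_{s-1}$ exists with $R_{s-1}\ge T$ and such that the rest of the chain stays decreasing. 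This is the main obstacle, since a priori several lower covers of $T'$ can have difference with maximum $e$.

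\textbf{Handling the obstacle.} We will work top-down, taking $R_{s-1}$ to be the largest torclosed set contained in $T'\setminus \{e\}$, namely $T'\setminus F$ with $F$ the principal filter generated by $e$ inside the component of $e$, minus the elements forced back by completion. We will then show that any decreasing chain must use exactly this set. The argument: all later-added elements have labels greater than $a_s$, hence are $\succ e$. By the choice of $\mathcal{L}$, this means they are in lower-numbered components, or in the same component but not above $e$, or are non-$\phi$ elements placed before. So the block $R_s\setminus R_{s-1}$ must contain every element of $T'\setminus T$ that is $\prec$-larger-constrained, that is, all elements of $T'\setminus T$ that are $\geq e$. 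Conversely, it cannot contain more, because every element of a block other than its maximum is $\prec$-larger than the maximum and smaller in $\leq$. If this uniqueness argument fails, the fallback is to use \cref{prop:gamma}: since $\omega_{\mathcal{L}}=\gamma_{2',\psi_{\mathcal{L}}}$, a decreasing chain in $[T,T']$ is obtained by successively joining $T$ with the $T(i)$, namely $T'\wedge(T\vee T(i))$ for decreasing $i$ (the dual, meet-side construction). This yields a canonical candidate, and decreasing chains can be compared to it via left modularity of the $T(i)$.
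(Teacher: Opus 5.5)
Your top-down start is sound. Each block $R_t\setminus R_{t-1}$ lies in one component and has a $\le$-maximum, which is therefore its $\prec$-minimum. Hence the $\prec$-smallest element $e$ of $T'\setminus T$ is the maximum of some block, and strict decrease forces that block to be the last one. The sentence in your key observation saying this element is smaller than everything added later has the direction reversed, since decreasing labels mean $m_1\succ m_2\succ\cdots\succ m_s$; you never use it.

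The genuine gap is the step you yourself call the main obstacle. You never prove that $R_{s-1}$ is determined, and the argument you sketch does not work:
\begin{itemize}
\item Removing from $T'$ the principal filter generated by $e$ does not give an order filter in general. Since $e=\max(T'\setminus R_{s-1})$, the removed elements lie \emph{below} $e$.
\item Your description of the last block is vacuous. By $\prec$-minimality, $e$ is the only element of $T'\setminus T$ that is $\ge e$, yet the block is usually much bigger than $\{e\}$. In \cref{fig:figsquarephi2} the decreasing chain $\emptyset\lessdot\{(\phi(0),0)\}\lessdot\mathcal{C}_P^{\phi}$ ends by adding all of $\mathcal{C}_1$. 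Deciding which elements below $e$ the block contains is exactly the problem.
\item The existence of a largest torclosed subset of $T'\setminus\{e\}$ is asserted without proof.
\item The fallback does not help. By left modularity $T'\wedge(T\vee T(i))=T\vee(T(i)\wedge T')$, and after deleting repetitions these elements form the \emph{increasing} chain of $[T,T']$, not a decreasing one. Left modularity alone cannot give uniqueness of decreasing chains anyway: $M_3$ is modular with $\mu(\hat{0},\hat{1})=2$, so every EL-labelling of it has two decreasing maximal chains.
\end{itemize}

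The missing idea is short: a lower cover of $T'$ is determined by its label. Suppose $R_1\neq R_2$ are both covered by $T'$ with $\max(T'\setminus R_1)=\max(T'\setminus R_2)=e$. Pick $f\in R_2\setminus R_1$, which exists because distinct lower covers of $T'$ are incomparable. Then $f\in T'\setminus R_1$, so $f\le e$ in a single component by \cref{lemCovers}. Since $R_2$ is an order filter, $e\in R_2$, a contradiction. This is the usual join-semidistributive fact that distinct lower covers carry distinct labels.

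So $R_{s-1}$ is forced, with no condition on the rest of the chain, and induction on $[T,R_{s-1}]$ finishes the argument. Completed this way, your top-down proof is a valid alternative to the paper's, which goes bottom-up. There, if two decreasing chains first diverge at $R\lessdot S$ and $R\lessdot U$ with $\omega_{\mathcal{L}}(R\lessdot S)<\omega_{\mathcal{L}}(R\lessdot U)$, then $e_{\omega_{\mathcal{L}}(R\lessdot U)}$ is maximal among the elements of its component still to be added. The first chain must therefore later add it as the maximum of a cover, giving a larger label after a smaller one.
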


\begin{proof}
Let $[T,T']$ be an interval of $\Tam(P,\phi)$, and let $I=\{i_1,\,i_2,\dots,\,i_k\}$ be the set of indices \mbox{$0\leq i_1<\cdots <i_k<n$} of the nonempty components of $T'\setminus T$. Let us prove that we have at most one maximal decreasing chain in $[T,T']$ with respect to $\omega_{\mathcal{L}}$. By our choice of linear extension $\mathcal{L}$, if $e_k\in\mathcal{C}_i$ and $e_l\in\mathcal{C}_j$ with $i<j$, thus $l<k$. Then, since by \cref{lemCovers} the cover relations happen between two torclosed sets that differ in only one component, any maximal decreasing chain of $[T,T']$ will contain the subchain $T < T\cup T'_{i_1} < T\cup T'_{i_1} \cup T'_{i_2} <\cdots < T\cup T'_{i_1} \cup \cdots \cup T'_{i_k} =T'$. Seeking a contradiction, suppose that we have two different maximal decreasing chains $C_1$ and $C_2$ of $[T,T']$. As both chains contain the subchain described above, and two consecutive elements of this subchain differ in only one component, we look to the component $\mathcal{C}_{i_j}$ with the least index where $C_1$ and $C_2$ differ. Suppose that the first time these chains differ, following them from bottom to top, is witnessed by $R\lessdot S$ in $C_1$ and $R\lessdot U$ in $C_2$. Since $T\cup T_{i_1}'\cup \cdots \cup T_{i_j}'$ is torclosed and greater than both $R$ and $S$, the elements in components $\mathcal{C}_p$ for $p>i$ have no effect on which subsets of elements from $\mathcal{C}_i$ can be added to $R$ and $S$.
This just means that we add all the elements of $T_{i_j}' \setminus R$ in $C_1$ and $C_2$ as if we did not have elements in components with bigger indices.  By \cref{lemCovers}, if $V\lessdot W$ is a cover relation in $\Tam(P,\phi)$, then $W\setminus V$ has a maximum element $e_l$, and $\omega_{\mathcal{L}}(V\lessdot W)=l$ by definition of $\omega_{\mathcal{L}}$.
Without loss of generality suppose that $\omega_{\mathcal{L}}(R\lessdot S)<\omega_{\mathcal{L}}(R\lessdot U)$.
Let $V\lessdot W$ be the cover relation of $C_1$ such that $e_{\omega_{\mathcal{L}}(R\lessdot U)} \in W\setminus V$. Since $e_{\omega_{\mathcal{L}}(R\lessdot U)}$ is a maximal element of $T_{i_j}'\setminus R$, then $\omega_{\mathcal{L}}(V\lessdot W)=\omega_{\mathcal{L}}(R\lessdot U)$. Thus the chain $C_1$ is not decreasing. This is absurd.

Then the last sentence of the proposition follows by results recalled in \cref{sec:posetsandlattices} in the paragraph about poset topology.
\end{proof}

We now turn our attention to the congruences of $\Tam(P,\phi)$. The next two results characterize the join dependency relation $D$. Recall that we denote by $J_{(x,i)}$ the minimal torclosed set containing $(x,i)$, and that \cref{prop:joinirr} tells us that the join-irreducibles of $\Tam(P,\phi)$ are the $J_{(x,i)}$ for $(x,i)\in \mathcal{C}_P^{\phi}$.

\begin{lemma}
\label{lem:Drelationinonecompo}
Let $i<n$. If $J_{(x,i)}\,D\,J_{(y,i)}$, then there exists $k\leq i$ such that $y=\phi(k)$ and $y\not \leq x$. If $P$ has a minimum $\hat{0}$ and $\phi(0)=\hat{0}$, then the last statement is an equivalence.  
\end{lemma}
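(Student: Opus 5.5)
The plan is to unwind the definition of $D$ in terms of completions. I will use two facts throughout. First, $J_{(y,i)}$ is the principal filter of $(y,i)$ in $\mathcal{C}_i$, and $(J_{(y,i)})_*=J_{(y,i)}\setminus\{(y,i)\}$. Second, the join of torclosed sets is the completion of their union. Completion only pushes elements from a component $\mathcal{C}_{i'}$ into components $\mathcal{C}_j$ with $j>i'$. A new element $(z,j)$ can only enter $\mathcal{C}_j$ from a lower component through a pair $(x',i')$, $(\phi(i'+1),j)$ with $i'<j$.

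\emph{Forward direction.} Suppose $J_{(x,i)}\,D\,J_{(y,i)}$, witnessed by a torclosed $p$. Since $aDb$ implies $a\not\leq b$, we get $(x,i)\notin J_{(y,i)}$, that is $y\not\leq x$.

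Now suppose, for a contradiction, that $y$ is not of the form $\phi(k)$. Let $Q$ be the completion of $(J_{(y,i)})_*\cup p$, so $(x,i)\notin Q$. Let $Q'$ be the completion of $Q\cup\{(y,i)\}$. I claim $Q'_i=Q_i\cup\{(y,i)\}$:
\begin{itemize}
\item $Q_i\cup\{(y,i)\}$ is an order filter of $\mathcal{C}_i$, because every element strictly above $(y,i)$ in $\mathcal{C}_i$ already lies in $(J_{(y,i)})_*\subseteq Q$.
\item Nothing new can be forced into $\mathcal{C}_i$ from lower components. Such a forcing would need $(y,i)$ to play the role of some $(\phi(i'+1),i)$, which is excluded by the assumption on $y$.
\item Additions in components $j>i$ never affect component $i$.
\end{itemize}
Hence $(x,i)\notin Q'$. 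But $Q'\supseteq J_{(y,i)}\vee p\ni(x,i)$, a contradiction. So $y=\phi(k)$ for some $k$. Since $(y,i)\in\mathcal{C}_i$, we have $\phi(k)\leq\phi(i)$, and injectivity of $\phi$ along the chain gives $k\leq i$.

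\emph{Converse,} assuming $P$ has minimum $\hat 0$ and $\phi(0)=\hat 0$. Let $y=\phi(k)$ with $k\leq i$ and $y\not\leq x$.
\begin{itemize}
\item Since $\phi(0)=\hat 0\leq x$, we must have $k\geq1$. Also $x\neq y$, so $J_{(x,i)}\neq J_{(y,i)}$.
\item Take $p=\mathcal{C}_{k-1}$, which is torclosed. Then $J_{(y,i)}\vee p$ contains $(\hat 0,k-1)$ and $(\phi(k),i)$. Torclosedness forces $(\hat 0,i)$, hence all of $\mathcal{C}_i$, so it contains $(x,i)$.
\item The set $(J_{(y,i)})_*\cup\mathcal{C}_{k-1}$ is itself torclosed. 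The only possible triggering pair would use $(x',k-1)$ together with $(\phi(k),j)$, and $(\phi(k),i)$ is exactly the removed element. Inside component $i$ alone there is no condition.
\item This set does not contain $(x,i)$, since $x\not\geq y$. So $J_{(x,i)}\not\leq (J_{(y,i)})_*\vee p$, and $J_{(x,i)}\,D\,J_{(y,i)}$.
\end{itemize}

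The main obstacle I expect is making rigorous the claim in the forward direction that adding $(y,i)$ to $Q$ changes nothing else in component $i$. This needs the precise description of how completion propagates, in the spirit of \cref{lem:procheenproche}: elements enter $\mathcal{C}_i$ only through a pair whose second member lies on the $\phi$-chain in $\mathcal{C}_i$, and $(y,i)$ is by assumption not on that chain.
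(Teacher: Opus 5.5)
Your proof is correct and follows essentially the same route as the paper. In the forward direction you show that when $y$ is off the $\phi$-chain, adding $(y,i)$ to $(J_{(y,i)})_*\vee p$ changes its $i$-th component only by $(y,i)$, and in the converse you use the same witness $p=\mathcal{C}_{k-1}$, with somewhat more explicit justification than the paper of why completion cannot push anything new into $\mathcal{C}_i$.
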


\begin{proof}
We suppose $J_{(x,i)}\,D\,J_{(y,i)}$, which means that $x\neq y$ and there exists $p\in \Tam(P,\phi)$ such that $J_{(x,i)}\leq J_{(y,i)}\vee p$ and $J_{(x,i)}\not\leq (J_{(y,i)})_{*}\vee p$. Then $J_{(x,i)}\not\leq J_{(y,i)}$, as noted at the end of \cref{sec:posetsandlattices}. This means $(y,i)\not\leq (x,i)$, thus $y\not\leq x$. It remains to prove that there exists $k\leq i$ such that $y=\phi(k)$. Seeking a contradiction, suppose that it is not the case. Then $J_{(y,i)}$ and $(J_{(y,i)})_{*}=J_{(y,i)}\setminus \{(y,i)\}$ contain the same elements of the form $(\phi(k),i)$ for $k\leq i$. Thus $(\,(J_{(y,i)})_{*}\vee p)_i = (\,J_{(y,i)}\vee p )_i$ or $(\,(J_{(y,i)})_{*}\vee p)_i = (J_{(y,i)}\vee p )_i \setminus \{(y,i)\}$. As $J_{(x,i)}=(J_{(x,i)})_i$, we have $(J_{(x,i)})_i\leq (J_{(y,i)}\vee p)_i$ and $(J_{(x,i)})_i\not\leq (\,(J_{(y,i)})_{*}\vee p)_i$. In particular, $(\,(J_{(y,i)})_{*}\vee p)_i \neq (J_{(y,i)}\vee p )_i$, thus $(\,(J_{(y,i)})_{*}\vee p)_i = (J_{(y,i)}\vee p )_i \setminus \{(y,i)\}$. Then, we have $J_{(x,i)}\leq (J_{(y,i)}\vee p)_i$ and $J_{(x,i)}\not\leq (\,(J_{(y,i)})_{*}\vee p)_i= (J_{(y,i)}\vee p )_i \setminus \{(y,i)\}$. This means that $(x,i)=(y,i)$, which is absurd since $y\not\leq x$.

Conversely, suppose that $P$ has a minimum $\hat{0}$, that $\phi(0)=\hat{0}$, and let $(x,i)$ and $(y,i)$ be in $\mathcal{C}_i$ such that there exists $k\leq i$ such that $y=\phi(k)$ and $y\not \leq x$. Since $y\not\leq x$, then $y\neq \phi(0)=\hat{0}$. This proves that $k\geq 1$. Let $p=\mathcal{C}_{k-1} \in \Tam(P,\phi)$. We have $J_{(x,i)} \subseteq  \mathcal{C}_i \cup \mathcal{C}_{k-1} = J_{(\phi(k),i)} \vee p$ since this latter torclosed set contains $(\hat{0},k-1)$ and $(\phi(k),i)$, thus by completion contains $(\hat{0},i)=\min(\mathcal{C}_i)$. But $(J_{(\phi(k),i)})_{*} \vee p = (J_{(\phi(k),i)})_{*} \cup \mathcal{C}_{k-1}$ since $(\phi(k),i)\not\in (J_{(\phi(k),i)})_{*}$. Then $J_{(x,i)}\not\leq (J_{(\phi(k),i)})_{*} \vee p$. This proves that $J_{(x,i)}\,D\,J_{(y,i)}$.
\end{proof}

\begin{lemma}
\label{lem:Drelatinindiffcompo} 
For all $i\neq j$, we have $J_{(x,j)}\,D\,J_{(y,i)}$ if and only if $i<j$, $y\leq x$, $ \phi(i+1) \not \leq x$ and there is no $y'\leq \phi(i)$ such that $y<y'\leq x$.
\end{lemma}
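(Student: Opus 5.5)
The plan is to prove the two directions separately. Write $b=J_{(y,i)}$, so that $b_*=J_{(y,i)}\setminus\{(y,i)\}$ is the principal filter of $(y,i)$ in $\mathcal{C}_i$ with its minimum removed. The tool throughout is how completion propagates. A new element $(z,l)$ can only be forced by a pair $(w,k)$, $(\phi(k+1),l)$ with $k<l$, so completion only moves elements \emph{upward} in component index.

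\textbf{Sufficiency.} Assume $i<j$, $y\leq x$, $\phi(i+1)\not\leq x$ and that no $y'\leq\phi(i)$ satisfies $y<y'\leq x$. Take $p=J_{(\phi(i+1),j)}$, which is the principal filter of $(\phi(i+1),j)$ in $\mathcal{C}_j$.
\begin{itemize}
\item Since $b\vee p$ contains $(y,i)$ and $(\phi(i+1),j)$, torclosedness gives $(y,j)\in b\vee p$. As $y\leq x$, also $(x,j)\in b\vee p$, so $J_{(x,j)}\leq b\vee p$.
\item I will compute $b_*\vee p$ explicitly. I claim it equals $b_*\sqcup F$, where $F$ is the order filter of $\mathcal{C}_j$ generated by $(\phi(i+1),j)$ together with all $(z,j)$ such that $y<z\leq\phi(i)$.
\item To verify the claim, note that the only nonempty components are $i$ and $j$. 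The only forcing pairs are therefore $(w,i)\in b_*$ with $(\phi(i+1),j)$, and these contribute exactly the generators of $F$. Nothing lies in components above $j$, so nothing further is forced and $b_*\sqcup F$ is torclosed.
\item By the two hypotheses on $x$, we have $(x,j)\notin F$. Hence $J_{(x,j)}\not\leq b_*\vee p$, which gives $J_{(x,j)}\,D\,b$.
\end{itemize}

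\textbf{Necessity: reduction to $i<j$.} Suppose $J_{(x,j)}\leq b\vee p$ but $J_{(x,j)}\not\leq b_*\vee p$. If $j<i$, the lemma $(T\vee T')_{<k}=T_{<k}\vee T'_{<k}$ with $k=i$ shows that $b\vee p$ and $b_*\vee p$ agree below component $i$. This is a contradiction, so $i<j$.

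\textbf{Necessity: the completion step.} Set $q=b_*\vee p$. Then $b\vee p$ is the completion of $q\cup\{(y,i)\}$, and this union is already an order filter. I will describe this completion by induction on the component index, in the spirit of \cref{lem:procheenproche}. The invariant to establish is: every element $(z,l)$ lying in the completion but not in $q$, with $l>i$, satisfies
\begin{itemize}
\item $y\leq z$,
\item $\phi(i+1)\not\leq z$,
\item there is no $y'\leq\phi(i)$ with $y<y'\leq z$.
\end{itemize}

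The base case is a direct forcing by the pair $(y,i)$, $(\phi(i+1),l)$ with $(\phi(i+1),l)\in q$. Then $y\leq z$ is clear. If some $y'\leq\phi(i)$ satisfied $y<y'\leq z$, then $(y',i)\in b_*\subseteq q$ and $(\phi(i+1),l)\in q$ would give $(y',l)\in q$, hence $(z,l)\in q$. The same argument applied to $\phi(i+1)$ itself shows $\phi(i+1)\not\leq z$, since $(\phi(i+1),l)\in q$ would already put $(z,l)$ in $q$.

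Applying the invariant to $(x,j)$ yields exactly the three conditions on $x$. This completes necessity.

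\textbf{Main obstacle.} The hard part is the inductive step, where a new element is forced by a pair $(w,k)$, $(\phi(k+1),l)$ in which one member was itself newly added at an earlier stage, for instance a new $(\phi(k+1),l)$ or a new $(w,k)$ with $k>i$. Here I will argue by choosing a counterexample that is minimal in the order of generation. I will then show that each newly generated element of the form $(\phi(m),l)$ satisfies $\phi(m)\geq y$ but $\phi(m)\not\geq\phi(i+1)$, and I expect this to force $\phi(m)\leq\phi(i)$. Combined with the fact that $(\phi(i+1),l)\notin q$ in that situation, this should let the invariant pass through the chain structure of the $(\phi(m),l)$. If this bookkeeping fails, the fallback is to replace $p$ by $p\cap(\text{components}\le j)$ and first reduce to the case where only components $i$ through $j$ matter.
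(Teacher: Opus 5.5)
\textbf{Verdict.} Your sufficiency direction and your reduction to $i<j$ are correct and agree with the paper: same witness $p=J_{(\phi(i+1),j)}$, same description of $(J_{(y,i)})_*\vee p$. The gap is in necessity. The invariant you want to propagate is false, and in fact the ``only if'' direction of the statement is false, so the obstacle you flag cannot be overcome.

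\textbf{Why the invariant breaks.} Your last paragraph runs straight into the failure. In the first round of completion, a new chain element $(\phi(m),l)$ with $l>i$ must satisfy $m\leq i$, because $(\phi(i+1),l)\in q$. If moreover $\phi(m)>y$, then $(\phi(m),i)\in(J_{(y,i)})_*\subseteq q$ already puts $(\phi(m),l)$ in $q$. So such an element is new only when $\phi(m)=y$.

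Now suppose $y=\phi(m)$ with $1\leq m\leq i$. Then the added element $(y,i)$, and each new copy $(y,l)$, is itself a chain element $(\phi((m-1)+1),\cdot)$. It therefore pulls every $(w,m-1)\in q$ into component $i$ and component $l$. This creates new elements $(w,i)$ and $(w,l)$ with $w\leq\phi(m-1)<y$, violating $y\leq z$. Your invariant only concerns components $l>i$, but here new elements also appear in component $i$ itself. Your fallback of truncating $p$ above $j$ does not help, since the offending elements come from component $m-1<i$.

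\textbf{Counterexample to the statement.} Take $P=\phi=C_3$, so $\Tam(P,\phi)\cong\Tam_4$, with $i=1$, $j=2$, $y=1$, $x=0$ and $p=\{(0,0),(2,2)\}$.
\begin{enumerate}
\item $p$ is torclosed, since neither $(1,1)$ nor $(1,2)$ lies in $p$.
\item $(J_{(1,1)})_*=\emptyset$, so $(J_{(1,1)})_*\vee p=p\not\supseteq J_{(0,2)}$.
\item In $J_{(1,1)}\vee p$, the pair $(0,0),(1,1)$ forces $(0,1)$, and then $(0,1),(2,2)$ forces $(0,2)$. Hence $J_{(0,2)}\leq J_{(1,1)}\vee p=\mathcal{C}_P^{\phi}$.
\end{enumerate}
So $J_{(0,2)}\,D\,J_{(1,1)}$ although $1\not\leq 0$. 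Likewise, in the example of \cref{fig:posetfirstExample} one has $J_4\,D\,J_5$ via $p=\{1,7\}$, since $\{5\}\vee\{1,7\}=\mathcal{C}_P^{\phi}$. This relation is missing from the paper's list. The paper's proof has the same gap, at the sentence asserting that ``completions involving $J_{(y,i)}$'' force $y\leq x$.

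\textbf{What your method does prove.} If $y\notin\{\phi(1),\dots,\phi(i)\}$, then the set $q\cup\{(y,i)\}\cup\{(z,l)\mid l>i,\ (\phi(i+1),l)\in q,\ y\leq z\leq\phi(l)\}$ is already torclosed. In that case your base-case argument yields all three conditions.

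\textbf{What is true in general.}
\begin{enumerate}
\item The conditions $i<j$, $\phi(i+1)\not\leq x$, and the non-existence of $y'$ remain necessary. Every component $l>i$ that receives new elements has $(\phi(i+1),l)\in q$.
\item The condition $y\leq x$ must be weakened to: either $y\leq x$, or $y=\phi(m)$ with $1\leq m\leq i$ and $x\geq w$ for some $w\leq\phi(m-1)$.
\item The extra relations in the second case are realized by the witness $p=\mathcal{C}_{m-1}\sqcup J_{(\phi(i+1),j)}$.
\end{enumerate}
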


\begin{proof}
Let $(x,j)$ and $(y,i)$ be in $\mathcal{C}_P^{\phi}$ such that $i<j$, $y\leq x$, $ \phi(i+1) \not \leq x$ and there is no $y'\leq \phi(i)$ such that $y<y'\leq x$. Let $p=J_{(\phi(i+1),j)}$. We have both $(y,i)$ and $(\phi(i+1),j)$ in the torclosed set $J_{(y,i)}\vee p$, thus by completion $(y,j)\in J_{(y,i)}\vee p$. Since $(x,j)\geq (y,j)$ and they are in the same component $\mathcal{C}_j$, then $(x,j)\in J_{(y,i)}\vee p$. Thus $J_{(x,j)}\leq J_{(y,i)}\vee p$.

If $y=\phi(i)$, then $(J_{(y,i)})_{*}=\emptyset$. And since $\phi(i+1)\not\leq x$, then $(x,j)\not\in J_{(\phi(i+1),j)} =(\,J_{(y,i)})_{*}\vee J_{(\phi(i+1),j)}$. This proves $J_{(x,j)}\,D\,J_{(y,i)}$. Otherwise $y\neq \phi(i)$. Then $(\,(J_{(y,i)})_{*}\vee J_{(\phi(i+1),j)})_j =\{(z,j)\mid \exists y'\in P,\, y< y'\leq \phi(i) \text{ and }y'\leq z\leq \phi(j)\}$. Then $(x,j)$ is not in this latter set since there is no $y'\leq \phi(i)$ such that $y<y'\leq x$. Thus $J_{(x,j)}\not\leq (\,(J_{(y,i)})_{*}\vee J_{(\phi(i+1),j)})_j$, which proves that $J_{(x,j)}\,D\,J_{(y,i)}$.

Conversely, suppose that we have $J_{(x,j)}\,D\,J_{(y,i)}$, which means that there exists $p\in \Tam(P,\phi)$ such that $J_{(x,j)}\leq J_{(y,i)}\vee p$ and $J_{(x,j)}\not\leq (J_{(y,i)})_{*}\vee p$. If $j<i$, then $J_{(y,i)}$ does not contribute in $J_{(x,j)}\leq J_{(y,i)}\vee p$, which means that $J_{(x,j)}\leq p$. Then $J_{(x,j)}\leq (J_{(y,i)})_{*}\vee p$, which is absurd. This proves that $i<j$. As observed, we do not have $J_{(x,j)}\leq p$, thus $x\not\in p$. Since $J_{(x,j)}\leq J_{(y,i)}\vee p$, this means that $J_{(y,i)}$ is essential when computing the join to contain $J_{(x,j)}$, even though it is not in the same component. Thus we have completions involving $J_{(y,i)}$, which proves both $(\phi(i+1),j)\in p$ and $(y,j)\leq (x,i)$. The latter gives $y\leq x$, while the former gives $\phi(i+1)\not\leq x$, because if it was not the case then $(x,j)\in p$ since $p$ is torclosed, which is absurd. It remains to prove that there is no $y'\leq \phi(i)$ such that $y<y'\leq x$. Seeking a contradiction, suppose that such a $y'$ exists. Then $(y',i)\in (J_{(y,i)})_{*}$, which with $(\phi(i+1),j)\in p$ gives by completion $(y',j)\in (J_{(y,i)})_{*}\vee p$. Since $y'\leq x$, this gives $(x,i)\in (J_{(y,i)})_{*}\vee p$. Thus $J_{(x,j)}\leq (J_{(y,i)})_{*}\vee p$, which is absurd.
\end{proof}

\begin{example}
In \cref{fig:posetfirstExample}, the $D$ relations between join-irreducibles generated by elements in the same component are $J_6\,D\,J_5$, $J_4\,D\,J_3$, $J_4\,D\,J_1$, $J_3\,D\,J_1$, $J_2\,D\,J_3$, $J_2\,D\,J_1$ and the other relations are $J_6\,D\,J_7$, $J_4\,D\,J_7$, $J_2\,D\,J_7$, $J_4\,D\,J_6$, $J_3\,D\,J_5$, $J_2\,D\,J_5$.
\end{example}

The following theorem gives another proof that $\Tam(P,\phi)$ is join-semidistributive, since a join-congruence uniform lattice is join-semidistributive.

\begin{theorem}
\label{thmLW}
$\Tam(P,\phi)$ is join-congruence uniform.
\end{theorem}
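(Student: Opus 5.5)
We plan to use \cref{propDayLW}: it suffices to show that the join dependency relation $D$ on $\mathrm{JIrr}(\Tam(P,\phi))=\{J_{(x,i)}\}$ has no $D$-cycles. To do this, we will exhibit a total order (or a rank function) on $\mathcal{C}_P^{\phi}$ that strictly increases along every $D$-relation. \cref{lem:Drelationinonecompo} and \cref{lem:Drelatinindiffcompo} give the necessary information about $D$, and they do not require $P$ to have a minimum for the implications we need.

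First, consider relations between different components. By \cref{lem:Drelatinindiffcompo}, $J_{(x,j)}\,D\,J_{(y,i)}$ with $i\neq j$ forces $i<j$. So along $D$ the component index weakly decreases, and strictly decreases whenever the two components differ. A $D$-cycle $a_1 D a_2 D\cdots D a_k D a_1$ must therefore have all its elements in a single component $\mathcal{C}_i$, since otherwise the index would strictly decrease somewhere and could never come back up.

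Next, consider relations inside one component. By the forward direction of \cref{lem:Drelationinonecompo}, $J_{(x,i)}\,D\,J_{(y,i)}$ implies $y=\phi(k)$ for some $k\leq i$ and $y\not\leq x$. In a cycle contained in $\mathcal{C}_i$, every element is the target of some $D$-relation, so every element has the form $(\phi(k),i)$. These elements form a chain in $\mathcal{C}_i$. For two of them, $a=(\phi(k),i)$ and $b=(\phi(k'),i)$, the relation $a\,D\,b$ requires $\phi(k')\not\leq\phi(k)$, that is $k'>k$. Hence the index $k$ strictly increases along the cycle, which is impossible. So there are no $D$-cycles, and \cref{propDayLW} gives that $\Tam(P,\phi)$ is join-congruence uniform.

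The only delicate point is making sure that the two lemmas are used only in their unconditional directions. For different components, we need ``$D$ implies $i<j$'', which is established in the converse part of the proof of \cref{lem:Drelatinindiffcompo} without any hypothesis on $P$. For the same component, we need ``$D$ implies $y=\phi(k)$ with $y\not\leq x$'', which is the first, unconditional statement of \cref{lem:Drelationinonecompo}. The argument is then a two-level lexicographic potential: the pair (component index decreasing, chain index increasing) is strictly monotone along $D$, so no cycle can exist.
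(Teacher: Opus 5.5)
Your proposal is correct and follows the same route as the paper: apply \cref{propDayLW} and deduce acyclicity of $D$ from \cref{lem:Drelatinindiffcompo} and \cref{lem:Drelationinonecompo}. Your lexicographic potential spells out the step that the paper's one-line proof leaves implicit: the component index strictly decreases along any $D$-relation between different components, and the index $k$ of $\phi(k)$ strictly increases along $D$-relations within a single component. You also correctly rely only on the directions of the two lemmas that need no hypothesis on $P$.
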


\begin{proof}
By \cref{propDayLW}, we need to prove that the $D$ relation is acyclic. This follows from \cref{lem:Drelationinonecompo} and \cref{lem:Drelatinindiffcompo}.
\end{proof}

\begin{remark}
The lattice $\Tam(P,\phi)$ is not polygonal in general (see \cref{fig:figsquarephi2}).
\end{remark}

\begin{definition}
Let $K:=(K_0,K_1,\dots,K_n)$ be a non-decreasing sequence of non-negative integers such that $K_i\leq i$ for all $i\leq n$. We call such a sequence a \emph{Kupisch-like series}. Denote \mbox{$R_K :=\{(x,i)\in \mathcal{C}_P^{\phi}\mid x\geq \phi(K_i)\}$}. On $\Tam(P,\phi)$ we define the \emph{Kupisch equivalence relation} $T\equiv_K T'$ if and only if $T\cap R_K = T'\cap R_K$.
\end{definition}

\begin{proposition}
\label{propKupisch}
Suppose that $P$ is a join-semilattice. Then a Kupisch equivalence relation is a lattice congruence of $\Tam(P,\phi)$.
\end{proposition}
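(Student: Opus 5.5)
The plan is to check compatibility with meets and with joins separately, using the description of joins in $\Tam(P,\phi)$ as completions. Write $R=R_K$ and $R_i:=R\cap\mathcal{C}_i=\{(x,i)\mid x\geq\phi(K_i)\}$. Since $\equiv_K$ is clearly an equivalence relation, it suffices to prove the following: if $T\equiv_K T'$, then for every $S\in\Tam(P,\phi)$ we have $T\wedge S\equiv_K T'\wedge S$ and $T\vee S\equiv_K T'\vee S$. The general two-sided condition then follows by transitivity.

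\emph{Meets.} The meet in $\Tam(P,\phi)$ is intersection. Hence $(T\cap S)\cap R=(T\cap R)\cap S=(T'\cap R)\cap S=(T'\cap S)\cap R$, and nothing more is needed.

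\emph{Joins.} By symmetry it suffices to prove $(T\vee S)\cap R\subseteq T'\vee S$. Set $V:=T'\vee S$. We describe $T\vee S$, the completion of the order filter $T\cup S$, as an increasing union of stages. Let $U_0:=T\cup S$, which is already an order filter of $\mathcal{C}_P^{\phi}$. Let $U_{t+1}$ be the order filter generated by $U_t$ together with all elements $(x,j)$ such that $(x,i)\in U_t$ and $(\phi(i+1),j)\in U_t$ for some $i<j$. Every $U_t$ is an order filter, and $T\vee S=\bigcup_t U_t$ because $\mathcal{C}_P^{\phi}$ is finite. We prove $U_t\cap R\subseteq V$ by induction on $t$.

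The base case holds because $U_0\cap R=(T\cap R)\cup(S\cap R)=(T'\cap R)\cup(S\cap R)\subseteq V$.

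For the inductive step, take $(z,j)\in (U_{t+1}\setminus U_t)\cap R$. Then $z\geq \phi(K_j)$, and there exist $i<j$ and $x\leq z$ with $(x,i)\in U_t$ and $(\phi(i+1),j)\in U_t$. The case where $(z,j)$ is itself one of the newly produced generators corresponds to $x=z$. This is where the hypothesis that $P$ is a join-semilattice enters. Put $x':=x\vee\phi(K_i)$. Since $x\leq\phi(i)$ and $K_i\leq i$, we get $x'\leq\phi(i)$, so $(x',i)\in\mathcal{C}_i$. Since $U_t$ is an order filter, $(x',i)\in U_t$, and clearly $(x',i)\in R_i$. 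The induction hypothesis gives $(x',i)\in V$. Because $K$ is non-decreasing, $z\geq\phi(K_j)\geq\phi(K_i)$, and together with $z\geq x$ this gives $z\geq x'$. We now split into two cases.
\begin{itemize}
\item If $K_j\leq i+1$, then $(\phi(i+1),j)\in U_t\cap R\subseteq V$. Since $V$ is torclosed and contains $(x',i)$, it contains $(x',j)$, and therefore $(z,j)\in V$.
\item If $K_j> i+1$, then $(\phi(K_j),j)\geq(\phi(i+1),j)$. Hence $(\phi(K_j),j)\in U_t\cap R\subseteq V$, and $(z,j)\geq(\phi(K_j),j)$ gives $(z,j)\in V$ directly.
\end{itemize}
This completes the induction, so $(T\vee S)\cap R\subseteq T'\vee S$. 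Combined with the same inclusion with $T$ and $T'$ exchanged, and then intersected with $R$, we obtain $T\vee S\equiv_K T'\vee S$.

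The main obstacle is exactly this inductive step. An element of $R$ may be produced by completion from elements $(x,i)$ and $(\phi(i+1),j)$ that lie outside $R$, where $T$ and $T'$ may differ. The join-semilattice hypothesis lets us replace $(x,i)$ by the element $(x\vee\phi(K_i),i)$ of $R$. Monotonicity of $K$ handles the other generator: either $(\phi(i+1),j)$ already lies in $R$, or it can be bypassed through $(\phi(K_j),j)$.
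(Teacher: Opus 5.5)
Your proof is correct, and it takes a genuinely different route from the paper's.

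\textbf{The paper's route.} It does not check compatibility with $\wedge$ and $\vee$ directly. It invokes join-congruence uniformity (\cref{thmLW}) and Day's criterion (\cref{propDayLW}). It observes that $\equiv_K$ contracts exactly the join-irreducibles $J_{(y,i)}$ with $(y,i)\notin R_K$. It then checks that this set is closed under the join dependency relation, using the explicit descriptions of $D$ in \cref{lem:Drelationinonecompo} and \cref{lem:Drelatinindiffcompo}. The join-semilattice hypothesis enters there through $y'=\phi(K_j)\vee y$, in the subcase where $y$ and $\phi(K_i)$ are incomparable.

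\textbf{Your route.} You dispose of meets immediately, since the meet is intersection. For joins you run the completion in stages. You replace the generator $(x,i)$ by $(x\vee\phi(K_i),i)\in R_K$. Then you either use $(\phi(i+1),j)\in R_K$ or bypass it through $(\phi(K_j),j)$ when $K_j>i+1$. This is the same join-semilattice device as in the paper, in a more transparent form.

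\textbf{What your approach buys.} It is self-contained: it needs none of the $D$-relation machinery. It also proves directly that the relation $\equiv_K$ itself respects $\wedge$ and $\vee$. Day's criterion, by contrast, only produces \emph{some} congruence contracting the right set of join-irreducibles. Identifying that congruence with $\equiv_K$ needs a further check, which the paper leaves implicit. The check is easy: a cover $T\lessdot T'$ is collapsed by $\equiv_K$ if and only if $\max(T'\setminus T)\notin R_K$, because $R_K$ is an up-set in each component. One then also uses that $T\equiv_K T'$ if and only if both $T$ and $T'$ are $\equiv_K$-equivalent to $T\cap T'$.

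\textbf{What the paper's approach buys.} It reuses the $D$-computations already made for \cref{thmLW}. It also places $\equiv_K$ inside the general description of congruences of $\Tam(P,\phi)$ as $D$-closed sets of join-irreducibles, naming explicitly which join-irreducibles are contracted.
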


\begin{proof}
Let $K$ be a Kupisch-like series. Since $\Tam(P,\phi)$ is join-congruence uniform (\cref{thmLW}), by \cref{propDayLW} it suffices to prove that the set $S$ of join-irreducibles that $\equiv_K$ contracts satisfies that $J_{(x,j)} D J_{(y,i)}$ and $J_{(y,i)}\in S$ together imply $J_{(x,j)}\in S$. We have
$$J_{(y,i)}\in S \iff J_{(y,i)} \equiv_K (J_{(y,i)})_{*} \iff (y,i)\not\in R_K \iff y\not\geq \phi(K_i) .$$

\noindent Suppose that $J_{(x,j)} D J_{(y,i)}$ and $y\not\geq \phi(K_i)$. We need to prove that $x\not \geq \phi(K_j)$.  We consider two cases; first $j=i$, then $j\neq i$.

Suppose $j=i$. By \cref{lem:Drelationinonecompo}, $J_{(x,i)} D J_{(y,i)}$ implies that there exists $k\leq i$ such that $y=\phi(k)$ and $x\not\geq y$. Since $\phi(k)=y\not\geq \phi(K_i)$, and $\phi(k)$ is comparable to $\phi(K_i)$, then $\phi(K_i)> \phi(k)=y$. This proves that $x\not \geq \phi(K_j)$, because if it was not the case then $x\geq \phi(K_j)>y$, which is absurd since $x\not\geq y$.

Suppose $j\neq i$. By \cref{lem:Drelatinindiffcompo}, $J_{(x,j)} D J_{(y,i)}$ implies that $i<j$, $y\leq x$, $ \phi(i+1) \not \leq x$ and there is no $y'\leq \phi(i)$ such that $y<y'\leq x$. Seeking a contradiction, assume that $x \geq \phi(K_j)$. Since $x\not\geq \phi(i+1)$ and $\phi(i+1)$ is comparable to $\phi(K_j)$, then $\phi(K_j)< \phi(i+1)$. Thus $\phi(K_j)\leq \phi(i)$. Since $y\not\geq \phi(K_i)$, either $y<\phi(K_i)$, or $\phi(K_i)$ and $y$ are incomparable. 

If $y<\phi(K_i)$, then $y<\phi(K_i)\leq \phi(K_j)$ since $K$ is non-decreasing. Thus $y<\phi(K_j)\leq x$, and we proved above that $\phi(K_j)\leq \phi(i)$. This is absurd since we know that no such element $y':=\phi(K_j)$ exists.

Otherwise $\phi(K_i)$ and $y$ are incomparable. Since $P$ is a join-semilattice, we can define in $P$ the element $y':=\phi(K_j) \vee y$. We proved that $\phi(K_j)\leq \phi(i)$, and by definition $y\leq \phi(i)$. Then $y'\leq \phi(i)$. By hypothesis, we have $\phi(K_j)\leq x$ and $y\leq x$, thus $y'\leq x$. To have something absurd, it suffices to prove that $y<y'$. By definition of $y'$, we have $y\leq y'$. If $y=y'$, then $\phi(K_j) \leq y$, and since $\phi(K_i)\leq \phi(K_j)$ because $K$ is non-decreasing, it would give $\phi(K_i)\leq y$, which is absurd. Thus $y<y'$. This finishes the proof of the proposition.
\end{proof}

\begin{figure}
    \centering
\begin{tikzpicture}
\begin{scope}
\node (0) at (0,0) {$\hat{0}$};
\node[draw,circle,inner sep=2pt] (a) at (-1,1) {$a$};
\node (b) at (1,1) {$b$};
\node[draw,circle,inner sep=2pt] (c) at (-1,2) {$c$};
\node (d) at (1,2) {$d$};
\node[draw,circle,inner sep=1pt] (1) at (0,3) {$\hat{1}$};
\draw[->,>=latex] (0) -- (a);
\draw[->,>=latex] (0) -- (b);
\draw[->,>=latex] (a) -- (c);
\draw[->,>=latex] (a) -- (d);
\draw[->,>=latex] (b) -- (c);
\draw[->,>=latex] (b) -- (d);
\draw[->,>=latex] (c) -- (1);
\draw[->,>=latex] (d) -- (1);
\draw (0,-0.8) node {$P$};
\end{scope}

\begin{scope}[xshift= 2cm, scale= 1]
\node (12) at (1,0) {$12$};
\node[draw,circle,inner sep=1pt] (11) at (1,1) {$11$};
\node (10) at (3,0) {$10$};
\node[draw,circle,inner sep=1pt] (9) at (2,1) {$9$};
\node (8) at (4,1) {$8$};
\node[draw,circle,inner sep=1pt] (7) at (3,2) {$7$};
\node[draw,circle,inner sep=1pt] (5) at (5,1) {$5$};
\node (6) at (6,0) {$6$};
\node (4) at (7,1) {$4$};
\node (2) at (7,2) {$2$};
\node[draw,circle,inner sep=1pt] (3) at (5,2) {$3$};
\node[draw,circle,inner sep=1pt] (1) at (6,3) {$1$};
\draw[->,>=latex] (12) -- (11);
\draw[->,>=latex] (10) -- (9);
\draw[->,>=latex] (10) -- (8);
\draw[->,>=latex] (9) -- (7);
\draw[->,>=latex] (8) -- (7);
\draw[->,>=latex] (6) -- (5);
\draw[->,>=latex] (6) -- (4);
\draw[->,>=latex] (5) -- (2);
\draw[->,>=latex] (5) -- (3);
\draw[->,>=latex] (4) -- (3);
\draw[->,>=latex] (4) -- (2);
\draw[->,>=latex] (3) -- (1);
\draw[->,>=latex] (2) -- (1);
\draw [decorate,decoration={brace,amplitude=7pt, mirror,raise=2ex}]
(0.7,-0.4) -- (7.3,-0.4) node[midway,yshift=-2.5em]{$\mathcal{C}_P^{\phi}$};
\end{scope}
\end{tikzpicture}
    \caption{On the left is a poset $P$ with a chain $\phi$ whose elements are circled. On the right is $\mathcal{C}_P^{\phi}$, with the numbering from \cref{sectionCongruenceTopo}.}
    \label{fig:counterexamplecongruence}
\end{figure}

\begin{remark}
The hypothesis that $P$ is a join-semilattice in \cref{propKupisch} is important. For example, the choice of $P$ and $\phi$ in \cref{fig:counterexamplecongruence} produces a lattice $\Tam(P,\phi)$ of $50$ elements in which there are Kupisch equivalence relations that are not lattice congruences. Indeed, let $K=(0,0,0)$ and $\equiv_K$ be the associated Kupisch equivalence relation on $\Tam(P,\phi)$. Then $J_2 \,D\,J_8$ and $J_8 \equiv_K (J_8)_{*}$, but $J_2 \not\equiv_K (J_2)_{*}$.  
\end{remark}

\section{Main examples}
\label{sectionMainExamples}

\subsection{The case of the chains}
\label{sec:Chain}

In this section, we study the particular case where $P$ is a chain. We suppose that $P=C_m$ where \mbox{$C_m: 0<1<\cdots <m-1$}. Since $\phi$ is a subchain of $C_m$, it can be viewed as a nonempty subset of $\{0,1,\dots,m-1\}$. If $\phi=\{0,1,\dots,m-1\}$, we will write $\phi=C_m$.
The components $\mathcal{C}_i$ for $i<n$ are themselves chains, isomorphic to $C_{\phi(i)}$, and having an order filter of $\mathcal{C}_{C_m}^{\phi}$ just amounts to knowing how many elements we have in each component $\mathcal{C}_i$ for $i<n$. Thus, these order filters $T$ are identified with words on nonnegative integers $u=u_0 u_1\dots u_{n-1}$ where for all $i <n$, $u_i:=|\mathcal{C}_{i} \cap T|$. The \emph{length} of a word is its number of letters.

\begin{proposition}
  $\Tam(C_m,\phi)$ is a congruence uniform, extremal and left modular lattice.  
\end{proposition}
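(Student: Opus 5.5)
The plan is to prove the three properties separately, using as much of \cref{sec:finiteTam} and \cref{sectionCongruenceTopo} as possible. Left modularity is immediate from \cref{thmleftmodular}, which holds for every finite $\Tam(P,\phi)$. Join-extremality is \cref{coro:joinextremalTam}. So what remains is meet-extremality and the meet half of congruence uniformity.

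\textbf{Extremality.} The chain $C_m$ has minimum $0$, so \cref{propJSD} applies. Since $P=C_m$ is totally ordered, every element of $I_P(\phi(n-1))$ is comparable to every $\phi(j)$. Hence $\Tam(C_m,\phi)$ is semidistributive. By \cref{lemJSDjoinmeetSD} we then get $|\mathrm{MIrr}|=|\mathrm{JIrr}|$. By \cref{prop:joinirr} and \cref{prop:longestchainsTam} this common value is $|\mathcal{C}_{C_m}^{\phi}|=\ell(\Tam(C_m,\phi))$. So the lattice is meet-extremal, hence extremal. Equivalently, \cref{prop:meetirr} shows there are no meet-irreducibles of the second kind, because $x\not\sim\phi(j)$ never happens in a chain.

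\textbf{Congruence uniformity.} By Day's characterization, a finite lattice is congruence uniform if and only if it is both join-congruence uniform and meet-congruence uniform. The first property is \cref{thmLW}. For the second, I will apply \cref{propDayLW} to the dual lattice. This means showing that the dual dependency relation $D^{op}$ on meet-irreducibles has no cycles. Here $m\,D^{op}\,m'$ holds if $m\neq m'$ and there is $p$ with $m\geq m'\wedge p$ but $m\not\geq m'^{*}\wedge p$.

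In the chain case the meet-irreducibles are exactly the sets $M_{(x,k)}$. Each is determined by the word $u$ coding it: $u_i$ equals $|\mathcal{C}_i|$ outside the range $[l(x),k]$, and inside that range it counts the elements $y$ with $y>x$. Its unique cover $M_{(x,k)}^{*}$ is obtained by adding $(x,k)$ and completing. Since meets are intersections, the condition $m\geq m'\wedge p$ is very concrete on words. It says that $m\cap$ (the entries of $m'\cap p$) is componentwise controlled, and removing the completion added by $(x,k)$ must break it.

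I would mimic \cref{lem:Drelationinonecompo} and \cref{lem:Drelatinindiffcompo} and compute $D^{op}$ explicitly. The expected outcome is that $M_{(x,j)}\,D^{op}\,M_{(y,i)}$ forces either $j>i$, or $j=i$ together with a strict monotone relation between $x$ and $y$ in the chain $C_m$. Either outcome gives a strict order on $\mathcal{C}_{C_m}^{\phi}$ (lexicographic on (component, element)) that every $D^{op}$ step increases, which rules out cycles.

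The main obstacle is this explicit computation of $D^{op}$, because the completion $M_{(x,k)}^{*}$ can add elements spread over several components. I would first check small cases such as $\Tam(C_3,C_3)\cong\Tam_4$ to guess the right monotone quantity. As a fallback, I would try to realize $\Tam(C_m,\phi)$ as an interval or a lattice quotient of a Tamari lattice $\Tam(C_M,C_M)$ by padding $\phi$ to the full chain. Both intervals and quotients of congruence uniform lattices are congruence uniform, and \cref{propKupisch} already supplies natural quotients of this kind.
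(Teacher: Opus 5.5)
Your arguments for left modularity, join-extremality and extremality are correct and are the same as the paper's. You use \cref{thmleftmodular}, \cref{coro:joinextremalTam}, and semidistributivity from \cref{propJSD}, which via \cref{lemJSDjoinmeetSD} gives $|\mathrm{MIrr}|=|\mathrm{JIrr}|=|\mathcal{C}_{C_m}^{\phi}|=\ell(\Tam(C_m,\phi))$.

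The gap is the meet half of congruence uniformity. You never prove that $D^{op}$ is acyclic. Your description of $D^{op}$ is only an ``expected outcome'', and you yourself flag the computation as the main obstacle, since the cover $M_{(x,k)}^{*}$ comes from a completion that can spread over several components. The fallback is not substantiated either. You give no realization of $\Tam(C_m,\phi)$ as an interval or quotient of some $\Tam(C_M,C_M)$, and the Kupisch quotients of \cref{propKupisch} cannot supply one in general. Such a quotient keeps the atom $\{(i,i)\}$ of every component, so one with $2$ atoms is a quotient of the pentagon and has at most $3$ join-irreducibles. By contrast, $\Tam(C_4,\phi_2)$ has $2$ atoms and $6$ join-irreducibles. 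As written, congruence uniformity is not established.

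The missing idea is that no analysis of $D^{op}$ is needed, because you already have everything. The paper concludes directly that a semidistributive, join-congruence uniform lattice is congruence uniform, an easy consequence of \cite[Lemma 1]{CASPARD200471}. The reason is a count of join-irreducible congruences:
\begin{enumerate}
\item In a finite lattice $L$, every cover $a\lessdot b$ is transposed to a cover $j_*\lessdot j$ with $j\in\mathrm{JIrr}(L)$; take $j$ minimal among join-irreducibles with $j\leq b$ and $j\not\leq a$. Dually it is transposed to a cover $m\lessdot m^*$ with $m\in\mathrm{MIrr}(L)$. Hence both $j\mapsto\mathrm{con}(j_*,j)$ and $m\mapsto\mathrm{con}(m,m^*)$ are surjections onto $\mathrm{JIrr}(\mathrm{Con}\,L)$.
\item It is standard (Day; Freese--Je\v{z}ek--Nation) that $\mathrm{con}(a_*,a)\leq\mathrm{con}(b_*,b)$ exactly when there is a (possibly empty) $D$-path from $a$ to $b$. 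So the first map is injective iff there are no $D$-cycles, and dually the second is injective iff there are no dual $D$-cycles.
\item \cref{thmLW} therefore gives $|\mathrm{JIrr}(L)|=|\mathrm{JIrr}(\mathrm{Con}\,L)|$, and semidistributivity gives $|\mathrm{MIrr}(L)|=|\mathrm{JIrr}(L)|$.
\item So the second map is a bijection, $L$ has no dual $D$-cycles, and $L$ is congruence uniform by \cite{Day_1979}.
\end{enumerate}
This single step replaces both your $D^{op}$ computation and the fallback.
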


\begin{proof}
The left modular property follows from \cref{thmleftmodular}.
Using \cref{propJSD}, it follows that $\Tam(C_m,\phi)$ is semidistributive, and $\Tam(C_m,\phi)$ is join-congruence uniform by \cref{thmLW}. Thus $\Tam(C_m,\phi)$ is congruence uniform since a semidistributive join-congruence uniform lattice is congruence uniform (this is an easy consequence of \cite[Lemma 1]{CASPARD200471}). By \cref{coro:joinextremalTam}, $\Tam(C_m,\phi)$ is join-extremal, but since it is semidistributive then it is extremal.
\end{proof}

\begin{figure}
\begin{subfigure}{.5\textwidth}
\centering
\begin{tikzpicture}[scale=0.8]
\draw[step=1cm, gray, very thin, dashed] (-1,-1) grid (6, 6);
\draw[->,>=latex] (-1,0)--(5,0);
\draw[very thick] (0,0) -- (0,2);
\draw[very thick] (2,0) -- (2,6);
\draw[very thick] (4,0) -- (4,2);
\draw[very thick] (5,0) -- (5,5);
\draw (0,0) node {$\bullet$};
\draw (1,0) node {$\bullet$};
\draw (2,0) node {$\bullet$};
\draw (3,0) node {$\bullet$};
\draw (4,0) node {$\bullet$};
\draw (5,0) node {$\bullet$};
\draw (0,0) node[below] {$1$};
\draw (1,0) node[below] {$0$};
\draw (2,0) node[below] {$5$};
\draw (3,0) node[below] {$0$};
\draw (4,0) node[below] {$1$};
\draw (5,0) node[below] {$4$};
\draw[red] (2,6) -- (-1,0);
\draw[red] (4,2) -- (3,0);
\draw[red] (5,5) -- (2.5,0);
\end{tikzpicture}
    \caption{The word $u=105014$ which represents a torclosed set of $\Tam(C_{12},\phi_2)$. The lines of slope $2$ from the top of each segment are represented in red.}
    \label{fig:chain4}
\end{subfigure}%
\begin{subfigure}{.5\textwidth}
\centering
\begin{tikzpicture}
\begin{scope}
\node (a) at (0,0) {$0$};
\node[draw,circle,inner sep=1pt] (b) at (0,1) {$1$};
\node (c) at (0,2) {$2$};
\node[draw,circle,inner sep=1pt] (d) at (0,3) {$3$};
\draw[->,>=latex] (a) -- (b);
\draw[->,>=latex] (b) -- (c);
\draw[->,>=latex] (c) -- (d);
\draw (0,-0.8) node {$C_4$};
\end{scope}

\begin{scope}[xshift=1.3cm,xscale=0.7]
\node[scale=0.8] (6) at (0,0) {$6$};
\node[scale=0.8] (5) at (0,1) {$5$};
\node[scale=0.8] (4) at (1,0) {$4$};
\node[scale=0.8] (3) at (1,1) {$3$};
\node[scale=0.8] (2) at (1,2) {$2$};
\node[scale=0.8] (1) at (1,3) {$1$};
\draw[->,>=latex] (6) -- (5);
\draw[->,>=latex] (4) -- (3);
\draw[->,>=latex] (3) -- (2);
\draw[->,>=latex] (2) -- (1);
\draw (0,-0.8) node {$\mathcal{C}_0$};
\draw (1,-0.8) node {$\mathcal{C}_1$};
\draw [decorate,decoration={brace,amplitude=7pt, mirror,raise=2ex}]
(-0.3,-0.9) -- (1.3,-0.9) node[midway,yshift=-2.5em]{$\mathcal{C}_{C_4}^{\phi}$};
\end{scope}

\begin{scope}[xshift=4.5cm,yshift=-1.5cm,xscale=1.2,yscale=1.7]
\node (0) at (0,0) {$00$};
\node (1) at (-1,1) {$10$};
\node (2) at (1,0.5) {$01$};
\node (3) at (-1,2) {$20$};
\node (4) at (1,1) {$02$};
\node (5) at (1,1.5) {$03$};
\node (6) at (0,2) {$13$};
\node (7) at (1,2.2) {$04$};
\node (8) at (1,3) {$14$};
\node (9) at (0,3.5) {$24$};
\draw[->,>=latex] (0) -- (1);
\draw[->,>=latex] (0) -- (2);
\draw[->,>=latex] (1) -- (3);
\draw[->,>=latex] (1) -- (6);
\draw[->,>=latex] (2) -- (4);
\draw[->,>=latex] (3) -- (9);
\draw[->,>=latex] (4) -- (5);
\draw[->,>=latex] (5) -- (6);
\draw[->,>=latex] (5) -- (7);
\draw[->,>=latex] (6) -- (8);
\draw[->,>=latex] (7) -- (8);
\draw[->,>=latex] (8) -- (9);
\end{scope}
\end{tikzpicture}
\caption{$\Tam(C_4,\phi_2)$ with elements the words $u$.}
\label{fig:combegiraudo}
\end{subfigure}
\caption{}
\label{fig:examplechain}
\end{figure}

\begin{proposition}
\label{lemconditionchaintorclosed}
The word $u$ corresponds to an order filter of $\mathcal{C}_{C_m}^{\phi}$ if and only if for all $i\in [n]$, $u_i\leq \phi(i)+1$. Moreover $u$ corresponds to a torclosed set if and only if for all $i< n$ and $1\leq k< n-i$, having both $u_{i+k} > \phi(i+k) - \phi(i+1)$ and $u_i\neq 0$ implies $u_{i+k} \geq u_i +\phi(i+k)-\phi(i)$.
\end{proposition}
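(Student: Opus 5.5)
The plan is a direct translation of both conditions into the word $u$, using the fact that each component is a chain. Since $P=C_m$, the $i$-th component is $\mathcal{C}_i=\{0,1,\dots,\phi(i)\}\times\{i\}$, a chain with $\phi(i)+1$ elements.

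\emph{Order filters.} There are no relations between different components of $\mathcal{C}_{C_m}^{\phi}$. So a subset $T$ is an order filter exactly when each $T_i$ is an order filter of the chain $\mathcal{C}_i$. An order filter of this chain is determined by its cardinality $u_i$, with $0\le u_i\le \phi(i)+1$. Explicitly,
$$T_i=\{(x,i)\mid \phi(i)-u_i+1\le x\le \phi(i)\}.$$
Hence $u$ corresponds to an order filter if and only if $u_i\le\phi(i)+1$ for every component index $i$. I would also note that the index range in the statement should be read as $0\le i<n$.

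\emph{Torclosedness.} Fix $i<j$ and write $j=i+k$ with $1\le k<n-i$. I translate the two hypotheses of \cref{def:TamPphi} and its conclusion in turn.
\begin{itemize}
\item By the description of $T_j$ above, $(\phi(i+1),j)\in T$ holds if and only if $\phi(i+1)\ge \phi(j)-u_j+1$, that is, $u_j>\phi(j)-\phi(i+1)$.
\item There exists some $(x,i)\in T$ if and only if $u_i\neq 0$.
\item Every $x$ with $(x,i)\in\mathcal{C}_i$ satisfies $x\le\phi(i)\le\phi(j)$, so $(x,j)$ is always an element of $\mathcal{C}_j$.
\end{itemize}

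Since $T_j$ is an up-set of a chain, requiring $(x,j)\in T$ for all $(x,i)\in T_i$ is equivalent to requiring it for the least such $x$, namely $x=\phi(i)-u_i+1$. That condition reads $\phi(i)-u_i+1\ge\phi(j)-u_j+1$, i.e. $u_j\ge u_i+\phi(j)-\phi(i)$.

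Putting these together, the torclosed condition for the pair $(i,j)$ becomes: if $u_j>\phi(j)-\phi(i+1)$ and $u_i\ne0$, then $u_{i+k}\ge u_i+\phi(i+k)-\phi(i)$. This is exactly the stated criterion.

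The main point to handle carefully is the reduction from ``all $(x,i)\in T$'' to the single minimal element, together with the check that $(x,j)$ lies in $\mathcal{C}_j$. The rest is bookkeeping with the indexing convention.
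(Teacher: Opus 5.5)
Your proposal is correct and follows essentially the same route as the paper's proof. Both translate nonemptiness of $T_i$ and membership of $(\phi(i+1),i+k)$ into inequalities on $u$, then reduce the implication to a comparison of component sizes. Your explicit description of $T_i$ as the top $u_i$ elements, the reduction to its minimal element, and the remark that the index range should be $0\le i<n$ simply spell out steps the paper leaves implicit.
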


\begin{proof}
The word $u$ corresponds to an order filter of $\mathcal{C}_{C_m}^{\phi}$ if and only if for all $i\in[n]$, $u_i\leq |\mathcal{C}_{i}|=\phi(i)+1$. Denote $T$ the order filter associated to $u$. It remains to understand on the word $u$ the completion operation. Let $i< n$ and $1\leq k< n-i$. We look at the completions between components $\mathcal{C}_i$ and $\mathcal{C}_{i+k}$. The completions between these two components happen if and only if $T_i\neq \emptyset$ and $(\phi(i+1),i+k) \in T_{i+k}$, and in this case we need $(x,i)\in T_i$ implies $(x,i+k)\in T_{i+k}$. We have $T_i\neq \emptyset \iff u_i\neq 0$ and $(\phi(i+1),i+k) \in T_{i+k} \iff u_{i+k} >\phi(i+k)-\phi(i+1)$. Moreover, $(x,i)\in T_i \Longrightarrow (x,i+k)\in T_{i+k}$ means $u_{i+k}-u_i\geq \phi(i+k)-\phi(i)$. This finishes the proof of the proposition.
\end{proof}

Let $p$ be a positive integer.
Before giving a general recursive formula to compute $|\Tam(C_m,\phi)|$ (\cref{prop:inductionformula}), we explore in some details the case $\Tam(C_{np},\phi_p)$ where by definition $\phi_p(i)=(i+1)\,p -1$ for all $i<n$.
For every $i< n$ such that $u_i\neq 0$, draw a segment from $(i,0)$ to $(i,u_i+p-1)$ in the Cartesian plane, where these points are respectively called the base and the top of the segment. Then \cref{lemconditionchaintorclosed} says that one can draw lines of slope $p$ passing through the $x$-axis and the top of each segment without crossing any segment nor touching the base of a segment (it can touch the top of a segment). These drawings are called \emph{diagrams}. See \cref{fig:chain4} for an example of a diagram, and see \cref{fig:combegiraudo} for the lattice $\Tam(C_4,\phi_2)$ whose elements are represented as their associated words $u$. When $u_i=0$, we have what we call an empty segment, which is represented as a point corresponding to the base of this empty segment.
We call a word $u$ which represents a torclosed set of $\Tam(C_n,C_n)$ a \emph{Tamari word} of length $m$. 
For a Tamari word $u$ of length $n$, let $B_p(u)$ be the word of length $m$ whose $i$-th letter is $p\, u_i-(p-1)$ for all $i<n$, and let $E_p(u)$ be the word of length $n$ whose $i$-th letter is $p\, u_i$ for all $i<n$.

\begin{lemma}
\label{lem:intervalBE}
Let $u$ be a Tamari word of length $n$. Then both $B_p(u)$ and $E_p(u)$ correspond to elements of $\Tam(C_{np},\phi_p)$. Moreover, the intervals $I_p(u):=[B_p(u),E_p(u)]$ for all Tamari words $u$ of length $n$ form a partition of $\Tam(C_{np},\phi_p)$. 
\end{lemma}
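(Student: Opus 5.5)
The plan is to identify every element of $\Tam(C_{np},\phi_p)$ with its word $v=v_0v_1\cdots v_{n-1}$ as in \cref{lemconditionchaintorclosed}, and to build the partition as the fibres of the map $\pi$ defined by $\pi(v)_i:=\lceil v_i/p\rceil$. The order on torclosed sets is inclusion. Since each component $\mathcal{C}_i$ is a chain and $T_i$ is an order filter of it, inclusion of torclosed sets is exactly the componentwise order on words. I use the convention that the $i$-th letter of $B_p(u)$ is $0$ when $u_i=0$, so that the letters of $B_p(u)$ are $\max(0,\,p\,u_i-(p-1))$. With this convention, for a word $v$ and a Tamari word $u$ we have
$$B_p(u)\leq v\leq E_p(u) \text{ componentwise} \iff \pi(v)=u .$$
It follows that $I_p(u)$ consists precisely of the torclosed words in the fibre $\pi^{-1}(u)$.

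First I specialise \cref{lemconditionchaintorclosed} to both settings. For $\phi=\phi_p$ we have $\phi_p(i+k)-\phi_p(i+1)=(k-1)p$ and $\phi_p(i+k)-\phi_p(i)=kp$. So $v$ is torclosed if and only if $v_i\leq (i+1)p$ and, whenever $v_i\neq 0$ and $v_{i+k}>(k-1)p$, we have $v_{i+k}\geq v_i+kp$. For $P=\phi=C_n$, a word $u$ is a Tamari word if and only if $u_i\leq i+1$ and, whenever $u_i\neq 0$ and $u_{i+k}>k-1$, we have $u_{i+k}\geq u_i+k$.

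The key steps, in order, are the following.

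1. I show that $\pi(v)$ is a Tamari word whenever $v$ is torclosed. The bound $u_i\le i+1$ is clear. Take $u_i\neq0$ and $u_{i+k}>k-1$, where $u=\pi(v)$. Then $v_i\neq0$, and $u_{i+k}\geq k$ gives $v_{i+k}>(k-1)p$. Hence $v_{i+k}\geq v_i+kp$, and taking ceilings gives $u_{i+k}\geq u_i+k$.

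2. I check that $B_p(u)$ and $E_p(u)$ are torclosed. For $E_p(u)$, the hypothesis $p\,u_{i+k}>(k-1)p$ means $u_{i+k}\geq k$. The Tamari condition then gives $u_{i+k}\ge u_i+k$, and multiplying by $p$ gives the required inequality. For $B_p(u)$, the hypothesis $p\,u_{i+k}-p+1>(k-1)p$ again forces $u_{i+k}\geq k$. Then $p\,u_{i+k}-p+1\geq p\,u_i-p+1+kp$ follows in the same way. The letter bounds $p\,u_i\le (i+1)p$ are immediate.

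3. I conclude from the displayed equivalence. Every element $v$ of $\Tam(C_{np},\phi_p)$ lies in exactly one interval, namely $I_p(\pi(v))$. These intervals are nonempty, since each contains $B_p(u)$ and $E_p(u)$, and they are pairwise disjoint. So they partition the lattice.

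I expect no deep obstacle here. The main care is bookkeeping: the $u_i=0$ convention for $B_p$; the fact that the interval taken in the lattice coincides with the componentwise box intersected with the torclosed words, which follows because the order is inclusion; and checking that the ceiling computation $\lceil (v_i+kp)/p\rceil=\lceil v_i/p\rceil+k$ preserves the strict and non-strict inequalities correctly in step 1.
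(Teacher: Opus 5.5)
Your proof is correct and takes essentially the paper's route: the paper places $T$ in the interval $I_p(u)$ where $u_i$ counts the elements $(\phi(j),i)$, $j\le i$, lying in $T$, and this count is exactly your $\pi(v)_i=\lceil v_i/p\rceil$. Your explicit inequality checks fill in what the paper leaves to the diagrams, and your convention that $B_p(u)$ has letter $0$ when $u_i=0$ is the intended reading, as the paper's figure confirms.
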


\begin{proof}
The fact that $B_p(u)$ and $E_p(u)$ are both elements of $\Tam(C_{np},\phi_p)$ follows easily from the diagrams using the fact that $u_j-u_i\geq j-i$ for all $i<j$ since $u$ is a Tamari word. This can also be seen directly on the chains forming $\mathcal{C}_{C_{np}}^{\phi}$ (see the right of \cref{fig:proofpropenumerationchain}). From this latter point of view follows the result about the partition of $\Tam(C_{np},\phi_p)$, since for a torclosed set $T$ of $\Tam(C_{np},\phi_p)$ the unique $u$ such that $T\in I_p(u)$ is the word whose $i$-th letter is the number of elements of $(\phi(j),i)$ in $T$ for $j\leq i$.
\end{proof}

The leaves of a (complete) binary tree $T$ are numbered by a reverse preorder traversal of $T$, which means visit the root, then traverse its subtrees from right to left. Given a binary tree $T$, each leaf is the leftmost leaf of some binary subtree of $T$. For the leaf numbered $i$, let $w_i$ be the number of leaves minus one of the largest binary subtree of $T$ whose leftmost leaf is the one numbered $i$. We call $w_i$ the weight of the leaf $i$, and the sequence $(w_1,w_2,\dots,w_k)$ of all weights is called the weight sequence of the binary tree.
Let $\varphi$ be the map that sends a Tamari word $u$ of length $n$ to the binary tree $T$ whose weight sequence is $(0,u_0,u_1,\dots,u_{n-1},n+1)$. For an example, see the top right of \cref{fig:prooflemdilatation} which represents $\varphi(1014012)$. The extra $n+1$ can be seen on the diagram with the segments by adding a segment of height $n+1$ at the $x$-coordinate $n$. The map $\varphi$ is in fact a bijection (see \cite{10.1093/comjnl/29.2.171}). It follows (this corresponds to the case $p=1$):

\begin{figure}
    \centering
\begin{tikzpicture}
\begin{scope}[scale=0.6]
\draw[step=1cm, gray, very thin, dashed] (-2,0) grid (7, 14);
\draw[->,>=latex] (-2,0)--(7,0);
\draw[very thick] (0,0) -- (0,3);
\draw[very thick] (2,0) -- (2,3);
\draw[very thick] (3,0) -- (3,12);
\draw[very thick] (5,0) -- (5,3);
\draw[very thick] (6,0) -- (6,6);

\draw[blue,very thick] (0,3) -- (0,5);
\draw[blue,very thick] (2,3) -- (2,5);
\draw[blue,very thick] (3,12) -- (3,14);
\draw[blue,very thick] (5,3) -- (5,5);
\draw[blue,very thick] (6,6) -- (6,8);

\draw[red,very thick] (-1,0) -- (0,3);
\draw[red,very thick] (1,0) -- (2,3);
\draw[red,very thick] (-1,0) -- (3,12);
\draw[red,very thick] (4,0) -- (5,3);
\draw[red,very thick] (4,0) -- (6,6);

\draw[orange,very thick] (-1.667,0) -- (0,5);
\draw[orange,very thick] (0.33,0) -- (2,5);
\draw[orange,very thick] (-1.67,0) -- (3,14);
\draw[orange,very thick] (3.33,0) -- (5,5);
\draw[orange,very thick] (3.33,0) -- (6,8);
\draw (0,0) node {$\bullet$};
\draw (1,0) node {$\bullet$};
\draw (2,0) node {$\bullet$};
\draw (3,0) node {$\bullet$};
\draw (4,0) node {$\bullet$};
\draw (5,0) node {$\bullet$};
\draw (6,0) node {$\bullet$};

\draw (0,3) node {$\bullet$};
\draw (2,3) node {$\bullet$};
\draw (3,12) node {$\bullet$};
\draw (5,3) node {$\bullet$};
\draw (6,6) node {$\bullet$};

\draw (0,5) node[blue] {$\bullet$};
\draw (2,5) node[blue] {$\bullet$};
\draw (3,14) node[blue] {$\bullet$};
\draw (5,5) node[blue] {$\bullet$};
\draw (6,8) node[blue] {$\bullet$};

\draw (-1.5,-0.1) node[below] {$B_3(u)=$};
\draw (0,-0.1) node[below] {$1$};
\draw (1,-0.1) node[below] {$0$};
\draw (2,-0.1) node[below] {$1$};
\draw (3,-0.1) node[below] {$8$};
\draw (4,-0.1) node[below] {$0$};
\draw (5,-0.1) node[below] {$1$};
\draw (6,-0.1) node[below] {$4$};

\draw (-1.5,-0.9) node[blue,below] {$E_3(u)=$};
\draw (0,-0.9) node[blue,below] {$3$};
\draw (1,-0.9) node[blue,below] {$0$};
\draw (2,-0.9) node[blue,below] {$3$};
\draw (3,-0.9) node[blue,below] {$12$};
\draw (4,-0.9) node[blue,below] {$0$};
\draw (5,-0.9) node[blue,below] {$3$};
\draw (6,-0.9) node[blue,below] {$6$};
\end{scope}

\begin{scope}[xshift=6cm, yshift=4cm, scale=0.9]
\draw (0,0) node {$8$};
\draw (1,0) node {$2$};
\draw (2,0) node {$1$};
\draw (3,0) node {$0$};
\draw (4,0) node {$4$};
\draw (5,0) node {$1$};
\draw (6,0) node {$0$}; 
\draw (7,0) node {$1$};
\draw (8,0) node {$0$};  

\draw[thick] (0,0.5) -- (4,4.5) -- (8,0.5);
\draw[thick] (3,0.5) -- (1.5,2);
\draw[thick] (1,0.5)--(2,1.5);
\draw[thick] (2,0.5)--(2.5,1);
\draw[thick] (4,0.5)--(6,2.5);
\draw[thick] (5,1.5)--(6,0.5);
\draw[thick] (5,0.5)--(5.5,1);
\draw[thick] (7,0.5)--(7.5,1);

\draw[red,rotate around={135:(6,2.5)}] (6,2.5) ellipse (3.3cm and 0.4cm);
\draw[red,rotate around={135:(5.5,1)}] (5.5,1) ellipse (1.2cm and 0.3cm);
\draw[red,rotate around={135:(2.2,1.3)}] (2.2,1.3) ellipse (1.5cm and 0.3cm);
\end{scope}

\begin{scope}[xshift=10cm,scale=1.3]
\draw[thick] (-2,0)--(-1,1)--(0,2)--(1,1);
\draw[thick] (-1.5,0)--(-1,1);
\draw[thick] (-1,0)--(-1,1);
\draw[thick] (0,0)--(0,2);
\draw[thick] (1,0)--(0,1);
\draw (-2,0) node {$\bullet$};
\draw (-1,1) node {$\bullet$};
\node[draw=red, circle, inner sep=2pt] at (0,2) {$\bullet$};
\draw (1,1) node {$\bullet$};
\draw (-1.5,0) node {$\bullet$};
\node[draw=red, circle, inner sep=2pt] at (-1,1) {$\bullet$};
\draw (0,0) node {$\bullet$};
\node[draw=red, circle, inner sep=2pt] at (0,1) {$\bullet$};
\draw (0,2) node {$\bullet$};
\draw (1,0) node {$\bullet$};
\draw (-1,0) node {$\bullet$};
\end{scope}
\end{tikzpicture}
    \caption{On the top right is represented the tree $\varphi(u)$ for the Tamari word $u=1014012$. On the left are represented $B_3(u)$ and $E_3(u)$. On the bottom right is the tree $T_u$ of the interval $[B_3(u),E_3(u)]$.}
    \label{fig:prooflemdilatation}
\end{figure}

\begin{proposition} [\cite{10.1093/comjnl/29.2.171}]
\label{propTamari}
$\Tam(C_n,C_n)\cong \Tam_{n+1}$.
\end{proposition}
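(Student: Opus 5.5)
The plan is to compare both sides through \emph{words} and to reduce everything to Pallo's description of the Tamari lattice by weight sequences \cite{10.1093/comjnl/29.2.171}. By \cref{lemconditionchaintorclosed}, with $P=\phi=C_n$ (so $\phi(i)=i$), a torclosed set $T$ is the same thing as a word $u=u_0\cdots u_{n-1}$ satisfying two conditions: $0\leq u_i\leq i+1$, and for all $i<n$ and $1\leq k<n-i$,
\[
u_i\neq 0 \ \text{and}\ u_{i+k}>k-1 \quad\Longrightarrow\quad u_{i+k}\geq u_i+k .
\]
Since each component $\mathcal{C}_i$ is a chain and $T_i$ is an order filter of it, $T_i$ is determined by its cardinality $u_i$. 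Hence inclusion of torclosed sets is exactly the componentwise order on words. The statement therefore reduces to two claims. First, $\varphi$ maps the set of these words bijectively onto the binary trees with $n+2$ leaves. Second, the componentwise order on weight sequences coincides with the Tamari (rotation) order.

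For the second claim I will invoke Pallo's theorem, which states exactly this. In particular, the Tamari order on $\Tam_{n+1}$ is given by componentwise comparison of weight sequences. The appended entries $0$ and $n+1$ are constant: the first leaf is always a single leaf, and the last leaf numbered in reverse preorder spans the whole tree. These entries therefore do not affect the comparison. So once the first claim is established, $\varphi$ is an order isomorphism between $\Tam(C_n,C_n)$ and $\Tam_{n+1}$.

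The first claim is the main obstacle. It requires matching the torclosed condition above with Pallo's characterization of weight sequences. A sequence $(w_1,\dots,w_N)$ is a weight sequence if and only if $0\leq w_i\leq i-1$, with the indexing adapted to reverse preorder, and the intervals $[i-w_i,i]$ form a laminar family: any two of them are nested or disjoint. In the paper's diagram language, the segment at position $i$ of height $u_i$ corresponds to the interval covered by the largest subtree having leaf $i$ as its leftmost leaf. I will check directly that, for the shifted sequence $(0,u_0,\dots,u_{n-1},n+1)$, the non-crossing condition says the following: if the interval of position $i+k$ reaches into position $i$ (this is $u_{i+k}>k-1$) and $u_i\neq0$, then the whole interval of $i$ must be contained in it (this is $u_{i+k}\geq u_i+k$). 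That is precisely the laminarity condition, read in the indexing shifted by one for the initial $0$.

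The bound $u_i\leq i+1$ matches $w_{i+2}\leq i+1$. The final entry $n+1$ is forced, since that interval covers everything and is compatible with all the others. Once this dictionary is fixed, verifying it is a finite case check: for two positions $i<i+k$, their intervals are either disjoint, or nested, or crossing, and only the crossing case is forbidden on both sides. If the orientation conventions turn out to mismatch, I will instead compose with the left–right mirror of trees, which is an anti-automorphism. I would then use Pallo's dual statement, so that the componentwise order is still respected.
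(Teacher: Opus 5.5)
Your proposal is correct and takes essentially the same route as the paper: identify torclosed sets of $\Tam(C_n,C_n)$ with words via \cref{lemconditionchaintorclosed}, then use Pallo's weight-sequence bijection $\varphi$ together with his componentwise description of the rotation order. The paper leaves the matching to the citation, whereas your laminarity check makes explicit why the torclosed condition is exactly the crossing-free condition on weight sequences. One small precision is that bounds plus laminarity characterize weight sequences only once the last entry is fixed to $N-1$; for instance $(0,0)$ is laminar but is not a weight sequence. Your construction already enforces this by appending $n+1$.
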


Thus the Tamari lattice is a very special case of a $(P,\phi)$-Tamari lattice. In addition to being a special case of a $(P,\phi)$-Tamari lattice, we can recover the Tamari lattice in any $(P,\phi)$-Tamari lattice as a sublattice:

\begin{proposition}
\label{propcompareTamari}
Let $P$ be any finite poset and $\phi :\phi(0)<\phi(1)<\cdots <\phi(n-1)$ be any chain in $P$. The induced subposet of $\Tam(P,\phi)$ on the torclosed sets that are order filters generated only by elements of the form $(\phi(i),j)$ for $i\leq j<n$ is a sublattice and is isomorphic to $\Tam_{n+1}$.
\end{proposition}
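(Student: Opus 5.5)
The plan is to compare with the case $P=C_n$, $\phi=C_n$ and then invoke \cref{propTamari}. In $\mathcal{C}_{C_n}^{C_n}$ the $i$-th component is the chain $\{0,1,\dots,i\}\times\{i\}$. Define
$$\Phi(T):=F\big(\{(\phi(k),i)\mid (k,i)\in T\}\big),$$
the order filter of $\mathcal{C}_P^{\phi}$ generated by these elements, for $T$ a torclosed set of $\mathcal{C}_{C_n}^{C_n}$. In the other direction set
$$\Psi(S):=\{(k,i)\mid (\phi(k),i)\in S\}.$$
Let $\mathcal{S}$ be the set of torclosed sets of $\Tam(P,\phi)$ that are order filters generated by elements of the form $(\phi(k),i)$. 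I will show that $\Phi$ and $\Psi$ are mutually inverse order isomorphisms between $\Tam(C_n,C_n)$ and $\mathcal{S}$, and that $\mathcal{S}$ is closed under $\wedge$ and $\vee$.

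\emph{Step 1: $\Phi(T)$ is torclosed.} Take $(x,i)\in\Phi(T)$ and $(\phi(i+1),j)\in\Phi(T)$ with $i<j$. Then $x\geq\phi(k)$ for some $(k,i)\in T$. Also $\phi(i+1)\geq\phi(l)$ for some $(l,j)\in T$, so $l\leq i+1$. Since $T_j$ is a filter of a chain, $(i+1,j)\in T$. Torclosedness of $T$ then gives $(k,j)\in T$, hence $(x,j)\geq(\phi(k),j)$ lies in $\Phi(T)$.

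Next, $\Psi\circ\Phi=\mathrm{id}$. Indeed $(\phi(k),i)\geq(\phi(k'),i)$ iff $k\geq k'$, so the elements $(\phi(\cdot),i)$ lying in $\Phi(T)_i$ are exactly those indexed by $T_i$. Also $\Phi\circ\Psi=\mathrm{id}$ on $\mathcal{S}$, since an element of $\mathcal{S}$ is generated by the $\phi$-elements it contains. Both maps clearly preserve inclusion, so $\mathcal{S}\cong\Tam(C_n,C_n)$ as posets. Note that $\Psi(S)$ is torclosed in the chain case for any torclosed $S$: if $(\phi(k),i)\in S$ and $(\phi(i+1),j)\in S$, then $(\phi(k),j)\in S$.

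\emph{Step 2: closure under meets.} The meet is intersection. Within component $i$, an element of $\mathcal{S}$ is the principal filter generated by $(\phi(k_i),i)$ for the least such $k_i$, or is empty, because these generators form a chain. The intersection of two such filters is again the principal filter generated by the larger generator (or is empty), so it lies in $\mathcal{S}$.

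\emph{Step 3: closure under joins (main obstacle).} This is where I expect the difficulty, since the join is a completion of the union. I will use \cref{lem:procheenproche}, which builds the completion by successive one-step completions. In each step, given $(\phi(i+1),j)$, every $(x,i)$ already present is copied to $(x,j)$. If the current set is generated by $\phi$-elements, the copied elements $(x,j)$ satisfy $x\geq\phi(k)$ for a generator $(\phi(k),i)$. So the newly added part is the filter generated by the $(\phi(k),j)$, and the set remains in $\mathcal{S}$. Moreover, on $\phi$-elements each step is exactly the corresponding completion step in $\Tam(C_n,C_n)$. Hence $\Phi(T\vee T')=\Phi(T)\vee\Phi(T')$, and $\mathcal{S}$ is a sublattice.

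\emph{Step 4: conclusion.} Combining the previous steps with \cref{propTamari} gives $\mathcal{S}\cong\Tam(C_n,C_n)\cong\Tam_{n+1}$.
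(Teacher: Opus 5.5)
Your proposal is correct and follows essentially the same route as the paper: your map $\Psi$ is the paper's encoding of an element by the word counting the elements $(\phi(k),j)$ it contains, and you conclude with \cref{propTamari}. You additionally write out the checks the paper calls immediate (torclosedness of $\Phi(T)$, closure under intersection and under completion); one small correction to Step 3 is that the intermediate sets produced during completion are not torclosed, so the invariant you carry is ``order filter generated by elements $(\phi(k),i)$'' rather than membership in $\mathcal{S}$.
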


\begin{proof}
By definition of this subposet, denoted by $P_{\phi}$, and of the completions, it is immediate that $P_{\phi}$ is a sublattice of $\Tam(P,\phi)$. Since, for each $j<n$, the elements $(\phi(i),j)$ for $i\leq j$ form a chain in $\mathcal{C}_j$, an element $T$ of $P_{\phi}$ can be identified as the word $u=u_0 u_1\dots u_{n-1}$ where $u_j$ is the number of elements of the chain $(\phi(j),j)>(\phi(j-1),j)>\cdots >(\phi(0),j)$ in $T$. Then $P_{\phi} \cong \Tam_{n+1}$ follows from \cref{propTamari}.
\end{proof}

\begin{example}
    In \cref{fig:latticefirstExample}, forgetting the torclosed sets $\{1,2\}$, $\{1,2,3\}$, $\{1,2,7\}$ and $\{1,2,3,5\}$ gives the sublattice $\Tam_4$.
\end{example}

We now want to study $\Tam(C_{np},\phi_p)$ when $p\geq 2$. We define a map from binary trees to planar rooted trees, called the \emph{gluing} map. It maps a binary tree to the planar rooted tree obtained by identifying together all the vertices on a right-descendant path starting at any internal node. For example, the binary tree on the top right of \cref{fig:prooflemdilatation} is mapped by the gluing to the planar rooted tree on the bottom right of the same figure (we circled in red the elements that are glued together). Let $\psi$ be the bijection that maps a Tamari word $u$ of length $n$ to the gluing of the tree $\varphi(u)$. For example, the planar rooted tree on the bottom right of \cref{fig:prooflemdilatation} is $\psi(1014012)$. See also on the right of \cref{fig:bijectionmiror}. The following result follows from the definition of $\varphi$.
For an illustration, see the left of \cref{fig:bijectionmiror} with its caption, and compare it to the top right of \cref{fig:prooflemdilatation}.

\begin{lemma} \label{lem:reformvarphi}
The bijection $\varphi$ is equivalently defined to be the map which sends a Tamari word $u$ of length $n$ to the binary tree obtained by taking the mirror image (reflection with respect to the y-axis) of the diagram of $u$ drawn with an extra empty segment indexed by $0$ at the beginning, an extra segment of height $n+1$ at the end, and lines of slope $1$ from the empty segments ($0$ letters) until reaching the last segment of height $n+1$ or before crossing another segment. 
\end{lemma}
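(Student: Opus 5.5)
The plan is to unwind the definition of $\varphi$ through weight sequences and check that the mirrored diagram produces a binary tree with the same weights. Let $\tilde u=(0,u_0,\dots,u_{n-1},n+1)$, indexed by positions $-1,0,\dots,n$. The target weight sequence is $\tilde u$, and a binary tree is determined by its weight sequence (the bijectivity result recalled before \cref{propTamari}). It therefore suffices to show that the tree obtained from the mirrored diagram has weight sequence $\tilde u$, with leaves in reverse preorder matching the positions.

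First, I make the tree read off the mirrored diagram precise. After reflection, every position $i$ contributes a vertical segment of height $\tilde u_i$ (a point if $\tilde u_i=0$). From each empty segment a slope-$1$ line (slope $-1$ after mirroring) is drawn until it either hits the last segment, of height $n+1$, or stops just before crossing another segment. The vertices of the tree are the bases of the segments (the leaves), together with the points where lines end on segments or where segments meet lines (internal nodes). The edges are the pieces of lines and segments between consecutive vertices.

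Second, I check that this is a complete binary tree whose root is the top of the segment at position $n$. Tamari words satisfy the torclosed condition of \cref{lemconditionchaintorclosed} with $\phi=C_n$: whenever $u_i\neq 0$ and $u_{i+k}>k-1$, we have $u_{i+k}\ge u_i+k$. Geometrically, this says that slope-$1$ lines through segment tops never cross segments and never touch bases. I will use this to show that each line ends exactly on the top of a later segment, or on the final segment. Each segment then acts as a right-descendant path whose bottom is a leaf and whose points are internal nodes, and each line acts as a left path. Counting shows that every internal node has exactly two children.

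Third, I compute weights. Consider the leaf at position $i$, which is the base of segment $i$. It is the leftmost leaf, in the mirrored picture, of the subtree hanging from the top of segment $i$. That subtree consists of the segments whose lines pass under this top, namely positions $i-\tilde u_i,\dots,i$. This follows because the segment top has height $\tilde u_i$ and the lines have slope $1$, together with the non-crossing property. So that subtree has $\tilde u_i+1$ leaves, and no larger subtree has the same leftmost leaf, since going up further leaves the segment. Hence $w_i=\tilde u_i$. The reverse preorder, which visits the root and then its subtrees from right to left, corresponds after mirroring to reading positions from left to right in the original diagram. This gives the indexing $(0,u_0,\dots,u_{n-1},n+1)$.

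The main obstacle is the second step. One must show rigorously that the Tamari inequalities make every line terminate exactly at a segment top, without passing through the interior of a segment. This requires a careful induction from right to left over positions, using the inequality $u_{i+k}\ge u_i+k$.
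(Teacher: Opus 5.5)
Your overall plan is the right one: show that the mirrored picture is a binary tree with weight sequence $\tilde u$, then invoke uniqueness. The paper offers nothing more than this; it only says the lemma ``follows from the definition of $\varphi$'' and points to \cref{fig:bijectionmiror}.

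However, the structural claim you build step 2 on, and single out as the main obstacle, is false. A line started at an empty position touches the tops it meets and keeps going. It stops only where the next step would cross a segment, i.e.\ at an \emph{interior} point of a later segment, or on the final segment. It never terminates at the top of a non-final segment. In the paper's own example $u=1014012$, the line from the empty letter $u_1=0$ touches the top of segment $2$ (height $1$) and stops at height $2$ on segment $3$ (height $4$). This is the red internal node $(4,2)$ of the mirrored picture.

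These stopping points are exactly the internal nodes that are not tops, and you cannot do without them. If $E$ of the $n+2$ positions are empty, there are only $n+2-E$ tops, but $n+2$ leaves require $n+1$ internal nodes. The missing $E-1$ are the stopping points of the lines from the empty positions other than $-1$; the line from $-1$ ends at the root $(n,n+1)$. So the induction you plan targets an untrue statement, and your child count cannot work with tops alone. Your step 1 description of the vertices was in fact correct. Note also that after mirroring, segments are the left-descendant paths and lines the right-descendant ones, as \cref{prop:reformpsi} requires; step 2 states this backwards, contradicting your own step 3.

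What the Tamari inequality really gives, and what suffices, is the following. For each $j$ with $\tilde u_j>0$, put $e=j-\tilde u_j\ge -1$.
\begin{itemize}
\item Position $e$ is empty: if $e\ge 0$, apply the condition with $i=e$, $k=\tilde u_j$.
\item Every $i'$ with $e<i'<j$ and $\tilde u_{i'}\neq 0$ satisfies $\tilde u_{i'}\le i'-e$: apply the condition with $i=i'$, $k=j-i'$, and for $j=n$ use $u_{i'}\le i'+1$.
\end{itemize}
Hence the line from $e$ reaches $(j,\tilde u_j)$ unobstructed. Since the lines are parallel, each top lies on exactly one of them.

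Now take as parent of a top the next node up its line, and as parent of a stopping point the next node up its segment. Heights strictly increase, so this is a plane tree rooted at $(n,n+1)$. Every internal node has one vertical child (left, after mirroring) and one diagonal child (right), and the leaves appear in $x$-order.

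Your step 3 then goes through. The subtree at the top of segment $j$ has its left path down the segment and its right path down the line to the empty position $j-\tilde u_j$, hence $\tilde u_j+1$ leaves. It is maximal because the top is the diagonal child of its parent. An empty position, being the bottom of a line, is a right child and gets weight $0$.
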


It follows from \cref{lem:reformvarphi} (the lines of slope $-1$ correspond to the red lines in \cref{fig:bijectionmiror}):

\begin{proposition} \label{prop:reformpsi}
The bijection $\psi$ is equivalently defined to be the map which sends a Tamari word $u$ of length $n$ to the planar rooted tree obtained by identifying together all the nodes that are on the same line of slope $1$ of $\varphi(u)$ as constructed in \cref{lem:reformvarphi} (or lines of slope $-1$ after the mirror image).
\end{proposition}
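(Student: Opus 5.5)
The plan is to derive \cref{prop:reformpsi} directly from \cref{lem:reformvarphi}. By definition, $\psi(u)$ is the gluing of $\varphi(u)$: we identify all vertices lying on a right-descendant path starting at an internal node. So the statement follows once we show that, in the picture of \cref{lem:reformvarphi}, the maximal right-descendant paths of $\varphi(u)$ are exactly the maximal sets of nodes lying on a common line of slope $1$. After the mirror image these become the lines of slope $-1$.

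First, I will fix the embedding of $\varphi(u)$ provided by \cref{lem:reformvarphi}. The binary tree consists of the vertical segments, including the extra empty segment indexed by $0$ and the final segment of height $n+1$, together with the lines of slope $1$ issued from the empty segments. The internal nodes are the points where such a line meets a segment. In this embedding I will check the following: at an internal node, one child edge continues along the slope-$1$ line, and the other child edge goes down the vertical segment. After reflection, the edge continuing along the line is the right child, so a right-descendant step keeps us on the same line.

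Second, I will show the converse. Consecutive internal nodes on the same slope-$1$ line are joined by a right-child edge. This uses the rule in \cref{lem:reformvarphi} that a line stops just before crossing another segment, or at the last segment. Hence a slope-$1$ line passes through a sequence of nodes with no other tree vertex between them, and its upper endpoint lies on a segment where the line terminates. Together with the first step, the maximal right-descendant paths (which are the identified classes in the gluing) coincide with the node sets of the individual lines. The main care needed is the definition of right-descendant path from an internal node: it includes the final leaf reached by going right, and a line's lowest point on the axis is that leaf. I expect this endpoint bookkeeping to be the only delicate point, and I would check it on the example of \cref{fig:prooflemdilatation}, where the red-circled glued nodes lie on the red lines.

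Finally, since gluing depends only on the partition of vertices into right-descendant paths, and these paths are the slope-$1$ lines, the planar rooted tree $\psi(u)$ equals the tree obtained by identifying nodes on common slope-$1$ lines of $\varphi(u)$. The planar order is inherited in both constructions from the same embedding, so it agrees as well.
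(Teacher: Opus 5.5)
Your proposal is correct and takes the same route as the paper, which gives no separate argument. The paper simply notes that the statement follows from \cref{lem:reformvarphi}, because in the mirrored diagram the vertical segment edges are left-child edges and the red slope $-1$ line edges are right-child edges, so the right-descendant paths of $\varphi(u)$ are exactly the red lines. Your endpoint bookkeeping (the upper endpoint of a line is the root or a left child, and the lower endpoint is the leaf at an empty segment) spells out what the paper leaves implicit.
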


\begin{proposition} \label{prop:numberinterval}
Let $u$ be a Tamari word of length $n$. We have
\begin{align}
|I_p(u)| = \prod_{x \text{ an internal node of } \varphi(u)}    \binom{(\text{number of children of x}) -1+(p-1)}{p-1}
\end{align}
\end{proposition}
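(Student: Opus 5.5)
The plan is to describe the interval $I_p(u)=[B_p(u),E_p(u)]$ explicitly in terms of words, and to show that it decomposes as a product of independent chains of choices, one for each node. I will read ``internal node of $\varphi(u)$'' in the statement as a node of the glued tree $\psi(u)$, that is, a class of nodes of $\varphi(u)$ lying on one line as in \cref{prop:reformpsi}; with the literal binary-tree reading every factor would be $\binom{p}{p-1}=p$. Checking which reading the formula intends is part of the plan.

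By \cref{lem:intervalBE}, an element $T$ of $\Tam(C_{np},\phi_p)$ lies in $I_p(u)$ exactly when its word $t=t_0\dots t_{n-1}$ satisfies two conditions. First, $t_i=0$ whenever $u_i=0$. Second, whenever $u_i\neq 0$,
\[ t_i = p\,u_i-(p-1)+a_i \quad\text{with } a_i\in\{0,\dots,p-1\}. \]
So $T$ is encoded by the vector $(a_i)_{i:\,u_i\neq 0}$. I will then rewrite the torclosed criterion of \cref{lemconditionchaintorclosed} for $\phi_p$ in terms of $u$ and $a$.

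\emph{Activation step.} The completion between components $i<j$ is triggered when $t_i\neq 0$ and $(\phi_p(i+1),j)\in T$. Since the elements $(\phi_p(k),j)$ present in $T$ are exactly those counted by $u_j$, the second condition is $u_j\ge j-i$. Hence whether the constraint between $i$ and $j$ is active depends only on $u$, not on $a$.

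\emph{Comparison step.} When the constraint is active, the requirement $t_j\ge t_i+p(j-i)$ becomes
\[ p\,(u_j-u_i-(j-i)) + a_j - a_i \ \ge\ 0. \]
Because $u$ is a Tamari word, $u_j-u_i\ge j-i$. If this inequality is strict, the left side is at least $p-(p-1)>0$, so the constraint holds automatically. If it is an equality, the constraint reduces to $a_i\le a_j$.

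The surviving constraints are therefore the pairs $i<j$ with $u_j\ge j-i$ and $u_j-u_i=j-i$. These are exactly the pairs of segment tops lying on a common line of slope $1$ in the diagram, with no earlier segment blocking the line. By \cref{lem:reformvarphi} and \cref{prop:reformpsi}, such lines are precisely the classes of nodes identified by the gluing, so each class is one node of $\psi(u)$.

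Within one class the tight relation is transitive, since it amounts to lying on the same line. The constraints therefore say that $a$ is weakly increasing along each class, and distinct classes impose no conditions on one another. A class of $k$ free indices then contributes the number of weakly increasing sequences of length $k$ in $\{0,\dots,p-1\}$, which is $\binom{k+p-1}{p-1}$. Taking the product over classes gives $|I_p(u)|$.

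The main obstacle is the bookkeeping in this last identification. I must check that a node of $\psi(u)$ with $c$ children corresponds to a line carrying exactly $c-1$ nonempty segments of $u$. The extra initial empty segment and the final segment of height $n+1$ are fixed and must not be counted. Nodes coming only from empty segments should contribute the trivial factor $1$. I will verify this by induction on the recursive structure of $\varphi$, checking it against \cref{fig:prooflemdilatation} with $u=1014012$ and $p=3$. I also need to make sure that ``blocked'' lines, where no completion is triggered, are exactly the places where the gluing starts a new node.
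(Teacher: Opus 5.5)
Your proposal is correct and is essentially the paper's argument. Each nonempty segment gets a free increment in $\{0,\dots,p-1\}$, the increments must be weakly increasing along each line through segment tops and are independent across lines, so each line with $k$ tops contributes $\binom{k+p-1}{p-1}$ and corresponds to a node of $\psi(u)$ with $k+1$ children. You are right that the product must be taken over the nodes of $\psi(u)$, which is also how the paper's proof concludes. Your activation/comparison computation is a cleaner algebraic version of the paper's diagrammatic remark that a segment can be raised freely unless its top lies on another segment's line. The bookkeeping you defer goes through directly from the Tamari condition: the intercept $j-u_j$ is always an empty position (possibly the extra initial one), and no segment can block a line before its last top, so each line gains exactly one extra child at its endpoint.
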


\begin{proof}
We consider the diagrams of $B_p(u)$ and $E_p(u)$ drawn together as is done on the left of \cref{fig:prooflemdilatation}. Passing from $B_p(u)$ to $E_p(u)$ corresponds to adding $p-1$ to the height of all the nonempty segments (this corresponds to adding the blue segments in \cref{fig:prooflemdilatation}). The elements of $I_p(u)$ correspond to the diagrams with segments whose heights are between these of $B_p(u)$ and these of $E_p(u)$. We can increase freely the height of a given segment of $x$-coordinate $i$ independently of the others before reaching the maximum height $p\, u_i+(p-1)$, unless the top of this segment in $u$ is touched by the line of slope $p$ from the top of another segment (in \cref{fig:prooflemdilatation} the segments which we cannot freely increase the height independently from the other segments are the first and fourth segments). 

If we have $k\geq 1$ segments in $u$ of $x$-coordinate $x_1,x_2,\dots,x_k$ whose tops are on the same line of slope $p$ (and there are no other segments whose top is on that line), then just looking at these segments without changing the others we have as many elements in $I_p(u)$ as the number of sequences $i_1,i_2,\dots,i_k$ satisfying $0\leq i_1\leq i_2 \leq \cdots \leq i_k \leq p-1$. Here $i_j$ corresponds to the number we add to the height of the segment of $x$-coordinate $x_j$. The number of such sequences is $\binom{p-1+k}{k} = \binom{k+(p-1)}{p-1}$. The line of slope $p$ we consider reaches a point on the $x$-axis whose $x$-coordinate is an integer. This point corresponds to a $0$ (empty segment) of the weight sequence of the binary tree $\varphi(u)$ (see the left of \cref{fig:bijectionmiror}). By \cref{prop:reformpsi}, in $\psi(u)$ the top of the segments of $x$-coordinates $x_1,x_2,\dots,x_k$ are identified, thus the resulting internal node that it creates has $k+1$ children. Thus there are as many elements of $I_p(u)$ as number of trees $\psi(u)$ where each internal node is labeled by a positive integer less than or equal to $\binom{(\text{number of children of x})-1+(p-1)}{p-1}$. This finishes the proof of the proposition.
\end{proof}

\begin{figure}
    \centering
\begin{tikzpicture}[scale=0.6]
\draw[step=1cm, gray, very thin, dashed] (-1,0) grid (8, 8);
\draw[->,>=latex] (-1,0)--(8,0);
\draw[very thick] (0,0) -- (0,8);
\draw[very thick] (1,0) -- (1,2);
\draw[very thick] (2,0) -- (2,1);
\draw[very thick] (4,0) -- (4,4);
\draw[very thick] (5,0) -- (5,1);
\draw[very thick] (7,0) -- (7,1);

\draw[red,very thick] (0,3) -- (3,0);
\draw[red,very thick] (0,8) -- (8,0);
\draw[red,very thick] (4,2) -- (6,0);

\draw (0,0) node {$\bullet$};
\draw (1,0) node {$\bullet$};
\draw (2,0) node {$\bullet$};
\draw (4,0) node {$\bullet$};
\draw (5,0) node {$\bullet$};
\draw (7,0) node {$\bullet$};

\draw (0,3) node[red] {$\bullet$};
\draw (0,8) node[red] {$\bullet$};
\draw (1,2) node[red] {$\bullet$};
\draw (2,1) node[red] {$\bullet$};
\draw (4,2) node[red] {$\bullet$};
\draw (4,4) node[red] {$\bullet$};
\draw (5,1) node[red] {$\bullet$};
\draw (7,1) node[red] {$\bullet$};
\draw (3,0) node[red] {$\bullet$};
\draw (6,0) node[red] {$\bullet$};
\draw (8,0) node[red] {$\bullet$};

\draw (0,-0.1) node[below] {$8$};
\draw (1,-0.1) node[below] {$2$};
\draw (2,-0.1) node[below] {$1$};
\draw (3,-0.1) node[below] {$0$};
\draw (4,-0.1) node[below] {$4$};
\draw (5,-0.1) node[below] {$1$};
\draw (6,-0.1) node[below] {$0$};
\draw (7,-0.1) node[below] {$1$};
\draw (8,-0.1) node[below] {$0$};   

\draw [decorate,decoration={brace,amplitude=7pt, mirror,raise=2ex}]
(0.8,-0.4) -- (7.2,-0.4) node[midway,yshift=-2.5em]{$(1014012)^{rev}$};

\begin{scope}[xshift=15cm,yshift=-1cm, scale=1.2]
\begin{scope}[scale=2]
\node (0) at (0,0) {$00$};
\node (1) at (-1,1) {$10$};
\node (2) at (1,0.5) {$01$};
\node (3) at (-1,2) {$20$};
\node (4) at (1,1) {$02$};
\node (5) at (1,1.5) {$03$};
\node (6) at (0,2) {$13$};
\node (7) at (1,2.2) {$04$};
\node (8) at (1,3) {$14$};
\node (9) at (0,3.5) {$24$};
\draw[->,>=latex] (0) -- (1);
\draw[->,>=latex] (0) -- (2);
\draw[->,>=latex] (1) -- (3);
\draw[->,>=latex] (1) -- (6);
\draw[->,>=latex] (2) -- (4);
\draw[->,>=latex] (3) -- (9);
\draw[->,>=latex] (4) -- (5);
\draw[->,>=latex] (5) -- (6);
\draw[->,>=latex] (5) -- (7);
\draw[->,>=latex] (6) -- (8);
\draw[->,>=latex] (7) -- (8);
\draw[->,>=latex] (8) -- (9); 

\draw[red] (0,0) ellipse (0.2cm and 0.2cm);
\draw[red] (1,0.75) ellipse (0.2cm and 0.5cm);
\draw[red] (-1,1.5) ellipse (0.2cm and 0.7cm);
\draw[red] (1,1.85) ellipse (0.2cm and 0.5cm);

\draw[red, thick, rounded corners=6pt]
  (-0.4,1.9)--(-0.4,1.7) --(0.3,1.7) --(1.2,2.8) --(1.2,3.2) --
  (0.2,3.7) --(-0.3,3.7) --(-0.3,3.2) --(0.7,3) --
  cycle;
\end{scope}

\begin{scope}[xshift=3.7cm,yshift=6cm,scale=0.8]
\draw (-1,0)--(0,1)--(1,0);
\draw (0,0)--(0,1);
\draw (-1,0) node {$\bullet$};
\draw (0,0) node {$\bullet$};
\draw (1,0) node {$\bullet$};
\draw (0,1) node {$\bullet$};
\end{scope}

\begin{scope}[xshift=3.4cm,yshift=3.7cm,scale=0.5]
\draw (-1,0)--(0,1)--(1,0)--(2,-1);
\draw (-1,0) node {$\bullet$};
\draw (0,1) node {$\bullet$};
\draw (1,0) node {$\bullet$};
\draw (2,-1) node {$\bullet$};
\end{scope}

\begin{scope}[xshift=4cm,yshift=1cm,scale=0.5]
\draw (-2,0)--(-1,1)--(0,2);
\draw (-1,1)--(0,0);
\draw (-2,0) node {$\bullet$};
\draw (-1,1) node {$\bullet$};
\draw (0,2) node {$\bullet$};
\draw (0,0) node {$\bullet$};
\end{scope}

\begin{scope}[xshift=-3cm,yshift=4cm,scale=0.5]
\draw (-2,0)--(-1,1)--(0,2)--(1,1);
\draw (-2,0) node {$\bullet$};
\draw (-1,1) node {$\bullet$};
\draw (0,2) node {$\bullet$};
\draw (1,1) node {$\bullet$};
\end{scope}

\begin{scope}[xshift=-1cm,yshift=-1cm,scale=0.5]
\draw (0,0)--(0,3);
\draw (0,0) node {$\bullet$};
\draw (0,1) node {$\bullet$};
\draw (0,2) node {$\bullet$};
\draw (0,3) node {$\bullet$};
\end{scope}    
\end{scope}
\end{tikzpicture}
\caption{On the left this is the mirror image of the diagram of $u=1014012$ with an extra initial $0$ and extra final $8$ and lines of slope $1$ drawn in red starting at each $0$. The segments with the red lines form the binary tree $\varphi(u)$, and $\psi(u)$ is obtained by identifying the red points that are on the same red lines. On the right we show the partition of $\Tam(C_4,\phi_2)$ with the intervals $I_2(u)$ in red, and we represented the associated tree $\psi(u)$ for each interval.}
\label{fig:bijectionmiror}
\end{figure}

Let $d_{0,p}=d_{1,p}=1$ and $d_{n,p}=|\Tam(C_{(n-1)p},\phi_p)|$ for all $n>1$.

\begin{theorem} \label{thm:genechains}
The generating function $D_p(x)$ of $(d_{n,p})_{n\geq 0}$ satisfies $D_p(x)=1+\frac{x\,D_p(x)}{(1-x\,D_p(x))^p}$.
\end{theorem}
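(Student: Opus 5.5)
We will combine the partition of \cref{lem:intervalBE} with the product formula of \cref{prop:numberinterval}, and then translate the resulting sum over planar rooted trees into a functional equation. By \cref{lem:intervalBE}, for $n\geq 1$ we have $|\Tam(C_{np},\phi_p)|=\sum_{u}|I_p(u)|$, where $u$ ranges over Tamari words of length $n$. Since $\psi$ is a bijection from Tamari words of length $n$ onto its image, a family of planar rooted trees (the gluings of binary trees with $n+2$ leaves), we get
\begin{align*}
d_{n+1,p}=\sum_{t}\ \prod_{x \text{ internal node of } t}\binom{c(x)-1+(p-1)}{p-1},
\end{align*}
where $c(x)$ is the number of children of $x$ in $t=\psi(u)$. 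Here we use that the internal nodes of $\psi(u)$ correspond to the classes of glued nodes of $\varphi(u)$, and that the proof of \cref{prop:numberinterval} in fact computes the binomial with $c(x)$ the number of children of the glued node.

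The first concrete step is to identify the image of $\psi$. Gluing the right-descendant paths of a binary tree with $N$ leaves is the classical rotation correspondence onto planar rooted trees with $N$ leaves, and this correspondence is a bijection. We may therefore count trees by number of leaves. We will verify the indexing on small cases, for instance $n=1$, $p=2$, where $d_{2,2}=|\Tam(C_2,\phi_2)|$ should equal $2+1=3$. This check fixes whether $d_{n,p}$ counts trees with $n$ or $n+1$ leaves and explains the conventions $d_{0,p}=d_{1,p}=1$.

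Next we set up a generating function. Let $A(x)=\sum_t x^{\#\text{leaves}}\,\mathrm{wt}(t)$, where a leaf has weight $1$ and an internal node with $k$ children contributes $\binom{k-1+p-1}{p-1}$. The root decomposition gives
\begin{align*}
A=x+\sum_{k\geq 2}\binom{k-2+p}{p-1}A^{k}.
\end{align*}
Since $\sum_{m\geq 0}\binom{m+p-1}{p-1}y^{m}=(1-y)^{-p}$, this becomes $A=x+A^{2}(1-A)^{-p}$, up to how the $k=1$ term is handled; unary nodes should not occur, because gluing never produces a node with a single child. The final step is to substitute $A=x\,D_p(x)$ and compare with the claimed equation $D=1+xD(1-xD)^{-p}$. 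Multiplying the claimed equation by $x$ gives $A=x+A\,x(1-A)^{-p}$, which is not the same as $A=x+A^{2}(1-A)^{-p}$. So the correct normalisation must be determined from the indexing shift, possibly by applying the decomposition to a planted tree, that is, a subtree hanging from its rightmost child with the root children counted differently.

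The main obstacle is this bookkeeping step. We must pin down exactly which trees $\psi$ produces, including the extra final segment of height $n+1$, which makes the root special. We must also determine what the binomial $\binom{k-1+p-1}{p-1}$ counts relative to the root decomposition. The approach is to first check the claimed equation numerically for $p=1$ (the Catalan numbers, consistent with \cref{propTamari}) and for $p=2$ with small $n$ using \cref{fig:combegiraudo}, where $d_{3,2}=10$. Then we will choose the recursive decomposition whose weights reproduce the factor $xD/(1-xD)^{p}$. A natural candidate is to decompose along the root's leftmost child: one subtree is weighted by $xD$, and the remaining root children form a sequence of subtrees counted with multiplicity $\binom{m+p-1}{p-1}$, which yields $(1-xD)^{-p}$.
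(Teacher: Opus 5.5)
There is a genuine gap, and it sits exactly in the step you flag as the obstacle: your tree model for the image of $\psi$ is wrong. Gluing the maximal right-descendant paths of a binary tree with $N$ leaves produces one vertex per path, that is, one per vertex that is not a right child (the root and the $N-1$ left children). A glued vertex has as many children as its path has internal nodes. So $\psi$ is the rotation bijection onto \emph{all} planar rooted trees with $N$ \emph{vertices}, not with $N$ leaves.

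Unary nodes do occur. For instance, the left comb $\varphi(0\cdots 0)$ glues to a path, and in general a path with a single internal node gives a node with one child. Your model already fails the check you propose:
\begin{itemize}
\item There are $2$ Tamari words of length $1$, but $3$ planar trees with $3$ leaves and no unary node.
\item For $p=1$, your equation $A=x+A^2(1-A)^{-1}$ generates the little Schr\"oder numbers $1,1,3,11,\dots$ rather than the Catalan numbers.
\item For $p=2$, your weighted trees with $3$ leaves have total weight $3+2\cdot 4=11$, not $d_{2,2}=3$.
\end{itemize}
Separately, for $p\geq 2$ your algebra step is also off: $\sum_{k\geq 2}\binom{k+p-2}{p-1}A^k$ equals $A(1-A)^{-p}-A$, not $A^2(1-A)^{-p}$. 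This is why you cannot reconcile your equation with the target. As written, the proposal ends with a decomposition guessed to fit the answer rather than with a proof.

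With the correct model, neither the root nor the extra final segment needs special treatment: the root's class gets the same weight as every other node. By \cref{lem:intervalBE} and \cref{prop:numberinterval}, $d_{n,p}$ is the weighted number of planar rooted trees with $n+1$ vertices. A node with $k\geq 1$ children has weight $\binom{k+p-2}{p-1}$, so unary nodes have weight $1$; this is consistent with $d_{0,p}=d_{1,p}=1$.

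Counting by vertices and splitting at the root (either a single vertex, or a root with $k\geq 1$ subtrees) gives $xD_p=x\bigl(1+\sum_{k\geq 1}\binom{k+p-2}{p-1}(xD_p)^k\bigr)$. Pulling out one factor $xD_p$ and using $(1-X)^{-p}=\sum_{m\geq 0}\binom{m+p-1}{p-1}X^m$ then yields $D_p=1+xD_p(1-xD_p)^{-p}$. This is the paper's argument. Your closing ``natural candidate'' is exactly this factorisation, but it only becomes justified once you count vertices and allow unary nodes.
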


\begin{proof}
By \cref{lem:intervalBE}, we have $d_{n,p} = \sum_{u \text{ a Tamari word of length }n} |I_p(u)|$. With \cref{prop:numberinterval}, it follows
$$d_{n,p} =\sum_{u \text{ a Tamari word of length }n} \quad \prod_{x \text{ an internal node of } \varphi(u)}    \binom{(\text{number of children of x}) -1+(p-1)}{p-1}.$$    
Thus $d_{n,p}$ is the number of planar rooted trees on $n+1$ vertices where each internal node is labeled by a positive integer less than or equal to $\displaystyle \binom{(\text{number of children of x}) -1+(p-1)}{p-1}$. It follows:
\begin{align*}
D_p(x) &= 1 + \sum_{k\geq 1} \binom{k +p-2}{p-1} \big(x\, D_p(x) \big)^k = 1 +x\,D_p(x)\,\sum_{k\geq 0} \binom{k+p-1}{p-1} \big(x\,D_p(x)\big)^k .
\end{align*}
Using $\frac{1}{(1-X)^p} = \sum_{k\geq 0} \binom{k+p-1}{p-1} X^k$, we have $D_p(x)=1+\frac{x\,D_p(x)}{(1-x\,D_p(x))^p}$.
\end{proof}

\begin{corollary} \label{coro:numberchainp}
For all $n\geq 0$ and $p\geq 1$ we have 
$\displaystyle d_{n,p} = \frac{1}{n+1} \sum_{k=0}^{n} \binom{n+1}{k} \binom{(p-1)\,k+n-1}{p\,k-1}$.
\end{corollary}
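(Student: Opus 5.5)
Since \cref{thm:genechains} gives the functional equation $D_p(x)=1+\frac{x\,D_p(x)}{(1-x\,D_p(x))^p}$, the plan is to apply Lagrange inversion to a suitable auxiliary series. Set $F(x):=x\,D_p(x)$. Multiplying the functional equation by $x$ gives
$$F = x\left(1+\frac{F}{(1-F)^p}\right),$$
so $F=x\,\Phi(F)$ with $\Phi(t)=1+\frac{t}{(1-t)^p}$, a power series with $\Phi(0)=1\neq 0$. Lagrange inversion then gives $[x^{m}]F = \frac{1}{m}[t^{m-1}]\Phi(t)^{m}$, and since $d_{n,p}=[x^{n+1}]F$ we get
$$d_{n,p}=\frac{1}{n+1}\,[t^{n}]\Big(1+\frac{t}{(1-t)^p}\Big)^{n+1}.$$

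Next we expand by the binomial theorem: $\big(1+\frac{t}{(1-t)^p}\big)^{n+1}=\sum_{k=0}^{n+1}\binom{n+1}{k}\,t^k(1-t)^{-pk}$. Using $(1-t)^{-pk}=\sum_{j\geq 0}\binom{pk+j-1}{j}t^j$, the coefficient of $t^n$ in the $k$-th term is $\binom{n+1}{k}\binom{pk+n-k-1}{n-k}$. Since $\binom{pk+n-k-1}{n-k}=\binom{(p-1)k+n-1}{pk-1}$ by symmetry, this matches the summand in the statement. The term $k=n+1$ contributes nothing, as $t^{n+1}$ has no $t^n$ coefficient, so the sum stops at $k=n$.

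The delicate point is the boundary term $k=0$ and the small cases $n=0,1$, which are defined by convention ($d_{0,p}=d_{1,p}=1$). For $k=0$ the coefficient $[t^n](1-t)^0$ equals $1$ if $n=0$ and $0$ otherwise. We must check that the formal value $\binom{n-1}{-1}$ follows the same rule, that is, equals $1$ for $n=0$ (with the convention $\binom{-1}{-1}=1$) and $0$ for $n\geq 1$. We then verify $n=0$ and $n=1$ directly: both give $1$, consistent with the definitions, since for $n=1$ the $k=1$ term is $\frac12\cdot 2\cdot\binom{p-1}{p-1}=1$. Finally, we check that the recursion in \cref{thm:genechains} really determines $D_p$ with constant term $1$, so that $F$ is the unique power series solution with $F(0)=0$ and Lagrange inversion applies.
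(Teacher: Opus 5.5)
Your proposal is correct and follows the paper's proof essentially step for step. Like the paper, you set $F=x\,D_p(x)$, apply Lagrange inversion with $\Phi(u)=1+\frac{u}{(1-u)^p}$, expand by the binomial theorem and the series of $(1-u)^{-pk}$, and reindex. Your extra care with the $k=0$ term is a refinement the paper passes over in silence: at $n=0$ the stated formula reduces to $\binom{-1}{-1}$, so it holds there only under the convention $\binom{-1}{-1}=1$, whereas your intermediate form $\binom{pk+n-k-1}{n-k}$ needs no convention.
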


\begin{proof}
This follows from \cref{thm:genechains} by using the Lagrange inversion formula. Let $R(x)=x\,D_p(x)$. The formula of \cref{thm:genechains} can be written $R(x)=x\, \big(1+ \frac{R(x)}{(1-R(x))^p} \big)$. Let \mbox{$\Phi(u)=1+\frac{u}{(1-u)^p}$}. Thus we have $R(x)=x\,\Phi(R(x))$ with $R(0)=0$. By the Lagrange inversion formula, we have
$$d_{n,p}=[x^{n+1}]\, R(x) =\frac{1}{n+1} [u^n] \Phi(u)^{n+1} = \frac{1}{n+1} [u^n] \Big(1+\frac{u}{(1-u)^p} \Big)^{n+1}.$$
By the binomial theorem and the expansion of $\frac{1}{(1-X)^p}$ recalled in the proof of \cref{thm:genechains}, we have
$$\Big(1+\frac{u}{(1-u)^p} \Big)^{n+1} =  \sum_{k=0}^{n+1} \binom{n+1}{k} \frac{u^k}{(1-u)^{p\,k}}= \sum_{k=0}^{n+1} \binom{n+1}{k} u^k \sum_{m\geq 0} \binom{m+p\,k-1}{p\,k-1} u^m.$$
This finishes the proof of the corollary by taking $m=n-k$ and using the above formula for $d_{n,p}$.
\end{proof}

Note that when $p=1$, we recover as expected by \cref{propTamari} a formula giving the Catalan numbers, the terms in the sum being the Narayana numbers. The sequence $(d_{n,2})_{n\geq 0}$ corresponds to the sequence $A109081$ of the $\mathrm{OEIS}$, whose first terms are $1,1,3,10,37, 146, 602, 2563$. When $p\in \{3,4,5\}$ the associated sequences $(d_{n,p})_{n\geq 0}$ are respectively the sequences $A161797$, $A321798$ and $A321799$, but for $p\geq 6$ there are no corresponding entries in the $\mathrm{OEIS}$.

\begin{remark}
Combe and Giraudo \cite{Combe_2022} also obtained similar, but different, generalizations of the Tamari lattice called $\delta$-canyon lattices. In our generalization $\Tam(C_{np},\phi_p)$ we have lines of slope $p$, whereas they keep the lines of slopes $1$ that give the Tamari lattice but they relax the conditions on the height of the segments.   
\end{remark}

Now we are interested in the general case $\Tam(C_n,\phi)$. Denote by $c_{k_0,k_1,\dots,k_{n-1}}$ the number of elements of $\Tam(C_n,\phi)$ where $k_0=\phi(0)+1$ and $k_i=\phi(i)-\phi(i-1)$ for all $1\leq i<n$ (see the left-hand side of \cref{fig:proofpropenumerationchain}). By convention, let $c_{\emptyset}=1$. For example, the case $\phi=C_n$, which is the Tamari lattice $\Tam_{n+1}$ by \cref{propTamari}, gives $|Tam_{n+1}|=c_{1,1,\dots,1}$ with $n$ ones in the index.

\begin{proposition}
\label{prop:inductionformula}
We have $c_{k_0}=k_0+1$ and $c_{k_0,k_1}=1+k_1+\dfrac{k_0(k_0+5)}{2}$. For any $n\geq 3$, we have 


\begin{align*}
c_{k_0,k_1,\dots,k_{n-1}} &= c_{k_0,k_1,\dots,k_{n-2}} + c_{k_0,k_1,\dots,k_{n-3}}\times k_{n-1} + c_{k_0,k_1,\dots,k_{n-4}} \Big(\dfrac{(k_{n-2}+2)(k_{n-2}+1)}{2}-1 \Big) \\
&\quad +\sum_{i=0}^{n-3} c_{k_0,k_1,\dots,k_{i-2}} \sum_{j=1}^{k_i} c_{j,k_{i+1},k_{i+2},\dots,k_{n-2}}.  
\end{align*}
\end{proposition}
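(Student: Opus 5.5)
The plan is to count torclosed sets through their words $u=u_0u_1\dots u_{n-1}$ with $0\le u_i\le \phi(i)+1=k_0+\dots+k_i$. I will use the torclosedness criterion of \cref{lemconditionchaintorclosed} and split according to the value of the last letter $u_{n-1}$. The key observation is that a completion from component $i$ into component $n-1$ is triggered exactly when $u_i\neq 0$ and $u_{n-1}>\phi(n-1)-\phi(i+1)=k_{i+2}+\dots+k_{n-1}$. It then forces $u_{n-1}\ge u_i+k_{i+1}+\dots+k_{n-1}$. So the interval $\{0,1,\dots,\phi(n-1)+1\}$ of possible values of $u_{n-1}$ is cut into consecutive windows
\[
W_{n-1}=\{0\},\quad W_{n-2}=[1,k_{n-1}],\quad W_{n-3}=\,]k_{n-1},k_{n-1}+k_{n-2}],\quad \dots,\quad W_{i-1}=\,]k_{i+1}+\dots+k_{n-1},\,k_i+\dots+k_{n-1}],
\]
according to which components $i$ can trigger a completion. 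The base cases $c_{k_0}=k_0+1$ and $c_{k_0,k_1}$ follow directly; for the latter one can also use \cref{prop:enumatmost2}.

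First, if $u_{n-1}=0$, nothing is imposed on the prefix, which gives the term $c_{k_0,\dots,k_{n-2}}$. Second, if $1\le u_{n-1}\le k_{n-1}$, only $i=n-2$ triggers. Any $u_{n-2}\neq 0$ would force $u_{n-1}>k_{n-1}$, so $u_{n-2}=0$ and the prefix $u_0\dots u_{n-3}$ is free. This gives $k_{n-1}\,c_{k_0,\dots,k_{n-3}}$. Third, if $u_{n-1}=k_{n-1}+b$ with $1\le b\le k_{n-2}$, then component $n-3$ triggers, and a nonzero $u_{n-3}$ would require $u_{n-1}>k_{n-2}+k_{n-1}$, so $u_{n-3}=0$. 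The letter $u_{n-2}=a$ is constrained only by $a\le b$, and it cannot trigger anything with earlier components because $u_{n-3}=0$ breaks the interaction. Summing $\sum_{b=1}^{k_{n-2}}(b+1)$ yields the factor $\frac{(k_{n-2}+2)(k_{n-2}+1)}{2}-1$, multiplied by the free prefix count $c_{k_0,\dots,k_{n-4}}$. I will check in this step that completions between components below $n-3$ and components $n-2,n-1$ are indeed impossible.

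The remaining case is $u_{n-1}\in W_{i-1}$ for some $0\le i\le n-3$ (here $i$ runs so that all earlier windows are covered). Then $u_{i-1}$ must vanish, which decouples the prefix $u_0\dots u_{i-2}$ and gives the factor $c_{k_0,\dots,k_{i-2}}$ (with the convention $c_\emptyset=1$ and the obvious reading for $i\le 1$). Writing $u_{n-1}=k_{i+1}+\dots+k_{n-1}+j$ with $1\le j\le k_i$, I claim that the admissible words $u_i\dots u_{n-2}$ are in bijection with the elements of $\Tam(C_{\bullet},\phi')$ for the chain with block sizes $(j,k_{i+1},\dots,k_{n-2})$. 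The letter $u_{n-1}$ then plays the role of a ceiling: every $u_t$ with $t\ge i$ satisfies $u_{n-1}\ge u_t+(\phi(n-1)-\phi(t))$ whenever $u_t\ne 0$. This truncates the first block of components $i,\dots,n-2$ to effective size $j$, namely $u_t\le j+k_{i+1}+\dots+k_t$. Meanwhile the pairwise conditions among $u_i,\dots,u_{n-2}$ are unchanged, since they only depend on differences $\phi(t)-\phi(s)$ for $s\ge i$.

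The main obstacle is making this last bijection precise. One must check that the bound imposed by $u_{n-1}$ on each $u_t$ is exactly the order-filter bound $u_t\le\phi'(t-i)+1$ for the shifted chain $\phi'$. One must also check that no residual condition links $u_i,\dots,u_{n-2}$ to $u_{n-1}$ beyond these bounds, since the trigger thresholds for $t\ge i$ are automatically exceeded in this window. I would first verify both points by comparing trigger thresholds in \cref{lemconditionchaintorclosed} for the original and the shifted chain, and only then sum over $j$ and $i$ to obtain the double sum of the formula.
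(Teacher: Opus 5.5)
Your proposal is correct and is essentially the paper's proof. Your windows $W_{i-1}$ for the value of $u_{n-1}$ are exactly the paper's classes $S(i,j)$ of torclosed sets containing $(\phi(i)-j+1,n-1)$ but not $(\phi(i)-j,n-1)$. Your cases give the same factorization $c_{k_0,\dots,k_{i-2}}\cdot c_{j,k_{i+1},\dots,k_{n-2}}$, with your first three cases corresponding to $S(n)$, $S(n-1,j)$ and $S(n-2,j)$.

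One correction to the justification. The prefix $u_0\dots u_{i-2}$ does not decouple because $u_{i-1}=0$ by itself. It decouples because your ceiling gives $u_s\le \phi(s)-\phi(i-1)\le \phi(s)-\phi(t+1)$ for all $s\ge i$ and $t\le i-2$, so no completion from such a $t$ is ever triggered. The same inequality at $s=i-1$ is what forces $u_{i-1}=0$, and in your third case the relevant bound is $u_{n-2}\le b\le k_{n-2}$.
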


\begin{proof}
The first two formulas follow from \cref{prop:enumatmost2}. Now we suppose that $n\geq 3$. We consider the lattice $\Tam(C_n,\phi)$ and denote $k_0=\phi(0)+1$ and $k_q=\phi(q)-\phi(q-1)$ for all $q\in [n-1]$.
Let $i<n$ and $j\in [k_i]$. 
Let $S(i,j)$ be the set of the torclosed sets of $\Tam(C_n,\phi)$ that contain $(\phi(i)-j+1,n-1)$ but not $(\phi(i)-j,n-1)$, and $S(n)$ be the set of torclosed sets that do not contain $(\phi(n-1),n-1)$.  
Since all the sets that we just introduced form a partition of the elements of $\Tam(C_n,\phi)$, then 
$$c_{k_0,k_1,\dots,k_{n-1}}=|S(n)| + \sum_{i=0}^{n-1} \sum_{j=1}^{k_i} |S(i,j)|.$$

We have $|S(n)|=c_{k_0,k_1,\dots,k_{n-2}}$. Now we want to know $|S(i,j)|$.
A torclosed set in $S(i,j)$ is the disjoint union of a torclosed set of $\Tam(C_n,\phi_{|\{0,1,\dots,i-2\}})$ and of a specific torclosed set $T$ of $\Tam(C_n,\phi)$ with elements only in the components $\mathcal{C}_{i}$ through $\mathcal{C}_{n-2}$, where $T_i$ has at most $j$ elements. More precisely, these latter torclosed sets in $S(i,j)$ correspond bijectively to the torclosed sets of $\Tam(C_{n-\phi(i-1)},\phi')$ where $\phi'(0)=j$ and $\phi'(p)-\phi'(p-1)=k_{i+p}$ for all $p\in [n-i-2]$. See \cref{fig:proofpropenumerationchain} for an illustration of $S(i,j)$. This gives for all $i<n-2$, 
$|S(i,j)|= c_{k_0,k_1,\dots,k_{i-2}} \times c_{j,k_{i+1},k_{i+2},\dots,k_{n-2}}$,
and $|S(n-2,j)|= c_{k_0,k_1,\dots,k_{n-4}} \times c_{j}$, and $|S(n-1,j)|= c_{k_0,k_1,\dots,k_{n-3}}$. The formula of the proposition for $n\geq 3$ then follows from the above formula for $c_{k_0,k_1,\dots,k_{n-1}}$ by extracting the terms of the sum corresponding to $i=n-1$ and $i=n-2$.
\end{proof}

\begin{figure}
    \centering
\begin{tikzpicture}[scale=0.8]
\node[scale=0.8] (36) at (5,0) {$\bullet$};
\node[scale=0.8][draw,circle,inner sep=1pt] (37) at (5,1) {$\bullet$};
\node[scale=0.8][draw,circle,inner sep=1pt] (38) at (5,2) {$\bullet$};
\node[scale=0.8] (39) at (5,3) {$\bullet$};
\node[scale=0.8][draw,circle,inner sep=1pt] (40) at (5,4) {$\bullet$};
\node[scale=0.8] (41) at (5,5) {$\bullet$};
\node[scale=0.8][draw,circle,inner sep=1pt] (42) at (5,6) {$\bullet$};
\node[scale=0.8] (43) at (5,7) {$\bullet$};
\node[scale=0.8][draw,circle,inner sep=1pt] (44) at (5,8) {$\bullet$};
\node[scale=0.8][draw,circle,inner sep=1pt] (45) at (5,9) {$\bullet$};

\draw[->,>=latex] (36) -- (37);
\draw[->,>=latex] (37) -- (38);
\draw[->,>=latex] (38) -- (39);
\draw[->,>=latex] (39) -- (40);
\draw[->,>=latex] (40) -- (41);
\draw[->,>=latex] (41) -- (42);
\draw[->,>=latex] (42) -- (43);
\draw[->,>=latex] (43) -- (44);
\draw[->,>=latex] (44) -- (45);

\begin{scope}[rotate=90]
\draw [decorate,decoration={brace,amplitude=7pt,raise=2ex}]
(0,-4.8) -- (1.2,-4.8)
 node[midway,xshift=-2.5em]{$k_0$};
\end{scope}

\begin{scope}[rotate=90]
\draw [decorate,decoration={brace,amplitude=7pt,raise=2ex}]
(1.5,-4.8) -- (2.2,-4.8)
 node[midway,xshift=-2.5em]{$k_1$};
\end{scope}

\begin{scope}[rotate=90]
\draw [decorate,decoration={brace,amplitude=7pt,raise=2ex}]
(2.6,-4.8) -- (4.2,-4.8)
 node[midway,xshift=-2.5em]{$k_2$};
\end{scope}

\begin{scope}[rotate=90]
\draw [decorate,decoration={brace,amplitude=7pt,raise=2ex}]
(4.6,-4.8) -- (6.2,-4.8)
 node[midway,xshift=-2.5em]{$k_3$};
\end{scope}

\draw[line width=1pt,dotted] (4,7) -- (4,8);

\begin{scope}[xshift=8.5cm,xscale=1.5]
\node[scale=0.8] (6) at (0,0) {$\bullet$};
\node[scale=0.8][draw,circle,inner sep=1pt] (5) at (0,1) {$\bullet$};

\node[scale=0.8] (4) at (1,0) {$\bullet$};
\node[scale=0.8][draw,circle,inner sep=1pt] (3) at (1,1) {$\bullet$};
\node[scale=0.8][draw,circle,inner sep=1pt] (2) at (1,2) {$\bullet$};

\node[scale=0.8] (7) at (2,0) {$\bullet$};
\node[scale=0.8][draw,circle,inner sep=1pt] (8) at (2,1) {$\bullet$};
\node[scale=0.8][draw,circle,inner sep=1pt] (9) at (2,2) {$\bullet$};
\node[scale=0.8] (10) at (2,3) {$\bullet$};
\node[scale=0.8][draw,circle,inner sep=1pt] (11) at (2,4) {$\bullet$};

\node[scale=0.8] (12) at (3,0) {$\bullet$};
\node[scale=0.8][draw,circle,inner sep=1pt] (13) at (3,1) {$\bullet$};
\node[scale=0.8][draw,circle,inner sep=1pt] (14) at (3,2) {$\bullet$};
\node[scale=0.8] (15) at (3,3) {$\bullet$};
\node[scale=0.8][draw,circle,inner sep=1pt] (16) at (3,4) {$\bullet$};
\node[scale=0.8] (17) at (3,5) {$\bullet$};
\node[scale=0.8][draw,circle,inner sep=1pt] (18) at (3,6) {$\bullet$};

\node[scale=0.8] (19) at (4,0) {$\bullet$};
\node[scale=0.8][draw,circle,inner sep=1pt] (20) at (4,1) {$\bullet$};
\node[scale=0.8][draw,circle,inner sep=1pt] (21) at (4,2) {$\bullet$};
\node[scale=0.8] (22) at (4,3) {$\bullet$};
\node[scale=0.8][draw,circle,inner sep=1pt] (23) at (4,4) {$\bullet$};
\node[scale=0.8] (24) at (4,5) {$\bullet$};
\node[scale=0.8][draw,circle,inner sep=1pt] (25) at (4,6) {$\bullet$};
\node[scale=0.8] (26) at (4,7) {$\bullet$};
\node[scale=0.8][draw,circle,inner sep=1pt] (28) at (4,8) {$\bullet$};

\node[scale=0.8] (36) at (5,0) {$\bullet$};
\node[scale=0.8][draw,circle,inner sep=1pt] (37) at (5,1) {$\bullet$};
\node[scale=0.8][draw,circle,inner sep=1pt] (38) at (5,2) {$\bullet$};
\node[scale=0.8] (39) at (5,3) {$\bullet$};
\node[scale=0.8][draw,circle,inner sep=1pt] (40) at (5,4) {$\bullet$};
\node[scale=0.8] (41) at (5,5) {$\bullet$};
\node[scale=0.8][draw,circle,inner sep=1pt] (42) at (5,6) {$\bullet$};
\node[scale=0.8] (43) at (5,7) {$\bullet$};
\node[scale=0.8][draw,circle,inner sep=1pt] (44) at (5,8) {$\bullet$};
\node[scale=0.8][draw,circle,inner sep=1pt] (45) at (5,9) {$\bullet$};

\draw [decorate,decoration={brace,amplitude=7pt,raise=2ex}]
(-0.3,2.6) -- (1.3,2.6) node[midway,yshift=2.5em]{$c_{k_0,k_1,\dots,k_{i-2}}$};

\begin{scope}[rotate=90]
\draw [decorate,decoration={brace,amplitude=7pt,raise=2ex}]
(5,-2.3) -- (8,-2.3)
 node[midway,xshift=-5.5em]{$c_{j,k_{i+1},k_{i+2},\dots,k_{n-2}}$};
\end{scope}

\draw[->,>=latex] (6) -- (5);
\draw[->,>=latex] (4) -- (3);
\draw[->,>=latex] (3) -- (2);

\draw (-0.5,-0.5) rectangle (1.5,2.5);
\draw (2.5,4.6) rectangle (4.5,8.5);

\draw[->,>=latex] (7) -- (8);
\draw[->,>=latex] (8) -- (9);
\draw[->,>=latex] (9) -- (10);
\draw[->,>=latex] (10) -- (11);

\draw[->,>=latex] (12) -- (13);
\draw[->,>=latex] (13) -- (14);
\draw[->,>=latex] (14) -- (15);
\draw[->,>=latex] (15) -- (16);
\draw[->,>=latex] (16) -- (17);
\draw[->,>=latex] (17) -- (18);

\node[scale=0.7] (a) at (2,4.4) {$(\phi(i-1),i-1)$};
\node[scale=0.7] (b) at (6,4) {$(\phi(i-1),n-1)$};
\node[scale=0.7] (b) at (5.8,6) {$(\phi(i),n-1)$};

\draw[->,>=latex] (19) -- (20);
\draw[->,>=latex] (20) -- (21);
\draw[->,>=latex] (21) -- (22);
\draw[->,>=latex] (22) -- (23);
\draw[->,>=latex] (23) -- (24);
\draw[->,>=latex] (24) -- (25);
\draw[->,>=latex] (25) -- (26);
\draw[->,>=latex] (26) -- (28);

\draw[->,>=latex] (36) -- (37);
\draw[->,>=latex] (37) -- (38);
\draw[->,>=latex] (38) -- (39);
\draw[->,>=latex] (39) -- (40);
\draw[->,>=latex] (40) -- (41);
\draw[->,>=latex] (41) -- (42);
\draw[->,>=latex] (42) -- (43);
\draw[->,>=latex] (43) -- (44);
\draw[->,>=latex] (44) -- (45);
\end{scope}    
\end{tikzpicture}
    \caption{On the left we have $C_n$ with $\phi$ whose elements are circled. On the right we have
    an illustration of $S(i,j)$ in the proof of \cref{prop:inductionformula} where the elements $(\phi(i),j)\in \mathcal{C}_P^{\phi}$ for $i\leq j<n$ are circled.}
    \label{fig:proofpropenumerationchain}
\end{figure}

By letting $\mathrm{Cat}_0=\mathrm{Cat}_1=1$ and, for any $n>1$, $\mathrm{Cat}_n=c_{1,1,\dots,1}$ where the number of $1$s in the index is $n-1$, we recover from \cref{prop:inductionformula} the formula $\mathrm{Cat}_{n+1}=\mathrm{Cat}_{n} + \mathrm{Cat}_{n-1} + \mathrm{Cat}_{n-2}\times 2 + \sum_{i=0}^{n-3} \mathrm{Cat}_{i}\,\mathrm{Cat}_{n-i} = \sum_{i=0}^{n} \mathrm{Cat}_{i}\,\mathrm{Cat}_{n-i}$, which is a well-known recursive formula of the Catalan numbers.


\subsection{Lattices of $d$-torsion classes of the $(d-1)$-Auslander algebras of type $\textbf{A}$}
\label{sec:Auslander}

Let $d$ and $n$ be positive integers. Recall the notations and results from \cref{sec:backgroundhighertorsionclasses}. The poset $os_n^{d+1}$ is ordered with the product order $\leq$ (see \cref{fig:posetos33} for an example). By \cref{thm:torsionAuslander}, a subset $T\subseteq os_n^{d+1}$ is an element of $L_n^d$ if and only if it satisfies the following two conditions:

\begin{enumerate}[(1)]
\item \label{cond1} For all $i<n$, $\{x\in T\,|\,x_{d+1}=i\}$ is an order filter of the subposet $\{x\in os_n^{d+1}\,|\,x_{d+1}=i\}$. 
\item \label{cond2} For all $x,z\in T$, if $x\rightsquigarrow \tau_d(z)$, then any $y\in os_n^{d+1}$ with $y_i\in \{x_i,z_i\}$ for each $i$ must be in $T$.
\end{enumerate}

We now prove that we can replace condition \eqref{cond2} by a simpler one (in a sense that will be explained later). We first prove a lemma.

\begin{lemma}
\label{lem:12prime}
    Let $T\subseteq os_n^{d+1}$. Then $T\in L_n^d$ if and only if it satisfies \eqref{cond1} above and
\begin{equation}
\tag{$2'$}\label{2prime}
\begin{aligned}
&\text{For all $i<j<n$, if $(x_1,\dots,x_d,i)\in T$ and $(x_2+1,x_3+1,\dots,x_d+1,i+1,j)\in T$,} \\
&\text{then $(x_1,\dots,x_d,j)\in T$.}
\end{aligned}
\end{equation}
\end{lemma}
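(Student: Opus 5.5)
The plan is to show that, in the presence of condition \eqref{cond1}, condition \eqref{cond2} is equivalent to \eqref{2prime}. The tool throughout is \eqref{cond1}: within a fixed last coordinate, membership in $T$ is upward closed for the product order. In each direction we will pick suitable elements and compare them componentwise.

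First we show that \eqref{cond1} and \eqref{cond2} imply \eqref{2prime}. Let $i<j<n$, and suppose $x=(x_1,\dots,x_d,i)\in T$ and $z=(x_2+1,\dots,x_d+1,i+1,j)\in T$. Then
$$\tau_d(z)=(x_2,\dots,x_d,i,j-1),$$
and the interleaving condition $x\rightsquigarrow\tau_d(z)$ reads
$$x_1\le x_2\le x_2\le x_3\le\cdots\le x_d\le i\le i\le j-1.$$
This holds because $x$ is non-decreasing and $i<j$. Now take $y=(x_1,\dots,x_d,j)$. It lies in $os_n^{d+1}$ since $x_d\le i<j$. Its entries satisfy $y_k=x_k$ for $k\le d$ and $y_{d+1}=z_{d+1}$. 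So \eqref{cond2} gives $y\in T$, which is exactly \eqref{2prime}.

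Next we show that \eqref{cond1} and \eqref{2prime} imply \eqref{cond2}. Let $x,z\in T$ with $x\rightsquigarrow\tau_d(z)$, which means
$$x_1\le z_1-1\le x_2\le z_2-1\le\cdots\le x_{d+1}\le z_{d+1}-1.$$
In particular $z_k>x_k$ and $z_k\le x_{k+1}+1$ for all $k\le d$. Let $y\in os_n^{d+1}$ with $y_k\in\{x_k,z_k\}$ for each $k$. We split on the value of $y_{d+1}$.

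\emph{Case $y_{d+1}=x_{d+1}$.} Since $z_k>x_k$, we have $y\ge x$ componentwise, and $y$ lies in the same component as $x$. Hence $y\in T$ by \eqref{cond1}.

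\emph{Case $y_{d+1}=z_{d+1}=:j$.} Put $i:=x_{d+1}$, so $i<j$. Consider
$$w=(x_2+1,\dots,x_d+1,i+1,j).$$
This $w$ is non-decreasing with entries at most $j\le n-1$, so $w\in os_n^{d+1}$. The inequalities $z_k\le x_{k+1}+1$ give $w\ge z$ componentwise, and $w$ has the same last coordinate $j$ as $z$. So $w\in T$ by \eqref{cond1}. Applying \eqref{2prime} to $x$ and $w$ gives $(x_1,\dots,x_d,j)\in T$. Finally, $y\ge(x_1,\dots,x_d,j)$ componentwise with the same last coordinate, so $y\in T$ by \eqref{cond1} again.

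The only step needing care is this second case. The obstacle is producing the element $w$ that \eqref{2prime} requires from the given $z$, which need not itself have the shape $(x_2+1,\dots,x_d+1,i+1,j)$. The upper bounds $z_k\le x_{k+1}+1$ coming from the interleaving condition are exactly what allow \eqref{cond1} to bridge from $z$ up to $w$.
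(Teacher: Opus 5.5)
Your proof is correct and follows the same route as the paper. For the forward direction you apply \eqref{cond2} to $x$ and $(x_2+1,\dots,x_d+1,i+1,j)$; for the converse you use \eqref{cond1} to lift $z$ to $(x_2+1,\dots,x_{d+1}+1,z_{d+1})$ and then apply \eqref{2prime}. Your separate case $y_{d+1}=x_{d+1}$, handled by $y\ge x$, is a small improvement: the paper asserts that every such $y$ lies above $(x_1,\dots,x_d,z_{d+1})$, which is only true when $y_{d+1}=z_{d+1}$.
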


\begin{proof}
Let $T\subseteq os_n^{d+1}$. We prove that $T$ satisfies both \eqref{cond1} and \eqref{2prime} if and only if it satisfies both \eqref{cond1} and \eqref{cond2}. We suppose that $T$ satisfies \eqref{cond1}. 

Suppose that $T$ satisfies \eqref{cond2}. Let $i<j<n$. Suppose that $(x_1,\dots,x_d,i)\in T$ and $(x_2+1,x_3+1,\dots,x_d+1,i+1,j)\in T$. Since $(x_1,\dots,x_d,i) \rightsquigarrow \tau_d (x_2+1,x_3+1,\dots,x_d+1,i+1,j)$, by \eqref{cond2} we have $(x_1,\dots,x_d,j)\in T$. This proves that $T$ satisfies \eqref{2prime}.

Conversely, suppose that $T$ satisfies \eqref{2prime}. Let $x,z\in T$. Suppose $x\rightsquigarrow \tau_d(z)$, meaning $x_1\leq z_1-1\leq x_2 \leq z_2-1\leq \dots \leq x_{d+1} \leq z_{d+1}-1$. We have that any $y\in os_n^{d+1}$ with $y_i\in \{x_i,z_i\}$ for each $i$ is above $(x_1,\dots,x_d,z_{d+1})$ for the product order. Thus, using \eqref{cond1}, it suffices to prove $(x_1,\dots,x_d,z_{d+1})\in T$. Since $x=(x_1,\dots,x_d,x_{d+1})\in T$, it suffices to prove that $(x_2+1,x_3+1,\dots,x_d+1,x_{d+1}+1,z_{d+1})\in T$, as \eqref{2prime} would then imply that $(x_1,\dots,x_d,z_{d+1})\in T$. Since we have $x_1\leq z_1-1\leq x_2 \leq z_2-1\leq \dots \leq x_{d+1} \leq z_{d+1}-1$, we know that $z\leq (x_2+1,x_3+1,\dots,x_{d+1}+1,z_{d+1})$. Since $z\in T$ and $T$ satisfies \eqref{cond1}, we have $(x_2+1,x_3+1,\dots,x_{d+1}+1,z_{d+1})\in T$. This finishes the proof of the lemma.
\end{proof}

\begin{proposition}
\label{prop:reformulationthmtorsion}
     Let $T\subseteq os_n^{d+1}$. Then $T\in L_n^d$ if and only if it satisfies \eqref{cond1} above and
\begin{equation}
\tag{$2''$}\label{2primeprime}
\begin{aligned}
&\text{For all $i<j<n$, if $(x_1,\dots,x_d,i)\in T$ and $(i+1,\dots,i+1,j)\in T$,} \\
&\text{then $(x_1,\dots,x_d,j)\in T$.}
\end{aligned}
\end{equation}
\end{proposition}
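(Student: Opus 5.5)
By \cref{lem:12prime}, it suffices to show that, for a set $T$ satisfying \eqref{cond1}, conditions \eqref{2prime} and \eqref{2primeprime} are equivalent. The whole argument reduces to comparing the two elements
\[
w:=(x_2+1,\dots,x_d+1,i+1,j) \quad\text{and}\quad v:=(i+1,\dots,i+1,j)
\]
inside the slice $\{y\in os_n^{d+1}\mid y_{d+1}=j\}$, where \eqref{cond1} says that $T$ is an order filter.

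First we check that both are genuine elements of $os_n^{d+1}$. Since $(x_1,\dots,x_d,i)$ is non-decreasing, we have $x_k\leq i$ for all $k$. Hence $x_k+1\leq i+1\leq j$, so both $w$ and $v$ are non-decreasing with entries less than $n$. Moreover, coordinatewise $x_k+1\leq i+1$ gives $w\leq v$ in the product order, and both lie in the slice $y_{d+1}=j$.

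For \eqref{2prime} $\Rightarrow$ \eqref{2primeprime}: assume $(x_1,\dots,x_d,i)\in T$ and $v\in T$. We need $w\in T$ to apply \eqref{2prime}. This is the step I expect to be the main obstacle, because $w\leq v$ points the wrong way for the filter property. I would instead use \eqref{2prime} iteratively to lower the entries of $v$ one coordinate at a time until reaching $w$. Alternatively, and more cleanly, I would use the already known equivalence with \eqref{cond2}. The key observation is that $(x_1,\dots,x_d,i)\rightsquigarrow\tau_d(v)$ holds whenever $x_1\leq i\leq x_2\leq i\leq\dots$. This forces $x_2=\dots=x_d=i$, which fails in general, so the detour through \eqref{cond2} does not settle the general case.

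So I would proceed by induction, first treating the special case $x=(i,\dots,i)$, where \eqref{2prime} and \eqref{2primeprime} coincide. I would then try to show that $(x_1,\dots,x_d,i)\in T$ together with \eqref{cond1} yields, via a chain of applications of \eqref{2prime} at intermediate levels $i'<i$, the element $w$ from $v$. If this fails, I would look for a direct argument that $w\in T$ using $(x_1,\dots,x_d,i)\in T$ and $v\in T$.

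For \eqref{2primeprime} $\Rightarrow$ \eqref{2prime}: assume $(x_1,\dots,x_d,i)\in T$ and $w\in T$. Since $w\leq v$ and $T$ is an order filter in the slice $y_{d+1}=j$, we get $v\in T$. Then \eqref{2primeprime} gives $(x_1,\dots,x_d,j)\in T$, as required.
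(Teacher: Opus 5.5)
You prove \eqref{2primeprime}$\Rightarrow$\eqref{2prime} correctly, and it is exactly the paper's argument. The converse \eqref{2prime}$\Rightarrow$\eqref{2primeprime}, which is the substantive direction, is not proved. You correctly notice that $w\le v$ points the wrong way, and that the detour through \eqref{cond2} only covers $x_2=\dots=x_d=i$. After that you only list strategies to try.

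The concrete route you lean towards, applying \eqref{2prime} at intermediate levels $i'<i$, cannot work. Nothing is known about $T$ outside the slices $y_{d+1}=i$ and $y_{d+1}=j$. Indeed, the intersection of $T$ with these two slices again satisfies \eqref{cond1} and \eqref{2prime}, so you may assume all other slices are empty. Every use of \eqref{2prime} must therefore be at the same pair $(i,j)$, with only the first $d$ coordinates varying.

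The missing idea is to interpolate from $(i,\dots,i)$ down to $x$ using a decreasing floor $i-k$. For $0\le k\le i$ set
$X_k=(\max(x_1,i-k),\dots,\max(x_d,i-k),i)$,
$L_k=(\max(x_2,i-k)+1,\dots,\max(x_d,i-k)+1,i+1,j)$,
$Y_k=(\max(x_1,i-k),\dots,\max(x_d,i-k),j)$.
The induction runs as follows.
\begin{itemize}
\item Each $X_k\ge(x,i)$, so $X_k\in T$ by \eqref{cond1}.
\item $L_0=v\in T$; your base case $X_0=(i,\dots,i)$ is the start of this induction.
\item If $L_k\in T$, then \eqref{2prime} applied to $X_k$ and $L_k$ gives $Y_k\in T$.
\item Since $x_l\le x_{l+1}$, we have $\max(x_{l+1},i-k-1)+1\ge\max(x_l,i-k)$. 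Also the $d$-th coordinate $i+1$ of $L_{k+1}$ dominates $\max(x_d,i-k)\le i$. Hence $L_{k+1}\ge Y_k$, and $L_{k+1}\in T$ by \eqref{cond1}.
\end{itemize}
At $k=i$ one has $X_i=(x,i)$, $L_i=w$ and $Y_i=(x,j)\in T$, as required.

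So $v$ is indeed lowered to $w$, as you hoped. However, each lowering step is paid for by the conclusion $Y_k$ of the previous application of \eqref{2prime}, not by the filter property applied to $v$ alone.
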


\begin{proof}
Let $T\subseteq os_n^{d+1}$. We prove that $T$ satisfies both \eqref{cond1} and \eqref{2primeprime} if and only if it satisfies both \eqref{cond1} and \eqref{2prime}. Using \cref{lem:12prime}, this will prove the proposition.  We suppose that $T$ satisfies \eqref{cond1}. 

Suppose that $T$ satisfies \eqref{2primeprime}. Let $i<j<n$. Suppose that $(x_1,\dots,x_d,i)\in T$ and $(x_2+1,x_3+1,\dots,x_d+1,i+1,j)\in T$. We need to prove that $(x_1,\dots,x_d,j)\in T$. Since $(x_1,\dots,x_d,i) \in os_n^{d+1}$, then $x_p\leq i$ for all $p\in [d]$. Thus $(i+1,\dots,i+1,j)\geq (x_2+1,x_3+1,\dots,x_d+1,i+1,j) \in T$.  Since $T$ satisfies \eqref{cond1}, then $(i+1,\dots,i+1,j)\in T$. Since $T$ satisfies \eqref{2primeprime}, then $(x_1,\dots,x_d,j)\in T$. Thus $T$ satisfies \eqref{2prime}.

Conversely, suppose that $T$ satisfies \eqref{2prime}. Let $i<j<n$. Suppose that $(x_1,\dots,x_d,i)\in T$ and $(i+1,\dots,i+1,j)\in T$. We need to prove that $(x_1,\dots,x_d,j)\in T$.
For $k \in \{0,1,\dots ,i\}$, let 
\begin{align*}
X_k &:=(\max(x_1,i-k),\max(x_2,i-k), \dots , \max(x_d,i-k), i), \\
L_k &:=(\max(x_2,i-k)+1, \max(x_3,i-k)+1, \dots , \max(x_d,i-k)+1, i+1, j), \\
Y_k &:=(\max(x_1,i-k),\max(x_2,i-k), \dots , \max(x_d,i-k), j).
\end{align*}    

See \cref{fig:proofreformtheorem} for an illustration of these elements and of the following arguments.
Since $T$ satisfies \eqref{2prime}, having both $X_k\in T$ and $L_k\in T$ implies $Y_k\in T$. Let $P_k$ be the property $(X_k,L_k) \in T^2$. Let us prove by induction that $P_k$ is true for all $k \in \{0,1,\dots ,i\}$.

Since $(x_1,\dots,x_d,i) \in os_n^{d+1}$, then $x_p\leq i$ for all $p\in [d]$. Thus $X_0=(i,i,\dots,i)\geq (x_1,\dots,x_d,i)\in T$. Since $T$ satisfies \eqref{cond1}, then $X_0\in T$. We already know that $L_0=(i+1,\dots,i+1,j)\in T$. Thus $P_0$ is true.

Let $k \in \{0,1,\dots ,i-1\}$ and assume that $P_k$ is true. It means that $(X_k,L_k) \in T^2$, which implies $Y_k\in T$ since $T$ satisfies \eqref{2prime}. 
We have $X_{k+1}\geq (x_1,x_2,\dots,x_d,i)\in T$, thus $X_{k+1}\in T$ since $T$ satisfies \eqref{cond1}. 
Let $ l\in [d-1]$. Since $x_{l+1} \geq x_l $, we have $ \max(x_{l+1},i-k-1)+1  \geq \max(x_l,i-k)$. This proves $L_{k+1} \geq Y_k$, and since $Y_k\in T$, then $L_{k+1}\in T$ since $T$ satisfies \eqref{cond1}. Then $(X_{k+1},L_{k+1}) \in T^2$, meaning $P_{k+1}$ is true.

By induction, this proves in particular that $P_i$ is true. Thus $(X_i,L_i) \in T^2$. This implies that $Y_i\in T$ since $T$ satisfies \eqref{2prime}. Thus $Y_i=(x_1,\dots,x_d,j) \in T$. This finishes the proof of the proposition.
\end{proof}

\begin{figure}
    \centering
\begin{tikzpicture}[scale=1]

\draw[gray] (0,0) ellipse (1.3 and 2.5);
\node at (0,-3) {$\mathcal{C}_i$};

\draw[dashed,red] (0,2.5) -- (0,-1);

\node[above] at (0,2.5) {\small $X_0=(i,\dots,i)$};
\node[left] at (0,2) {\small $X_1$};
\node[left] at (0,1) {\small $X_{k-1}$};
\node[left] at (0,0.5) {\small $X_k$};
\node[left] at (0,-1) {\small $X_i=(x_1,\dots,x_d,i)$};

\node at (0,2.5) {$\bullet$};
\node at (0,2) {$\bullet$};
\node at (0,1) {$\bullet$};
\node at (0,0.5) {$\bullet$};
\node at (0,-1) {$\bullet$};

\begin{scope}[xshift=6cm]
\draw[gray] (0,0) ellipse (1.4 and 2.5);
\draw[gray] (0,1) ellipse (1.7 and 3.5);
\draw[gray] (0,1.5) ellipse (1.9 and 4);
\node at (0,-3) {$\mathcal{C}_j$};   

\draw[dashed,red] (0,2.5) -- (0,-1);

\node[above right] at (0,2.5) {\small $Y_0=(i,\dots,i,j)$};
\node[right] at (0,2) {\small $Y_1$};
\node[right] at (0,1) {\small $Y_{k-1}$};
\node[right] at (0,0.5) {\small $Y_k$};
\node[right] at (0,-1) {\small $Y_i=(x_1,\dots,x_d,j)$};

\node at (0,2.5) {$\bullet$};
\node at (0,2) {$\bullet$};
\node at (0,1) {$\bullet$};
\node at (0,0.5) {$\bullet$};
\node at (0,-1) {$\bullet$};

\node[left] at (-0.5,3.7) {\small $L_1$};
\node[left] at (-1,3) {\small $L_2$};
\node[left] at (-1,2) {\small $L_{k}$};
\node[left] at (-1,1.5) {\small $L_{k+1}$};
\node[left] at (-1,0) {\small $L_i$};
\node at (-0.5,3.7) {$\bullet$};
\node at (-1,2) {$\bullet$};
\node at (-1,3) {$\bullet$};
\node at (-1,1.5) {$\bullet$};
\node at (-1,0) {$\bullet$};
\draw[dashed] (0,4.5)--(-1,3)--(-1,1.5)--(-1,0);
\draw[dashed] (-0.5,3.7)--(0,2.5);
\draw[dashed] (-1,3)--(0,2);
\draw[dashed] (-1,1.5)--(0,0.5);
\draw[dashed] (-1,0)--(0,-1);
\draw[dashed] (-1,2)--(0,1);
\node at (0,4.5) {$\bullet$};
\node at (0,5.5) {$\bullet$};
\node[above] at (0,4.5) {\small $L_0=(i+1,\dots,i+1,j)$};
\node[above] at (0,5.5) {\small $(j,\dots,j)$};
\end{scope}
\end{tikzpicture}
    \caption{Illustration of the proof of \cref{prop:reformulationthmtorsion}}
    \label{fig:proofreformtheorem}
\end{figure}

Note that \cref{prop:reformulationthmtorsion} is a nice improvement on \cref{thm:torsionAuslander}. Indeed, we replaced \eqref{cond2} with \eqref{2primeprime}, which still says that $(x_1,\dots,x_d,j)\in T$ as soon as both $(x_1,\dots,x_d,i)$ and another element are in $T$. But the other element no longer depends on $x_1,\dots,x_d$, only on $i$. 

The next theorem follows from the previous result and motivated the definition of the $(P,\phi)$-Tamari lattices. 

\begin{theorem}
\label{thm:torsionAuslanderistam}
$L_n^d = \Tam\big((os_n^d,\leq),\phi\big)$ with $\phi$ defined by $\phi(i)=(i,\dots,i)$ for all $i<n$.
\end{theorem}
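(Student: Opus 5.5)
We will identify $os_n^{d+1}$ with $\mathcal{C}_P^{\phi}$ for $P=(os_n^d,\leq)$ and $\phi(i)=(i,\dots,i)$, and then check that the two conditions of \cref{prop:reformulationthmtorsion} are exactly "order filter" and "torclosed". First, $\phi$ is a chain in $P$, since $(0,\dots,0)<(1,\dots,1)<\cdots<(n-1,\dots,n-1)$ in the product order. For each $i<n$, the principal order ideal $I_P(\phi(i))$ consists of the nondecreasing sequences $(x_1,\dots,x_d)$ with all $x_p\leq i$. So the map
\[
(x_1,\dots,x_d,i)\mapsto \big((x_1,\dots,x_d),i\big)
\]
is a bijection from $\{x\in os_n^{d+1}\mid x_{d+1}=i\}$ onto $\mathcal{C}_i$, and these sets partition $os_n^{d+1}$ as $i$ runs over $\{0,\dots,n-1\}$. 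This gives a bijection $os_n^{d+1}\to\mathcal{C}_P^{\phi}$. Moreover, on each fibre $x_{d+1}=i$ the product order of $os_n^{d+1}$ coincides with the product order on $I_P(\phi(i))$, which is the order of $\mathcal{C}_i$.

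Next we translate the conditions. Since $\mathcal{C}_P^{\phi}$ is the disjoint union of the components, with no relations between different components, a subset $T$ is an order filter of $\mathcal{C}_P^{\phi}$ exactly when each $T_i$ is an order filter of $\mathcal{C}_i$. Under the bijection, this is precisely condition \eqref{cond1}.

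For the torclosed condition, recall that it asks, for $i<j$, that $(x,i)\in T$ and $(\phi(i+1),j)\in T$ imply $(x,j)\in T$. The element $(\phi(i+1),j)=((i+1,\dots,i+1),j)$ corresponds to $(i+1,\dots,i+1,j)\in os_n^{d+1}$, which is a valid nondecreasing sequence because $i+1\leq j$. Hence the torclosed condition is literally \eqref{2primeprime}.

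Finally, by \cref{prop:reformulationthmtorsion}, $T\in L_n^d$ if and only if $T$ satisfies \eqref{cond1} and \eqref{2primeprime}. By the translation above, this holds if and only if the image of $T$ is a torclosed order filter, that is, an element of $\Tam(P,\phi)$. Both posets are ordered by inclusion and the bijection preserves inclusion, so the lattices are isomorphic (equal after the identification).

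There is no real obstacle here: the substantive work is \cref{prop:reformulationthmtorsion}. The one point that needs care is checking that $x_{d+1}=i$ forces $x_p\leq i$ for all $p$, so that the fibres match the components $\mathcal{C}_i$ exactly.
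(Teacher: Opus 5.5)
Your proof is correct and follows the paper's own argument: you identify the fibre $\{x\in os_n^{d+1}\mid x_{d+1}=i\}$ with $\mathcal{C}_i$, check that condition \eqref{cond1} is exactly the order-filter condition on $\mathcal{C}_P^{\phi}$ and that \eqref{2primeprime} is literally the torclosed condition because $(\phi(i+1),j)=(i+1,\dots,i+1,j)$, and conclude via \cref{prop:reformulationthmtorsion}. Compared with the paper, the only addition is that you spell out the identification of $os_n^{d+1}$ with $\mathcal{C}_P^{\phi}$ explicitly.
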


\begin{proof}
Both $L_n^d$ and $\Tam\big((os_n^d,\leq),\phi\big)$ are lattices ordered by inclusion, thus it suffices to prove that the $d$-torsion classes $T\in L_n^d$ are the torclosed sets of $\Tam\big((os_n^d,\leq),\phi\big)$.
Recall the definition of $\mathcal{C}_P^{\phi}$ in \cref{sectionGenerality}. By definition, $T\in L_n^d$ satisfies \eqref{1aus} if and only if $T$ is an order filter of $\mathcal{C}_P^{\phi}$.
Since $(\phi(i+1),j)=(i+1,\dots,i+1,j)$, then $T\in L_n^d$ satisfies \eqref{2primeprime} if and only if $T$ is a torclosed set of $\Tam\big((os_n^d,\leq),\phi\big)$. This proves the theorem using \cref{prop:reformulationthmtorsion}.   
\end{proof}

Note that $L_n^d$ is a $(P,\phi)$-Tamari lattice that enjoys nice properties as $os_n^d$ is a lattice and the associated $\phi$ satisfies $\phi(0)=(0,\dots,0)=\hat{0}_{os_n^d}$. From \cref{thm:torsionAuslanderistam} and the results of \cref{sectionPphiTamari} follow:

\begin{corollary}
\label{cor:resultsLnd}
The lattice $L_n^d$ inherits all properties of the $(P,\phi)$-Tamari lattices. In particular, it is a join-semidistributive lattice, and it is semidistributive if and only if $n\leq 2$ or $d=1$. The spine of $L_n^d$ corresponds to  $J((os_n^{d+1})^{op})$. We have $\ell(L_n^d)=|\mathrm{JIrr}(L_n^d)|=\binom{n+d}{d+1}$ and
$|\mathrm{MIrr}(L_n^d)|=(d-1)\,\binom{n+d}{d+2}+\frac{n(n+1)}{2}$.
\end{corollary}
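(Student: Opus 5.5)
We obtain every claim by specializing the general results of \cref{sectionPphiTamari} via \cref{thm:torsionAuslanderistam}, with $P=(os_n^d,\leq)$ and $\phi(i)=(i,\dots,i)$. Note that $P$ has minimum $\hat{0}=(0,\dots,0)=\phi(0)$, and that $I_P(\phi(n-1))=os_n^d$. So the hypotheses of \cref{prop:meetirr} and \cref{propJSD} hold.

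\emph{Join-semidistributivity and semidistributivity.} Join-semidistributivity is the first part of \cref{propJSD}. For the equivalence, by \cref{propJSD} we must decide when every $x\in os_n^d$ is comparable to every $(j,\dots,j)$ with $1\leq j\leq n-1$.
\begin{itemize}
\item If $d=1$, then $os_n^1$ is a chain, so every pair is comparable.
\item If $n=1$, then $os_1^d$ is a single point.
\item If $n=2$, then only $j=1$ occurs and $(1,\dots,1)$ is the maximum of $os_2^d$.
\item If $n\geq 3$ and $d\geq 2$, then $x=(0,2,\dots,2)$ is incomparable to $(1,\dots,1)$, so $L_n^d$ is not semidistributive.
\end{itemize}

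\emph{Spine and length.} The map $((x_1,\dots,x_d),i)\mapsto (x_1,\dots,x_d,i)$ is a bijection from $\mathcal{C}_P^{\phi}$ onto $os_n^{d+1}$. Indeed, $(x,i)\in\mathcal{C}_i$ means $x\leq(i,\dots,i)$, i.e.\ $x_d\leq i$. Under this bijection, $\leq_{prod}$ becomes the product order on $os_n^{d+1}$. By \cref{prop:spineTam}, the spine is therefore the poset of order filters of $os_n^{d+1}$, which is $J((os_n^{d+1})^{op})$. By \cref{prop:longestchainsTam} and \cref{prop:joinirr}, $\ell(L_n^d)=|\mathrm{JIrr}(L_n^d)|=|os_n^{d+1}|$. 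This equals the number of multisets of size $d+1$ from an $n$-set, namely $\binom{n+d}{d+1}$.

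\emph{Meet-irreducibles.} By \cref{prop:meetirr}, there are $\binom{n+d}{d+1}$ meet-irreducibles of the first kind. The second kind is indexed by triples $(x,k,j)$ with $1\leq j\leq k<n$, $x\in os_n^d$ with $x_d\leq k$, and $x\not\sim (j,\dots,j)$. The incomparability condition is $x_1<j<x_d$, since $x$ is non-decreasing: $x\not\leq(j,\dots,j)$ means $x_d>j$, and $x\not\geq(j,\dots,j)$ means $x_1<j$. In particular $d\geq 2$ is needed.

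To count these, fix the values $a=x_1<j<b=x_d\leq k$. The middle entries $x_2\leq\dots\leq x_{d-1}$ form a multiset of size $d-2$ from $\{a,\dots,b\}$. It is cleaner to count globally: a second-kind meet-irreducible is the non-decreasing sequence $x_1\leq\dots\leq x_d\leq k$ together with one marked value $j$ strictly between $x_1$ and $x_d$. I plan to encode the pair $(x,j)$ together with $k$ as a multiset of size $d+2$ on an appropriate set with a distinguished position. I expect this to yield $(d-1)\binom{n+d}{d+2}+\binom{n+1}{2}-\binom{n+d}{d+1}$ after summing over $k$.

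I will verify the identity first for $d=1$ (both sides $0$), then for $d=2$ (direct computation), and then prove it in general by induction on $n$ using Pascal's rule. This summation identity is the main obstacle.
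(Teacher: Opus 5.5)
The specializations are correct and follow the paper's route. The identification $((x_1,\dots,x_d),i)\mapsto(x_1,\dots,x_d,i)$ of $(\mathcal{C}_P^{\phi},\leq_{prod})$ with $os_n^{d+1}$ gives the spine, the length and $|\mathrm{JIrr}(L_n^d)|$. Your case analysis with the witness $(0,2,\dots,2)$ settles semidistributivity. Your description of the second-kind meet-irreducibles as triples $(x,j,k)$ with $x_1<j<x_d\leq k<n$ is exactly what the paper counts: it fixes $x_1$, $j$, $x_d$, $k$, counts $\binom{x_d-x_1+d-2}{d-2}$ middle parts, and collapses the fourfold sum with hockey-stick identities.

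\textbf{The gap} is that you never carry out this count. The formula for $|\mathrm{MIrr}(L_n^d)|$ is the only part of the corollary that needs a real computation, and you only state it as what you ``expect''. Your fallback does not close it either. Induction on $n$ just moves the difficulty to the $k=n-1$ layer, namely $\sum_{x\in os_n^d}\max(0,x_d-x_1-1)$, which is a sum of the same kind. Also, checking $d=1,2$ does not give base cases for an induction on $n$.

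\textbf{How to close it.} Your encoding idea works once made precise, but only after relaxing $j<x_d$ to $j\leq x_d$.
\begin{itemize}
\item Take a triple with $x_1<j\leq x_d\leq k<n$, and let $p\in\{1,\dots,d-1\}$ be the largest index with $x_p<j$. Send the triple to $p$ together with $(x_1,\dots,x_p,\,j-1,\,x_{p+1}-1,\dots,x_d-1,\,k-1)$, a non-decreasing sequence of length $d+2$ in $\{0,\dots,n-2\}$.
\item This is a bijection: the inverse adds $1$ to positions $p+1,\dots,d+2$. So there are $(d-1)\binom{n+d}{d+2}$ relaxed triples.
\item The relaxed triples with $j=x_d$ are exactly the $(x,k)\in os_n^{d+1}$ with $x$ non-constant. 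There are $\binom{n+d}{d+1}-\binom{n+1}{2}$ of them, since constant $x=(c,\dots,c)$ with $c\leq k<n$ gives $\binom{n+1}{2}$ pairs.
\item Hence the second kind contributes $(d-1)\binom{n+d}{d+2}+\binom{n+1}{2}-\binom{n+d}{d+1}$. Adding the $\binom{n+d}{d+1}$ first-kind ones gives the claimed formula.
\end{itemize}
Read without subtraction, this matches first-kind meet-irreducibles with non-constant $x$ to the case $j=x_d$. That gives a bijective explanation of the count, which the paper leaves as an open question.
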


\begin{proof}
It is immediate that $|os_n^d|=\binom{n+d-1}{d}$. We only need to prove the formula for the number of meet-irreducibles. We use \cref{prop:meetirr}. The number of the first kind of meet-irreducibles is $\binom{n+d}{d+1}$. Counting the second kind of meet-irreducibles amounts to count, for any $i<n$ and any $j\leq i$, the number of elements of $\mathcal{C}_i$ incomparable to $(\phi(j),i)$. Since $\phi(0)=\hat{0}_{os_n^d}$, no such elements exist if $i\in \{0,1\}$, nor if $j=\{0,i\}$ since $(\phi(0),i)$ and $(\phi(i),i)$ are respectively the minimum and maximum elements of $\mathcal{C}_i$. Then 
$$|\mathrm{MIrr}(L_n^d)|=\binom{n+d}{d+1}+\sum_{i=2}^{n-1} \,\,\sum_{j=1}^{i-1}\,\, |\{(x,i)\in \mathcal{C}_i \mid (x,i) \not\sim (\phi(j),i) \}|$$

Let $i$ and $j$ such that $2\leq i<n$ and $0<j<i$. The integer in the sum above is also the number of elements of $os_n^d$ less than $\phi(i)=(i,\dots,i)$ that are incomparable to $\phi(j)=(j,\dots,j)$. Such an element needs to start with a $k<j$, and finish with a $l\in \{j+1,j+2,\dots,i\}$, and between the two it should be non-decreasing. Thus when $k$ and $l$ are fixed, the number of such elements is $|os_{l-k+1}^{d-2}|=\binom{l-k+d-2}{d-2}$. This proves that
$$|\mathrm{MIrr}(L_n^d)|=\binom{n+d}{d+1}+\sum_{i=2}^{n-1} \,\,\sum_{j=1}^{i-1}\,\, \sum_{k=0}^{j-1}\,\, \sum_{l=j+1}^{i} \binom{l-k+d-2}{d-2} .$$
By successive use of the Hockey-stick identity, this reduces to
$$|\mathrm{MIrr}(L_n^d)|=\binom{n+d}{d+1}+\sum_{i=2}^{n-1} \,(i-1)\, \binom{i+d}{d} -2\,\binom{n+d}{d+2} +\frac{(n-1)(n+2)}{2}$$
Using the identity $i\,\binom{i+d}{d}=(d+1)\,\binom{i+d}{d+1}$ and again the Hockey-stick identity, we obtain the desired formula.
\end{proof}

\begin{example}
The lattice $L_3^3$ is given in \cref{fig:HasseL33} (which is taken from \cite{August_2025}). As proved in \cref{cor:resultsLnd} it is not semidistributive (as observed in \cite[Example 5.21]{August_2025}), and we can check that the number of meet-irreducibles is $18$, as predicted by our formula. 
\end{example}

\begin{definition}
The \emph{totally symmetric $d$-dimensional partitions} are the order ideals $I$ of $(\mathbb{N}^d,\leq_{prod})$ such that if $x=(x_1,x_2,\dots,x_d)\in I$, then $x_{\sigma}:=(x_{\sigma(1)},x_{\sigma(2)},\dots,x_{\sigma(d)})\in I$ for all permutations $\sigma \in \mathfrak{S}_d$, where $\mathfrak{S}_d$ is the set of all permutations of $[d]$.
\end{definition}

\begin{lemma}
\label{lem:totasym}
Let $I$ be a totally symmetric $d$-dimensional partition and $x\in I$ be a maximal element of $I$. Then $x_{\sigma}$ is also a maximal element of $I$ for all $\sigma\in \mathfrak{S}_d$.     
\end{lemma}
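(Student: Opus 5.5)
The plan is to use the fact that permuting coordinates is an automorphism of $(\mathbb{N}^d,\leq_{prod})$ that maps $I$ onto itself, and then transport maximality through this automorphism.

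First, fix $\sigma\in\mathfrak{S}_d$. By total symmetry, $x_\sigma\in I$ because $x\in I$. Next, observe that the map $y\mapsto y_\sigma$ is a bijection of $\mathbb{N}^d$ that preserves and reflects the product order: $y\leq z$ holds if and only if $y_k\leq z_k$ for all $k$, if and only if $y_{\sigma(k)}\leq z_{\sigma(k)}$ for all $k$, if and only if $y_\sigma\leq z_\sigma$. Its inverse is $y\mapsto y_{\sigma^{-1}}$, since $(y_\sigma)_{\sigma^{-1}}=y$ with the convention $(y_\sigma)_k=y_{\sigma(k)}$.

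Then argue by contradiction. Suppose some $z\in I$ satisfies $x_\sigma<z$. Applying $\sigma^{-1}$ gives $x=(x_\sigma)_{\sigma^{-1}}<z_{\sigma^{-1}}$, where the inequality is strict because the map is a bijection. Total symmetry applied to $z$ with the permutation $\sigma^{-1}$ gives $z_{\sigma^{-1}}\in I$. This contradicts the maximality of $x$ in $I$, so $x_\sigma$ is maximal.

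The only point needing care is the composition convention for $(x_\sigma)_{\sigma^{-1}}=x$. Apart from that, the argument is routine and I expect no real obstacle.
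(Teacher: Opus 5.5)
Your proof is correct and follows essentially the same argument as the paper: assume some $z\in I$ lies strictly above $x_\sigma$, apply $\sigma^{-1}$ to get $x<z_{\sigma^{-1}}\in I$, and contradict the maximality of $x$. You also spell out two points the paper leaves implicit, namely that $x_\sigma\in I$ and that the composition convention gives $(x_\sigma)_{\sigma^{-1}}=x$.
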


\begin{proof}
Seeking a contradiction, suppose that there exists $\sigma\in \mathfrak{S}_d$ such that $x_{\sigma}$ is not a maximal element of $I$. Then there exists $y\in I$ such that $x_{\sigma} <_{prod} y$. By applying $\sigma^{-1}$ to both sides, which keeps the product order inequality, we obtain $x<_{prod} y_{\sigma^{-1}}$. But $y_{\sigma^{-1}} \in I$ and $x$ is a maximal element of $I$. This is absurd.
\end{proof}

\begin{figure}
\begin{subfigure}{.5\textwidth}
\centering
\begin{tikzpicture}
\begin{scope}[scale=1.1]
\node[draw,circle,inner sep=1pt] (10) at (0,0) {$(0,0,0)$};
\node (9) at (1,1) {$(0,0,1)$};
\node (8) at (2,2) {$(0,1,1)$};
\node[draw,circle,inner sep=1pt] (7) at (3,3) {$(1,1,1)$};
\node (6) at (0,2) {$(0,0,2)$};
\node (5) at (1,3) {$(0,1,2)$};
\node (4) at (0,4) {$(0,2,2)$};
\node (3) at (2,4) {$(1,1,2)$};
\node (2) at (1,5) {$(1,2,2)$};
\node[draw,circle,inner sep=1pt] (1) at (0,6) {$(2,2,2)$};
\draw[->,>=latex] (10) -- (9);
\draw[->,>=latex] (9) -- (6);
\draw[->,>=latex] (9) -- (8);
\draw[->,>=latex] (6) -- (5);
\draw[->,>=latex] (8) -- (5);
\draw[->,>=latex] (8) -- (7);
\draw[->,>=latex] (5) -- (3);
\draw[->,>=latex] (5) -- (4);
\draw[->,>=latex] (7) -- (3);
\draw[->,>=latex] (3) -- (2);
\draw[->,>=latex] (4) -- (2);
\draw[->,>=latex] (2) -- (1);
\end{scope}
\end{tikzpicture}
\caption{The poset $os_3^3$ with the chain $\phi:i\mapsto (i,\dots,i)$ whose elements are circled.}
\label{fig:posetos33}
\end{subfigure}%
\begin{subfigure}{.5\textwidth}
\centering
 \begin{tikzpicture}[line join=bevel, yscale=.3, xscale=0.65,scale=0.85]
	        \node (1) at (127.0bp,810.0bp) {$\bullet$};
			\node (23) at (199.0bp,738.0bp) {$\bullet$};
			\node (25) at (127.0bp,738.0bp) {$\bullet$};
			\node (2) at (55.0bp,810.0bp) {$\bullet$};
			\node (35) at (55.0bp,594.0bp) {$\bullet$};
			\node (26) at (127.0bp,954.0bp) {$\bullet$};
			\node (36) at (190.0bp,882.0bp) {$\bullet$};
			\node (38) at (127.0bp,882.0bp) {$\bullet$};
			\node (22) at (199.0bp,666.0bp) {$\bullet$};
			\node (33) at (271.0bp,666.0bp) {$\bullet$};
			\node (5) at (170.0bp,234.0bp) {$\bullet$};
			\node (4) at (190.0bp,162.0bp) {$\bullet$};
			\node (16) at (199.0bp,594.0bp) {$\bullet$};
			\node (9) at (254.0bp,522.0bp) {$\bullet$};
			\node (15) at (110.0bp,522.0bp) {$\bullet$};
			\node (30) at (326.0bp,378.0bp) {$\bullet$};
			\node (28) at (271.0bp,306.0bp) {$\bullet$};
			\node (29) at (326.0bp,306.0bp) {$\bullet$};
			\node (11) at (215.0bp,306.0bp) {$\bullet$};
			\node (27) at (271.0bp,234.0bp) {$\bullet$};
			\node (43) at (262.0bp,162.0bp) {$\bullet$};
			\node (32) at (326.0bp,522.0bp) {$\bullet$};
			\node (31) at (326.0bp,450.0bp) {$\bullet$};
			\node (19) at (254.0bp,450.0bp) {$\bullet$};
			\node (24) at (127.0bp,666.0bp) {$\bullet$};
			\node (7) at (254.0bp,378.0bp) {$\bullet$};
			\node (3) at (190.0bp,90.0bp) {$\bullet$};
			\node (13) at (182.0bp,450.0bp) {$\bullet$};
			\node (34) at (271.0bp,738.0bp) {$\bullet$};
			\node (14) at (110.0bp,450.0bp) {$\bullet$};
			\node (6) at (215.0bp,234.0bp) {$\bullet$};
			\node (40) at (199.0bp,810.0bp) {$\bullet$};
			\node (37) at (55.0bp,1026.0bp) {$\bullet$};
			\node (39) at (55.0bp,954.0bp) {$\bullet$};
			\node (42) at (110.0bp,90.0bp) {$\bullet$};
			\node (10) at (110.0bp,378.0bp) {$\bullet$};
			\node (8) at (130.0bp,306.0bp) {$\bullet$};
			\node (21) at (55.0bp,306.0bp) {$\bullet$};
			\node (41) at (127.0bp,1026.0bp) {$\bullet$};
			\node (45) at (127.0bp,1120.0bp) {$\mathcal{C}_{os_3^3}^{\phi}$};
			\node (44) at (271.0bp,810.0bp) {$\bullet$};
			\node (18) at (127.0bp,594.0bp) {$\bullet$};
			\node (17) at (182.0bp,522.0bp) {$\bullet$};
			\node (20) at (271.0bp,594.0bp) {$\bullet$};
			\node (0) at (110.0bp,18.0bp) {$\emptyset$};
			\node (12) at (182.0bp,378.0bp) {$\bullet$};
			\draw [<-] (1) ..controls (152.2bp,784.8bp) and (165.32bp,771.68bp)  .. (23);
			\draw [<-] (1) ..controls (127.0bp,784.13bp) and (127.0bp,774.97bp)  .. (25);
			\draw [<-] (2) ..controls (55.0bp,754.39bp) and (55.0bp,667.55bp)  .. (35);
			\draw [<-] (26) ..controls (149.17bp,928.66bp) and (160.26bp,915.99bp)  .. (36);
			\draw [<-] (26) --(38);
			\draw [<-] (23) ..controls (199.0bp,712.13bp) and (199.0bp,702.97bp)  .. (22);
			\draw [<-] (23) ..controls (224.2bp,712.8bp) and (237.32bp,699.68bp)  .. (33);
			\draw [<-] (5) --(4);
			\draw [<-] (16) ..controls (218.51bp,568.46bp) and (227.43bp,556.78bp)  .. (9);
			\draw [<-] (16) ..controls (168.17bp,569.06bp) and (149.82bp,554.21bp)  .. (15);
			\draw [<-] (30) ..controls (306.49bp,352.46bp) and (297.57bp,340.78bp)  .. (28);
			\draw [<-] (30) -- (29); 
			\draw [<-] (30) --(11);
			\draw [<-] (27) ..controls (242.97bp,209.09bp) and (227.16bp,195.03bp)  .. (4);
			\draw [<-] (27) ..controls (267.77bp,208.13bp) and (266.62bp,198.97bp)  .. (43);
			\draw [<-] (28)--(5);
			\draw [<-] (28) ..controls (271.0bp,280.13bp) and (271.0bp,270.97bp)  .. (27);
			\draw [<-] (32) ..controls (326.0bp,496.13bp) and (326.0bp,486.97bp)  .. (31);
			\draw [<-] (32) ..controls (300.8bp,496.8bp) and (287.68bp,483.68bp)  .. (19);
			\draw [<-] (25) ..controls (152.2bp,712.8bp) and (165.32bp,699.68bp)  .. (22);
			\draw [<-] (25) ..controls (127.0bp,712.13bp) and (127.0bp,702.97bp)  .. (24);
			\draw [<-] (7) --(11);
			\draw [<-] (4) ..controls (190.0bp,136.13bp) and (190.0bp,126.97bp)  .. (3);
			\draw [<-] (31) ..controls (326.0bp,424.13bp) and (326.0bp,414.97bp)  .. (30);
			\draw [<-] (31) ..controls (300.8bp,424.8bp) and (287.68bp,411.68bp)  .. (7);
			\draw [<-] (9) ..controls (228.8bp,496.8bp) and (215.68bp,483.68bp)  .. (13);
			\draw [<-] (9) ..controls (254.0bp,496.13bp) and (254.0bp,486.97bp)  .. (19);
			\draw [<-] (34) ..controls (271.0bp,712.13bp) and (271.0bp,702.97bp)  .. (33);
			\draw [<-] (15) ..controls (135.2bp,496.8bp) and (148.32bp,483.68bp)  .. (13);
			\draw [<-] (15) ..controls (110.0bp,496.13bp) and (110.0bp,486.97bp)  .. (14);
			\draw [<-] (6) --(4);
			\draw [<-] (36) ..controls (167.83bp,856.66bp) and (156.74bp,843.99bp)  .. (1);
			\draw [<-] (36) ..controls (193.23bp,856.13bp) and (194.38bp,846.97bp)  .. (40);
			\draw [<-] (37) ..controls (55.0bp,1000.1bp) and (55.0bp,990.97bp)  .. (39);
			\draw [<-] (37) ..controls (34.156bp,999.64bp) and (24.765bp,985.71bp)  .. (19.0bp,972.0bp) .. controls (3.1542bp,934.31bp) and (0.0bp,922.88bp)  .. (0.0bp,882.0bp) .. controls (0.0bp,882.0bp) and (0.0bp,882.0bp)  .. (0.0bp,306.0bp) .. controls (0.0bp,228.53bp) and (56.874bp,150.79bp)  .. (42);
			\draw [<-] (10) ..controls (116.08bp,352.24bp) and (118.34bp,342.67bp)  .. (8);
			\draw [<-] (10) ..controls (90.488bp,352.46bp) and (81.565bp,340.78bp)  .. (21);
			\draw [<-] (8)--(5);
			\draw [<-] (41) ..controls (127.0bp,1000.1bp) and (127.0bp,990.97bp)  .. (26);
			\draw [<-] (41) ..controls (101.8bp,1000.8bp) and (88.685bp,987.68bp)  .. (39);
			\draw [<-] (35) ..controls (55.0bp,527.29bp) and (55.0bp,392.8bp)  .. (21);
			\draw [<-] (29) ..controls (317.8bp,280.8bp) and (304.68bp,267.68bp)  .. (27);
			\draw [<-] (29) ..controls (296.78bp,282.89bp) and (257.24bp,263.12bp)  .. (6);
			\draw [<-] (38)--(1);
			\draw [<-] (38)--(2);
			\draw [<-] (45) --(37);
			\draw [<-] (45) -- (41);
			\draw [<-] (45) -- (44);
			\draw [<-] (18) ..controls (120.92bp,568.24bp) and (118.66bp,558.67bp)  .. (15);
			\draw [<-] (18) ..controls (146.51bp,568.46bp) and (155.43bp,556.78bp)  .. (17);
			\draw [<-] (39) ..controls (55.0bp,911.2bp) and (55.0bp,867.25bp)  .. (2);
			\draw [<-] (22) ..controls (199.0bp,640.13bp) and (199.0bp,630.97bp)  .. (16);
			\draw [<-] (22) ..controls (173.8bp,640.8bp) and (160.68bp,627.68bp)  .. (18);
			\draw [<-] (22) ..controls (224.2bp,640.8bp) and (237.32bp,627.68bp)  .. (20);
			\draw [<-] (17) ..controls (182.0bp,496.13bp) and (182.0bp,486.97bp)  .. (13);
			\draw [<-] (43) ..controls (235.63bp,148.7bp) and (230.66bp,146.25bp)  .. (226.0bp,144.0bp) .. controls (197.52bp,130.25bp) and (164.83bp,115.09bp)  .. (42);
			\draw [<-] (43) ..controls (236.8bp,136.8bp) and (223.68bp,123.68bp)  .. (3);
			\draw [<-] (20) ..controls (264.92bp,568.24bp) and (262.66bp,558.67bp)  .. (9);
			\draw [<-] (20) ..controls (240.17bp,569.06bp) and (221.82bp,554.21bp)  .. (17);
			\draw [<-] (42) ..controls (110.0bp,64.131bp) and (110.0bp,54.974bp)  .. (0);
			\draw [<-] (3) ..controls (162.32bp,65.085bp) and (146.7bp,51.029bp)  .. (0);
			\draw [<-] (44) ..controls (292.7bp,783.97bp) and (302.12bp,770.05bp)  .. (307.0bp,756.0bp) .. controls (326.0bp,685.22bp) and (326.0bp,595.42bp)  .. (32);
			\draw [<-] (44) ..controls (271.0bp,784.13bp) and (271.0bp,774.97bp)  .. (34);
			\draw [<-] (13) ..controls (207.2bp,424.8bp) and (220.32bp,411.68bp)  .. (7);
			\draw [<-] (13) ..controls (182.0bp,424.13bp) and (182.0bp,414.97bp)  .. (12);
			\draw [<-] (11) ..controls (173.8bp,280.8bp) and (170.68bp,267.68bp)  .. (5);
			\draw [<-] (11) --(6);
			\draw [<-] (33) ..controls (271.0bp,640.13bp) and (271.0bp,630.97bp)  .. (20);
			\draw [<-] (19) ..controls (254.0bp,424.13bp) and (254.0bp,414.97bp)  .. (7);
			\draw [<-] (24) ..controls (101.8bp,640.8bp) and (88.685bp,627.68bp)  .. (35);
			\draw [<-] (24) ..controls (127.0bp,640.13bp) and (127.0bp,630.97bp)  .. (18);
			\draw [<-] (14) ..controls (110.0bp,424.13bp) and (110.0bp,414.97bp)  .. (10);
			\draw [<-] (14) ..controls (135.2bp,424.8bp) and (148.32bp,411.68bp)  .. (12);
			\draw [<-] (12) ..controls (162.49bp,352.46bp) and (153.57bp,340.78bp)  .. (8);
			\draw [<-] (12) --(11);
			\draw [<-] (40) ..controls (199.0bp,784.13bp) and (199.0bp,774.97bp)  .. (23);
			\draw [<-] (40) ..controls (224.2bp,784.8bp) and (237.32bp,771.68bp)  .. (34);
			\draw [<-] (21) ..controls (50.661bp,247.51bp) and (47.371bp,148.61bp)  .. (74.0bp,72.0bp) .. controls (77.662bp,61.464bp) and (83.876bp,51.001bp)  .. (0);
		\end{tikzpicture}
		\caption{The lattice $L_3^3$.}
		\label{fig:HasseL33}
\end{subfigure}
\caption{}
\label{fig:3torsionauslander}
\end{figure}

We recall the following result of J.R. Stembridge:

\begin{proposition}[\cite{STEMBRIDGE1995227}]
\label{prop:stembridge}
The number of totally symmetric $3$-dimensional partitions, also called \emph{totally symmetric plane partitions}, which fit in a $3$-dimensional box whose sides all have length $n$ is given by the sequence $(a_n)_{n\geq 0}$ ($A005157$ of the $\mathrm{OEIS}$ \cite{oeis}) defined for all $n\geq 0$ by 
$$\displaystyle a_n:=\prod_{i=1}^n \prod_{j=i}^n \prod_{k=j}^n \,\frac{i+j+k-1}{i+j+k-2}.$$  
\end{proposition}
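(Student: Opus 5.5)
The statement is a deep enumerative theorem of Stembridge, cited here from \cite{STEMBRIDGE1995227}, so in the paper it is used as a black box. If one wanted to reprove it, I would follow Stembridge's strategy: reduce the count to a Pfaffian via nonintersecting lattice paths, then evaluate that Pfaffian.

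\emph{Reduction to a fundamental domain.} A totally symmetric plane partition $I$ in the box $[0,n]^3$ is determined by its intersection with the fundamental domain $\{x_1\geq x_2\geq x_3\}$ for the action of $\mathfrak{S}_3$. By \cref{lem:totasym}, maximal elements are permuted by $\mathfrak{S}_3$, so it suffices to record the orbits of cubes. Following Doran and Stembridge, I would also remove the cubes on the diagonal $x_1=x_2=x_3$ together with the forced layers. The aim is to encode the remaining data as a family of rhombus tilings of a symmetric region, or equivalently as lattice paths in a triangular sector: one path starting from each point on a "free" boundary line and ending at prescribed points, with the paths pairwise nonintersecting.

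\emph{Pfaffian formula.} Because the starting points are only constrained to lie on a line rather than fixed, the Lindstr\"om--Gessel--Viennot lemma does not give a determinant directly. I would instead use Stembridge's Pfaffian version of it for families with a free set of endpoints. This yields $a_n=\mathrm{Pf}(A_n)$ for an explicit skew-symmetric matrix $A_n$ whose entries are alternating sums of binomial coefficients counting single paths.

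\emph{Evaluation, the main obstacle.} The hard step is evaluating $\mathrm{Pf}(A_n)$ in closed form. My first attempt would be to transform $A_n$ by row and column operations, in the style of Andrews' evaluation of the determinants arising for descending plane partitions, into a matrix amenable to a known Pfaffian or determinant identity. I would then show that the resulting product telescopes to $\prod_{1\le i\le j\le k\le n}\frac{i+j+k-1}{i+j+k-2}$. If that fails, I would fall back to an inductive argument: guess the ratio $a_n/a_{n-1}$ and verify it by a Desnanot--Jacobi-type condensation identity for Pfaffians.
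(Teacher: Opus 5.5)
This matches the paper, which gives no proof of this proposition and simply quotes it from Stembridge as a black box. Your outline (lattice paths in a fundamental domain, Stembridge's Pfaffian version of Lindstr\"om--Gessel--Viennot for a free set of endpoints, then evaluation of the Pfaffian) is the route taken in the cited work, though note the sketch is not a proof by itself, since the Pfaffian evaluation, which is the whole difficulty of the theorem, is left as a plan.
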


The first ten values of the sequence $(a_n)_{n\geq 0}$ are $1, 2, 5, 16, 66, 352, 2431, 21760, 252586, 3803648$.

\begin{proposition}
\label{proptotallysym}
 We have an isomorphism of posets between $os_n^d$ and $os_{d+1}^{n-1}$, thus $|J(os_n^d)| = |J(os_{d+1}^{n-1})|$. We have that $|J(os_n^d)|$ is the number of totally symmetric $d$-dimensional partitions which fit in a $d$-dimensional box whose sides all have length $n$.  We deduce that $|J(os_3^d)|=2^{d+1}$ and $|J(os_4^d)|=a_{d+1}$ where 
 $(a_d)_{d\geq 0}$ is defined in \cref{prop:stembridge}. 
\end{proposition}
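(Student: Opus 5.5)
I will construct an explicit order isomorphism $os_n^d \cong os_{d+1}^{n-1}$ using the classical duality between non-decreasing sequences and lattice paths, or equivalently Young diagrams in a box. An element $x=(x_1\le \dots\le x_d)$ with entries in $\{0,\dots,n-1\}$ is a partition with at most $d$ parts, each at most $n-1$, i.e.\ a Young diagram in a $d\times(n-1)$ box. Its conjugate $x'$ is defined by $x'_k := |\{p\in[d] : x_p\ge n-k\}|$ for $k=1,\dots,n-1$; this is a non-decreasing sequence of length $n-1$ with entries in $\{0,\dots,d\}$, so $x'\in os_{d+1}^{n-1}$. Conjugation is an involution on Young diagrams, so this is a bijection. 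Both orders are containment of diagrams, and conjugation preserves containment, so the map is an order isomorphism. The only care needed is that the indexing conventions make both product orders correspond to inclusion of diagrams; I will check this directly: $x\le y$ componentwise implies each $x'_k\le y'_k$, and the inverse map has the same form. The equality $|J(os_n^d)|=|J(os_{d+1}^{n-1})|$ then follows immediately.

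For the totally symmetric partitions, I will consider the box $\{0,\dots,n-1\}^d$. Sorting coordinates gives a map from this box onto $os_n^d$. A totally symmetric order ideal $I$ of $\mathbb{N}^d$ inside the box is determined by $I\cap os_n^d$, since $I$ is closed under permutations. I claim $I\mapsto I\cap os_n^d$ is a bijection onto the order ideals of $(os_n^d,\le)$. It is well defined because the product order on $os_n^d$ is the restriction of the one on $\mathbb{N}^d$. For the inverse, an ideal $K$ of $os_n^d$ is sent to $\widetilde K=\{z : \mathrm{sort}(z)\in K\}$. The key check is that sorting is order preserving: $z\le_{prod} w$ implies $\mathrm{sort}(z)\le_{prod}\mathrm{sort}(w)$, because the $k$-th smallest entry is monotone in the vector. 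Hence $\widetilde K$ is an order ideal, and it is clearly symmetric. Alternatively, \cref{lem:totasym} shows that $I$ is generated by its maximal elements, whose permutation classes are determined inside $os_n^d$. The two maps are mutually inverse, which gives the count.

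The consequences come next. First, $|J(os_3^d)|=|J(os_{d+1}^{2})|$. Elements of $os_{d+1}^2$ are pairs $a\le b\le d$. Order ideals of this staircase poset biject with lattice paths, and should number $2^{d+1}$; equivalently they are the symmetric ideals of the square grid $\{0,\dots,d\}^2$, determined by a sequence of choices along the diagonal. I will verify this via the standard bijection of symmetric Ferrers diagrams in a $(d+1)\times(d+1)$ box with self-conjugate partitions, whose distinct odd hook lengths give subsets of $\{1,3,\dots,2d+1\}$, hence $2^{d+1}$ of them. Second, $|J(os_4^d)|=|J(os_{d+1}^{3})|$ is the number of totally symmetric plane partitions in a box of side $d+1$, which equals $a_{d+1}$ by \cref{prop:stembridge}.

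The main obstacle is getting the conjugation indices right in the isomorphism. The symmetric-ideal bijection hinges only on the monotonicity of sorting, which is a routine check.
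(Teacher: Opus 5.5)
Your proposal is correct and follows the paper's route. Your conjugation $x'_k=|\{p : x_p\ge n-k\}|$ is exactly the paper's row/column reading of the Ferrers diagram. Your identification of totally symmetric partitions with order ideals of $os_n^d$ via sorting is the ideal-theoretic version of the paper's antichain argument through \cref{lem:totasym}. The two counts then follow, as in the paper, from the box of side $d+1$ via self-conjugate partitions and Stembridge's theorem. The monotonicity of sorting that you make explicit is exactly what the paper's antichain argument uses implicitly, so your version is slightly more complete.
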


\begin{proof}
The function which maps $(x_1,x_2,\dots,x_d)\in os_n^d$ to the non-decreasing sequence of size $n-1$ with $n-1-x_d$ zeros, $x_d-x_{d-1}$ ones, and so on where the number of $i$'s is $x_{d-i+1}-x_{d-i}$ for all $i\in [d]$, is an isomorphism of posets from $os_n^d$ to $os_{d+1}^{n-1}$. Indeed this bijection comes from reading row by row or column by column the Ferrers diagram of $(x_1,x_2,\dots,x_d)\in os_n^d$. The fact that it is an isomorphism of posets is easily seen with this representation.

We now prove the combinatorial interpretation for $|J(os_n^d)|$. By definition, we can identify a totally symmetric $d$-dimensional partition with its set of maximal elements. By \cref{lem:totasym}, we in fact just need to specify the maximal elements that are non-decreasing. This shows that the totally symmetric $d$-dimensional partitions which fit in a $d$-dimensional box whose sides all have length $n$ correspond to the antichains of $(os_n^d,\leq)$. These latter are counted by $|J(os_n^d)|$. This proves what we wanted.

Finally, from this combinatorial interpretation the formulas for $|J(os_3^d)|$ and $|J(os_4^d)|$ follow.
\end{proof}

Unfortunately, no formulas are known for the number of totally symmetric $d$-dimensional partitions which fit in a $d$-dimensional box whose sides all have length $n\geq 5$.
Building on the above results, we are able to obtain formulas for the number of elements of $L_n^d$ when $n\leq 4$ (no formulas were known for $n\in \{3,4\}$). We start with a lemma.

\begin{lemma}
\label{lem:nbrorderfilterincomparable}
Let $P$ be a finite poset and $x\in P$. Let $I$ be the subposet of $P$ on the elements that are incomparable to $x$. Then the number of order filters of $P$ where $x$ is a generator (which means $x$ is a minimal element) is equal to $|J(I)|$. Moreover $|J(P)|=|J(P\setminus \{x\})|+|J(I)|$.
\end{lemma}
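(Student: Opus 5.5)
The plan is to set up an explicit bijection between the order filters $F$ of $P$ having $x$ as a minimal element and the order filters of the subposet $I$ of elements incomparable to $x$. The second formula then follows by splitting the order filters of $P$ according to this property.

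\textbf{Step 1: the bijection.} Given an order filter $F$ of $P$ with $x$ minimal in $F$, I send it to $F\cap I$. This is an order filter of $I$, since $I$ carries the induced order.

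Conversely, given an order filter $G$ of $I$, I set $F:=F_P(x)\cup G$.
\begin{itemize}
\item $F$ is an order filter of $P$. If $g\in G$ and $y\geq g$, then either $y$ is comparable to $x$ or $y\in I$. In the second case $y\in G$. In the first case, $y\leq x$ would give $g\leq x$, contradicting $g\in I$, so $y\geq x$ and $y\in F_P(x)$.
\item $x$ is minimal in $F$. Nothing of $G$ lies below $x$, and nothing of $F_P(x)$ lies strictly below $x$.
\end{itemize}

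To see the two maps are mutually inverse, take an order filter $F$ with $x$ minimal. It contains $F_P(x)$ and contains no element strictly below $x$. So every element of $F$ is either $\geq x$ or incomparable to $x$, which gives $F=F_P(x)\sqcup (F\cap I)$. The other composition is clear, because $F_P(x)\cap I=\emptyset$. This shows the number of such filters is the number of order filters of $I$, which equals $|J(I)|$.

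\textbf{Step 2: the recursion.} I partition the order filters of $P$ into those in which $x$ is a minimal element and those in which it is not.

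If $x$ is not minimal in $F$, then either $x\notin F$, or $x\in F$ together with some $y<x$ in $F$. I claim these filters correspond bijectively to order filters of $P\setminus\{x\}$ via $F\mapsto F\setminus\{x\}$.
\begin{itemize}
\item \emph{The image is a filter.} Suppose $F$ is such a filter, $z\in F\setminus\{x\}$, and $w\geq z$ with $w\neq x$. Then $w\in F$, so $w\in F\setminus\{x\}$.
\item \emph{Inverse map.} Given an order filter $H$ of $P\setminus\{x\}$, I recover $F$ as follows. If some $y<x$ lies in $H$, set $F:=H\cup\{x\}$; otherwise set $F:=H$.
\item \emph{The second case gives a filter.} Take $F=H$, so no element of $H$ is below $x$. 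Then no $z\in H$ has $x\geq z$, so $F$ is upward closed in $P$. Here $x\notin F$, so $x$ is not minimal.
\item \emph{The first case gives a filter.} Upward closure holds because $H$ is a filter of $P\setminus\{x\}$ and $x\geq y\in H$ is added. Here $x$ is not minimal, since $y<x$ lies in $F$.
\end{itemize}

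\textbf{Injectivity.} I check that $F\setminus\{x\}$ determines $F$. Suppose $x\in F$ with $x$ not minimal. Then some $y<x$ lies in $F$, so $y\in F\setminus\{x\}$, and the rule re-adds $x$. Suppose instead $x\notin F$. Then no element below $x$ is in $F$, since $F$ is upward closed.

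\textbf{Conclusion and main obstacle.} Summing the two counts gives $|J(P)|=|J(P\setminus\{x\})|+|J(I)|$. Throughout I use that the number of order filters equals the number of order ideals. The only delicate point is this careful case analysis showing $F\mapsto F\setminus\{x\}$ is a bijection onto the order filters of $P\setminus\{x\}$.
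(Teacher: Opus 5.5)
Your proof is correct. It reaches the result by a slightly different route from the paper.

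\textbf{The paper's route.} The paper identifies each order filter with its antichain of minimal elements. Filters in which $x$ is minimal correspond to antichains containing $x$. Deleting $x$ turns these into antichains of $I$. The identity $|J(P)|=|J(P\setminus\{x\})|+|J(I)|$ is then just the split of the antichains of $P$ according to whether they contain $x$. Those avoiding $x$ are exactly the antichains of $P\setminus\{x\}$.

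\textbf{Your route.} You stay at the level of filters and build two bijections by hand: $F\mapsto F\cap I$, with inverse $G\mapsto F_P(x)\cup G$, and $F\mapsto F\setminus\{x\}$. These are the same correspondences in disguise. Indeed $\min(F_P(x)\sqcup G)=\{x\}\sqcup\min G$. And when $x$ is not minimal in $F$, the set $F\setminus\{x\}$ is the filter of $P\setminus\{x\}$ generated by $\min F$.

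\textbf{What each approach buys.} The antichain viewpoint buys brevity. The whole case analysis of your Step~2 (whether $x\in F$, whether some $y<x$ lies in $H$) collapses to the trivial observation that an antichain not containing $x$ is an antichain of $P\setminus\{x\}$. Your version buys self-containedness and the explicit decomposition $F=F_P(x)\sqcup(F\cap I)$.

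\textbf{One point of exposition.} In the first case of your inverse map, say explicitly why $H\cup\{x\}$ is upward closed at $x$. Any $w>x$ satisfies $w>y\in H$ with $w\neq x$, hence $w\in H$. State this rather than just ``$x\geq y\in H$ is added''.
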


\begin{proof}
These order filters are identified with their minimal elements, which are the antichains of $P$ that contain $x$. Thus the number of these order filters is the number of antichains of $P$ formed by elements that are all incomparable to $x$. The results follow. 
\end{proof}

\begin{proposition}
\label{prop:number4torsions}
Let $d\geq 1$. We have $|L_1^d|=2$, $|L_2^d|=d+4$, and $|L_3^d| = d+3+5\times 2^d$. Let $K_1:=[(0,\dots,0,2,2),(1,3,\dots,3)]$, $K_2:=[(0,\dots,0,2,3),(1,3,\dots,3)]$ and $K_3:=\{x\in os_4^{d+1}\mid x\not\geq 1\cdots 13\}$ be three subposets of $os_4^{d+1}$. Then
\begin{equation}
\tag{$\ast$}
\label{formulaL4d}
|L_4^d|=8+d+3\times 2^{d+1} + (d+4)\,(a_d-1) + \sum_{i=2}^d a_i + 2\,\big(|J(K_1)|+|J(K_2)|\big) +|J(K_3)|,  
\end{equation}
where the sequence $(a_d)_{d\geq 0}$ is defined in \cref{prop:stembridge}.
\end{proposition}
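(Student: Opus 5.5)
We use \cref{thm:torsionAuslanderistam} to identify $L_n^d$ with $\Tam(P,\phi)$, where $P=os_n^d$ and $\phi(i)=(i,\dots,i)$, so $I_i:=I_P(\phi(i))\cong os_{i+1}^d$ and $\mathcal{C}_P^{\phi}=\bigsqcup_{i<n}\mathcal{C}_i$. Note that $(\mathcal{C}_P^{\phi},\leq_{prod})\cong (os_n^{d+1},\leq)$ via $(x,i)\mapsto(x_1,\dots,x_d,i)$, since $x\leq\phi(i)$ means $x_d\leq i$.

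For $n\le 3$ we apply the enumeration results of \cref{sec:enumgeneral}. For $n=1$, $|L_1^d|=|J(I_0)|=2$. For $n=2$, \cref{prop:enumatmost2} gives $|J(I_0)|+|J(os_2^{d+1})|-1=2+(d+3)-1=d+4$, because $os_2^{d+1}$ is a chain with $d+2$ elements. For $n=3$, since $\phi(0)=\hat 0$, \cref{propComptage3} gives $1+|J(os_2^d)|+|J(os_3^{d+1})|+|J(I_2\setminus I_1)|$. Here $|J(os_2^d)|=d+2$, and $|J(os_3^{d+1})|=2^{d+2}$ by \cref{proptotallysym}. The set $I_2\setminus I_1$ consists of the sequences in $os_3^d$ ending with $2$; it is isomorphic to $os_3^{d-1}$, so it has $2^{d}$ order ideals. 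The total is $1+(d+2)+4\cdot 2^d+2^d=d+3+5\cdot 2^d$.

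For $n=4$ we follow the partition strategy used in \cref{propComptage3} and \cref{prop:inductionformula}. First we split on whether $(\phi(3),3)\in T$. If it is not, then $T_3=\emptyset$ and the count is $|L_3^d|$. If it is, we split further according to which of $(\phi(0),0),(\phi(1),1),(\phi(2),2)$ lie in $T$, or equivalently according to the support $\{i: T_i\neq\emptyset\}$, and also according to which elements $(\phi(k),3)$ lie in $T_3$. Each case then reduces to counting order filters of some explicit subposet, using \cref{lem:procheenproche} and the fact that nonempty lower components force $(\hat0,\cdot)$ into higher components when the relevant $(\phi(k+1),j)$ is present.

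The pieces should be identified as follows.
\begin{itemize}
\item The cases with $T_{<3}$ fully saturated by completion contribute $J$ of intervals in $os_4^{d+1}$. These are the posets $K_1$ and $K_2$, each appearing twice by the symmetric roles of components $1$ and $2$.
\item The case where only the constraint \emph{$T$ is an order filter of $(os_4^{d+1},\le)$ avoiding the forced region above $1\cdots13$} remains gives $|J(K_3)|$, via \cref{lem:nbrorderfilterincomparable}.
\item Cases where $T_3$ merely contains $(\phi(k),3)$ and lower components are constrained give terms $|J(os_4^{i})|=a_{i}$ by \cref{proptotallysym}. These produce $\sum_{i=2}^d a_i$ and $(d+4)(a_d-1)$, the factor $d+4$ counting the choices in components $0,1$ (a copy of $L_2^d$).
\item The remaining small cases give $8+d+3\cdot2^{d+1}$.
\end{itemize}
We would check the final formula against $|L_4^1|=\mathrm{Cat}_5=42$ and small computer data for $d=2$.

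The main obstacle is the $n=4$ bookkeeping. We must choose the case split so that every piece is exactly a product of an already-counted lattice and $J$ of a nice poset. This is delicate because completions from $\mathcal{C}_1$ and $\mathcal{C}_2$ into $\mathcal{C}_3$ interact: the conditions are triggered by $(\phi(2),3)$ and $(\phi(3),3)$ respectively. Our first attempt would be to order the split by the largest $k$ with $(\phi(k),3)\notin T_3$, which linearizes these triggers.
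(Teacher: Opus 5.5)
Your treatment of $n\le 3$ is correct and is exactly the paper's argument. It uses \cref{prop:enumatmost2} and \cref{propComptage3}, with $|J(os_2^{d+1})|=d+3$, $|J(os_3^{d+1})|=2^{d+2}$ and $I_2\setminus I_1\cong os_3^{d-1}$. For $n=4$, however, there is a genuine gap. The bullet list assigns the terms of $(\ast)$ to cases by matching them against the target formula, several of those assignments are wrong, and the bookkeeping you defer as ``the main obstacle'' is the whole content of the statement.

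The split you propose at the end, by the largest $k$ with $(\phi(k),3)\notin T$, is the right one; it is the paper's partition. What you are missing is its structure and the actual counts.

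If $(\phi(k),3)\in T$ and $(\phi(k-1),3)\notin T$ with $k\ge 1$, then $T_{k-1}=\emptyset$, since otherwise completion puts $(\phi(k-1),3)$ into $T$. Components below $k-1$ do not interact with $\mathcal{C}_3$, and components $\ge k$ are tied to $\mathcal{C}_3$ by the product order. This gives the following blocks.
\begin{itemize}
\item For $k=3$ the block counts $(d+4)(a_d-1)$, and your attribution is right there.
\item For $k=2$ one has $T_1=\emptyset$, and $(\phi(0),0)$ is a free choice, because $(\phi(1),j)\notin T$ for every $j$. Meanwhile $T_2\cup T_3$ is determined by an order filter of $K_1\cup\{(2,\dots,2)\}$. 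One application of \cref{lem:nbrorderfilterincomparable} then gives $2\big(|J(K_1)|+|J(K_2)|\big)$. So the factor $2$ comes from component $0$, not from a ``symmetric role of components 1 and 2''. There is no such symmetry, and nothing in $T_{<3}$ is saturated in this case.
\item For $k=0$ one has $T_3=\mathcal{C}_3$, so the block is a second full copy of $L_3^d$, which your list omits.
\item For $k=1$ one has $T_0=\emptyset$, but the block must be split again according to whether $(\phi(2),2)\in T$. This decides whether completions $\mathcal{C}_1\to\mathcal{C}_2$ fire.
\end{itemize}

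The two sub-blocks of $k=1$ count order filters of explicit posets $V$ and $W$. Neither is $K_3$ nor a copy of some $os_4^i$. One obtains
$|J(V)|=1+a_d-d+\sum_{i=2}^d a_i$ and $|J(W)|=|J(K_3)|-2^{d+2}-a_d+1$
only by adjoining elements one at a time and applying \cref{lem:nbrorderfilterincomparable} repeatedly. This uses $[(0,\dots,0),(2,\dots,2)]\cong os_3^{d+1}$ and the fact that the relevant incomparability sets are isomorphic to $os_4^{d-i-1}$.

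So $|J(K_3)|$ is not the count of any block. The constant $8+d+3\cdot 2^{d+1}$ does not come from ``remaining small cases'' either. It is $2|L_3^d|$ plus the corrections $(1-d)+(1-2^{d+2})$, with the $\pm a_d$ terms cancelling.

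Until these blocks are identified and evaluated, $(\ast)$ is not established. The numerical checks you mention at $d\in\{1,2\}$ are consistency tests, not a substitute for the computation.
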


\begin{proof}
The formulas for $|L_n^d|$ with $n\leq 3$ follow from \cref{thm:torsionAuslanderistam} and \cref{propComptage3}, using for $|L_3^d|$ the formula $|J(os_3^d)|=2^{d+1}$ obtained in \cref{proptotallysym}. 
For $|L_4^d|$ it requires more work. 
By \cref{thm:torsionAuslanderistam}, $L_4^d= \Tam(os_4^d,\phi)$ with $\phi$ defined by $\phi(i)=(i,\dots,i)$ for all $i<n$. Let:

\begin{itemize}
    \item $R_1 :=\{T\in L_4^d\mid (\phi(3),3)\not\in T\}$
    \item $R_2 :=\{T\in L_4^d\mid (\phi(3),3)\in T,\, (\phi(2),3)\not\in T\}$
    \item $R_3 :=\{T\in L_4^d\mid (\phi(2),3)\in T,\,(\phi(1),3)\not\in T\}$
    \item $R_4 :=\{T\in L_4^d\mid (\phi(1),3)\in T,\,(\phi(0),3)\not\in T,\,(\phi(2),2)\not\in T\}$
    \item $R_5 :=\{T\in L_4^d\mid (\phi(1),3)\in T,\,(\phi(0),3)\not\in T,\,(\phi(2),2)\in T\}$
    \item $R_6 :=\{T\in L_4^d\mid (\phi(0),3)\in T\}$
\end{itemize}

These sets form a partition of all the torclosed sets of $L_4^d$, thus $|L_4^d|=\sum_{i=1}^6 |R_i|$.
It is immediate that $|R_1|=|R_6|=|L_3^d|=d+3+5\times 2^d$. 

A torclosed set of $R_2$ is a torclosed set of $\Tam(os_4^d,\phi_{|\{0,1\}})$ together with an order filter of \mbox{$\big(\mathcal{C}_3\setminus\{(\phi(3),3)\}\big) \setminus I_{\mathcal{C}_3}(\phi(2),3)$}. The elements in this latter poset are those elements of $os_4^{d+1}$ that end with two letters $3$, but are not $(3,3,\dots,3)$. It follows that $|R_2|=(d+4)\, (|J(os_4^{d-1})|-1)$. Thus, by \cref{proptotallysym}, $|R_2|=(d+4) (a_d-1)$. 

Let $Q=(\mathcal{C}_2 \cup \mathcal{C}_3,\leq_{prod})$. A torclosed set of $R_3$ may or may not contain $(\phi(0),0)$, does not contain elements of $\mathcal{C}_1$, and on the other elements it should be an order filter of \mbox{$Q\setminus \big(I_Q(\phi(1),3) \cup F_Q(\phi(2),3)\big)$}. The elements in this latter poset are those of $os_4^{d+1}$ that are in \mbox{$[(0,\dots,0,2,2),(1,3,\dots,3)] \cup \{(2,\dots,2)\}=K_1\cup \{(2,\dots,2)\}$}. Then by \cref{lem:nbrorderfilterincomparable}, \mbox{$|R_3|=2\times (|J(K_1)|+|J(I)|)$} where $I$ is the subposet of $K_1\cup \{(2,\dots,2)\}$ on the incomparable elements of $(2,\dots,2)$. We have $I=[(0,\dots,0,2,3),(1,3,\dots,3)]=K_2$.
Thus $|R_3|=2\,(|J(K_1)|+|J(K_2)|)$.

The torclosed sets of $R_4$ are the order filters of the subposet of $os_4^{d+1}$ on the elements $[(0,\dots,0,1,1),(1,\dots,1)] \cup [(0,\dots,0,1,3),(0,3,\dots,3)]$. Denote by $V$ this subposet. We have $|R_4|=|J(V)|$. It will be helpful to add $(0,\dots,0,3)$ to $V$ (the adding forms a new subposet of $os_4^{d+1}$). To do this we use \cref{lem:nbrorderfilterincomparable}, which gives $|R_4|=|J(V\cup \{(0,\dots,0,3)\})|-|J(I')|$ where $I'$ is the subposet of $V\cup \{(0,\dots,0,3)\}$ on the elements that are incomparable to $(0,\dots,0,3)$. Then $I'=[(0,\dots,0,1,1),(1,\dots,1)]$, thus $|R_4|=|J(V\cup \{(0,\dots,0,3)\})|-(d+1)$. In $V\cup \{(0,\dots,0,3)\}$ we give the subposet formed by the incomparable elements to each of the element of the chain $[(0,\dots,0,1,1),(1,\dots,1)]$; there is only one element incomparable to $(0,\dots,0,1,1)$ which is $(0,\dots,0,3)$, for $(1,\dots,1)$ the incomparable elements form the subposet $[(0,\dots,0,3),(0,3,\dots,3)] \cong os_4^{d-1}$, and for any $(0,\dots,0,1,\dots,1)$ with the number of zeros equal to $i\in [d-2]$ the incomparable elements are the elements starting with $i+1$ zeros and ending with a $3$, which form a subposet isomorphic to $os_4^{d-i-1}$. Remember that $|J(os_4^{j})|=a_{j+1}$ (\cref{proptotallysym}). Then applying \cref{lem:nbrorderfilterincomparable} successively for all the elements of the chain $[(0,\dots,0,1,1),(1,\dots,1)]$ gives at the end 
$$|J(V\cup \{(0,\dots,0,3)\})| = |J([(0,\dots,0,3),(0,3,\dots,3)])|+2+a_d+\sum_{i=1}^{d-2} a_{d-i}=2+2\,a_d+\sum_{i=1}^{d-2} a_{d-i} .$$ 
Thus $|R_4|=2+2\,a_d+\sum_{i=1}^{d-2} a_{d-i} -(d+1)= 1+a_d-d+\sum_{i=2}^d a_i$.

The torclosed sets of $R_5$ are the order filters of the subposet of $os_4^{d+1}\setminus \{(2,\dots,2)\}$ on the elements that are neither bigger than $(1,\dots,1,3)$ nor smaller than $(0,\dots,0,3)$. Denote by $W$ this subposet. We have \mbox{$K_3=W\cup \{(0,\dots,0,1),(0,\dots,0),(0,\dots,0,2),(0,\dots,0,3),(2,\dots,2)\}$}. We now use five times \cref{lem:nbrorderfilterincomparable} to add the five above elements to $W$ to obtain $K_3$. As $(0,\dots,0,1)$ is smaller than all the elements of $W\cup \{(0,\dots,0,1)\}$, we have $|J(W\cup\{(0,\dots,0,1)\})|=|J(W)|+1$. Then as $(0,\dots,0,0)$ is smaller than all the elements of $W\cup\{(0,\dots,0,1),(0,\dots,0,0)\}$, we have $|J(W\cup\{(0,\dots,0,1),(0,\dots,0)\})|=|J(W)|+2$. The subposet of $W\cup\{(0,\dots,0,1),(0,\dots,0),(0,\dots,0,2)\}$ on the incomparable elements of $(0,\dots,0,2)$ is $[(0,\dots,0,1,1),(1,\dots,1)]$, thus $|J(W\cup\{(0,\dots,0,1),(0,\dots,0),(0,\dots,0,2)\})|=|J(W)|+2+(d+1)=|J(W)|+3+d$. The subposet of $W\cup\{(0,\dots,0,1),(0,\dots,0),(0,\dots,0,2),(0,\dots,0,3)\}$ on the incomparable elements of $(0,\dots,0,3)$ is $[(0,\dots,0,1,1),(2,\dots,2)[$. Since 
{\small
$$[(0,\dots,0,1,1),(2,\dots,2)[ \,\cup \,\{(2,\dots,2)\}\cup [(0,\dots,0),(0,\dots,0,2)]=[(0,\dots,0),(2,\dots,2)]\cong os_3^{d+1} ,$$}
by using \cref{lem:nbrorderfilterincomparable} we can obtain $|J([(0,\dots,0,1,1),(2,\dots,2)[)|=|J(os_3^{d+1})|-d-4=2^{d+2}-d-4$. Thus 
$$|J(W\cup\{(0,\dots,0,1),(0,\dots,0),(0,\dots,0,2),(0,\dots,0,3)\})|=|J(W)|+2^{d+2}-1 .$$

The incomparable elements of $(2,\dots,2)$ in $K_3$ are the ones that start with a $0$ and end with a $3$, thus they form a subposet of $K_3$ isomorphic to $os_4^{d-1}$. Then 
$$|J(K_3)|=(|J(W)|+2^{d+2}-1)+|J(os_4^{d-1})|=|J(W)|+2^{d+2}-1+a_d .$$
Thus $|R_5|=|J(W)|=|J(K_3)|-2^{d+2}-a_d+1$.

Putting all together using $|L_4^d|=\sum_{i=1}^6 |R_i|$ finishes the proof of the proposition.
\end{proof}

\begin{remark}
With \cref{prop:stembridge} and \eqref{formulaL4d}, we are able to compute $|L_4^7|=6543848$ and \mbox{$|L_4^8|=130286256$}. This completes the column $n=4$ of Table $2$ of \cite{August_2025}. 
\end{remark}

\subsection{Lattices of $d$-torsion classes of the $(d-1)$-Nakayama algebras of type $\textbf{A}$}
\label{sec:Nakayama}

Let $d$ and $n$ be positive integers. Recall the notations and results from the end of \cref{sec:backgroundhighertorsionclasses}, where we introduced the lattice $L_{\underline{l}}^d$ of the $d$-torsion classes of the $(d-1)$-Nakayama algebra associated to the Kupisch series $\underline{l}$. We defined $os_{\underline{l}}^{d+1}= \{y\in os_n^{d+1} \mid y_1\geq y_{d+1}-l_{y_{d+1}} +1 \}  \subseteq os_n^{d+1}$.
We have that $\mathcal{C}_{os_n^{d}}^{\phi} \cap os_{\underline{l}}^{d+1}$ is an order filter of $\mathcal{C}_{os_n^{d}}^{\phi}$. Thus any element of $L_{\underline{l}}^d$ is an order filter of $\mathcal{C}_{os_n^{d}}^{\phi}$.
Recall also that by \cref{thm:torsionAuslanderistam}, $L_n^d= \Tam((os_n^d,\leq),\phi)$ with $\phi$ defined by $\phi(i)=(i,\dots,i)$ for all $i<n$.

\begin{lemma}
\label{lem:newlab}
Let $R\in L_{\underline{l}}^d$. Let $i<j<n$. Suppose that $(x,i)\in R$ and $(\phi(i+1),j)\in R$. Then $(y,j)\in R$ for any $(y,j)\in \mathcal{C}_j\cap os_{\underline{l}}^{d+1}$ such that $(y,j)\geq (x,j)$.
\end{lemma}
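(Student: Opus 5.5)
The plan is to repeat the two-step argument of \cref{lem:12prime} and \cref{prop:reformulationthmtorsion}, this time inside $os_{\underline{l}}^{d+1}$. The new difficulty is that $(x,j)$ itself need not lie in $os_{\underline{l}}^{d+1}$. So I will not aim at $(x,j)$. Instead I aim at an intermediate element that is guaranteed to be in $os_{\underline{l}}^{d+1}$ and lies below $(y,j)$, and finish with condition (1) of \cref{thm:torsionNakayama}, the order filter condition in $\mathcal{C}_j\cap os_{\underline{l}}^{d+1}$.

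\textbf{Choice of target.} Write $y=(y_1,\dots,y_d)$ and set $w:=(\min(y_1,i),\dots,\min(y_d,i))$.
\begin{itemize}
\item Since $x\leq y$ and $x\leq\phi(i)=(i,\dots,i)$, we get $x\leq w\leq \phi(i)$.
\item By the Kupisch condition $l_j\leq l_i+(j-i)$, the inequality $y_1\geq j-l_j+1$ gives $y_1\geq i-l_i+1$. Since also $i\geq i-l_i+1$, this yields $w_1\geq i-l_i+1$, so $(w,i)\in os_{\underline{l}}^{d+1}$. Condition (1) then gives $(w,i)\in R$.
\item We have $w_1\geq j-l_j+1$, because $y_1\geq j-l_j+1$ and $i\geq j-l_j+1$, the latter again from $l_j\geq 2$ and the Kupisch inequalities. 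Hence $(w,j)\in os_{\underline{l}}^{d+1}$.
\end{itemize}
Since $(w,j)\leq (y,j)$, it then suffices to prove $(w,j)\in R$.

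\textbf{Induction.} For $0\leq k\leq i$ I define, exactly as in the proof of \cref{prop:reformulationthmtorsion},
\begin{align*}
X_k&=(\max(w_1,i-k),\dots,\max(w_d,i-k),i),\\
L_k&=(\max(w_2,i-k)+1,\dots,\max(w_d,i-k)+1,i+1,j),\\
Y_k&=(\max(w_1,i-k),\dots,\max(w_d,i-k),j).
\end{align*}
\begin{itemize}
\item \emph{Membership.} Each of these dominates $(w,\cdot)$ coordinatewise in its first entry, so its first entry is at least $w_1\geq j-l_j+1\geq i-l_i+1$. Hence all of them lie in $os_{\underline{l}}^{d+1}$.
\item \emph{One step.} One checks $X_k\rightsquigarrow\tau_d(L_k)$. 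Condition (2) of \cref{thm:torsionNakayama}, applied to $X_k,L_k\in R$ with the mixed element whose first $d$ entries come from $X_k$ and whose last entry is $j$, gives $Y_k\in R$. This mixed element is in $os_{\underline{l}}^{d+1}$ by the membership check.
\item \emph{Base case.} $X_0=\phi(i)\times\{i\}\geq (w,i)$, so $X_0\in R$ by condition (1), and $L_0=(\phi(i+1),j)\in R$ by hypothesis.
\item \emph{Inductive step.} $X_{k+1}\geq (w,i)$ gives $X_{k+1}\in R$. Also $L_{k+1}\geq Y_k$, using $\max(w_{p+1},i-k-1)+1\geq \max(w_p,i-k)$ as before, so $L_{k+1}\in R$ by condition (1).
\end{itemize}
At $k=i$ we get $Y_i=(w,j)\in R$, and condition (1) then gives $(y,j)\in R$.

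\textbf{Expected obstacle.} The step needing most care is the bookkeeping that every intermediate element, and every element used in condition (2), lies in $os_{\underline{l}}^{d+1}$. In the Nakayama setting conditions (1) and (2) only speak about such elements, which is why I replaced $x$ by $w$. The key inequalities are $j-l_j\leq i-l_i$ and $j-l_j+1\leq i$, both consequences of $2\leq l_q\leq l_{q-1}+1$. If $i-k$ drops below $j-l_j+1$, the first coordinates are still controlled by $w_1$, so the argument survives.
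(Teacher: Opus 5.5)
Your induction is the paper's ($X_k,L_k,Y_k$ with the same two checks), and apart from one step everything checks out. The exception is the claim that the target $(w,j)$ lies in $os_{\underline{l}}^{d+1}$, which is false in general. You assert $i\geq j-l_j+1$ ``from $l_j\geq 2$ and the Kupisch inequalities''. But $l_j\geq 2$ only gives $j-l_j+1\leq j-1$, and that guarantees $j-l_j+1\leq i$ only when $j=i+1$. The inequalities $l_q\leq l_{q-1}+1$ bound $l_j$ from above, not from below.

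Concretely, take $d=1$, $n=3$, $\underline{l}=(1,2,2)$, $i=0$, $j=2$, $x=(0)$, $y=(1)$. All hypotheses hold, for instance with $R=\{(0,0),(1,2),(2,2)\}$, which satisfies both conditions of \cref{thm:torsionNakayama}. Yet $w=(0)$ and $(w,j)=(0,2)\notin os_{\underline{l}}^{2}$, because $0<j-l_j+1=1$. In this situation every $Y_k$ has first entry at most $i<j-l_j+1$. So already $Y_0=(i,\dots,i,j)$ lies outside $os_{\underline{l}}^{d+1}$, condition (2) cannot put it in $R$, and the induction never starts.

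The repair is a case split using the hypothesis $(\phi(i+1),j)\in R\subseteq os_{\underline{l}}^{d+1}$, which forces $i+1\geq j-l_j+1$.
\begin{itemize}
\item If $i\geq j-l_j+1$, your argument runs exactly as written.
\item If $i=j-l_j$, then $\mathcal{C}_j\cap os_{\underline{l}}^{d+1}=\{(z,j)\in\mathcal{C}_j\mid z_1\geq i+1\}$ has minimum $(\phi(i+1),j)\in R$. Condition (1) alone then gives $(y,j)\in R$.
\end{itemize}

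With this fix your proof is complete, and your truncation $w=(\min(y_1,i),\dots,\min(y_d,i))$ is actually an improvement on the paper. The paper runs the induction with $y$ itself, starting from the claim that $(y,i)\geq(x,i)$ lies in $R$. That tacitly requires $y\leq\phi(i)$, and under that assumption $i\geq y_1\geq j-l_j+1$ holds automatically, so the paper never meets your bad case. But general $y$ is genuinely needed: \cref{prop:Nakayamarestriction} applies the lemma to elements $y$ whose entries may exceed $i$.
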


\begin{proof}
Let $(y,j)\in \mathcal{C}_j\cap os_{\underline{l}}^{d+1}$ such that $(y,j)\geq (x,j)$. Let us prove that $(y,j)\in R$. We have $(y,i)\geq (x,i) \in R$, thus $(y,i)\in R$ since $R$ is an order filter of $\mathcal{C}_{os_n^d}^{\phi}$. For $k \in \{0,1,\dots ,i\}$, let 
\begin{align*}
X_k &:=(\max(y_1,i-k),\max(y_2,i-k), \dots , \max(y_d,i-k), i), \\
L_k &:=(\max(y_2,i-k)+1, \max(y_3,i-k)+1, \dots , \max(y_d,i-k)+1, i+1, j), \\
Y_k &:=(\max(y_1,i-k),\max(y_2,i-k), \dots , \max(y_d,i-k), j).
\end{align*}

Note that these elements were already introduced in the proof of \cref{prop:reformulationthmtorsion} and were illustrated in \cref{fig:proofreformtheorem} (where $x_1,\dots,x_d$ are replaced by $y_1,\dots,y_d$).
Since $X_k \geq  X_i=(y,i) \in R$ for all $k\leq i$, then $X_k\in R$ for all $k\leq i$ since $R$ is an order filter of $\mathcal{C}_{os_n^d}^{\phi}$. Since $Y_k\geq Y_i=(y,j)\in os_{\underline{l}}^{d+1}$, then $Y_k\in os_{\underline{l}}^{d+1}$ for all $k\leq i$. Let us prove by induction that $Y_k\in R$ for all $k\leq i$. This will finish the proof of the lemma since $Y_i=(y,j)$.
We know that $X_0=(i,\dots,i)\in R$, and $L_0=(i+1,\dots,i+1,j)=(\phi(i+1),j)\in R$ by hyphothesis.
Moreover $X_0\rightsquigarrow \tau_d(L_0)$. Thus by condition $2$ of \cref{thm:torsionNakayama}, since $(i,\dots,i,j)=Y_0 \in os_{\underline{l}}^{d+1}$, we have $Y_0\in R$.
Let $k\in [i]$. Suppose that for all $p<k$, $Y_p\in R$. Let us prove that $Y_k\in R$. We know that $X_k\in R$. Since $L_k\geq Y_{k-1}$ and $Y_{k-1}\in R$ by hypothesis, then $L_k\in R$. Moreover $X_k\rightsquigarrow \tau_d(L_k)$. Thus by condition $2$ of \cref{thm:torsionNakayama}, since $Y_k \in os_{\underline{l}}^{d+1}$ and the $l$-th coordinate of this element is either the $l$-th coordinate of $X_k$ or $L_k$ for all $l\in [d+1]$, we have $Y_k\in R$. This finishes the proof of the induction, thus of the lemma.
\end{proof}

\begin{proposition}
\label{prop:Nakayamarestriction}
The lattice $L_{\underline{l}}^d$ is the lattice, ordered by inclusion, on the sets $T\cap os_{\underline{l}}^{d+1}$ for all torclosed sets $T\in L_n^d$.
\end{proposition}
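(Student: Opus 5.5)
We prove two inclusions of sets of subsets of $os_{\underline{l}}^{d+1}$. Write $S:=os_{\underline{l}}^{d+1}$, viewed inside $\mathcal{C}_{os_n^d}^{\phi}$ via \cref{thm:torsionAuslanderistam}. Since both families are ordered by inclusion, it suffices to show two things. First, for every torclosed $T\in L_n^d$, the set $T\cap S$ satisfies the two conditions of \cref{thm:torsionNakayama}. Second, every $R\in L_{\underline{l}}^d$ equals $T\cap S$ for some torclosed $T$.

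\textbf{Restrictions are Nakayama torsion classes.} Let $T$ be torclosed. Condition (1) holds because $T_i$ is an order filter of $\mathcal{C}_i$, so its intersection with the subposet $S\cap\mathcal{C}_i$ is an order filter there. For condition (2), take $x,z\in T\cap S$ with $x\rightsquigarrow\tau_d(z)$. By \cref{thm:torsionAuslander} (equivalently \cref{prop:reformulationthmtorsion}), the Auslander condition (\ref{2aus}) already holds for $T$, so every $y\in os_n^{d+1}$ with $y_i\in\{x_i,z_i\}$ lies in $T$. In particular every such $y$ lying in $S$ is in $T\cap S$. This direction should be routine.

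\textbf{Every Nakayama torsion class is a restriction.} Given $R\in L_{\underline{l}}^d$, we take $T$ to be the completion in $\Tam(os_n^d,\phi)$ of the order filter of $\mathcal{C}_{os_n^d}^{\phi}$ generated by $R$. Call this filter $\overline{R}$; recall that $R$ is already an order filter of $\mathcal{C}_{os_n^d}^{\phi}$, as noted before \cref{lem:newlab}, so $\overline{R}=R$. We must show $T\cap S\subseteq R$, i.e.\ that completion adds no new element of $S$.

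The key is a closure argument. Consider the set
$$R':=R\cup\big(\mathcal{C}_{os_n^d}^{\phi}\setminus S\big)\cap U,$$
where $U$ is chosen so that $R'$ is torclosed. The cleanest candidate would be $R'=R\cup(\mathcal{C}_{os_n^d}^{\phi}\setminus S)$, but this fails: it is not torclosed, since elements outside $S$ could trigger completions through the $(\phi(i+1),j)$. We therefore proceed differently. We build the completion step by step as in \cref{lem:procheenproche}, one rule application at a time, and prove by induction on the steps the invariant that the current set $Q$ satisfies $Q\cap S=R$.

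A single step adds $(y,j)$ with $(y,j)\geq(x,j)$, where $(x,i)\in Q$ and $(\phi(i+1),j)\in Q$. If $(y,j)\in S$, then $(x,j)\le(y,j)$, and since $S$ is an order filter, the relevant generators should lie in $S$, after which \cref{lem:newlab} gives $(y,j)\in R$. \textbf{Main obstacle:} the generators $(x,i)$ and $(\phi(i+1),j)$ might lie outside $S$, hence outside $R$, having been added earlier. To handle this, I will try to replace $(x,i)$ by an element of $R$. Since $S$ constrains only $y_1\ge y_{d+1}-l_{y_{d+1}}+1$, and the Kupisch inequality $l_j\le l_i+(j-i)$ holds, if $(y,j)\in S$ then $(y,i)\in S$. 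Moreover $(y,i)\ge(x,i)$ and $(y,i)$ lies in the filter generated, so I expect $(y,i)\in R$ by the invariant. Similarly, $(\phi(i+1),j)=(i+1,\dots,i+1,j)$ dominates $(y,j)$ in its first coordinate, and so lies in $S$ whenever $(y,j)$ does. With both generators in $R$, \cref{lem:newlab} applied to $(y,i)$ gives $(y,j)\in R$. If the invariant-based step for $(y,i)$ needs care, I will instead run the induction over components in increasing order, so that $Q_i$ is already settled when component $j$ is processed.

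Finally, $T\cap S=R$ shows that $R$ is a restriction, and the two inclusions together give the stated equality of lattices.
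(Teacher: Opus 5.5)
Your architecture is the paper's. The first direction is immediate, exactly as you say. For the second, you complete $R$ and use \cref{lem:newlab} to show that no new element of $S=os_{\underline{l}}^{d+1}$ is created. Your invariant $Q\cap S=R$ over single rule applications is a valid replacement for the paper's induction over components and over the steps $F_1,\dots,F_k$ of \cref{lem:procheenproche}. However, the key step as written rests on two false claims.

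\textbf{First claim: $(y,i)$ is not in general an element of $\mathcal{C}_i$.} That requires $y_d\le i$. But $y$ ranges over elements of $\mathcal{C}_j$ above $x$, so its coordinates can go up to $j$; then $(y_1,\dots,y_d,i)$ is not even non-decreasing. The correct substitute is the truncation $t=(\min(y_1,i),\dots,\min(y_d,i))$, as in the paper. It works for three reasons:
\begin{itemize}
\item $(x,i)\le(t,i)$, since $x\le y$ and $x\le\phi(i)$; hence $(t,i)\in Q$.
\item $(t,j)\le(y,j)$.
\item $(t,i)\in S$, because $y_1\ge j-l_j+1\ge i-l_i+1$ by the Kupisch inequality, and $i\ge i-l_i+1$.
\end{itemize}
Hence $(t,i)\in Q\cap S=R$.

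\textbf{Second claim: $(\phi(i+1),j)$ need not lie in $S$ when $(y,j)$ does.} The reason is that $y_1$ can exceed $i+1$. Take $l_k=2$ for all $k\ge 1$, $i=0$, $j=3$, $x=\phi(0)$ and $y=\phi(3)$. Then $(y,3)\in S$ but $(\phi(1),3)\notin S$, so $(\phi(1),3)\notin R$ and \cref{lem:newlab} has no valid input.

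This case must be treated separately, as the paper does. If $(\phi(i+1),j)\notin S$, then $i+1<j-l_j+1\le y_1$, so $(y,j)\ge(\phi(i+1),j)\in Q$. Since $Q$ is an order filter, $(y,j)$ was already in $Q$, hence in $Q\cap S=R$. Equivalently, the minimum $(\phi(j-l_j+1),j)$ of $\mathcal{C}_j\cap S$ already lies in $R$, so $R_j=\mathcal{C}_j\cap S$.

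Only when $(\phi(i+1),j)\in S$, and hence $(\phi(i+1),j)\in R$ by the invariant, may you apply \cref{lem:newlab} to the pair $(t,i)$, $(\phi(i+1),j)$ to conclude $(y,j)\in R$. With these two repairs your argument goes through.
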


\begin{proof}
If $T\in L_n^d$, then $T\cap os_{\underline{l}}^{d+1} \in L_{\underline{l}}^d$. Conversely, let $R\in L_{\underline{l}}^d$. Then $R\subseteq \mathcal{C}_{os_n^d}^{\phi}$. Let $T$ be the smallest torclosed set of $L_n^d$ containing $R$. Let us prove that $R=T\cap os_{\underline{l}}^{d+1}$. 
We prove by induction that $T_k\cap os_{\underline{l}}^{d+1} = R_k$ for all $k<n$. It is immediate for $k=0$. Let $k\in [n-1]$. We suppose that $T_i\cap\, os_{\underline{l}}^{d+1} = R_i$ for all $i<k$. We want to prove that $T_k \cap \,os_{\underline{l}}^{d+1} = R_k$.

By definition of $T$, we know that $R_k\subseteq T_k\cap os_{\underline{l}}^{d+1}$. Thus we need to prove that $T_k\cap os_{\underline{l}}^{d+1} \subseteq R_k$. If $R_k=\emptyset$ this is immediate, thus we suppose that $R_k\neq \emptyset$. By \cref{lem:procheenproche}, we can compute $T_k$ by first looking at the completion of $T_{k-1}\cup R_k$, which in $\mathcal{C}_k$ forms the order filter that we denote $F_1$. Then at the completion of $T_{k-2}\cup F_1$, which in $\mathcal{C}_k$ forms the order filter that we denote $F_2$, and so on. At the end we will have $F_k=T_k$. Let us prove by induction that $F_j\cap os_{\underline{l}}^{d+1}\subseteq R_k$ for all $j\in [k]$. This will finish the proof of the proposition since for $j=k$ this will give $T_k\cap os_{\underline{l}}^{d+1}\subseteq R_k$. 

Let us prove that $F_1\cap os_{\underline{l}}^{d+1}\subseteq R_k$. Let $(y,k)\in F_1\cap os_{\underline{l}}^{d+1}$.
If $(y,k)$ is not obtained through the completion of $T_{k-1}\cup R_k$, then $(y,k)\in R_k$ and there is nothing to prove. Thus we assume that there exists $(z,k-1)\in T_{k-1}$ such that $(y,k)\geq (z,k)$. In particular $(\phi(k),k)\in R_k$. Let \mbox{$t=(\min(y_1,k-1),\min(y_2,k-1),\dots,\min(y_d,k-1))\in os_{n}^{d}$}. We have  $(y,k)\geq  (t,k)$. Let us prove that $(t,k-1)\in R_{k-1}$.

Since $(y,k)\in \mathcal{C}_k\cap os_{\underline{l}}^{d+1}=\{(x,k)\in \mathcal{C}_k\mid x\geq \phi(k-l_k+1)\}$, then $y_1\geq k-l_k+1$. Since $k\geq 1$, we have $l_k\geq 2$, which gives $k-1\geq k-l_k+1$. Thus $\min(y_1,k-1)\geq k-l_k+1$. Then
$(t,k)\in os_{\underline{l}}^{d+1}$. Since $(t,k) \leq (k-1,\dots,k-1,k)$, we have $(t,k-1) \in \mathcal{C}_{k-1}$. Moreover, $(t,k-1)\in os_{\underline{l}}^{d+1}$. Indeed, since $l_k-l_{k-1}\leq 1$, then $(k-1)-l_{k-1}+1\leq k-l_k+1$. Thus if $(t,k-1)\not\in os_{\underline{l}}^{d+1}$, which means $t\not\geq k-1-l_{k-1}+1$, then $t\not\geq k-l_{k}+1$, which is absurd since $(t,k)\in os_{\underline{l}}^{d+1}$. Thus $(t,k-1)\in \mathcal{C}_{k-1} \cap os_{\underline{l}}^{d+1}$.
Since $(z,k-1)\in \mathcal{C}_{k-1}$, we have $z_i\leq k-1$ for all $i\in [d]$. Moreover $(z,k)\leq (y,k)$ gives $z_i\leq y_i$ for all $i\in [d]$. Then
$(z,k-1)\leq (t,k-1)$. Since $(z,k-1)\in T_{k-1}$, then $(t,k-1)\in T_{k-1}$. But we proved that $(t,k-1)\in \mathcal{C}_{k-1} \cap os_{\underline{l}}^{d+1}$, thus $(t,k-1)\in T_{k-1}\cap os_{\underline{l}}^{d+1}=R_{k-1}$ by induction hypothesis.

Then we use \cref{lem:newlab} with $(t,k-1)\in R$ and $(\phi(k),k)\in R$. This gives $(y,k)\in R_k$ since $(y,k)\in \mathcal{C}_k \cap os_{\underline{l}}^{d+1}$ and $(y,k)\geq (t,k)$. This is what we wanted to prove. Now if we let $j\in [k]$ and we suppose that $F_i\cap os_{\underline{l}}^{d+1} \subseteq R_k$ for all $i<j$, we need to prove that $F_j\cap os_{\underline{l}}^{d+1} \subseteq R_k$. The proof below uses the same structure as in the proof of $F_1\cap os_{\underline{l}}^{d+1} \subseteq R_k$ above. To help the reader follow these similar arguments, as no confusion is possible since we do not use previously introduced elements, we will reuse some variable names. 

Let $(y,k)\in F_j\cap os_{\underline{l}}^{d+1}$. Let us prove that $(y,k)\in R_k$. If $(y,k)$ is not obtained through the completion of $T_{k-j}\cup F_{j-1}$, then $(y,k)\in F_{j-1}\cap os_{\underline{l}}^{d+1}\subseteq R_k$ by induction hypothesis, and there is nothing to prove. Thus we assume that there exists $(z,k-j)\in T_{k-j}$ such that $(y,k)\geq (z,k)$. In particular $(\phi(k-j+1),k)\in F_{j-1}$. If $(\phi(k-j+1),k)\not\in os_{\underline{l}}^{d+1}$, then $(\phi(k-j+1),k)\leq (\phi(k-l_k+1),k)$. Since $(\phi(k-j+1),k)\in F_{j-1}$ then $(\phi(k-l_k+1),k) \in F_{j-1}\cap os_{\underline{l}}^{d+1} \subseteq R_k$ by induction hypothesis, and this proves that $F_j\cap os_{\underline{l}}^{d+1} \subseteq \mathcal{C}_k\cap os_{\underline{l}}^{d+1} = R_k$ since $\mathcal{C}_k\cap os_{\underline{l}}^{d+1}=\{(x,k)\in \mathcal{C}_k\mid x\geq \phi(k-l_k+1)\}$. Thus we assume that $(\phi(k-j+1),k)\in os_{\underline{l}}^{d+1}$, which gives us $l_k\geq j$. Then $(\phi(k-j+1),k)\in F_{j-1}\cap os_{\underline{l}}^{d+1} \subseteq R_k$ by induction hypothesis, then $(\phi(k-j+1),k) \in R_k$. To have something to prove we thus further assume that $\phi(k-l_k+1)<\phi(k-j+1)$, which gives $l_k>j$. Let $t=(\min(y_1,k-j),\dots,\min(y_d,k-j)) \in os_n^d$. We have $(y,k)\geq (t,k)$. Let us prove that $(t,k-j)\in R_{k-j}$.

Since $(y,k)\in \mathcal{C}_k\cap os_{\underline{l}}^{d+1}=\{(x,k)\in \mathcal{C}_k\mid x\geq \phi(k-l_k+1)\}$, then $y_1\geq k-l_k+1$. We also proved that $l_k>j$, which gives $k-j\geq k-l_k+1$. Thus $\min(y_1,k-j)\geq k-l_k+1$. Then
$(t,k)\in os_{\underline{l}}^{d+1}$. Since $(t,k) \leq (k-j,\dots,k-j,k)$, we have $(t,k-j) \in \mathcal{C}_{k-j}$. 
Moreover, $(t,k-j)\in os_{\underline{l}}^{d+1}$. Indeed, since $l_k-l_{k-j}\leq j$, then $(k-j)-l_{k-j}+1\leq k-l_k+1$. 
Thus if $(t,k-j)\not\in os_{\underline{l}}^{d+1}$, which means $t\not\geq k-j-l_{k-j}+1$, then $t\not\geq k-l_{k}+1$, which is absurd since $(t,k)\in os_{\underline{l}}^{d+1}$. Thus $(t,k-j)\in \mathcal{C}_{k-j} \cap os_{\underline{l}}^{d+1}$.
Since $(z,k-j)\in \mathcal{C}_{k-j}$, we have $z_i\leq k-j$ for all $i\in [d]$. Moreover $(z,k)\leq (y,k)$ gives $z_i\leq y_i$ for all $i\in [d]$. Then
$(z,k-j)\leq (t,k-j)$. Since $(z,k-j)\in T_{k-j}$, then $(t,k-j)\in T_{k-j}$. But we proved that $(t,k-j)\in \mathcal{C}_{k-j} \cap os_{\underline{l}}^{d+1}$, thus $(t,k-j)\in T_{k-j}\cap os_{\underline{l}}^{d+1}=R_{k-j}$ by induction hypothesis.

Then we use \cref{lem:newlab} with $(t,k-j)\in R$ and $(\phi(k-j+1),k)\in R$. This gives $(y,k)\in R_k$ since $(y,k)\in \mathcal{C}_k \cap os_{\underline{l}}^{d+1}$ and $(y,k)\geq (t,k)$. This proves that $F_j\cap os_{\underline{l}}^{d+1} \subseteq R_k$. As explained, this finishes the proof of the proposition.
\end{proof}

\begin{theorem}
The lattice $L_{\underline{l}}^d$ is a lattice quotient of $L_n^d$. Thus it inherits the properties of the $(P,\phi)$-Tamari lattices that are preserved by lattice quotient. In particular it is a join-semidistributive lattice.
\end{theorem}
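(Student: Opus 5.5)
The plan is to identify $L_{\underline{l}}^d$ with the quotient of $L_n^d$ by a Kupisch equivalence relation, and then apply \cref{propKupisch}. By \cref{thm:torsionAuslanderistam}, $L_n^d=\Tam(P,\phi)$ with $P=(os_n^d,\leq)$ and $\phi(i)=(i,\dots,i)$. Here $P$ is a lattice, hence a join-semilattice, so the hypothesis of \cref{propKupisch} holds.

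The Kupisch-like series will be $K_i:=i-l_i+1$. The Kupisch series conditions $l_0=1$ and $2\leq l_i\leq l_{i-1}+1$ give $K_0=0$, $0\leq K_i\leq i-1$ for $i\geq 1$, and $K_i-K_{i-1}=1-(l_i-l_{i-1})\geq 0$. So $K$ is non-decreasing with $K_i\leq i$, i.e. it is a Kupisch-like series. For $(x,i)\in\mathcal{C}_P^{\phi}$, the condition $x\geq \phi(K_i)=(K_i,\dots,K_i)$ is equivalent to $x_1\geq i-l_i+1$, which is exactly the defining condition of $os_{\underline{l}}^{d+1}$ applied to $(x_1,\dots,x_d,i)$. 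Hence $R_K=os_{\underline{l}}^{d+1}$. By \cref{propKupisch}, $\equiv_K$, defined by $T\equiv_K T'$ iff $T\cap os_{\underline{l}}^{d+1}=T'\cap os_{\underline{l}}^{d+1}$, is a lattice congruence on $L_n^d$.

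Next I identify the quotient with $L_{\underline{l}}^d$. The map $\pi:T\mapsto T\cap os_{\underline{l}}^{d+1}$ is surjective onto $L_{\underline{l}}^d$ by \cref{prop:Nakayamarestriction}, and its fibres are exactly the $\equiv_K$-classes. The quotient order compares the minima of classes, and I will check that this agrees with inclusion of the images. If $\pi(T)\subseteq\pi(T')$, let $\underline{T}$ and $\underline{T'}$ be the class minima. The proof of \cref{prop:Nakayamarestriction} shows that the minimum of the class of $R$ is the completion of $R$ in $L_n^d$, since that completion already restricts to $R$. Completion is monotone, so $\underline{T}\subseteq\underline{T'}$. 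The converse direction is immediate because $\pi$ is order preserving. Thus $\pi$ induces a poset isomorphism from $L_n^d/{\equiv_K}$ to $L_{\underline{l}}^d$, and $L_{\underline{l}}^d$ is a lattice quotient of $L_n^d$.

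Finally, join-semidistributivity passes to lattice quotients: the quotient map is a surjective lattice homomorphism, and the defining implication is an equational quasi-identity that can be lifted. Concretely, the lower-bounded property (join-congruence uniformity from \cref{thmLW}) is preserved by quotients, and it implies join-semidistributivity. The main obstacle is the identification step, specifically that the quotient order coincides with inclusion. That relies on the completion of $R$ restricting back to $R$, which \cref{prop:Nakayamarestriction} supplies.
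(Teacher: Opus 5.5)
Your proposal is correct and follows the paper's proof: the Kupisch-like series $K_i=i-l_i+1$ gives $R_K=os_{\underline{l}}^{d+1}$ (the paper writes $\max(0,i-l_i+1)$, which is the same since $l_i\le i+1$), so \cref{propKupisch} and \cref{prop:Nakayamarestriction} finish the argument. Your extra check that the quotient order is inclusion, using that class minima are completions, fills in a detail the paper leaves implicit. One small caveat: saying join-semidistributivity is a quasi-identity that ``can be lifted'' is not a valid reason on its own, since free lattices are semidistributive yet have every lattice (e.g.\ $M_3$) as a quotient. So rely on finiteness instead, either through preservation of lower boundedness (your concrete justification) or by noting that class minima form a join-subsemilattice on which the implication can be checked.
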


\begin{proof}
Let $\equiv$ be the equivalence relation on $L_n^d$ defined by $T\equiv T'$ if and only if $T\cap os_{\underline{l}}^{d+1} = T'\cap\, os_{\underline{l}}^{d+1}$.
By \cref{prop:Nakayamarestriction}, the elements of $L_{\underline{l}}^d$ are identified with the equivalence classes of the relation $\equiv$. 
Let $K=(K_0,K_1,\dots,K_{n-1})$ be the Kupisch-like series defined by $K_i:=\max(0,i-l_{i} +1)$ for all $i<n$. Then the associated Kupisch equivalence relation $\equiv_K$ on $L_n^d$ corresponds to $\equiv$. By \cref{thm:torsionAuslanderistam}, $L_n^d=\Tam\big((os_n^d,\leq),\phi\big)$ with $\phi$ defined by $\phi(i)=(i,\dots,i)$ for all $i<n$. Thus the proposition follows from \cref{propKupisch} since $(os_n^d,\leq)$ is a lattice.
\end{proof}

\section{Open questions}
\label{sec:openquestionspphi}

We finish with some open questions.

\begin{question}
Can we find a general formula for the number of elements of $\Tam(P,\phi)$? 
\end{question}

Maybe a general closed formula is too much to ask as counting the number of order ideals is a problem known to be hard, but we could have one for special cases or depending on the number of ideals of (subposets of) $P$. For example, in \cref{sec:Chain} we looked at the special case where $P=C_n$ is a chain. In that case we obtained a recursive formula (\cref{prop:inductionformula}) to count the number of elements of $\Tam(C_n,\phi)$. 
For the case of $\Tam(C_{np},\phi_p)$ we were able to obtain a closed-form expression (\cref{coro:numberchainp}) using a formula satisfied by the generating function of $|\Tam(C_{np},\phi_p)|$ (\cref{thm:genechains}).

\begin{question}
Can we obtain a closed-form expression for the number of elements of $\Tam(C_n,\phi)$?
\end{question}

In the other special case $P=os_n^d$ that we studied in \cref{sec:Auslander}, we were able to give a formula for the number of meet-irreducibles of $L_n^d$ (\cref{cor:resultsLnd}).

\begin{question}
Can we find a simple bijective explanation for the number of meet-irreducibles of $L_n^d$ obtained in \cref{cor:resultsLnd}?    
\end{question}

We have made some progress on the enumeration of the higher torsion classes in that case by giving formulas when $n\leq 4$ is fixed, and $d$ is any positive integer (\cref{prop:number4torsions}). The following asymptotic behavior seems to hold, which says that when the number of components $n$ is fixed, in some ways the lattice $L_n^d$ converges to a distributive lattice when $d$ goes to infinity as the number of elements of $L_n^d$ is comparable to the number of elements of its spine (which is $|J(os_n^{d+1})|$ by \cref{cor:resultsLnd}). For instance, as $d$ goes to infinity, the quotient $\dfrac{|L_n^d|}{|J(os_n^{d+1})|}$
converges to $1$ when $n=2$ and to $\frac{5}{4}$ when $n=3$.

\begin{question}
For $n$ fixed, $|L_n^d|=\mathcal{O}_{d\rightarrow \infty}\big(|J(os_n^{d+1})|\big)$? 
\end{question}

We finish with two more algebraically oriented questions.

\begin{question}
Are the higher Nakayama algebras of type $\mathbf{A}_{\infty}^{\infty}$ also examples of $(P,\phi)$-Tamari lattices?
\end{question}

\begin{question}
    Do the higher torsion classes of a $d$-cluster tilting subcategory always form a join-semidistributive lattice?
\end{question}

\bibliographystyle{alpha}
\bibliography{biblio}

\end{document}